\def\ZZ{{\mathbb Z}}
\def\RR{{\mathbb R}}
\def\Sphere{{\mathbb S}}
\def\eps{\varepsilon}
\def\dist{{\rm dist}\,}
\def\divop{{\rm div}}
\def\tueps{{\tilde u_\eps}}
\def\hueps{{\hat u_\eps}}
\def\ueps{u_\eps}
\def\uepsn{u_{\eps,n}}
\def\Ueps{U_\eps}
\def\uepsdef{W_{\eps,d}}
\def\uepsnodef{W_{\eps}}
\def\uhom{W_{*}}
\def\vhom{V_{*}}
\def\err{\mbox{err}}
\def\vieps{v^i_\eps}
\def\viepsdef{v^i_{\eps,d}}
\def\vjeps{v^j_\eps}
\def\vjepsdef{v^j_{\eps,d}}
\def\Veps{V_\eps}
\def\Weps{W_\eps}
\def\homeg{{\omega}}
\def\HOmeg{{\Omega}}
\def\HomO{{\Omega_{0}}}
\def\homegtau{\homeg_\tau^\eps}
\def\Omegtau{\Omega_\tau^\eps}
\def\Omegtaul{\Omega_{\tau}^{\eps}}
\def\homeps{\homeg_\eps}
\def\homeps{\homeg_\eps}
\def\aeps{a_\eps}
\def\aepsdef{a_{\eps,d}}
\def\ceps{\alpha_\eps}
\def\cepsdef{\gamma_1}
\def\csigeps{{c_{\eps}^{\sigma}}}
\def\reps{r_\eps}
\def\Deps{D_\eps}
\def\Ceps{Q_\eps}
\def\Geps{G_\eps}
\def\tbeta{{\tilde \beta}}
\def\fib{\Phi_b}
\def \MMMMI#1{-\mskip -#1mu\int}
\def \moy {\displaystyle\MMMMI{19.2}}
\def \twepsn {\tilde{w}_{\eps,1,n}}
\def \ttwepsn {\tilde{w}_{\eps,2,n}}
\def \tvn {\tilde{v}_{1,n}}
\def \ttvn {\tilde{v}_{2,n}}
\def\bh{{\mathbf{h}}}
\def\DN{{{\mycal R}_{\eps}}}
\DeclareFontFamily{OT1}{rsfs}{}
\DeclareFontShape{OT1}{rsfs}{m}{n}{ <-7> rsfs5 <7-10> rsfs7 <10-> rsfs10}{}
\DeclareMathAlphabet{\mycal}{OT1}{rsfs}{m}{n}
\begin{document}
\title{Interior Regularity Estimates in High Conductivity Homogenization and Application}
\author{Marc Briane \and Yves Capdeboscq \and Luc Nguyen}
\institute{
M.~Briane \at {Institut de Recherche Math\'ematique de Rennes, Universit\'e Europ\'eenne de Bretagne,
\\
Campus de Beaulieu, 35042  Rennes Cedex FRANCE}
\\\email{mbriane@insa-rennes.fr}
\and
Y.~Capdeboscq\at {Mathematical Institute, 24-29 St Giles', Oxford OX1 3LB, UK} 
\\\email{capdeboscq@maths.ox.ac.uk} 
\and
L.~Nguyen \at {Department of Mathematics, Fine Hall, Washington Road, Princeton, NJ 08544-1000 USA}
\\\email{llnguyen@math.princeton.edu}
}
\titlerunning{Regularity for High Conductivity Homogenization}
\authorrunning{M. Briane \and Y. Capdeboscq \and L. Nguyen}
\maketitle
\begin{abstract}
In this paper, uniform pointwise regularity estimates for the solutions of conductivity equations are obtained 
in a unit conductivity medium reinforced by a $\eps$-periodic lattice of highly conducting thin rods. 
The estimates are derived only at a distance $\eps^{1+\tau}$ (for some $\tau>0$) away from the fibres. 
This distance constraint is rather sharp since the gradients of the solutions are shown to be unbounded 
locally in $L^p$ as soon as $p>2$. One key ingredient is the derivation in dimension two of 
regularity estimates to the solutions of the equations deduced from a Fourier series expansion with 
respect to the fibres direction, and weighted by the high-contrast conductivity. 
The dependence on powers of $\eps$ of these two-dimensional estimates is shown to be sharp. 
The initial motivation for this work comes from imaging, and enhanced resolution phenomena 
observed experimentally in the presence of micro-structures \cite{LEROSEY-ET-AL-07}. We use these regularity estimates
to characterize the signature of low volume fraction heterogeneities in the fibred
reinforced medium assuming that the heterogeneities stay at a distance $\eps^{1+\tau}$ away from the fibres. 
\end{abstract}
\begin{keywords}
{homogenization - high conductivity - fibred media - weighted second-order elliptic equations - regularity estimates}
\end{keywords}
\par\medskip\noindent
{\bf AMS subject classification:} 35J15 - 35B27 - 35B65
%==============================================================%
\section{\label{sec:intro}Introduction}
Consider a material contained in $\Omega$ ${}=$ $\homeg \times (-L,L)\, \subset \, \RR^3$ where $\homeg$ 
is a bounded domain in $\RR^2$ with smooth boundary $\partial\homeg$. Given some fixed $\omega_0 \Subset \homeg$, 
we assume that inside $\HomO=\omega_0\times(-L,L)$, the material contains small cylindrical rods of high conductivity. 
\begin{figure}
\begin{center}
\includegraphics[width=0.8\columnwidth]{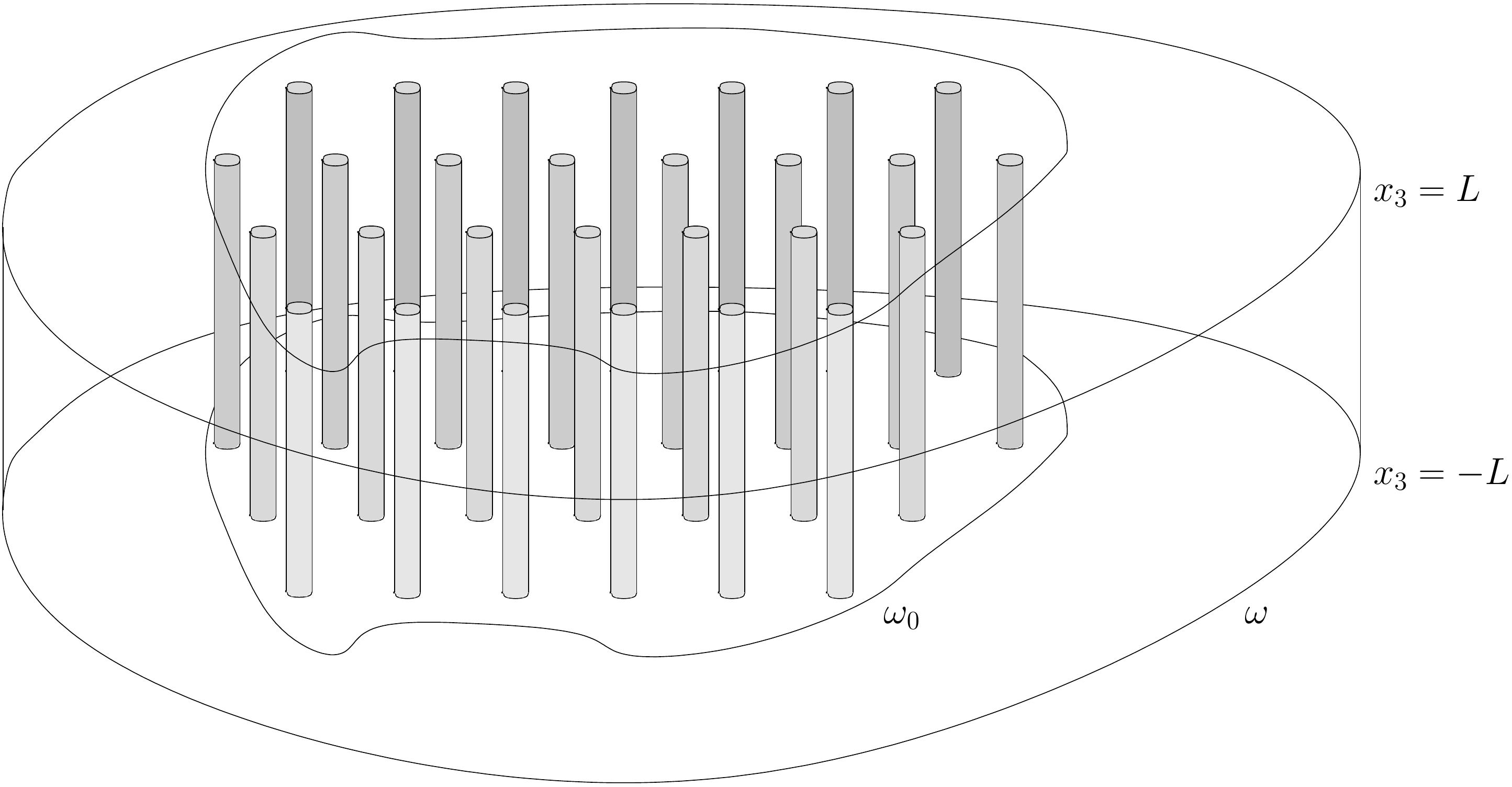}
\caption{\label{fig:fig1} Sketch of a domain $\Omega=\omega\times(-L,L)$ containing  cylindrical rods.} 
\end{center}
\end{figure}
A sketch of the domain is represented in Figure~\ref{fig:fig1}. 
For $\eps > 0$, $\reps \ll \eps$, and $(m,n) \in \ZZ^2$, let 
\begin{align*}
D_{m,n,\eps}
	&{}= D_{\eps\reps}(m\eps,n\eps)
	\;,\\
Q_{m,n,\eps}
	&{}= D_{m,n,\eps} \times (-L,L)
	\;,
\end{align*}
where $D_{\eps\reps}(m\eps,n\eps)$ is the disc of radius $\eps\reps$ centered at
$(m\eps,n\eps)$. Introduce the index set
\[
I_\eps  {}= \{(m,n) \in \ZZ^2: D_{m,n,\eps} \subset \omega_0\}\;,
\]
and set
$$
\Deps {}= \bigcup_{(m,n) \in I_\eps} D_{m,n,\eps}	\;, \qquad \Ceps {}= \bigcup_{(m,n) \in I_\eps} Q_{m,n,\eps}	\;.
$$
We will assume that the conductivity parameter of $\HOmeg$ is of the form
\begin{equation}
\aeps {}= \left\{\begin{array}{ll}
1 & \text{ in } \HOmeg \setminus \Ceps	\;,\\
\ceps & \text{ in } \Ceps	\;,
\end{array}\right.
\label{eq:defaeps}
\end{equation}
where 
\begin{equation}
0 < \kappa_{-} \leq \ceps\,\pi\reps^2 \leq \kappa_{+}	\;.
\label{eq:boundkappa}
\end{equation}
Additionally, we assume
\begin{equation}
0 < \frac{2\pi}{\gamma_{+}} \leq -\eps^2\ln\reps\, \leq \frac{2\pi}{\gamma_{-}}
	\;.
\label{eq:Cond1}
\end{equation}
We consider, for $\fib \in C^1(\bar\HOmeg)$ and $F$ $\in$ $L^2(\HOmeg)$, the solution $\Psi_\eps \in H^1(\HOmeg)$ of
\begin{equation}
\left\{\begin{array}{ll}
-\divop(\aeps\,\nabla \Psi_\eps) = F &\text{ in } \HOmeg	\;,\\
\Psi_\eps =  \fib&\text{ on } \partial\HOmeg	\;.
\end{array}\right.
\label{eq:Full-3D}
\end{equation}

This model, initially introduced by Fenchenko \& Khruslov \cite{FENCHENKO-KHRUSLOV-81}, has been studied in the context of 
homogenization by several authors \cite{KHRUSLOV-91,BELLIEUD-BOUCHITTE-98,BRIANE-TCHOU-01,BRIANE-03,MARCHENKO-KHRUSLOV-06}. 
It is known to have a non-standard behaviour when $\eps$ tends to zero. Namely, the homogenized limit is not of divergence 
form, and admits a non-local term (see Theorem~\ref{thm:hom} for the precise form of the limit). 
While it is clear that, thanks to the ellipticity, a global $W^{1,2}\left(\Omega\right)$ bound holds, one can show that 
except for special boundary data $\fib$, the solutions $\Psi_\eps$ of \eqref{eq:Full-3D} are unbounded in
 $W^{1,p}_{\rm loc}(\HOmeg_0)$ for any $p>2$, see Corollary~\ref{cor:hom}. This makes the situation very 
different from the case of bounded coefficients: Meyers' Theorem \cite{MEYERS-63} shows that solutions are bounded in $W^{1,p}_{\rm loc}$ for some $p > 2$ in that case. 
In the case of periodic composites with bounded coefficients, Li \& Vogelius \cite{LI-VOGELIUS-00} and Li \& Nirenberg \cite{LI-NIRENBERG-03} showed one can in fact 
obtain $W^{1,\infty}$ estimates. However, such improved regularity estimates strongly depend on the contrast of the coefficients as shown 
for example in \cite{LEONETTI-NESI-97,AMMARI-KANG-LIM-05,BAO-LI-YIN-09}.

The first goal of this work is to establish interior
$C^{1,\alpha}$ estimates, uniformly in $\eps$, for $\Psi_\eps$ away from $\Ceps$. 
We show that in a set $\Omega_\eps=\omega_\eps \times (-l,l)$ improved 
regularity estimates can be obtained. The set $\homeps$ is ``almost'' the complement of the high conductivity fibres in the sense 
that $\omega_\eps \cap D_\eps = \emptyset$ and $\omega_0\setminus \omega_\eps$ tends to zero with $\eps$. 
Introducing $V$ the solution  in $H_0^1(\Omega)$ of  
$$
-\Delta V= F, 
$$
we show that 
$$
\|\Psi_\eps-V\|_{C^{1,\nu}\left(\Omega_\eps\right)}\leq C\,\|F\|_{L^2(\HOmeg)},
$$
see Theorems~\ref{thm:Regularity-3d} and \ref{thm:Regularity-3d**} for precise statements.

We can think of several applications for this work. One could for example use this result to 
establish a posteriori error estimates for the numerical solutions of
\eqref{eq:Full-3D}. Our initial motivation came from a question related to imaging. 
In a recent work, Ammari et al. \cite{AMMARI-BONNETIER-CAPDEBOSCQ-09} showed that the signature
of small inclusions inside a periodic medium was determined by the effective properties of this 
medium, in the limit when both the period and the size of the inclusions tend to zero. 
The motivation for this study was to provide a mathematical perspective on the so-called 
``resolution beyond the diffraction limit''  verified experimentally \cite{LEROSEY-ET-AL-07}.

Recent developments in material sciences, see Bouchitt\'e et al. \cite{BOUCHITTE-BOUREL-FELBACQ-09}, 
have shown that very high contrast composite materials, with scalings similar to the ones used 
in this paper, could be used to construct  meta-materials with particularly interesting 
properties (such as materials with negative optical indices). Such composites are out of 
the scope of \cite{AMMARI-BONNETIER-CAPDEBOSCQ-09}. This work relies on elliptic regularity 
estimates shown by Li \& Nirenberg which do not apply for large contrast.
These imaging problems involve either the Helmholtz equation, or the full Maxwell equation. We limited 
our study to the case of a real valued conductivity coefficient in this work. Assume a perturbation of small volume $\Geps$ is located 
inside $\Omega_\eps$ with conductivity $\cepsdef \in C^0(\Omega)$. The conductivity of 
the defective medium is
\begin{equation}
\aepsdef {}= \left\{\begin{array}{ll}
1 & \text{ in } \Omega \setminus (\Ceps \cup \Geps)	\;,\\
\cepsdef & \text{ in } \Geps	\;,\\
\ceps & \text{ in } \Ceps	\;.
\end{array}\right.
\label{eq:aepsgamma1}
\end{equation}
For $\fib \in C^1(\bar\Omega)$, consider $\uepsdef$ the solution of
\begin{equation}
\left\{\begin{array}{ll}
\divop(\aepsdef\,\nabla \uepsdef) = 0 	& \text{ in } \Omega	\;,\\
\uepsdef = \fib	&\text{ on } \partial\Omega	\;,
\end{array}\right.
\label{eq:u-with-defect}
\end{equation}
and compare it to the unperturbed solution $\uepsnodef$ of 
\begin{equation}
\left\{\begin{array}{ll}
\divop(\aeps\,\nabla \uepsnodef) = 0 	& \text{ in } \Omega	\;,\\
\uepsnodef = \fib	&\text{ on } \partial\Omega	\;.
\end{array}\right.
\label{eq:u-no-defect}
\end{equation}
The signature on the boundary of the defect is characterised by the quadratic form $\DN:H^{1/2}(\partial \Omega) \to  \mathbb{R}$ defined by
$$
\DN(\fib) {}=  \int_{\partial\Omega}
		\aeps \frac{\partial}{\partial n}\left(\uepsdef-\uepsnodef\right)(s)\fib(s)ds.
$$
We show in Theorem~\ref{thm:defect} that at first order, for $\fib \in C^1(\bar\Omega)$, it is of the form
$$
\DN(\fib) = \left|\Geps \right|\int_{\Omega}M\nabla \uhom \cdot\nabla\uhom\,d\mu +o\left(\left|\Geps \right|\right),
$$
where $\uhom$ is the solution of the homogenized problem \eqref{eq:hom-pro} associated with \eqref{eq:u-no-defect}. 
This can be seen as an extension of \cite{CAPDEBOSCQ-VOGELIUS-03A}  to \eqref{eq:u-no-defect}. 
At first order, the signature of the defect is similar to what would be observed if defect was 
introduced in the effective medium, instead of the homogeneous substrate: the only difference is the formula 
of polarisation tensor $M$. This is what was observed in \cite{BENHASSEN-BONNETIER-06} for finite 
conductivities. 

Theorem~\ref{thm:defect} has another interpretation, probably of equal if not greater importance for applications. 
Theorem~\ref{thm:defect}, compared with the main result 
of \cite{AMMARI-BONNETIER-CAPDEBOSCQ-09} or  \cite{BENHASSEN-BONNETIER-06}  shows that impurities in the 
substrate do not affect the  overall properties of the composite material  more than impurities  would affect a 
regular composite, provided these impurities are located in $\Omega_\eps$, that is, not too
 close to the highly 
conducting fibres but not necessarily at a distance proportional to the size of the microstructure. As far as the 
authors are aware, this is the first result of this nature. If such highly contrasted structures are used to manufacture 
meta-materials, as it is suggested in \cite{BOUCHITTE-BOUREL-FELBACQ-09}, this result is of practical importance.

Another very related question is the regularity of the solution of a problem similar to \eqref{eq:Full-3D}, where cavities 
replace inclusions. This problem shows a similar non-standard effective behaviour, the celebrated ``strange term'' of 
Cioranescu and Murat \cite{CIORANESCU-MURAT-82}. This will be the subject of a forthcoming paper.

\medskip{}

Our paper is structured as follows. In Section~\ref{sec:main}, we state our main results concerning the regularity 
of \eqref{eq:Full-3D} away from the highly conductive fibres. 
First, we consider for $F$ $\in$ $L^2(\HOmeg)$, the solution $\Ueps \in H_0^1(\HOmeg)$ of
\begin{equation}
\left\{\begin{array}{ll}
-\divop(\aeps\,\nabla \Ueps) = F  &\text{ in } \HOmeg	\;,\\
\Ueps =  0&\text{ on } \partial\HOmeg	\;.
\end{array}\right.
\label{eq:INT-3D}
\end{equation}
Our result concerning problem \eqref{eq:INT-3D} is Theorem~\ref{thm:Regularity-3d}.
 We then turn to the boundary value 
problem \eqref{eq:u-no-defect}, and obtain an estimate for $\Weps$ in Theorem~\ref{thm:Regularity-3d**}. 
To highlight the fact that excluding a buffer zone around the highly conducting fibres is necessary, for any $p>2$ we 
provide an explosive lower bound for the $W^{1,p}$ norm of  $\Weps$ in Corollary~\ref{cor:hom} (see also Remark~\ref{rem:bup}). 
This result is a corollary of the homogenization result given by Theorem~\ref{thm:hom}. 

Our approach relies on the fact that one can perform a partial Fourier series decomposition of $\Ueps$ and $\Weps$ in $x_3$. In Section~\ref{sec:2D-3D} 
we prove Theorem~\ref{thm:Regularity-3d} and Theorem~\ref{thm:Regularity-3d**} starting from the regularity of the Fourier coefficients. 
Note that translating the interior regularity results from $\Ueps$ to $\Weps$ is not 
obvious. A natural idea is to study $\tilde\Weps=\eta\Weps$, where $\eta$ is a cut-off function. Provided the cut-off function 
is chosen carefully, this leads to an interior problem in $H^1_0(\HOmeg)$, 
$$
-\divop(\aeps\,\nabla \tilde\Weps) =  - 2 \aeps \nabla \Weps \cdot \nabla \eta - \aeps \Delta \eta \Weps  \text{ in } \HOmeg.
$$
Let us focus on the first right-hand side term. Remembering that the only bound uniform in $\eps$ is the energy bound, 
$\int_{\Omega} a_\eps |\nabla \Weps|^2 \leq C \|\fib\|_{C^1(\bar\HOmeg)}$, since $\nabla \Weps$ is not bounded uniformly in 
$\eps$ in any $L^{p}(\HOmeg)$ for $p>2$, we are then left with a problem of the form 
$$
-\divop(\aeps\,\nabla \hat\Weps) =  - \aeps f,\mbox{ in } H^1_0(\Omega) \mbox{ with } \int_\Omega \aeps f^2 dx <\infty. 
$$
Unfortunately, without additional information on $f$, this is not enough to guarantee that $\hat\Weps$ is bounded in 
$L^\infty(\Omega)$. In Proposition~\ref{pro:contrex-marc}, we show that the best one can hope for in 
this case is  $\eps \|\hat\Weps\|_{L^\infty}< C\| \sqrt{\aeps} f \|_{L^2}$.

In Section~\ref{sec:defect}, we show how these results can be used to obtain a representation 
formula for inclusions of small measure located away from the fibres. This part was the initial motivation of this work. 
Because outside of the highly conducting fibres the substrate is homogeneous, we provide a self-contained proof. 

We prove the homogenization Theorem~\ref{thm:hom} in Section~\ref{sec:hom}, and finally turn to more technical results.
Section~\ref{sec:dGNM} is devoted to the proof of Lemma~\ref{lem:dGNM}, a supremum estimate of de Giorgi-Moser-Nash type adapted to 
our problem. The proof of this Lemma uses a Poincar\'e-Sobolev inequality proved in 
Lemma~\ref{lem:Sob-Marc}. Section~\ref{sec:proof-2d} contains the proof of the regularity results for the Fourier coefficients 
associated with $\Ueps$. Section~\ref{sec:ABCD} is the counterpart of Section~\ref{sec:proof-2d}, for $\Weps$. Several intermediate 
technical results are proved in Appendix~\ref{sec:AnnexA}.

\section{\label{sec:main} Main interior regularity estimates}

The main part of this paper is devoted to the derivation of interior regularity estimates for problem~\eqref{eq:Full-3D}, 
outside of the highly conducting rods.

For a fixed $\omega_0 \Subset \omega_1 \Subset \homeg$, define $\homegtau$ by
\begin{equation}\label{eq:defhomegtau}
\homegtau{}= \{x \in \omega_1: \dist(x,\Deps) \geq \eps \tau\}.
\end{equation}
Our result concerning problem \eqref{eq:INT-3D} is the following.
\begin{theorem}\label{thm:Regularity-3d}
For $\kappa > 0$,  $\tau > \kappa\,\eps^{\frac{1-\eta}{2(1+\eta)}}$ with 
$\eta\in\left(\frac{3}{4},1\right)$, and $0<\nu <2\left(\eta - \frac{3}{4}\right)$, 
the solution $\Ueps$ of \eqref{eq:INT-3D} satisfies
\[
\left\| \Ueps - V\right\|_{C^{1,\nu}\left(\homegtau \times(-L,L)\right)}
	\leq C(\kappa,\eta,\nu) \|F\|_{L^2(\Omega)},
\]
where $V\in H^1_0(\Omega)$ is the solution of
$$
-\Delta V = F.
$$ 
\end{theorem}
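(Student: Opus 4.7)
The plan is to exploit the translation invariance of $\aeps$ in the $x_3$-direction by performing a partial Fourier series expansion. Since $\Ueps$ and $V$ both vanish on $\partial\HOmeg$, I write
$$
\Ueps(x',x_3) = \sum_{k\geq 1} u_k(x')\,\phi_k(x_3), \qquad V(x',x_3) = \sum_{k\geq 1} v_k(x')\,\phi_k(x_3),
$$
and similarly $F = \sum_k f_k(x')\phi_k(x_3)$, where $\phi_k(x_3) = \sin(k\pi(x_3+L)/(2L))$ are the Dirichlet eigenfunctions of $-\partial_3^2$ on $(-L,L)$ with eigenvalues $\mu_k^2 = (k\pi/(2L))^2$. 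Because $\aeps$ depends only on $x'$, the coefficient $u_k$ solves the two-dimensional weighted problem
$$
-\text{div}_{x'}(\aeps \nabla_{x'} u_k) + \mu_k^2\, \aeps\, u_k = f_k \text{ in } \omega_1, \qquad u_k = 0 \text{ on } \partial\omega_1,
$$
while $v_k$ satisfies the same equation with $\aeps\equiv 1$, namely $-\Delta_{x'} v_k + \mu_k^2 v_k = f_k$. The comparison between $\Ueps$ and $V$ is thus mode-by-mode a comparison between these two 2D problems.

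I would then invoke the two-dimensional regularity estimates for the Fourier modes developed in Section~\ref{sec:proof-2d}, applied to the difference $u_k - v_k$. I expect these 2D estimates to produce, for each $k$, a bound of the schematic form
$$
\|u_k - v_k\|_{C^{1,\nu}(\homegtau)} \leq C(\kappa,\eta,\nu)\,(1+\mu_k)^{\sigma(\eta,\nu)}\,\|f_k\|_{L^2(\omega_1)},
$$
with polynomial growth in $\mu_k$. Summing in $k$ and using Cauchy--Schwarz together with Parseval's identity $\sum_k \|f_k\|_{L^2(\omega_1)}^2 \leq C\|F\|_{L^2(\HOmeg)}^2$, while accounting for the extra factors $\mu_k$ and $\mu_k^\nu$ coming respectively from $x_3$-differentiation and from the Hölder modulus in $x_3$, yields the desired three-dimensional $C^{1,\nu}$ bound. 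The constraints $\eta \in (3/4,1)$ and $0 < \nu < 2(\eta-3/4)$ are exactly what is needed to keep the resulting series over $k$ summable: asking for a larger Hölder exponent amplifies the growth in $\mu_k$ and forces a stronger regularity requirement from the 2D theory.

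The main obstacle is establishing the 2D estimate with the correct joint dependence on the frequency $\mu_k$, the geometric buffer $\tau\eps$, and the small parameter $\eps$. The threshold $\tau > \kappa\,\eps^{(1-\eta)/(2(1+\eta))}$ is dictated precisely by that 2D estimate: on $\homegtau$, the Hölder norm of the mode problem degenerates polynomially as $\tau\to 0$ and $\eps\to 0$, and the chosen scaling balances this blow-up against the scale-$\eps$ gain available near the fibres through the de Giorgi--Moser--Nash type supremum estimate (Lemma~\ref{lem:dGNM}) and the associated weighted Poincaré--Sobolev inequality (Lemma~\ref{lem:Sob-Marc}). Once the two-dimensional bound is in place with the right tradeoff between Hölder exponent and $\mu_k$-growth, the passage from the modes back to $\Ueps - V$ is a careful but essentially routine summation argument; the technical heart of the theorem lies in the 2D Fourier-mode analysis.
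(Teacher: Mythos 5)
Your proposal follows essentially the same route as the paper: a partial Fourier expansion in $x_3$, reduction to the two-dimensional weighted mode problems \eqref{eq:ini2.5d}, mode-wise $L^\infty$ and H\"older estimates from Proposition~\ref{prop:Linfybd2d} combined with the supremum bound of Lemma~\ref{lem:dGNM}, and a Cauchy--Schwarz/Parseval summation in which the extra factors of $n$ from $\partial_3$ and $n^\nu$ from the H\"older modulus produce exactly the thresholds $\eta>\frac{3}{4}$ and $\nu<2(\eta-\frac{3}{4})$. You have correctly located the technical heart in the 2D estimates and the origin of the constraint on $\tau$, so the plan is sound.
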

This result is proved in Section~\ref{sec:2D-3D}.
\begin{remark}
It should be noted that, generically, $V$ may not belong to any $W^{1,p}(\Omega)$ for $p > 6$.
The above result asserts that the difference $\Ueps - V$ enjoys a better regularity in most of the domain since
$|\omega_1 \setminus \homegtau| \rightarrow 0$ as $\eps \rightarrow 0$.
\end{remark}

We use the notation $C$ for various constants in the paper which are always independent of $\eps$. 
When appropriate, we highlight the dependence on the parameters appearing in the statements of the results by writing $C(a, b, \ldots )$.

As a direct consequence of the above theorem, we have
\begin{corollary}\label{cor:Regularity-3d}
Under the assumptions of Theorem \ref{thm:Regularity-3d} we have
\begin{equation}
\left\| \Ueps\right\|_{L^\infty\left(\homegtau \times(-L,L)\right)} 
+ \left\|\nabla \Ueps\right\|_{L^6\left(\homegtau \times(-L,L)\right)}
\leq C(\kappa,\eta) \|F\|_{L^2(\Omega)}
	.\label{eq:sauve-aubin-nitsche}
\end{equation}
\end{corollary}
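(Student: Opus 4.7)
The plan is to write $\Ueps = V + (\Ueps - V)$ and estimate each piece separately. Theorem~\ref{thm:Regularity-3d} already delivers control of $\Ueps - V$ in $C^{1,\nu}(\homegtau \times (-L,L))$ by $\|F\|_{L^2(\Omega)}$. Since $\homegtau \times (-L,L)$ sits inside the bounded set $\omega_1 \times (-L,L)$, a $C^{1,\nu}$ bound immediately gives bounds on $\|\Ueps - V\|_{L^\infty}$ and on $\|\nabla(\Ueps - V)\|_{L^\infty}$, which in turn yield the $L^6$ bound for the gradient on the same set. Both bounds are controlled by $C(\kappa,\eta)\|F\|_{L^2(\Omega)}$.

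It remains to establish the analogous bound for the reference solution $V$ itself, independently of $\eps$. Since $V \in H^1_0(\Omega)$ solves $-\Delta V = F$ on the smooth cylinder $\Omega = \omega\times(-L,L)$ with right-hand side in $L^2(\Omega)$, standard elliptic regularity yields $V \in H^2(\Omega)$ with $\|V\|_{H^2(\Omega)} \leq C\|F\|_{L^2(\Omega)}$. In dimension three, the Sobolev embedding $H^1(\Omega)\hookrightarrow L^6(\Omega)$ applied componentwise to $\nabla V$ gives $\|\nabla V\|_{L^6(\Omega)} \leq C\|V\|_{H^2(\Omega)}$, while $H^2(\Omega) \hookrightarrow C^{0,1/2}(\bar\Omega)$ (also a consequence of three-dimensional Sobolev theory) furnishes $\|V\|_{L^\infty(\Omega)} \leq C\|V\|_{H^2(\Omega)}$.

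Combining the two pieces by the triangle inequality then produces \eqref{eq:sauve-aubin-nitsche}. There is really no obstacle here: the only substantive input is Theorem~\ref{thm:Regularity-3d}, and the remaining ingredients (interior/boundary $H^2$ regularity for the Laplacian on the smooth cylinder and the three-dimensional Sobolev embeddings) are entirely standard and $\eps$-independent. If one wishes to avoid any discussion of edge regularity of the cylinder, one can instead absorb all boundary-type losses into the $\Ueps - V$ term, since the relevant set $\homegtau \times (-L,L)$ is uniformly away from the lateral boundary $\partial\omega \times (-L,L)$; but in any case the smoothness of $\partial\omega$ already assumed in the introduction suffices.
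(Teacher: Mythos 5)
Your argument is correct and is precisely the one the paper intends: the corollary is stated as a direct consequence of Theorem~\ref{thm:Regularity-3d}, with the only extra input being the standard $H^2$ regularity of $V$ and the three-dimensional embeddings $H^2(\Omega)\hookrightarrow L^\infty(\Omega)$ and $H^1(\Omega)\hookrightarrow L^6(\Omega)$ applied to $\nabla V$ (the remark following the theorem, that $V$ generically fails to lie in $W^{1,p}$ for $p>6$, confirms this is the intended route). Your closing observation that any edge-regularity concerns for the cylinder can be sidestepped because $\homegtau\times(-L,L)$ stays away from the lateral boundary is a sensible precaution and does not change the argument.
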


Next, we derive an estimate for the solution $\Weps$  of \eqref{eq:u-no-defect},  given by the following proposition.

\begin{theorem}\label{thm:Regularity-3d**}
Let $\Omegtau {}= \homegtau \times(-l,l)$, with $l<L$. 
For $\kappa > 0$,  $\tau > \kappa\,\eps^{\frac{1-\eta}{2(1+\eta)}}$ with $\eta\in\left(\frac{3}{4},1\right)$, 
and $0<\nu <2\left(\eta - \frac{3}{4}\right)$, the solution $\Weps$ of \eqref{eq:u-no-defect} satisfies
$$
\|\Weps\|_{C^{1,\nu}\left(\Omegtau\right)}\leq C(\kappa,\eta,\nu, l)\,\|\Phi_b\|_{C^1(\bar\HOmeg)}.
$$
\end{theorem}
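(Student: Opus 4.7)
The plan is to exploit that $\aeps$ is independent of $x_3$ by decomposing $\Weps$ via a partial Fourier series in $x_3$, as announced in the introduction. The excerpt explicitly warns that the naive cutoff $\tilde\Weps=\eta\Weps$ is insufficient: it leads to a right-hand side of the form $-\aeps f$ with only $\int_\HOmeg \aeps f^2<\infty$, and Proposition~\ref{pro:contrex-marc} shows this regime loses a factor $\eps$ in the best $L^\infty$ bound, so Theorem~\ref{thm:Regularity-3d} cannot be applied directly. The Fourier route is therefore essentially forced.

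After reducing to zero Dirichlet data on the top and bottom faces $\homeg\times\{\pm L\}$ by subtracting a smooth lift of $\Phi_b$ in $x_3$, I would expand
\begin{equation*}
\Weps(x',x_3)=\sum_{k\geq 1} W_{\eps,k}(x')\,\phi_k(x_3),
\end{equation*}
where $\{\phi_k\}$ is the $L^2(-L,L)$-orthonormal basis of eigenfunctions of $-\partial_{x_3}^2$ with Dirichlet conditions at $\pm L$ and eigenvalues $\lambda_k$. Since $\aeps=\aeps(x')$, each coefficient $W_{\eps,k}:\homeg\to\RR$ satisfies a two-dimensional weighted elliptic equation
\begin{equation*}
-\divop_{x'}(\aeps\,\nabla_{x'}W_{\eps,k})+\lambda_k\,\aeps\,W_{\eps,k}=G_k\quad\text{in }\homeg,
\end{equation*}
with Dirichlet data on $\partial\homeg$ read off from the Fourier expansion of $\Phi_b$ restricted to the lateral face, and $G_k$ encoding the contribution of the lift. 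I would then invoke the 2D interior regularity estimates of Section~\ref{sec:ABCD}, which are the direct boundary-value analogue of the 2D estimates developed for $\Ueps$ in Section~\ref{sec:proof-2d}, to bound each $W_{\eps,k}$ in $C^{1,\nu}(\homegtau)$ with explicit dependence on $\lambda_k$ and on the Fourier data.

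The last step is to reassemble the series on $\Omegtau=\homegtau\times(-l,l)$. Here the strict inequality $l<L$ is essential: restricted to the subinterval $(-l,l)$ bounded away from $\pm L$, the functions $\phi_k$ and their derivatives admit improved pointwise bounds, which, combined with the decay of the Fourier coefficients of $\Phi_b\in C^1(\bar\HOmeg)$, is what lets the summation close in $C^{1,\nu}(\Omegtau)$. The main obstacle will be to ensure that the 2D estimates from Section~\ref{sec:ABCD} carry a sharp enough $\lambda_k$-dependence for the resummation to converge, and to track how the coupling between the lateral trace of $\Phi_b$ and the lift of its top/bottom trace is absorbed into a single bound of the announced form $C(\kappa,\eta,\nu,l)\,\|\Phi_b\|_{C^1(\bar\HOmeg)}$.
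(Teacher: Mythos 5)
Your overall architecture (partial Fourier series in $x_3$, two-dimensional weighted elliptic estimates for the coefficients, resummation on $\homegtau\times(-l,l)$) is indeed the paper's, but the step ``reduce to zero data on $\omega\times\{\pm L\}$ by subtracting a smooth lift of $\Phi_b$'' is exactly where the proof cannot be made to work as you describe, and you do not supply the idea that fixes it. If $\phi_L$ is a generic smooth lift matching $\Phi_b(\cdot,\pm L)$, then $\Weps-\phi_L$ solves $-\divop(\aeps\nabla(\Weps-\phi_L))=\divop(\aeps\nabla\phi_L)$, and on the fibres $\aeps\nabla_{x'}\phi_L\sim\ceps\sim\reps^{-2}$; after Fourier decomposition this is a source of the form $\divop_2(\aeps\bh)$ for which Lemma~\ref{lem:dGNM} requires $\sqrt{\aeps}\,\bh\in L^\infty$ — a quantity of size $\reps^{-1}$, i.e.\ exponentially large in $\eps^{-2}$ — while Proposition~\ref{prop:Linfybd2d} only accepts an unweighted $\divop_2(\bh)$ with $\bh\in L^\infty$. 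This is precisely the obstruction you quote from the introduction, and quoting it does not circumvent it. The paper's resolution is a three-way splitting of the data: the lift is taken to be $\phi_L=\phi_1+\phi_2$, where $\phi_1$ is built from the capacity function $\csigeps$ so that it is \emph{constant in $(x_1,x_2)$ on each fibre} and \emph{affine in $x_3$} ($\partial_{33}\phi_1=0$); then $\aeps\nabla\phi_1$ contributes nothing on the fibres, $W_{\eps,1}-\phi_1$ solves an interior problem with right-hand side $\Delta\phi_1=\divop(\bh)$, $\bh\in L^\infty$ uniformly, and Proposition~\ref{pro:tilde-W1} applies. The remainder $\phi_2$ does produce the bad weighted source, but it is of size $O(\eps^{\sigma+2})$ in $L^\infty$, which over-compensates the $\eps^{-\alpha}$ loss in Lemma~\ref{lem:dGNM} (Proposition~\ref{pro:tilde-W2}). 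The lateral trace $\phi_0$ is handled separately by a horizontal cutoff supported away from $\omega_0$, reducing to Theorem~\ref{thm:Regularity-3d}; your plan of feeding nonzero lateral Dirichlet data directly into the 2D estimates would require redoing Section~\ref{sec:proof-2d}, which is stated only for $H^1_0(\omega)$.

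A secondary inaccuracy: the constraint $l<L$ does not enter through ``improved pointwise bounds'' for the eigenfunctions $\sin\big(\tfrac{k\pi}{2}(\tfrac{x_3}{L}+1)\big)$ on $(-l,l)$ — these are just sines, with no gain away from $\pm L$. The convergence of the resummed series comes from the $\lambda_k^{-\eta}=O(k^{-2\eta})$ decay in the 2D estimates together with square-summability of the Fourier data; the restriction $l<L$ is used only to insert the vertical cutoff $p\in C_c^\infty(-l',l')$, $l<l'<L$, in the treatment of $W_{\eps,2}$.
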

This result is proved in Section~\ref{sec:2D-3D}.

\begin{remark}
Once again, as $\eps \rightarrow 0$, we have $|\omega_1\setminus\homegtau|=o(1)$, therefore the solution of \eqref{eq:INT-3D} enjoys a uniform $C^{1,\nu}$-bound in almost 
the whole
domain. 
\end{remark}

It is natural to ask whether a uniform global $C^{1,\nu}$-bound exists for $\Omega$. To answer this question, first we consider the limit homogenized problem 
corresponding to \eqref{eq:u-no-defect}. To state our homogenization result, we need to introduce a capacity function. The function $\csigeps$ is defined, 
for some $\sigma \geq 1$, in $\HOmeg$ by
\begin{equation}
\csigeps(x){}=\left\{\begin{array}{cl}
0 & \mbox{if }r=\sqrt{(x_1-m\eps)^2+(x_2-n\eps)^2}\leq\eps r_\eps
\\*[.4em]
\displaystyle \frac{\ln r-\ln(\eps r_\eps)}{\ln(\eps^\sigma/2)-\ln(\eps r_\eps)} & \mbox{if }r\in(\eps r_\eps,\eps^\sigma/2),\quad\mbox{for }(m,n)\in I_\eps,
\\*[.4em]
1 & \mbox{elsewhere}.
\end{array}\right.
\label{eq:defceps}
\end{equation}
In fact, we choose $\sigma$ so that $\eps^{\sigma} > \eps\,\tau$. Note that $\csigeps$ does not depend on the variable $x_3$, 
is periodic of period $[-\eps/2,\eps/2]^3$ in $\HomO$,
and is equal to $1$ outside of $\HomO$.
This capacity function has been ubiquitous in the derivation of homogenization results
related to conductivities of the form \eqref{eq:defaeps} since its introduction in \cite{CIORANESCU-MURAT-82}. 

We have the following homogenization result, proved in Section \ref{sec:hom}. Note that $\aeps$  is only reinforced in the cylinder $\HomO=\omega_0\times(-L,L)$.  
\begin{theorem}\label{thm:hom}
Assume in addition to (\ref{eq:boundkappa}), (\ref{eq:Cond1}), that 
\begin{equation}\label{alpeps}
\lim_{\eps\to 0}\;\ceps\,\pi \reps^2=\kappa\in[\kappa_-,\kappa_+],\quad
\lim_{\eps\to 0}\;\frac{2\pi}{\eps^2\left|\ln \reps\right|}=\gamma\in[\gamma_-,\gamma_+].
\end{equation}
Let $\fib\in C^1(\bar\HOmeg)$. Then, the solution $\Weps$ converges weakly in $H^1(\HOmeg)$ to the unique solution $\uhom$ of the coupled system
\begin{equation}
\left\{\begin{array}{rl}
\displaystyle -\,\Delta \uhom+\gamma\left(\uhom-\vhom\right)1_{\Omega_0}=0 & \mbox{in }\Omega
\\*[.6em]
-\,\kappa\,\partial^2_{33}\vhom+\gamma\left(\vhom-\uhom\right)=0 & \mbox{in }\Omega_0=\omega_0\times(-L,L)
\\*[.4em]
\uhom=\fib & \mbox{on }\partial\HOmeg
\\*[.4em]
\vhom(\cdot,\pm L)=\fib(\cdot,\pm L) & \mbox{in }\omega_0.
\end{array}\right.
\label{eq:hom-pro}
\end{equation}
The pair $(\uhom, \vhom)$ satisfies $\uhom\in C^{0,\alpha}(\bar\Omega) \cap W^{2,p}_{\rm loc}(\HOmeg)$ 
and $\vhom\in C^{0,\alpha}(\bar\Omega_0) \cap C^\infty_{\rm loc}(\Omega_0)$ for some $\alpha \in (0,1)$ and for any $p>2$. 
\par
Moreover, the following corrector result holds:
\begin{equation}\label{cor.res}
\lim_{\eps\to 0}\;
\int_\HOmeg \aeps\big|\nabla \Weps-\nabla \csigeps\left(\uhom-\vhom\right)-\csigeps\nabla \uhom-(1- \csigeps)\,\partial_3 \vhom\,e_3\big|^2\,dx=0.
\end{equation}
\end{theorem}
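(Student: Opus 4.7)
The plan is to apply the classical Tartar / Cioranescu--Murat oscillating test function method, taking advantage of $\csigeps$ from \eqref{eq:defceps} as the corrector that encodes the capacity of one fiber cross-section. Along the way I would treat as a second unknown a suitable cross-sectional average $V_\eps$ of $\Weps$ over each fiber (interpolated so as to be defined throughout $\Omega_0$); this will turn out to converge to $\vhom$. Testing \eqref{eq:u-no-defect} against $\Weps - \Phi_b$ (using an $H^1$ extension of the boundary data) yields
$$
\int_\HOmeg \aeps\,|\nabla \Weps|^2\,dx \;\leq\; C\,\|\fib\|_{C^1(\bar\HOmeg)}^2 ,
$$
so $\Weps$ is bounded in $H^1(\HOmeg)$ and $\sqrt{\aeps}\,\nabla \Weps$ is bounded in $L^2$. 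Up to a subsequence, $\Weps\rightharpoonup \uhom$ in $H^1(\HOmeg)$; using $\aeps\pi\reps^2\to\kappa$ and a Poincaré inequality on fiber cross-sections, $V_\eps\rightharpoonup \vhom$ in a suitable weak sense in $H^1(\HOmeg_0)$.

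To identify the limit system, I would plug test functions of the form $\varphi_\eps=\phi_0+\csigeps(\phi_1-\phi_0)$, with $\phi_0,\phi_1\in C^\infty$, into the weak formulation. The resulting identity decomposes into three contributions. Where $\csigeps=1$ one obtains the standard $\int_\HOmeg\nabla \uhom\cdot\nabla\phi_0$ that produces the Laplacian in the first equation of \eqref{eq:hom-pro}. Inside the fibers $\Ceps$, the product $\aeps\pi\reps^2\to\kappa$ together with the approximate constancy of $\Weps$ on each cross-section supplies the term $\kappa\int_{\HOmeg_0}\partial_3\vhom\,\partial_3\phi_1$. The crucial strange term comes from the annular region $\eps\reps<r<\eps^\sigma/2$, where $|\nabla \csigeps|^2\sim 1/(r^2(\ln\eps\reps)^2)$; integrating one period and summing over $I_\eps$ and invoking \eqref{eq:Cond1} together with $2\pi/(\eps^2|\ln\reps|)\to\gamma$ gives
$$
\int_\HOmeg|\nabla \csigeps|^2(\phi_0-\phi_1)(\Weps-V_\eps)\,dx \;\longrightarrow\; \gamma\int_{\HOmeg_0}(\phi_0-\phi_1)(\uhom-\vhom)\,dx,
$$
which produces the coupling $\gamma(\uhom-\vhom)\mathbf{1}_{\HomO}$. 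The cross-terms $\int \aeps\nabla \csigeps\cdot\nabla\phi_j$ vanish because $|\nabla \csigeps|$ is concentrated on a set of measure $O(\eps^2/|\ln\reps|)$. Varying $\phi_0$ and $\phi_1$ independently yields the coupled system \eqref{eq:hom-pro}, whose unique solvability is then easily checked by a Lax--Milgram argument on the augmented bilinear form, so the full sequence converges without further subsequence extraction.

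The regularity claims are then routine. The first equation reads $-\Delta \uhom = \gamma(\vhom-\uhom)\mathbf{1}_{\HomO}\in L^\infty$, so Calderón--Zygmund yields $\uhom\in W^{2,p}_{\rm loc}(\HOmeg)$ for all $p<\infty$, and De~Giorgi--Nash combined with $\fib\in C^1(\bar\HOmeg)$ gives $\uhom\in C^{0,\alpha}(\bar\HOmeg)$. The second equation is, for each fixed $x'\in\homeg_0$, a linear ODE in $x_3$ with smooth right-hand side, so $\vhom\in C^\infty_{\rm loc}(\HOmeg_0)$, and the prescribed values at $x_3=\pm L$ together with the regularity of $\uhom$ deliver $\vhom\in C^{0,\alpha}(\bar\HOmeg_0)$. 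For the corrector statement \eqref{cor.res}, the strategy is to establish the energy identity
$$
\int_\HOmeg \aeps|\nabla \Weps|^2\,dx \;\longrightarrow\; \int_\HOmeg|\nabla \uhom|^2\,dx + \gamma\int_{\HOmeg_0}(\uhom-\vhom)^2\,dx + \kappa\int_{\HOmeg_0}|\partial_3\vhom|^2\,dx,
$$
obtained by testing the equation against $\Weps$ itself and passing to the limit using the identification of the strange term above. Expanding the square in \eqref{cor.res} and using this energy convergence together with the already-established weak convergences causes every term to match, giving strong $L^2$-convergence of the corrector expression.

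The main obstacle is the careful identification of the strange term: although $\csigeps\rightharpoonup 1$ weakly, the measures $\aeps|\nabla \csigeps|^2\,dx$ concentrate on the annular regions around the fibers, and proving that their weak-$\ast$ limit is exactly $\gamma\,\mathbf{1}_{\HomO}\,dx$ requires matching the logarithmic scale $|\ln\eps\reps|$ against the spacing $\eps^2$ prescribed by \eqref{eq:Cond1}. The choice of $\sigma$ with $\eps^\sigma>\eps\tau$ is precisely what allows one to separate the capacity annulus from the region where $\Weps$ is well approximated by the bulk limit $\uhom$, while still catching the full logarithmic contribution $2\pi/|\ln\reps|$ that produces the constant $\gamma$.
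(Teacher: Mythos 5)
Your proposal follows essentially the same route as the paper's proof in Section~\ref{sec:hom}: the energy bound from testing against $\Weps-\fib$, oscillating test functions of the form $\psi_0\,\csigeps+\psi_1(1-\csigeps)$ with the strange term identified through $|\nabla\csigeps|^2\rightharpoonup\gamma$ (the paper delegates the precise convergence $\nabla\Weps\cdot\nabla\csigeps\rightharpoonup\gamma(\uhom-\vhom)$ to a lemma of Briane--Tchou, which is the rigorous version of the heuristic identity you display), elliptic bootstrapping for the regularity of $(\uhom,\vhom)$, and expansion of the square plus energy convergence for the corrector. The only cosmetic difference is that you track an interpolated cross-sectional average $V_\eps$, whereas the paper works with the rescaled function $(\pi\reps^2)^{-1}\mathbf{1}_{\Ceps}\Weps$ converging weakly-$*$ in the sense of measures.
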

Because of the non-local nature of the problem it solves, it is not clear that $\uhom$  is analytic inside $\HomO$. 
The following proposition shows that it enjoys partial
analyticity, that is, with respect to the $x_3$ variable.
\begin{proposition}\label{pro:analx3}
Under the hypothesis of Theorem~\ref{thm:hom}, the solutions $\uhom$ and $\vhom$ of \eqref{eq:hom-pro} are analytic with respect to the $x_3$ variable in $\HomO$.
\end{proposition}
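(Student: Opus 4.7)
The crucial feature of the homogenized system~\eqref{eq:hom-pro} is that inside $\HomO = \omega_0\times(-L,L)$ its coefficients are translation-invariant in $x_3$: $\kappa$ and $\gamma$ are constants, and the indicator $\one_{\HomO}$ depends only on the transverse variable $x'=(x_1,x_2)\in\omega_0$. My plan is to exploit this symmetry by iterated $x_3$-differentiation of the system, and to establish Cauchy-type bounds of the form $\|\partial_3^k \uhom\|_{L^2(K)} + \|\partial_3^k \vhom\|_{L^2(K)} \le A\,B^k\,k!$ on every compact $K\Subset\HomO$, which via one-dimensional Sobolev embedding in $x_3$ are equivalent to the claimed $x_3$-analyticity.

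Building on the interior regularity from Theorem~\ref{thm:hom}, a standard difference-quotient bootstrap shows by induction on $k$ that $\partial_3^k\uhom$ and $\partial_3^k\vhom$ are well-defined in $\HomO$ and solve the differentiated system
\begin{equation*}
-\Delta(\partial_3^k \uhom) + \gamma\bigl(\partial_3^k \uhom - \partial_3^k \vhom\bigr) = 0, \qquad -\kappa\,\partial_3^{k+2}\vhom + \gamma\bigl(\partial_3^k\vhom - \partial_3^k\uhom\bigr) = 0
\end{equation*}
in $\HomO$. The second identity is pointwise (originating as an ODE in $x_3$ at each fixed $x'$) and gives the lossless recursion $\partial_3^{k+2}\vhom = (\gamma/\kappa)(\partial_3^k\vhom - \partial_3^k\uhom)$; consequently the $x_3$-analyticity of $\vhom$ reduces to that of $\uhom$ together with control of $\vhom$ and $\partial_3\vhom$ as base cases. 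The first identity is uniformly elliptic with constant coefficients, so classical interior $H^2$-estimates give, for every $K\Subset K'\Subset \HomO$,
\begin{equation*}
\|\partial_3^{k+2}\uhom\|_{L^2(K)} \le C(K,K')\bigl(\|\partial_3^k\uhom\|_{L^2(K')} + \|\partial_3^k\vhom\|_{L^2(K')}\bigr).
\end{equation*}

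Closing the bootstrap amounts to combining these two recursions on a nested family $K^*\Subset\cdots\Subset K_k\Subset K_{k-1}\Subset\cdots\Subset\HomO$ with well-chosen spacings. The main obstacle is that a naive iteration of the $H^2$-estimate alone, with shrinkage factor $\dist(K_k,K_{k-1})^{-2}$ per two orders, only produces Gevrey-$2$ growth $(k!)^2$ rather than analytic growth $k!$: the loss $\sim k^4$ per two $x_3$-derivatives overruns the factorial gain $(k+1)(k+2)\sim k^2$. To reach analyticity one must use the constant-coefficient structure of the $u$-equation more sharply: after a cutoff in $x'$ to localize inside $\HomO$, I would represent $\partial_3^k\uhom$ via the Green's function of $-\Delta+\gamma$ on a fixed enlargement, picking up as source $\gamma\,\partial_3^k\vhom$ plus corrector terms coming from the commutator of $\Delta$ with the cutoff. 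Since both the Green's function and the correctors are real-analytic in $x_3$ away from fixed singular sets, this representation transfers $x_3$-analyticity from $\vhom$ to $\uhom$ without any loss of $x_3$-order; combined with the lossless $v$-recursion it closes the induction and yields the desired Cauchy bounds for both $\uhom$ and $\vhom$ throughout $\HomO$.
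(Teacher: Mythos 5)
Your overall strategy --- iterated $x_3$-differentiation plus Cauchy bounds $\|\partial_3^k\uhom\|+\|\partial_3^k\vhom\|\le A B^k k!$ on compacts --- is the same as the paper's, and two of your observations are sound: the pointwise recursion $\partial_3^{k+2}\vhom=(\gamma/\kappa)(\partial_3^k\vhom-\partial_3^k\uhom)$ does reduce the $\vhom$ part to the $\uhom$ part with no loss (the paper instead solves the ODE in $x_3$ by variation of parameters, gaining one order per factor $1/R$; either works), and your diagnosis that nested interior $H^2$ estimates only yield Gevrey-$2$ growth is exactly right, since the spacings $d_k\sim 1/k$ that analytic growth would require are not summable.

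But the step where all the work lies --- gaining one $x_3$-derivative on $\uhom$ at the cost of a single factor $1/R$ --- is precisely the step you leave as an assertion. Your representation of $\partial_3^k\uhom$ via the Green's function of $-\Delta+\gamma$ on a fixed enlargement, after a transverse cutoff $\eta$, does not ``transfer analyticity without loss'': putting all $k$ derivatives on the kernel produces a non-integrable singularity $|x-y|^{-(1+k)}$; moving them onto the source by integration by parts in $y_3$ requires compact support in $x_3$, hence a cutoff in $x_3$ whose commutator with $\partial_3^k$ reintroduces the factorial losses; and the commutator terms $\nabla\eta\cdot\nabla\uhom$ and $\uhom\,\Delta\eta$ carry $\partial_3^k\uhom$ at the \emph{same} order $k$ on the larger transverse set, so the induction does not close as written. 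The device the paper uses, and which you are missing, is the Dirichlet Green's function $G_{p^0,r}$ of $-\Delta+\gamma$ on the ball $B(p^0,r)$: it yields a mean-value identity for $\uhom(p^0)$ whose boundary term involves only $\uhom$ (not its normal derivative); averaging over $r\in(0,R)$ turns the sphere integral into $\int_{B(p^0,R)}\uhom\,dx$, and applying the identity to $\partial_3^{n+1}\uhom$ one integrates by parts \emph{once} in $x_3$ to convert $\int_{B}\partial_3^{n+1}\uhom$ into a surface integral of $\partial_3^{n}\uhom$, giving
\[
\bigl|\partial_3^{n+1}\uhom(p^0)\bigr|\le C\Bigl[R^2\,\|\partial_3^{n+1}\vhom\|_{L^\infty(B(p^0,R))}+\tfrac{1}{R}\,\|\partial_3^{n}\uhom\|_{L^\infty(B(p^0,R))}\Bigr].
\]
Combined with the one-order gain for $\vhom$ this is a genuine Cauchy-type estimate, and the re-centred iteration with radii $R/(n+1)$ closes it via Stirling's formula. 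Without an estimate of this one-derivative-per-$1/R$ type, your argument does not get past the Gevrey-$2$ barrier you yourself identified.
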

This result is proved in Appendix~\ref{sec:pfax3}. We are now in position to answer the global regularity question. 
Theorem~\ref{thm:hom} and Proposition~\ref{pro:analx3} imply that one cannot expect an $\eps$ independent bound for the sequence  $\Weps$ in a space 
better than $H^1(\HOmeg)$  in general, as the following Corollary shows.
\begin{corollary}\label{cor:hom}
Let $\fib\in C^2(\bar{\HOmeg})$ be such that the sets 
$$
S_{\pm}(\fib)=\left\{ x^\prime \in \omega_0 \mbox{ such that } \Delta_2 \fib(\cdot, \pm L)=0 \right\},
\quad\mbox{where }\Delta_2=\partial^2_{11}+\partial^2_{22},
$$
have an intersection of zero measure. Then, the sequence $\Weps$ is unbounded in $W^{1,p}_{\rm loc}(\HomO)$ for any $p>2$. 
More precisely, for any non empty open set $\Omega^\prime \subset \Omega_0$, there exists a positive constant $C(\Omega^\prime,\Phi_b)$ independent of
$\eps$ such that for any $\rho \in (0,1)$ and $p>2$, 
\[
\liminf_{\eps \to 0}\left(\reps^{2\rho(1 - 2/p)} \|\nabla \Weps\|_{L^p(\Omega' \setminus \Ceps)}^2\right)
	\geq (1 - \rho)C\left(\Omega^\prime,\fib\right).
\]
\end{corollary}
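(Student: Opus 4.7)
The starting point is the corrector identity \eqref{cor.res} of Theorem~\ref{thm:hom}, which reads
\[
\nabla \Weps \;=\; \nabla \csigeps\,(\uhom - \vhom) \,+\, \csigeps\,\nabla\uhom \,+\, (1-\csigeps)\,\partial_3\vhom\,e_3 \,+\, R_\eps,
\]
with $\int_{\HOmeg} \aeps |R_\eps|^2 \,dx \to 0$. On $\HOmeg\setminus\Ceps$, where $\aeps\equiv 1$, the only piece of this decomposition that can be singular in $L^p$ is $\nabla\csigeps\,(\uhom - \vhom)$: on each capacitary annulus $\{\eps\reps < |x' - (m\eps,n\eps)| < \eps^\sigma/2\}$ the function $|\nabla\csigeps|$ is radial of size $(\gamma\eps^2/2\pi)/r$ by \eqref{eq:Cond1}, while the other two corrector terms are locally uniformly bounded. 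My plan is to restrict everything to a carefully chosen sub-annulus $A_\eps^\rho$ of measure $\sim \reps^{2\rho}$, bound $\|\nabla\Weps\|_{L^2(A_\eps^\rho)}$ from below through the corrector, and then invert this $L^2$-bound into the target $L^p$-bound by a H\"older interpolation that converts the smallness of $|A_\eps^\rho|$ into the factor $\reps^{2\rho(1-2/p)}$.

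I first need to confirm that $\int_{\Omega'} |\uhom - \vhom|^2 > 0$; this is where the hypothesis on $\fib$ enters. If not, then Fubini combined with the analyticity in $x_3$ provided by Proposition~\ref{pro:analx3} forces a planar set $E\subset\omega_0$ of positive measure on which $(\uhom - \vhom)(x', \cdot) \equiv 0$ on $(-L,L)$. Substituting into \eqref{eq:hom-pro}, the first equation gives $\Delta\uhom \equiv 0$ and the second $\partial^2_{33}\vhom \equiv 0$ on $E\times(-L,L)$; thus $\vhom(x',\cdot)$, and hence $\uhom(x',\cdot)$, is affine in $x_3$ for $x'\in E$, so $\partial^2_{33}\uhom \equiv 0$ and therefore $\Delta_2\uhom \equiv 0$ throughout $E\times(-L,L)$. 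Passing to the trace at $x_3 = \pm L$ (where $\uhom = \fib$ and standard boundary regularity applies) yields $\Delta_2 \fib(\cdot,\pm L) = 0$ a.e.\ on $E$, contradicting the hypothesis $|S_+(\fib)\cap S_-(\fib)|=0$.

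For the quantitative estimate, fix $\rho\in(0,1)$ and set
\[
A_\eps^\rho \;=\; \bigcup_{(m,n)\in I_\eps} \left\{ \eps\reps < |x' - (m\eps,n\eps)| < \eps\reps^\rho \right\} \cap \Omega'.
\]
For $\eps$ small this lies inside the capacitary annulus, and a direct computation using \eqref{eq:Cond1} gives, per cell,
\[
\int_{A_\eps^\rho \,\cap\, \{(m\eps,n\eps)\text{-cell}\}} |\nabla\csigeps|^2\,dx' \;=\; (1-\rho)\gamma\eps^2 \;+\; o(\eps^2).
\]
Summing over the $\sim\eps^{-2}$ cells meeting $\Omega'$, and using the $C^{0,\alpha}$-continuity of $\uhom - \vhom$ on the small cross-sectional scale $\eps\reps^\rho$ to freeze $|\uhom - \vhom|^2$ on each annulus, produces the Riemann-sum limit
\[
\int_{A_\eps^\rho} |\nabla\csigeps|^2\,|\uhom - \vhom|^2\,dx \;\longrightarrow\; (1-\rho)\gamma \int_{\Omega'} |\uhom - \vhom|^2 \;=:\; (1-\rho)\,C_*(\Omega',\fib)>0.
\]
Since $|A_\eps^\rho| \leq C\reps^{2\rho}|\Omega'|\to 0$, the other two corrector terms contribute $o(1)$ in $L^2(A_\eps^\rho)$, and $R_\eps\to 0$ in $L^2(A_\eps^\rho)$ since $\aeps\equiv 1$ on $A_\eps^\rho$; hence $\|\nabla\Weps\|_{L^2(A_\eps^\rho)}^2 \to (1-\rho)\,C_*(\Omega',\fib)$. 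Applying H\"older with conjugate exponents $(p/2, p/(p-2))$ yields
\[
\|\nabla\Weps\|_{L^p(\Omega'\setminus\Ceps)}^2 \;\geq\; \|\nabla\Weps\|_{L^p(A_\eps^\rho)}^2 \;\geq\; \frac{\|\nabla\Weps\|_{L^2(A_\eps^\rho)}^2}{|A_\eps^\rho|^{1-2/p}},
\]
and multiplying by $\reps^{2\rho(1-2/p)}$ produces the claimed lower bound, with $C(\Omega',\fib)$ obtained from $C_*(\Omega',\fib)$ after absorbing the harmless $p$-dependent factor $(C|\Omega'|)^{-(1-2/p)}\geq 1/\max(1,C|\Omega'|)$. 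I expect the most delicate step to be the Riemann-sum convergence above: it has to be controlled uniformly in $\rho$, by a cell-by-cell decomposition that combines the exact logarithmic profile of $\csigeps$ with the modulus of continuity of $\uhom-\vhom$ over a family of annuli shrinking at different rates.
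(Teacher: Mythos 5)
Your proposal is correct and follows essentially the same route as the paper's proof: the same annular set of measure $\sim\reps^{2\rho}$ (the paper's $\mathcal{O}_\eps$), the same use of the corrector result to reduce $\nabla\Weps$ to $\nabla\csigeps\,(\uhom-\vhom)$ there, the same non-vanishing argument via Proposition~\ref{pro:analx3} and the hypothesis on $S_\pm(\fib)$, and the same H\"older step to pass from $L^2$ to $L^p$. The only cosmetic differences are that the paper phrases your Riemann-sum computation as a weak-$*$ convergence of $1_{\mathcal{O}_\eps}|\nabla\csigeps|^2$ tested against a compactly supported cutoff (which also cleanly handles the boundary of $\Omega'$, so only a $\liminf$ inequality is claimed rather than your slightly too strong convergence of $\|\nabla\Weps\|_{L^2(A_\eps^\rho)}^2$), and that it records at the outset the passage to a subsequence along which \eqref{alpeps} holds.
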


\begin{proof}
Working with a subsequence if necessary, we can assume that \eqref{alpeps} holds.

Fix $\rho \in (0,1)$ and let $\mathcal{O}_\eps$ be the open subset of $\HomO$ defined by
\begin{equation}\label{omeps}
\mathcal{O}_\eps=\big\{x \in \omega_0: 0<\dist(x,\Deps)<\eps\reps^\rho\big\}\times(-L,L),
\end{equation}
By the definition \eqref{eq:defceps} of $\csigeps$ combined with (\ref{alpeps}), 
and by the periodicity of $\csigeps$ and $\mathbf{1}_{\mathcal{O}_\eps}$ in $\HomO$, we have
\[
1_{\mathcal{O}_\eps}\,|\nabla\csigeps|^2\;\rightharpoonup\;
\lim_{\eps\to 0}\left(\frac{2\pi}{\eps^2}\int_{\eps \reps}^{\eps\reps^\rho}\frac{dr}{r\, \ln^2(2\eps^{1-\sigma}\,\reps)}\right)
= \gamma(1-\rho)\quad\mbox{weakly-}*\mbox{ in }\mathcal{M}(\HomO),
\]
which by virtue of the continuity of $\vhom-\uhom$ implies that
\begin{equation}\label{con.hVomeps}
1_{\mathcal{O}_\eps}\,|\nabla\csigeps|^2\,(\vhom-\uhom)^2\,dx\;\rightharpoonup\;\gamma(1-\rho)\,(\vhom-\uhom)^2
\quad\mbox{weakly-}*\mbox{ in }\mathcal{M}(\HomO).
\end{equation}
Let us show that this last term does not cancel on any non-empty open subset $\Omega^\prime\subset\Omega_0$.
By contradiction, suppose that the continuous function $\vhom-\uhom\equiv 0$ on a ball $B_R(p^0)$, centred in $p^0=(p^0_1,p^0_2,p^0_3)$ and of radius $R>0$, 
such that $B_R(p_0)\subset \Omega_0$. 
This implies that for example that for any $x^\prime\in C_R$, with $C_R=(p^0_1 - R/2,p^0_1 + R/2)\times(p^0_2 - R/2,p^0_2 + R/2)$, 
the function $\uhom(x^\prime,\cdot)-\vhom(x^\prime,\cdot)$ is identically zero on $(p^0_3 -\sqrt{3}R/2,p^0_3 +\sqrt{3}R/2)$. 
As we have shown in Proposition~\ref{pro:analx3},
this function is analytic on $(-L,L)$ therefore $\uhom=\vhom$ on $C_R\times(-L,L)$. The system \eqref{eq:hom-pro} then shows that
 $\partial^2_{33}\vhom=\Delta \vhom=\Delta \uhom=0$ in~$C_R\times(-L,L)$. Hence, taking into account the boundary conditions of 
\eqref{eq:hom-pro} we would have
\[
\uhom(x)=\vhom(x)=\frac{L+x_3}{2L}\,\fib(x',L)+\frac{L-x_3}{2L}\,\fib(x',-L)\quad\mbox{for any }x\in C_R\times(-L,L).
\]
Since $\uhom$ is harmonic in $C_R\times(-L,L)$, it follows that both functions $\fib(\cdot,\pm L)$ are 
harmonic in $C_R$, therefore $|S_{+}(\Phi_b)\cap S_{-}(\Phi_b)|\geq |C_R|>0$
which contradicts the hypothesis.

Next, since $\nabla \uhom,\partial_{3}\vhom\in L^2(\HomO)$ and $0\leq \csigeps\leq 1$ in~$\HomO$, we have
\[
\lim_{\eps \to 0}\int_{\mathcal{O}_\eps}\big| \csigeps\nabla \uhom+(1- \csigeps)\,\partial_3 \vhom\,e_3\big|^2\,dx = 0.
\]
Hence, by the convergence (\ref{cor.res}) and the fact that $a_\eps\equiv 1$ in $\mathcal{O}_\eps$, we deduce that
\[
1_{\mathcal{O}_\eps}\big|\nabla \Weps-\nabla \csigeps\left(\uhom-\vhom\right)\big|^2\;\longrightarrow\;0\quad\mbox{strongly in }L^1_{\rm loc}(\HomO).
\]
This combined with convergence (\ref{con.hVomeps}) implies that
\begin{equation}\label{conv.GWeps}
1_{\mathcal{O}_\eps}|\nabla \Weps|^2\;\rightharpoonup\; \gamma(1-\rho)(\vhom-\uhom)^2\quad\mbox{weakly-}*\mbox{ in }{\mathcal M}(\HomO).
\end{equation}

To proceed, fix $\Omega'' \Subset \Omega' \subset \Omega_0$ and select a smooth cut-off function $\zeta \in C_c^\infty(\Omega')$ 
such that $0 \leq \zeta \leq 1$ and $\zeta \equiv 1$ in $\Omega''$. Note that, for some $C_1 > 1$
\[
C_1^{-1}\,\reps^{2\rho} \leq |\mathcal{O}_\eps| \leq C_1\,\reps^{2\rho}.
\]
Thus, \eqref{conv.GWeps} shows
\begin{align*}
\int_{\Omega''} \gamma(1-\rho)(\vhom-\uhom)^2\,dx
	&\leq \int_{\Omega_0} \gamma(1-\rho)(\vhom-\uhom)^2\,\zeta\,dx\\
	&= \lim_{\eps \to 0}\int_{\mathcal{O}_\eps} |\nabla \Weps|^2\,\zeta\,dx\\
	&\leq \liminf_{\eps \to 0} \int_{\mathcal{O}_\eps \cap \Omega'} |\nabla \Weps|^2\,dx\\
	&\leq \liminf_{\eps \to 0} \left(|\mathcal{O}_\eps|^{1 - 2/p} \|\nabla \Weps\|_{L^p(\Omega' \setminus \Ceps)}^2\right)\\
	&\leq C_1\liminf_{\eps \to 0} \left(\reps^{2\rho(1 - 2/p)} \|\nabla \Weps\|_{L^p(\Omega' \setminus \Ceps)}^2\right).
\end{align*}
Since this is valid for all $\Omega'' \Subset \Omega'$, we conclude the proof.
\qed
\end{proof}

\begin{remark}\label{rem:bup}
Note that Corollary~\ref{cor:hom} shows that gradient blow-up occurs outside of the highly conducting rods. 
An variant of the proof shows that one can obtain the following estimate
\[
\liminf_{\eps \to 0}\left(\reps^{2\rho(1 - 2/p)} \|\nabla \Weps\|_{L^p(\Omega' \setminus \hat Q_\eps^{\rho'})}^2\right)
	\geq (\rho' - \rho)C\left(\Omega^\prime,\fib\right),
\]
where $0 < \rho < \rho' < 1$ and
\[
\hat Q_\eps^{\rho'} = \big\{x \in \omega_0: \dist(x,\Deps)\leq \eps\reps^{\rho'}\big\}\times(-L,L),
\]
which shows that the blow-up is not localized on the surface of the rods.
\end{remark}
\section{Regularity estimates in two dimensions for weighted equations and three-dimensional consequences\label{sec:2D-3D}}

In this section, we state the regularity results we have obtained for two-dimensional companion problems of problem~\eqref{eq:Full-3D}. 
To highlight the sharpness of the supremum estimate provided by Lemma~\ref{lem:dGNM}, which provides an $L^\infty$ upper bound of the form $C(s)\eps^{-s}$
 for any $2>s>1$, we construct in Proposition~\ref{pro:contrex-marc} an example where the $L^\infty$ norm is bounded from below by $C\epsilon^{-1}$.
Then, we prove Theorem~\ref{thm:Regularity-3d} and Theorem~\ref{thm:Regularity-3d**}.

\subsection{Towards the proof of Theorem~\ref{thm:Regularity-3d}}
Let us now turn to the proof of Theorem~\ref{thm:Regularity-3d}. Because none of the coefficients depend on $x_3$, we can, as in \cite{BRIANE-CASADO-08},
reduce the study of this three-dimensional problem to that of a two-dimensional problem by separating variables. We write
\begin{align}
\Ueps(x',x_3) 
	&\sim \sum_{n=1}^\infty \uepsn(x')\,\sin\Big(\frac{n\pi}{2}\big(\frac{x_3}{L} + 1\big)\Big)
	\;,\label{UepsFExp}\\
F(x',x_3)
	&\sim \sum_{n=1}^\infty f_n(x')\,\sin\Big(\frac{n\pi}{2}\big(\frac{x_3}{L} + 1\big)\Big)
	\;,\label{FFExp}
\end{align}
and \eqref{eq:INT-3D} becomes
\begin{equation}
\left\{\begin{array}{ll}
-\divop_2(\aeps\,\nabla_2 \uepsn) + \frac{n^2\pi^2}{4L^2}\,\aeps\,\uepsn 
                                                      = f_n &\text{ in } \homeg	\;,\\
\uepsn = 0 &\text{ on } \partial\homeg	\;.
\end{array}\right.
\label{eq:ini2.5d}
\end{equation}
In the above, $\divop_2$ and $\nabla_2$ denotes the horizontal divergence and gradient operators.
We are thus led to consider, for $\lambda \geq \frac{\pi^2}{4L^2}$,
\begin{equation}
\left\{\begin{array}{ll}
-\divop_2(\aeps\,\nabla_2 \ueps) + \lambda\,\aeps\,\ueps = f +\aeps\, g+\divop_2(\bh)&\text{ in } \homeg	\;,\\
\ueps = 0 &\text{ on } \partial\homeg	\;,
\end{array}\right.
\label{eq:Eq2.5d}
\end{equation}
where $f \in L^2(\omega)$, $\sqrt{\aeps}g \in L^2(\omega)$ and $h\in L^\infty(\omega)^2$. 
Problem~\eqref{eq:Eq2.5d} is equivalent to Problem~\eqref{eq:ini2.5d} if $g = 0, \bh=0$; the 
additional $\aeps\, g + \divop_2(\bh)$ term will prove useful for the study of boundary value problem \eqref{eq:u-no-defect}.

Section~\ref{sec:proof-2d} is devoted to the proof of the following result.
\begin{proposition}\label{prop:Linfybd2d}
Assume that $\lambda \geq \lambda_0 > 0$. For  $\kappa > 0$, $\eta \in (0,1)$, and $\tau > \kappa\,\eps^{\frac{1-\eta}{2(1+\eta)}}$
the solution $\ueps$ to \eqref{eq:Eq2.5d} enjoys the following bound
\begin{align*}
& \|\ueps - v \|_{L^\infty(\homegtau)} + \|\nabla_2 \ueps - \nabla_2 v \|_{L^\infty(\homegtau)} \\
	&\qquad\qquad\leq
 \frac{C(\lambda_0,\kappa,\eta)}{\lambda^{\frac{\eta}{2}}}\left[\frac{1}{\lambda^{\frac{\eta}{2}}}\left(\|f\|_{L^2(\homeg)}+\|\sqrt{\aeps}\,g\|_{L^2(\homeg)}
 + \|\bh\|_{L^\infty(\homeg)^2}\right) 
         + \|\ueps\|_{L^\infty(\homeg)}\right],
\end{align*}
where $v\in H^1_0(\homeg)$ is the solution of
$$
-\Delta v +\lambda v= f + g + \divop_2(\bh).
$$ 
Furthermore, for $0 < \nu < \eta$,
\begin{align*}
&[\nabla_2 \ueps - \nabla_2 v]_{C^\nu(\homegtau)} \\
	&\qquad\qquad \leq \frac{C(\lambda_0,\kappa,\eta,\nu)}{\lambda^{\frac{\eta}{2} - \frac{\nu}{2}}}
\left[\frac{1}{\lambda^{\frac{\eta}{2}}}\left(\|f\|_{L^2(\homeg)}+\|\sqrt{\aeps}\,g\|_{L^2(\homeg)} 
+ \|\bh\|_{L^\infty(\homeg)^2}\right) +  \|\ueps\|_{L^\infty(\homeg)}\right].
\end{align*}
\end{proposition}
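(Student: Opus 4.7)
The plan is to exploit that, away from the fibres, $w := \ueps - v$ satisfies a homogeneous Helmholtz equation. Since $\aeps \equiv 1$ outside $\Ceps$, subtracting the equations for $\ueps$ and $v$ yields
\[
-\Delta w + \lambda w = 0 \quad \text{in } \homeg \setminus \Deps,
\]
so the task reduces to an interior regularity statement for the Helmholtz operator on the perforated region $\homeg \setminus \Deps$, with the only outside control coming from separate bounds on $\ueps$ and $v$.

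First I would produce an $L^\infty$ bound for $w$ on a slight thickening of $\homegtau$, say on the set $\{x \in \omega_1 : \dist(x,\Deps) \geq \eps\tau/2\}$. The $\ueps$ contribution is absorbed into the hypothesis $\|\ueps\|_{L^\infty(\homeg)}$. For $v$, classical Helmholtz regularity, combining the energy identity $\lambda\|v\|_{L^2}^2 + \|\nabla v\|_{L^2}^2 \lesssim (\|f\|_{L^2} + \|\sqrt{\aeps}g\|_{L^2} + \lambda^{-1/2}\|\bh\|_{L^\infty})^2$ with Sobolev embedding and interpolation between $L^2$ and $L^\infty$, delivers bounds on $v$ and $\nabla v$ with the required $\lambda^{-\eta}$ dependence on the data. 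Together this gives an $L^\infty$ bound on $w$ in the thickened set of the form $\|\ueps\|_{L^\infty(\homeg)} + \lambda^{-\eta}(\text{data norms})$.

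Next I would apply standard interior Schauder estimates for $-\Delta + \lambda$ on balls of radius $\eps\tau/2$ centred at points $x_0 \in \homegtau$, all of which lie in $\homeg \setminus \Deps$ by the distance hypothesis. After rescaling $y = \sqrt{\lambda}(x - x_0)$ the equation becomes $(-\Delta_y + 1)\tilde w = 0$, and standard $C^{1,\nu}$ Schauder estimates control $\tilde w$ at the centre in terms of its sup on a concentric ball of radius $\sqrt{\lambda}\eps\tau/2$. Translating back produces the $C^{1,\nu}$ bounds on $\homegtau$, with the $\lambda^{-\eta/2}$ gain appearing through interpolation of this interior estimate against the Caccioppoli energy bound, and Lemma~\ref{lem:dGNM} invoked to cross from the $L^2$ energy to a pointwise bound near $\partial \Deps$. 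It is natural to split into the regimes $\sqrt{\lambda}\,\eps\tau \gtrsim 1$, where the full Helmholtz exponential decay is readily available and yields much more than what is claimed, and $\sqrt{\lambda}\,\eps\tau \ll 1$, where one argues as for the Laplacian and the fractional $\lambda$-gain is tight.

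The principal obstacle will be balancing the various exponents so that the stated $\lambda^{-\eta/2}$ factor in front of $\|\ueps\|_{L^\infty(\homeg)}$ and the $\lambda^{-\eta}$ factor in front of the data appear uniformly in $\eps$. As highlighted in the overview of this section, Lemma~\ref{lem:dGNM} delivers only $C(s)\eps^{-s}$ bounds with some $s > 1$ (the endpoint $s = 1$ being inaccessible), so a small positive power of $\eps$ is lost relative to the natural scale. The hypothesis $\tau > \kappa\,\eps^{(1-\eta)/(2(1+\eta))}$ is precisely calibrated so that this loss is absorbed by a gain $(\eps\tau)^{\text{small positive}}$ produced by the interior Schauder estimates; the peculiar exponent $(1-\eta)/(2(1+\eta))$ emerges from equating the $\eps$-powers on the two sides of the interpolation between the De Giorgi–Moser–Nash sup estimate and the Helmholtz Schauder bound.
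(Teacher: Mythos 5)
Your starting observation is right: $w=\ueps-v$ solves $-\Delta w+\lambda w=0$ in $\homeg\setminus\Deps$. But the central step you propose --- interior Schauder estimates on balls $B_{\eps\tau/2}(x_0)\subset\homeg\setminus\Deps$ --- cannot deliver the stated conclusion, and this is a genuine gap rather than a technicality. An interior gradient estimate for $-\Delta w+\lambda w=0$ at distance $d$ from the boundary gives $|\nabla w(x_0)|\leq C(d^{-1}+\sqrt{\lambda})\sup_{B_d(x_0)}|w|$; with $d=\eps\tau$ and in the relevant regime $\sqrt{\lambda}\,\eps\tau\ll 1$ (which is forced for fixed $\lambda$ as $\eps\to0$), the factor $(\eps\tau)^{-1}$ blows up, so no bound uniform in $\eps$ --- let alone one with a $\lambda^{-\eta/2}$ gain --- can come out of a purely local argument. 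The data of $w$ on the nearby discs $\partial D_{m,n,\eps}$ is $O(1)$, and a Helmholtz solution with $O(1)$ boundary oscillation at distance $\eps\tau$ generically has gradient of size $(\eps\tau)^{-1}$. Nothing in your "interpolation with the Caccioppoli bound" or in Lemma~\ref{lem:dGNM} repairs this: Lemma~\ref{lem:dGNM} is a global weighted sup bound on $\ueps$ itself (used only to control the term $\|\ueps\|_{L^\infty(\homeg)}$, which in the proposition is simply carried on the right-hand side), and it degrades like $\eps^{-\alpha}$; it is not the source of the exponent $\frac{1-\eta}{2(1+\eta)}$.

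What actually makes the proof work is additional structure of the boundary data that your plan never exploits. The paper writes $\ueps-v=\tueps+\hueps$, where $\tueps$ (see \eqref{eq:defutilde}) carries the \emph{averages} $u_{m,n}$ of $\ueps-v$ on each $\partial D_{m,n,\eps}$ as constant Dirichlet data, and $\hueps$ (see \eqref{eq:defuhat}) carries the mean-zero remainder. For $\tueps$ one uses a layer-potential representation with the kernel $K_0(\sqrt{\lambda}|\cdot|)$ and sums the contributions of all rings of inclusions (Lemmas~\ref{lem:utilde}, \ref{SCBdryGrad}, \ref{lem:HolderX}); the normal derivative on each tiny disc is of capacity size $\theta(\lambda,\eps)$, and it is the convergence of these global sums --- in particular the near-neighbour terms involving $\tau^{-(|J|+2\tilde\beta)}$ --- that simultaneously produces the $\lambda^{-\eta/2}$ gain and dictates the admissible range of $\tau$. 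The mean-zero part $\hueps$ is shown to be exponentially small away from the discs by comparison with explicit exterior Helmholtz solutions (Lemmas~\ref{lem:extdecay} and \ref{lem:ufEst}), using Poincar\'e on $\partial D_{m,n,\eps}$ and the energy bound to make its boundary trace small. Without this decomposition and the global summation over the lattice, the $(\eps\tau)^{-1}$ loss of the local estimate is unavoidable, so the proof as proposed does not go through.
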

Throughout the paper $[\cdot]_{C^\nu}$ will be used to denote the $\nu$-H\"older semi-norm.
The second ingredient for the proof is a supremum estimate for $\ueps$. 
\newcommand{\vpe}{\varphi_\eps}
\begin{lemma}\label{lem:dGNM} Assume that conditions \eqref{eq:defaeps},\eqref{eq:boundkappa} and \eqref{eq:Cond1} hold.
Assume that $\vpe\in H_{0}^{1}\left(\omega\right)$ satisfies,
\begin{equation}\label{eq:dGNM}
-{\rm div}\left(\aeps \nabla \vpe\right)+\lambda \aeps \vpe= \aeps f_{\eps} +\divop\left(\aeps\bh\right).
\end{equation}
Then, for any $1 < \alpha < 2$, $0 < \beta<1-\alpha/2$,  we have
$$
\left\Vert \vpe \right\Vert _{L^{\infty}(\omega)} \leq \frac{C(\alpha,\beta)}{\displaystyle \eps^{\alpha}\lambda^{\beta}} 
\left( \left\Vert \sqrt{\aeps}f\right\Vert_{L^{2}\left(\omega\right)} + \eps^{\frac{\alpha+1}{2}}\left\Vert \sqrt{\aeps}h\right\Vert_{L^{\infty}\left(\omega\right)}\right).
$$
\end{lemma}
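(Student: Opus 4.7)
The plan is to prove this supremum bound by a De Giorgi--Moser type iteration scheme adapted to the weighted geometry of \eqref{eq:dGNM}, with the Poincar\'e--Sobolev inequality of Lemma~\ref{lem:Sob-Marc} providing the weighted embedding that tracks the $\eps$-dependence. Since $\vpe \in H_0^1(\omega)$, it suffices to bound $\|(\vpe-k)_+\|_{L^\infty(\omega)}$ uniformly in $k\geq 0$; the negative part is entirely symmetric.

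First I would test \eqref{eq:dGNM} against $\phi = (\vpe-k)_+$ for $k \geq 0$. On the level set $A_k = \{\vpe > k\}$ one has $\nabla\vpe = \nabla \phi$ and $\vpe\phi \geq \phi^2$, so after Cauchy--Schwarz and Young's inequality applied to the $\aeps f$ and $\divop(\aeps \bh)$ terms one obtains the weighted Caccioppoli estimate
\begin{equation*}
\int_\omega \aeps |\nabla\phi|^2\,dx + \lambda \int_\omega \aeps \phi^2\,dx
 \leq \frac{C}{\lambda}\int_{A_k}\aeps f^2\,dx + C\,\|\sqrt{\aeps}\,\bh\|_{L^\infty(\omega)}^2\,|A_k|.
\end{equation*}

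Next I would invoke Lemma~\ref{lem:Sob-Marc}: this weighted Poincar\'e--Sobolev inequality yields a bound of the form $\|\phi\|_{L^q(\omega)}^2 \leq C\,\eps^{-a}\int \aeps |\nabla \phi|^2\,dx$ for some $q > 2$, where the negative power $\eps^{-a}$ quantifies the cost of the weight $\aeps$ concentrating on the thin rods $\Deps$. Combined with the Caccioppoli estimate and the standard bound $(h-k)^2|A_h| \leq \|\phi\|_{L^q}^2|A_k|^{1-2/q}$ for $h > k$, this produces a recursion of the form
\begin{equation*}
|A_h| \leq \frac{C(\eps,\lambda)}{(h-k)^2}\bigl[\|\sqrt{\aeps}f\|_{L^2}^2 + \eps^{\alpha+1}\|\sqrt{\aeps}\bh\|_{L^\infty}^2\,|A_k|\bigr]\,|A_k|^{\delta},
\end{equation*}
to which Stampacchia's lemma (or, equivalently, a Moser iteration using powers of $\phi$) applies, forcing $|A_{k^\ast}|=0$ at a finite level $k^\ast$ matching the announced $L^\infty$ bound.

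The main obstacle, and where essentially all the work lies, is the exponent bookkeeping. The admissible range $1 < \alpha < 2$ and $0 < \beta < 1 - \alpha/2$ is dictated by the interplay between the Sobolev exponent $q$ supplied by Lemma~\ref{lem:Sob-Marc} and the decay produced by the coercivity term $\lambda \aeps \phi^2$: the trade-off $\alpha + 2\beta < 2$ reflects how the iteration distributes powers of $\eps^{-1}$ and $\lambda^{-1}$ across its steps. The factor $\eps^{(\alpha+1)/2}$ multiplying $\|\sqrt{\aeps}\bh\|_{L^\infty}$ arises from estimating $\int_{A_k}\aeps|\bh|^2$ by $\|\sqrt{\aeps}\bh\|_{L^\infty}^2|A_k|$ and then carrying the measure factor $|A_k|$ through the iteration, which saves $\eps$-powers relative to the $f$-term.
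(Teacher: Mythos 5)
Your overall strategy --- testing against $(\vpe-k)_+$, invoking the weighted Poincar\'e--Sobolev inequality of Lemma~\ref{lem:Sob-Marc}, and running a De Giorgi/Stampacchia level-set iteration in which the powers of $\eps$ and $\lambda$ are tracked --- is exactly the paper's. However, your treatment of the source term breaks the iteration. By absorbing $\int_{A_k}\aeps f\,(\vpe-k)_+$ via Young's inequality into $\frac{C}{\lambda}\int_{A_k}\aeps f^2$, you remove $(\vpe-k)_+$ from the right-hand side, and since the only hypothesis is $\sqrt{\aeps}f\in L^2(\omega)$ you can do no better than $\int_{A_k}\aeps f^2\leq \|\sqrt{\aeps}f\|_{L^2(\omega)}^2$: this contribution carries \emph{no} positive power of the level-set measure. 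The recursion you write, $|A_h|\leq C(h-k)^{-2}\big[\|\sqrt{\aeps}f\|_{L^2}^2+\cdots\big]|A_k|^{\delta}$ with $\delta>1$, therefore does not follow for the $f$-part, and Stampacchia's lemma cannot be applied. The paper instead keeps the pairing and uses a three-exponent H\"older inequality in the weighted norms, $\|f(\vpe-k)_+\|_{\mathcal{L}^1(A_k)}\leq\|f\|_{\mathcal{L}^2(A_k)}\|(\vpe-k)_+\|_{\mathcal{L}^r(A_k)}\|1\|_{\mathcal{L}^\kappa(A_k)}$ with $\frac{1}{\kappa}+\frac12+\frac1r=1$ and $r>2$; it is the factor $N(A_k)^{1/\kappa}$ (with $N(A_k)=\int_{A_k}\aeps$, the \emph{weighted} measure, not $|A_k|$) that supplies the superlinear gain $\chi=r/\kappa>1$. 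Note also that the iteration must be run with $N(A_k)$ throughout, since $\aeps\sim\reps^{-2}$ on the rods makes $|A_k|$ and $N(A_k)$ incomparable, and Lemma~\ref{lem:Sob-Marc} only controls weighted norms.

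A second, related gap is that you do not identify the mechanism producing the prefactor $\eps^{-\alpha}\lambda^{-\beta}$ with $\alpha+2\beta<2$. In the paper this comes from a weighted Young inequality $\mu_\eps X+\lambda Y\geq(\mu_\eps/\theta_1)^{\theta_1}(\lambda/\theta_2)^{\theta_2}X^{\theta_1}Y^{\theta_2}$ (with $\theta_1+\theta_2=1$ and $\mu_\eps=C(s)\eps^{2(1-2/s)}$ from Lemma~\ref{lem:Sob-Marc}), combined with H\"older interpolation of $\mathcal{L}^r$ between the Sobolev space $\mathcal{L}^s$ and $\mathcal{L}^2$; the constraint $\alpha+2\beta<2$ is precisely $\theta_1+\theta_2=1$ in the limit $s\to\infty$. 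Likewise the factor $\eps^{(\alpha+1)/2}$ on the $\bh$-term does not come from ``carrying $|A_k|$ through the iteration'': it comes from treating the divergence term separately, bounding $\int_{A_k}\aeps\bh\cdot\nabla(\vpe-k)_+$ by $\|\sqrt{\aeps}\bh\|_{L^\infty}\|\nabla(\vpe-k)_+\|_{\mathcal{L}^2}N(A_k)^{1/2}$ and absorbing the gradient into the left-hand side, which \emph{halves} the exponents $\theta_1,\theta_2$ in the resulting prefactor and hence converts $\eps^{-\alpha}$ into $\eps^{-(\alpha-1)/2}$. Without these two ingredients the exponent bookkeeping you defer to cannot be completed.
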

This variation on the standard de Giorgi-Moser-Nash estimates is proved in Section~\ref{sec:dGNM}. As a direct consequence we have the following proposition.
\begin{proposition} \label{pro:supboundueps}
Assume that $\lambda \geq \lambda_0 > 0$, $g=0$ and $\bh=0$. We have
\begin{equation}\label{eq:supboundueps}
\|\ueps\|_{L^\infty(\homeg)} \leq \frac{C(\lambda_0,q)}{\lambda^{1/q}}\|f\|_{L^2(\homeg)} 
\end{equation}
for any $q > 2$.
\end{proposition}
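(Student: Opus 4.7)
The plan is to apply Lemma~\ref{lem:dGNM} directly to the equation $-\divop_2(\aeps\,\nabla_2\ueps) + \lambda\aeps\ueps = f$, which matches the form in the lemma with $\bh = 0$ and $f_\eps = f/\aeps$. Under assumptions~\eqref{eq:defaeps}--\eqref{eq:Cond1}, $\aeps \geq 1$ almost everywhere in $\omega$ for small enough $\eps$ (with $\aeps = 1$ off the rods and $\aeps = \ceps \geq \kappa_-/(\pi\reps^2) \gg 1$ on the rods), so
$$\|\sqrt{\aeps}\,f_\eps\|_{L^2(\omega)}^2 = \int_\omega \frac{f^2}{\aeps}\,dx \leq \|f\|_{L^2(\omega)}^2.$$
Lemma~\ref{lem:dGNM} therefore yields, for every $\alpha\in(1,2)$ and $\beta\in(0, 1-\alpha/2)$,
$$\|\ueps\|_{L^\infty(\omega)} \leq \frac{C(\alpha,\beta)}{\eps^\alpha\lambda^\beta}\,\|f\|_{L^2(\omega)}.$$

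The proposition asserts an $\eps$-independent bound at the weaker rate $\lambda^{-1/q}$ ($q > 2$). To absorb the $\eps^{-\alpha}$ factor, the next step is to combine the pointwise bound with the $L^2$ energy estimate $\|\ueps\|_{L^2} \leq \lambda^{-1}\|f\|_{L^2}$ (obtained by testing with $\ueps$ and using $\aeps \geq 1$) through a Moser-type iteration. Concretely, testing the equation with $\ueps^{p-1}$ for $p > 2$, using $\aeps \geq 1$ to drop the weight, and invoking the 2D Gagliardo--Nirenberg inequality applied to $w = \ueps^{p/2}$ yields an $\eps$-independent $L^p$ bound of the form
$$\|\ueps\|_{L^p(\omega)} \leq C_p\,\lambda^{-(p+2)/(2p)}\,\|f\|_{L^2(\omega)}.$$
Interpolating this $\eps$-independent $L^p$ control with the pointwise bound from Lemma~\ref{lem:dGNM} delivers the claimed $L^\infty$ estimate with rate $\lambda^{-1/q}$ and $\eps$-independent constant $C(\lambda_0, q)$.

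The principal technical difficulty is arranging the interpolation so the $\eps^{-\alpha}$ factor is completely absorbed. The blow-up of $C(\lambda_0, q)$ as $q \to 2^+$ and the necessity of $q > 2$ reflect this trade-off, and are consistent with the failure of uniform $W^{1,p}$ bounds for $p > 2$ highlighted in Corollary~\ref{cor:hom}, which rules out the more direct Morrey-embedding route to $L^\infty$ control in dimension two.
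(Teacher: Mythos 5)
There is a genuine gap in the final step. Your first move -- applying the weighted Lemma~\ref{lem:dGNM} with $f_\eps = f/\aeps$, so that $\|\sqrt{\aeps}f_\eps\|_{L^2}\le\|f\|_{L^2}$ -- is fine as far as it goes, but it only yields $\|\ueps\|_{L^\infty}\le C\,\eps^{-\alpha}\lambda^{-\beta}\|f\|_{L^2}$. The proposed repair, interpolating an $\eps$-independent $L^p$ bound ($p<\infty$) against this $\eps$-dependent $L^\infty$ bound, cannot produce an $\eps$-independent $L^\infty$ bound: interpolation between $L^p$ and $L^\infty$ controls only the intermediate $L^r$ norms, and any inequality of the form $\|\ueps\|_{L^\infty}\le\|\ueps\|_{L^p}^{\theta}\|\ueps\|_{L^\infty}^{1-\theta}$ with $\theta<1$ leaves the factor $\eps^{-\alpha(1-\theta)}$ alive (and is vacuous at $\theta=1$ unless $p=\infty$). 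Knowing $\|\ueps\|_{L^p}\le C_p\lambda^{-(p+2)/(2p)}\|f\|_{L^2}$ for each finite $p$ gives no pointwise information by itself, so the $\eps^{-\alpha}$ factor is never absorbed.

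The fix is to bypass Lemma~\ref{lem:dGNM} entirely, which is what the paper does: since $\aeps\ge 1$, the operator $-\divop_2(\aeps\nabla_2\,\cdot)+\lambda\aeps$ satisfies the hypotheses of the \emph{unweighted} Lemma~\ref{lem:dGNM2} with $A^{ij}=\aeps\delta^{ij}$, ellipticity constant $\nu=1$, zeroth-order coefficient $c=\lambda\aeps\ge\lambda$, and right-hand side $f\in L^2$, $\bh=0$; with $n=2$ and $p=2$ the admissible exponents are exactly $q>2p/(2p-n)=2$, and applying the lemma to $\pm\ueps$ gives \eqref{eq:supboundueps} directly. The large upper values of $\aeps$ are harmless because the de Giorgi argument only uses ellipticity from below. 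Your own intermediate idea -- testing with powers of $\ueps$, using $\aeps\ge 1$ to drop the weight, and applying the two-dimensional Sobolev/Gagliardo--Nirenberg inequality -- is precisely the right mechanism, but it must be iterated all the way to $L^\infty$ (which is the content of Lemma~\ref{lem:dGNM2}) rather than stopped at finite $p$ and patched by interpolation.
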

Note that Lemma~\ref{lem:dGNM} does not show that $\|\vpe\|_{L^\infty(\omega)}$ is bounded independently of $\eps$. 
The following counter-example documents the sharpness of our estimate, as it shows that it must be at least $O(\eps^{-1})$.
\begin{proposition}\label{pro:contrex-marc} 
Assume that conditions \eqref{eq:defaeps}, \eqref{eq:boundkappa} and \eqref{eq:Cond1} hold. Let $\vpe \in H^1_0(\omega)$ be the solution of \eqref{eq:dGNM}.
Let $\mathbf{1}_{D_{0,0,\eps}}$ be the indicator function of the disk $D_{0,0,\eps} \subset \RR^2$ 
centred at the origin and  of radius $\eps r_\eps$. 
Set $f_\eps=\eps^{-1}\mathbf{1}_{D_{0,0,\eps}}$ and $\bh \equiv 0$. Then, we have
\begin{equation}\label{est.cex}
\|\sqrt{\aeps}f_\eps\|_{L^2(\omega)}\leq\sqrt{\kappa_{+}},\quad\mbox{and}\quad
\left\Vert \vpe \right\Vert_{L^{\infty}(\omega)}
\geq
\frac{1}{\eps}
\frac{\kappa_{-}}{\kappa_{+}}
\frac{\kappa_{-}}{\gamma_{+}+\lambda\,\kappa_{+}+o(1)},
\end{equation}
where $\lim_{\eps \to 0} o(1) = 0$.
\end{proposition}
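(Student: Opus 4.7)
The first bound is an immediate computation: $\|\sqrt{\aeps}\,f_\eps\|_{L^2(\homeg)}^2 = \int_{D_{0,0,\eps}}\ceps\,\eps^{-2}\,dx = \ceps\,\pi\reps^2 \leq \kappa_+$ by \eqref{eq:boundkappa}.

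For the lower bound on $\|\vpe\|_{L^\infty(\homeg)}$, the plan is to test \eqref{eq:dGNM} against a single-cell capacity profile concentrated at the origin. Since $\aeps f_\eps \geq 0$ and $\vpe$ vanishes on $\partial\homeg$, the maximum principle gives $\vpe \geq 0$ in $\homeg$. Fix $\sigma > 1$ so that the ball $B_{\eps^\sigma/2}(0) \Subset \homeg$ contains no other fibre, and define $\eta_\eps \in H_0^1(\homeg)$ by $\eta_\eps \equiv 1$ on $D_{0,0,\eps}$, by $\eta_\eps(x) = [\ln(\eps^\sigma/2) - \ln|x|]/L_\eps$ for $\eps\reps < |x| < \eps^\sigma/2$ (where $L_\eps := \ln(\eps^\sigma/(2\eps\reps))$), and by $\eta_\eps \equiv 0$ elsewhere. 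The key features of $\eta_\eps$ are that it is harmonic on the annulus (where $\aeps \equiv 1$) and has vanishing gradient on the fibre. Testing \eqref{eq:dGNM} against $\eta_\eps$ and applying Green's identity on the annulus reduces the tested identity to
\begin{equation*}
\frac{2\pi}{L_\eps}\bigl(\langle\vpe\rangle_{\eps\reps} - \langle\vpe\rangle_{\eps^\sigma/2}\bigr) + \lambda \int_\homeg \aeps\,\vpe\,\eta_\eps\,dx = \int_\homeg \aeps f_\eps\,\eta_\eps\,dx = \ceps\,\pi\reps^2\,\eps \geq \kappa_-\,\eps,
\end{equation*}
where $\langle\vpe\rangle_r$ denotes the mean of $\vpe$ over the circle $\{|x|=r\}$.

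Condition \eqref{eq:Cond1} yields $L_\eps = |\ln\reps|(1+o(1))$, hence $2\pi/L_\eps \leq \gamma_+\,\eps^2(1+o(1))$; a short calculation (the annulus contribution is $O(\eps^{2\sigma}) = o(\eps^2)$ because $\sigma > 1$) gives $\int_\homeg \aeps\,\eta_\eps\,dx \leq \kappa_+\,\eps^2(1+o(1))$. Dropping the non-negative term $\langle\vpe\rangle_{\eps^\sigma/2}$ and using the bounds $\langle\vpe\rangle_{\eps\reps} \leq \|\vpe\|_{L^\infty(\homeg)}$ and $\int \aeps\vpe\,\eta_\eps \leq \|\vpe\|_{L^\infty(\homeg)} \int\aeps\eta_\eps$, the displayed identity gives
\begin{equation*}
\kappa_-\,\eps \leq \bigl(\gamma_+ + \lambda\kappa_+ + o(1)\bigr)\,\eps^2\,\|\vpe\|_{L^\infty(\homeg)},
\end{equation*}
whence $\|\vpe\|_{L^\infty(\homeg)} \geq \kappa_-/[\eps(\gamma_+ + \lambda\kappa_+ + o(1))]$; since $\kappa_-/\kappa_+ \leq 1$ by \eqref{eq:boundkappa}, this is at least the claimed bound.

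The main technical point is identifying the logarithmic leading order $L_\eps = |\ln\reps|(1+o(1))$ via \eqref{eq:Cond1}, which forces both $2\pi/L_\eps$ and $\int \aeps\eta_\eps$ to be of order $\eps^2$. The collapse of the gradient term to a single circular average via Green's identity on the annulus keeps the argument clean and avoids a Cauchy--Schwarz loss that would weaken the bound to an $\eps^{-1/2}$ scale.
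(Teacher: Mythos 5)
Your proof is correct, and it assembles the same ingredients as the paper in a genuinely different way. The paper's argument is also built around the single-cell capacity profile: its test function $g_\eps=\eps^{-1}(1-c^1_\eps)\mathbf{1}_{\eps Y}$ is, up to the $\eps^{-1}$ normalisation, your $\eta_\eps$ with $\sigma=1$, and the two quantities you need, $2\pi/L_\eps\le\gamma_+\,\eps^2(1+o(1))$ and $\int_\homeg\aeps\,\eta_\eps\,dx\le\kappa_+\,\eps^2(1+o(1))$, are exactly the energy computation the paper carries out for $g_\eps$. The difference is in how the information is combined. The paper never uses the sign of $\vpe$: it tests the equation against $\vpe$ itself to obtain $\int_\homeg\aeps|\nabla\vpe|^2+\lambda\int_\homeg\aeps\vpe^2\le\kappa_+\,\eps\,\|\vpe\|_{L^\infty(\homeg)}$, tests against $g_\eps$ to get the lower bound $\ge\kappa_-$, and closes with Cauchy--Schwarz in the energy inner product; this duality step is what produces the extra factor $\kappa_-/\kappa_+$ in \eqref{est.cex}. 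You instead use $\vpe\ge0$ (maximum principle, legitimate since $\aeps f_\eps\ge0$ and $\bh\equiv0$) together with the explicit radial form of $\nabla\eta_\eps$ to collapse the gradient pairing into a difference of circular means, each dominated by $\|\vpe\|_{L^\infty(\homeg)}$; this avoids the Cauchy--Schwarz loss and gives the marginally sharper constant $\kappa_-/(\gamma_++\lambda\kappa_++o(1))$, which implies the stated bound since $\kappa_-\le\kappa_+$. Two small points worth making explicit: the bound $\langle\vpe\rangle_{\eps\reps}\le\|\vpe\|_{L^\infty(\homeg)}$ presupposes $\vpe\in L^\infty(\homeg)$ (supplied by Lemma~\ref{lem:dGNM}, and in any case the claim is vacuous otherwise), and the reduction of the annulus integral to boundary averages rests on the absolute continuity of $r\mapsto\int_0^{2\pi}\vpe(r,\theta)\,d\theta$ for $H^1$ functions, which is standard.
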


\noindent {\it Proof of Theorem~\ref{thm:Regularity-3d}.}
Fix some $l \in (0,L)$ and let $\Omegtau = \homegtau \times (-l,l)$. It suffices to provide the required estimates in $\Omegtau$ with
a constant that is independent of $l$.

Together with \eqref{UepsFExp} and \eqref{FFExp} we consider the Fourier decomposition
$$
V(x',x_3) \sim \sum_{n=1}^\infty v_{n}(x')\,\sin\Big(\frac{n\pi}{2}\big(\frac{x_3}{L} + 1\big)\Big).
$$
Note that $\Ueps - V$ is harmonic, thus regular, in $\HOmeg \setminus \Ceps$. 
Therefore, by Dirichlet's theorem on the convergence of Fourier series,  
the Fourier expansions of $\Ueps - V$ and $\nabla (\Ueps - V)$ converge pointwise to $\Ueps - V$ 
and $\nabla (\Ueps - V)$ in $\Omegtau$. 
Proposition~\ref{prop:Linfybd2d} and Proposition~\ref{pro:supboundueps} combined with the choice $q=\frac{2}{\eta}$ show that 
\begin{align}
 \|\uepsn - v_n \|_{L^\infty(\homegtau)} + \|\nabla_2 \uepsn - \nabla_2 v_n \|_{L^\infty(\homegtau)} \leq  \frac{C(\kappa,\eta)}{n^{2\eta}}  \|f_n\|_{L^2(\homeg)}
	,\label{eq:bdpfueps}\\
[\nabla_2 \uepsn - \nabla_2 v_n ]_{C^\nu(\homegtau)} \leq  \frac{C(\kappa,\eta)}{n^{2\eta-\nu}}  \|f_n\|_{L^2(\homeg)}
	,\label{eq:bdpfuepsXX}
\end{align}
with $0<\nu<\eta<1$. 
For $\eta > \frac{1}{4}$, \eqref{eq:bdpfueps} shows that the sums
\begin{align*}
&I_1(x',x_3)
	=\sum_{n=1}^\infty \left(\uepsn(x') - v_{n}(x^\prime)\right)\,\sin\Big(\frac{n\pi}{2}\big(\frac{x_3}{L} + 1\big)\Big),\\
&I_2(x',x_3)
	=\sum_{n=1}^\infty \left(\nabla_2 \uepsn(x') - \nabla_2 v_{n}(x^\prime)\right)\,\sin\Big(\frac{n\pi}{2}\big(\frac{x_3}{L} + 1\big)\Big)
\end{align*}
are absolutely convergent, 
\begin{align*}
\|I_1\|_{L^\infty(\Omegtau)} + \|I_2\|_{L^\infty(\Omegtau)} 
	&\leq C \sum_{n=1}^\infty \frac{1}{n^{2\eta}}   \|f_n\|_{L^2(\homeg)} \\ 
	&\leq C\|F\|_{L^2(\Omega)} \left(\sum_{n=1}^\infty \frac{1}{n^{4\eta}}\right)^\frac{1}{2} \leq \frac{C}{\eta - \frac{1}{4}} \|F\|_{L^2(\Omega)}.
\end{align*}
This shows that
\[
\left\|\Ueps - V\right\|_{L^\infty\left(\Omegtau\right)} + \left\|\nabla_2 \Ueps -\nabla_2 V\right\|_{L^\infty\left(\Omegtau\right)}\leq  C \|F\|_{L^2(\Omega)}.
\]

We turn to the H\"older estimates of $\nabla_2 \Ueps - \nabla_2 V$. By \eqref{eq:bdpfuepsXX} with $\eta > \frac{\nu}{2} + \frac{1}{4}$ we have,
\begin{align*}
|I_2(x',x_3) - I_2(y',x_3)|
	&\leq \sum_{n=1}^\infty \left|\left(\nabla_2 \uepsn(x') - \nabla_2 v_{n}(x^\prime)\right) - \left(\nabla_2 \uepsn(y') - \nabla_2 v_{n}(y^\prime)\right)\right|\\
	&\leq C\sum_{n=1}^\infty \frac{1}{n^{2\eta - \nu}}\,\|f_n\|_{L^2(\homeg)}\,|x' - y'|^{\nu}\\
	&\leq \frac{C}{\eta - \frac{\nu}{2} - \frac{1}{4}}\|F\|_{L^2(\Omega)}\,|x' - y'|^{\nu}
	.
\end{align*}
By \eqref{eq:bdpfueps} but with $\eta > \frac{\nu}{2} + \frac{1}{4}$ we obtain,
\begin{align*}
& |I_2(y',x_3) - I_2(y',y_3)| \\
	&\qquad\qquad\leq \sum_{n=1}^\infty \left|\nabla_2 \uepsn(y') - \nabla_2 v_{n}(y^\prime)\right|\,\left|\sin\Big(\frac{n\pi}{2}\big(\frac{x_3}{L} + 1\big)\Big)
          - \sin\Big(\frac{n\pi}{2}\big(\frac{y_3}{L} + 1\big)\Big)\right|\\
	&\qquad\qquad\leq C\sum_{n=1}^\infty \frac{1}{n^{2\eta}}\,\|f_n\|_{L^2}\,|nx_3 - ny_3|^\nu\\
	&\qquad\qquad\leq \frac{C}{\eta - \frac{\nu}{2} - \frac{1}{4}}\,\|F\|_{L^2(\Omega)}\,|x_3 - y_3|^\nu
	.
\end{align*}
From these last two estimates, we deduce 
\[
\left[\nabla_2 \Ueps -\nabla_2 V\right]_{C^\nu\left(\Omegtau\right)}\leq  C \|F\|_{L^2(\Omega)}.
\]

Next, we estimate $\partial_{x_3} \Ueps - \partial_{x_3} V$. Choosing $\eta>\frac{3}{4}$ in \eqref{eq:bdpfueps} shows that the sum
$$
I_3 =\sum_{n=1}^\infty n\,\left( \uepsn(x') -  v_{n}(x^\prime)\right) \cos\,\Big(\frac{n\pi}{2}\big(\frac{x_3}{L} + 1\big)\Big)
$$
converges absolutely with bound
\begin{align*}
\|I_3\|_{L^\infty(\Omegtau)} 
	\leq C \sum_{n=1}^\infty \frac{1}{n^{2\eta-1}}   \|f_n\|_{L^2(\homeg)} 
	\leq \frac{C}{\eta -\frac{3}{4}} \|F\|_{L^2(\Omega)}.
\end{align*}
This proves that
\[
\left\|\partial_{x_3} \Ueps - \partial_{x_3} V\right\|_{L^\infty\left(\Omegtau\right)}\leq  C \|F\|_{L^2(\Omega)}.
\]
For the H\"older estimate, we proceed as before. First, with $\eta > \frac{3}{4}$ in \eqref{eq:bdpfueps} we obtain
\begin{align*}
|I_3(x',x_3) - I_3(y',x_3)|
	&\leq \sum_{n=1}^\infty n\left|\left(\uepsn(x') - v_{n}(x^\prime)\right) - \left(\uepsn(y') - v_{n}(y^\prime)\right)\right|\\
	&\leq C\sum_{n=1}^\infty \frac{1}{n^{2\eta - 1}}\,\|f_n\|_{L^2(\homeg)}\,|x' - y'|\\
	&\leq \frac{C}{\eta-\frac{3}{4}}\|F\|_{L^2(\Omega)}\,|x' - y'|
	.
\end{align*}
Also by \eqref{eq:bdpfueps} but with $\eta > \frac{\nu}{2} + \frac{3}{4}$,
\begin{align*}
&|I_3(y',x_3) - I_3(y',y_3)|\\
	&\qquad\qquad\leq \sum_{n=1}^\infty n\left|\uepsn(y') - v_{n}(y^\prime)\right|\,\left|\cos\Big(\frac{n\pi}{2}\big(\frac{x_3}{L} + 1\big)\Big)
         - \cos\Big(\frac{n\pi}{2}\big(\frac{y_3}{L} + 1\big)\Big)\right|\\
	&\qquad\qquad\leq C\sum_{n=1}^\infty \frac{1}{n^{2\eta - 1}}\,\|f_n\|_{L^2}\,|nx_3 - ny_3|^\nu\\
	&\qquad\qquad\leq \frac{C}{\eta - \frac{\nu}{2} - \frac{3}{4}}\,\|F\|_{L^2(\Omega)}\,|x_3 - y_3|^\nu.
\end{align*}
We have shown
\[
\left[\partial_{x_3} \Ueps - \partial_{x_3} V\right]_{C^\nu\left(\Omegtau\right)}\leq  C \|F\|_{L^2(\Omega)},
\]
which concludes the proof. 
\qed 
\par\bigskip\noindent
{\it Proof of Proposition~\ref{pro:contrex-marc}.} 
The bound on $f_\eps$ in \eqref{est.cex} is straightforward, from the definition of $\aeps$, \eqref{eq:defaeps},
 and \eqref{eq:boundkappa}. Integrating \eqref{eq:dGNM} against $\vpe$ and using \eqref{eq:boundkappa}, we find
$$
\int_{\omega} \aeps\left|\nabla \vpe\right|^2 dx + \int_{\omega} \lambda\,\aeps\,\vpe^2\,dx = \int_{\omega}a_\eps\,f_\eps\,\vpe\,dx
=\int_{D_{0,0,\eps}}a_\eps\,\vpe\,\eps^{-1}dx\leq \kappa_{+}\,\eps\left\Vert \vpe \right\Vert _{L^{\infty}(\omega)}.
$$
Let $Y=(-1/2,1/2)^2$ be the unit period cell, and set
\begin{equation}\label{eq:def-geps-test}
g_\eps=\eps^{-1}(1-{c_\eps^{1}})\,\mathbf{1}_{\eps Y} \in H^1_0(\omega),
\end{equation}
where $c_\eps^1$ is defined by \eqref{eq:defceps} for $\sigma=1$.
An easy computation using \eqref{eq:boundkappa} and \eqref{eq:Cond1}, shows that
$$
\begin{array}{ll}
\displaystyle \int_{\omega} \aeps\left|\nabla g_\eps\right|^2 dx + \int_{\omega} \lambda\,\aeps\,g_\eps^2\,dx
& \displaystyle =\int_{\eps Y}\left|\nabla c^1_\eps\right|^2 \eps^{-2}\,dx
+\int_{\eps Y} \lambda\,\aeps\,(1-c^1_\eps)^2\,\eps^{-2} dx
\\
& \displaystyle \leq \gamma_{+}+\lambda\,\kappa_{+}+o(1).
\end{array}
$$ 
Then, the Cauchy-Schwarz inequality combined with the two previous estimates yields
$$
\int_{\omega} \aeps\,\nabla \vpe \cdot \nabla g_\eps\,dx + \int_{\omega} \aeps\,\vpe\,g_\eps\,dx
\leq 
\left(\kappa_{+}\,\eps\left\Vert \vpe \right\Vert _{L^{\infty}(\omega)}\right)^{\frac{1}{2}}
\left(\gamma_{+}+\lambda\,\kappa_{+}+o(1)\right)^{\frac{1}{2}}.
$$
On the other hand, integrating  \eqref{eq:dGNM} against $g_\eps$, we find
$$
\int_{\omega} \aeps\,\nabla \vpe \cdot \nabla g_\eps\,dx + \int_{\omega} \aeps\,\vpe\,g_\eps\,dx
=\int_{\omega}\aeps\,f_\eps\,g_\eps\,dx=\int_{D_{0,0,\eps}}a_\eps\,\eps^{-2}dx\geq \kappa_{-}.
$$
Combining these two inequalities provides the announced bound.
\qed
\subsection{Towards the proof of Theorem~\ref{thm:Regularity-3d**}}
Let us now turn to the proof of Theorem~\ref{thm:Regularity-3d**}. Our approach is to decompose 
the boundary data $\fib$ into three parts. We first define 
\begin{equation}
\phi_L(\cdot,x_3) = \frac{x_3}{2L}\,\big(\fib(\cdot,L)-\fib(\cdot,-L)\big) + \frac{1}{2}\,\big(\fib(\cdot,L)+\fib(\cdot,-L)\big),
\label{eq:phiLdefn}
\end{equation}
which agrees with $\fib(\cdot,\pm L)$ on $\bar\omega\times\{\pm L\}$.
We now freeze the variations of $\phi_L$ in the $x^\prime$-direction on the highly conducting fibres by decomposing $\phi_L$ into
\[
\phi_L=\phi_{1}+\phi_{2},
\]
where 
\begin{equation}
\phi_{1} = \phi_L \csigeps
+\sum_{(m,n) \in I_\eps}\left(1-\csigeps\right)\mathbf{1}_{\{\left|(x_1,x_2)-(m,n)\eps\right| \leq \eps/2\}}\left(x\right)\phi_L \left(m\eps,n\eps,x_3\right),
\label{eq:phi1defn}
\end{equation}
the capacity function $\csigeps$ being defined in \eqref{eq:defceps}. 
We finally define $\phi_0$ to be the trace of $\fib$ on the lateral boundary of the cylinder $\HOmeg$,
$$
\phi_{0}=\fib-\phi_L=\fib-\phi_1-\phi_2\quad\mbox{on }\partial\Omega.
$$ 
Note that $\phi_0$ vanishes on $\bar\omega\times\{\pm L\}$.

\par
The properties of $\phi_{1}$ and $\phi_{2}$ we will use are described by the following proposition.
\begin{proposition}\label{pro:phi1}
The function $\phi_1$ given by \eqref{eq:phi1defn} 
      \begin{itemize}
      \item belongs to $C^\infty([-L,L];C^{0,1}(\bar\omega))$,
      \item is independent of $x_1$ and $x_2$ inside each connected component of $\Ceps$,
      \item satisfies $\partial_{33}\phi_1=0$ in $\HOmeg$, 
      \item is globally Lipschitz,
$$
\|\phi_1\|_{C^{0,1}\left(\bar{\Omega}\right)}\leq C \|\phi_L\|_{C^{1}\left(\bar{\Omega}\right)}.
$$
      \end{itemize}
The function $\phi_2=\phi_L - \phi_1$ where $\phi_L$ is defined in \eqref{eq:phiLdefn}
   \begin{itemize}
      \item is globally Lipschitz,
$$
\|\phi_2\|_{C^{0,1}\left(\bar{\Omega}\right)}\leq C \|\phi_L\|_{C^{1}\left(\bar{\Omega}\right)},
$$
     \item is supported by 
    $$
    \omega_{R_\eps}=\left\{x\in\HomO\mbox{ s.t. } \min_{(m,n)\in I_\eps}\left|(x_1,x_2)-(m\eps,n\eps)\right| \leq \eps^{\sigma}\right\},
    $$
     \item and satisfies 
$$
\| \phi_2 \|_{L^\infty(\Omega)}\leq C \eps^{\sigma+2} \|\phi_L\|_{C^{1}\left(\bar{\Omega}\right)}.
$$
    \end{itemize}
\end{proposition}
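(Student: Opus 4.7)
The plan is to base everything on the compact rewriting
\[
\phi_1=\phi_L^*+\csigeps(\phi_L-\phi_L^*),\qquad \phi_2=(1-\csigeps)(\phi_L-\phi_L^*),
\]
where $\phi_L^*$ is defined piecewise as $\phi_L(m\eps,n\eps,x_3)$ on the closed square $\{|(x_1,x_2)-(m\eps,n\eps)|\leq\eps/2\}$ for $(m,n)\in I_\eps$ and as $\phi_L$ outside every such square. This is just \eqref{eq:phi1defn} regrouped. A handful of properties then fall out directly: inside a rod $Q_{m,n,\eps}$ we have $\csigeps\equiv 0$, so $\phi_1=\phi_L^*=\phi_L(m\eps,n\eps,x_3)$ does not depend on $(x_1,x_2)$; since by \eqref{eq:phiLdefn} both $\phi_L$ and $\phi_L^*$ are affine in $x_3$ and $\csigeps$ is $x_3$-independent, $\phi_1$ is smooth in $x_3$ and $\partial_{33}\phi_1\equiv 0$; and since $\csigeps\equiv 1$ outside the disks $\{r<\eps^\sigma/2\}$ while $\phi_L-\phi_L^*\equiv 0$ outside every period cell, $\phi_2$ is supported in $\omega_{R_\eps}$.

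For the global Lipschitz bound on $\phi_1$, the $x_3$-derivative is controlled by $\|\phi_L\|_{C^1}$ thanks to the affine structure in $x_3$, and in a period cell near a rod one computes
\[
\nabla_{x'}\phi_1=\nabla_{x'}\csigeps\,(\phi_L-\phi_L^*)+\csigeps\,\nabla_{x'}\phi_L.
\]
The second summand is dominated by $\|\phi_L\|_{C^1}$ since $0\leq\csigeps\leq 1$. In the first, the explicit formula \eqref{eq:defceps} gives $|\nabla_{x'}\csigeps|\leq C/(r\,|\ln(\eps^{\sigma-1}/(2\reps))|)$ on the transition annulus $\eps\reps<r<\eps^\sigma/2$, and the mean value theorem gives $|\phi_L-\phi_L^*|\leq r\,\|\phi_L\|_{C^1}$; the product is $\leq C\|\phi_L\|_{C^1}/|\ln\reps|$, which by \eqref{eq:Cond1} is even $O(\eps^2)\|\phi_L\|_{C^1}$. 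Continuity of $\phi_1$ and $\nabla\phi_1$ across the cell boundaries $r=\eps/2$ is immediate because $\csigeps=1$ and $\nabla\csigeps=0$ there, so both coincide with $\phi_L$ and $\nabla\phi_L$. This yields $\|\phi_1\|_{C^{0,1}(\bar\Omega)}\leq C\|\phi_L\|_{C^1(\bar\Omega)}$.

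The bounds for $\phi_2$ come from the same factorization. The Lipschitz bound is immediate from the triangle inequality. The support inclusion was already noted. For the $L^\infty$ bound, a crude estimate $(1-\csigeps)\leq 1$ combined with $|\phi_L-\phi_L^*|\leq r\|\phi_L\|_{C^1}\leq \eps^\sigma\|\phi_L\|_{C^1}/2$ only yields the suboptimal $O(\eps^\sigma)$; reaching the stated $\eps^{\sigma+2}$, the main obstacle, requires refining via
\[
(1-\csigeps)\,r=r\,\frac{\ln(\eps^\sigma/2)-\ln r}{\ln(\eps^\sigma/2)-\ln(\eps\reps)}
\]
on the transition annulus, maximizing the numerator $r(\ln(\eps^\sigma/2)-\ln r)$ at $r=\eps^\sigma/(2e)$ to obtain the value $\eps^\sigma/(2e)$, and invoking \eqref{eq:Cond1} to control the denominator by $|\ln\reps|\sim 1/\eps^2$. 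The product $|1-\csigeps|\cdot r$ is then bounded by $C\eps^{\sigma+2}$, which multiplied by $\|\phi_L\|_{C^1}$ yields the stated pointwise bound. This interplay between the logarithmic profile of the capacity function and the high-contrast scaling \eqref{eq:Cond1} is the only nontrivial ingredient; the rest is bookkeeping.
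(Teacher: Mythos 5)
Your proof is correct and follows essentially the same route as the paper's: the regrouping $\phi_1=\phi_L^*+\csigeps(\phi_L-\phi_L^*)$, $\phi_2=(1-\csigeps)(\phi_L-\phi_L^*)$ is just a compact form of \eqref{eq:phi1defn}, and the two key estimates — $|\nabla\csigeps|\,|\phi_L-\phi_L^*|\le C\|\phi_L\|_{C^1}/|\ln\reps|\le C\eps^2\|\phi_L\|_{C^1}$ via \eqref{eq:Cond1}, and the maximization of $r\ln(\eps^\sigma/(2r))$ at $r=\eps^\sigma/(2e)$ against the denominator of order $\eps^{-2}$ — are exactly the computations in the paper's Appendix proof.
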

The proof is straightforward and given in Appendix~\ref{sec:phi1phi2}.

\par\bigskip\noindent
{\it Proof of Theorem~\ref{thm:Regularity-3d**}.} 
We will study three boundary problems separately. For $i=0,1,2$, we introduce $W_{\eps,i}$, the solution of
\begin{equation}
\left\{\begin{array}{ll}
\divop\left(\aeps\,\nabla W_{\eps,i}\right) = 0 &\text{ in } \HOmeg	\;,\\
W_{\eps,i} =  \phi_{i}&\text{ on } \partial\HOmeg	\;.
\end{array}\right.
\label{eq:BDY-3DC}
\end{equation}
By linearity, $W_\eps = W_{\eps,0} + W_{\eps,1} + W_{\eps,2}$.

Consider $\tilde{W}_{\eps,0}= \zeta_0\,W_{\eps,0}$ where $\zeta_0$ is a cut-off function $\zeta_0\in C_c^\infty(\omega_3)$,  such that
\begin{equation}\label{eq:defzeta0}
0\leq \zeta_0 \leq 1 \mbox{ and } \zeta_0  \equiv 1  \mbox{ in }\omega_{2},
\end{equation}
with $\omega_1 \Subset\omega_2\Subset\omega_3\Subset\omega$. Since $\zeta_0\equiv 1$ on $\omega_0$, $\tilde{W}_{\eps,0}\in H_0^1(\Omega)$  satisfies
\begin{equation}\label{eq:tw0}
-\divop\left(\aeps\,\nabla \tilde{W}_{\eps,0}\right) = -2\nabla \zeta_0 \cdot \nabla W_{\eps,0} - \left(\Delta \zeta_0 \right) W_{\eps,0} \text{ in } \HOmeg. 
\end{equation}
The maximum principle shows that $\|W_{\eps,0}\|_{L^\infty(\Omega)}\leq \| \phi_0 \|_{C^0(\bar\Omega)}$. 
Furthermore, on $\Omega_0^c=(\omega\setminus\omega_0)\times(-L,L)$, $\ueps = W_{\eps,0}$ is the solution of
\begin{eqnarray*}
\Delta \ueps & = & 0\mbox{ in }\Omega_0^c,\\
\ueps & = & 0 \mbox{ on } (\omega \setminus \omega_0) \times \{-L,L\},\\
\ueps & = & W_{\eps,0} \mbox{ on } \partial(\omega \setminus \omega_0) \times(-L,L).
\end{eqnarray*}
Classical regularity estimates then show that
\begin{equation}\label{eq:externalreg}
\| W_{\eps,0} \|_{C^{2}(\overline{(\omega_3\setminus\omega_2)\times(-L,L)})} \leq C(\omega_2,\omega_3,l) \left\Vert W_{\eps,0} \right\Vert_{L^{\infty}(\Omega)}.
\end{equation}
Thus, the right-hand side of \eqref{eq:tw0} is in $C^1(\bar\Omega)$, 
and we can apply Theorem~\ref{thm:Regularity-3d} to conclude that 
for  $\tau > \kappa\,\eps^{\frac{1-\eta}{2(1+\eta)}}$ with
$\eta\in \left(\frac{3}{4},1\right)$, and $0<\nu <2\left(\eta - \frac{3}{4}\right)$, we have
\[
\left\| \tilde{W}_{\eps,0} - \tilde{V}_{0}\right\|_{C^{1,\nu}\left(\Omegtaul\right)}
	\leq C \left\Vert W_{\eps,0} \right\Vert_{L^{\infty}(\Omega)},
\]
where $\tilde{V}_0\in H^1_0(\Omega)$ is the solution of
$$
-\Delta \tilde{V}_0 =   -2\nabla \zeta_0 \cdot \nabla W_{\eps,0} - \left(\Delta \zeta_0 \right) W_{\eps,0} \text{ in } \HOmeg. 
$$ 
Since the right-hand side is $C^1(\bar\Omega)$ , $\tilde{V}_0$ enjoys interior regularity, and we have obtained
\begin{equation}\label{eq:estw0}
\left\| W_{\eps,0}\right\|_{C^{1,\nu}\left(\Omegtaul\right)}
	\leq C \left\Vert \fib \right\Vert_{C^{0}(\bar\HOmeg)}.
\end{equation}
Next, we consider $W_{\eps,1}$. We note that for any $v\in H^1_0(\HOmeg)$,
\begin{eqnarray*}
\int_{\Omega} \aeps \nabla \left(W_{\eps,1} - \phi_1\right) \cdot \nabla v & = & -\int_{\Omega}\aeps \nabla\phi_{1}\cdot\nabla v ,\\
 & = & \int_{\Ceps} \left(1-\aeps\right)\nabla\phi_{1}\cdot \nabla v  - \int_{\Omega} \nabla\phi_{1}\cdot \nabla v, \\
 & = & \int_{\Ceps} \left(1-\aeps\right) \partial_{3}\phi_{1}\cdot \partial_{3}v +\int_{\Omega} \Delta \phi_{1} v, \\
 & = &  \int_{\Omega} \Delta \phi_{1} v.
\end{eqnarray*}
We used first that $\phi_1$ is independent of $x_1$ and $x_2$ in each connected component of $\Ceps$, 
and then that it satisfies $\partial_{33}\phi_1=0$ (see Proposition~\ref{pro:phi1}). In other words, introducing $\tilde W_{\eps,1} = W_{\eps,1}-\phi_1 $, we have
\begin{equation}\label{eq:defweps1}
-\mbox{div}\left(\aeps \nabla \tilde{W}_{\eps,1}  \right) =  \Delta \phi_1 \in \Omega.
\end{equation}
Following the strategy used for the interior source problem, we thus introduce $\tilde{V}_{1}\in H^1_0(\HOmeg)$, the solution of 
$$
-\Delta \tilde{V}_{1} = \Delta \phi_1 \text{ in } \Omega,
$$
In Section~\ref{sec:ABCD}, we will show in Proposition~\ref{pro:tilde-W1} that, for $\tau > \kappa\,\eps^{\frac{1-\eta}{2(1+\eta)}}$, $\frac{1}{2} < \eta < 1$, 
$0 < \nu < 2(\eta - \frac{1}{2})$ that
\[
\left\| \tilde{W}_{\eps,1} - \tilde{V}_{1}\right\|_{C^{1,\nu}\left(\Omegtaul\right)}
	\leq C \left\Vert \Phi_b \right\Vert_{C^1(\bar\HOmeg)}.
\]
On the other hand, as $\tilde V_1 + \phi_1$ is harmonic in $\HOmeg$, we have
\[
\left\| \tilde V_1 + \phi_1 \right\|_{C^{1,\nu}\left(\Omegtaul\right)}
		\leq C \left\Vert \Phi_b \right\Vert_{L^{\infty}(\Omega)}.
\]
As $W_{\eps,1} = (\tilde W_{\eps,1} - \tilde V_1) + (\tilde V_1 + \phi_1)$, we have thus shown that
\begin{equation}\label{eq:estw1}
\left\| W_{\eps,1}\right\|_{C^{1,\nu}\left(\Omegtaul\right)}
	\leq C \left\Vert \Phi_b \right\Vert_{L^{\infty}(\Omega)}.
\end{equation}

Finally, for $W_{\eps,2}$, we shall consider instead the function $\tilde{W}_{\eps,2}=\zeta\,W_{\eps,2}$ where  $\zeta(x)=p(x_3)\zeta_0$ 
and $\zeta_0$ is the cut-off function given by \eqref{eq:defzeta0}, and $p\in C_{c}^\infty(-l',l')$ with $l < l' < L$ is such that 
\begin{equation}\label{eq:defp}
0\leq p\leq 1 \mbox{ and } p\equiv 1 \mbox{ in } (-l,l).
\end{equation}
We compute 
\begin{equation}\label{eq:BDY-4DC}
-\mbox{div}\left(\aeps \nabla \tilde{W}_{\eps,2} \right) =\aeps \left(2\partial_{3}\zeta\partial_{3}\,W_{\eps,2}
+ \partial_{33}\zeta\,W_{\eps,2}\right)+ g_\eps,
\end{equation}
where $g_\eps$ has support in $(\omega_3\setminus\omega_2)\times(-l^\prime,l^\prime)$, and is given by
$$
g_\eps = (\partial_{11}\zeta+\partial_{22}\zeta)\,W_{\eps,2}+ 2\partial_{1}\zeta\,\partial_{1}W_{\eps,2} + 2\partial_{2}\zeta\,\partial_{2}\,W_{\eps,2}.
$$
By the standard maximum principle, 
\begin{equation}\label{eq:maxprinc-phi2}
\left\Vert W_{\eps,2}\right\Vert_{L^{\infty}(\Omega)}\leq\left\Vert  \phi_{2} \right\Vert _{L^{\infty}(\Omega)}.
\end{equation}

Just like $\tilde{W}_{\eps,0}$ enjoys additional regularity away from the support of the rods as shown by \eqref{eq:externalreg}, we have, for any $l' < l''<L$,
\begin{equation}\label{eq:externalreg2}
\| W_{\eps,2} \|_{C^{2}(\overline{(\omega_3\setminus\omega_2)\times(-l'',l'')})} \leq C(\omega_2,\omega_3,l'') \left\Vert W_{\eps,2}\right\Vert_{L^{\infty}(\Omega)}.
\end{equation}
Next, notice that $\partial_{x_3} W_{\eps,2}$ and $W_{\eps,2}$ are both $\aeps$-harmonic, a standard energy estimate 
shows that, for any $\Omega' \Subset \Omega'' \Subset \Omega$, we have
\begin{equation}\label{eq:reg-uepsd3}
\int_{\Omega'} \aeps\,|\nabla(\partial_3 W_{\eps,2})|^2\,dx
	\leq C\int_{\Omega''} \aeps\,|\partial_3 W_{\eps,2}|^2\,dx 
	\leq C\int_{\Omega} \aeps\,|W_{\eps,2}|^2\,dx 
	\leq C\,\|W_{\eps,2}\|_{L^\infty(\Omega)}.
\end{equation}
Note that the regularity estimate \eqref{eq:externalreg2} yields in particular that
\begin{equation}\label{eq:reg-g}
\| g_\eps \|_{C^1(\bar\HOmeg)} \leq C \left\Vert  \phi_{2} \right\Vert _{L^{\infty}(\Omega)}.
\end{equation}
The companion problem to \eqref{eq:BDY-4DC} is
\begin{equation}
-\Delta \tilde{V}_{\eps,2} = 2\partial_{3}\zeta\partial_{3}\,W_{\eps,2}
+ \partial_{33}\zeta\,W_{\eps,2} + g_\eps \text{ in } \HOmeg	\;,\\
\label{eq:BDY-TV01}
\end{equation}
Thanks to \eqref{eq:externalreg2}, the right-hand side of \eqref{eq:BDY-TV01} is in $C^1(\bar\Omega)$ uniformly in $\eps$, and 
therefore $\tilde{V}_{\eps,2}$ enjoys interior regularity. Proposition~\ref{pro:tilde-W2} proved in Section~\ref{sec:ABCD}
on the regularity of the difference $\tilde{W}_{\eps,2} -\tilde{V}_{\eps,2}$ implies a regularity result of $W_{\eps,2}$. For $\kappa > 0$, 
 $\tau > \kappa\,\eps^{\frac{1-\eta}{2(1+\eta)}}$ with $\eta
\in\left(\frac{2}{3},1\right)$, and $0<\nu <3\left(\eta - \frac{2}{3}\right)$, the solution $W_{\eps,2}$ of \eqref{eq:BDY-3DC} satisfies
\begin{equation}\label{eq:estw2}
\left\| W_{\eps,2} \right\|_{C^{1,\nu}\left(\Omegtaul\right)}
	\leq C\, \eps^\sigma\, \|\fib\|_{C^1(\bar\Omega)}.
\end{equation}
The claim of the Theorem is a consequence of the three bounds \eqref{eq:estw0}, \eqref{eq:estw1} and \eqref{eq:estw2}.
\qed
\section{Application to structures with defects of small volume\label{sec:defect}}

We now consider the case when a defect of small volume is present in the medium. 
In that case, the conductivity of the defective medium is given by \eqref{eq:aepsgamma1}. 
We assume that the defect of support $\Geps$ stays away from the high conductivity fibres. 
To fix ideas, given  $0<l<L$ and $\omega_1$ such that $\omega_0\Subset\omega_1\Subset\omega$, suppose
\begin{equation}\label{Omeps}
G_\eps\subset\Omega_\eps = \big\{x \in \omega_1: \dist(x,\Deps) \geq \eps^{17/16}\big\}\times(-l,l).
\end{equation}
Note that the set $\Omega_\eps$ grows as $\eps$ tends to zero. Furthermore, in the language of Section \ref{sec:main}, 
$\Omega_\eps = \Omegtau$ for $\tau = \eps^{1/16}$, $\kappa = 1$, $\eta = 7/9$. This guarantees a $C^{1,\nu}$ estimate with $0 < \nu < 1/18$.

Throughout the section, we will assume that \eqref{alpeps} holds, namely 
$$
\lim_{\eps\to 0}\;\ceps\,\pi \reps^2=\kappa\in[\kappa_-,\kappa_+],\quad
\lim_{\eps\to 0}\;\frac{2\pi}{\eps^2\left|\ln \reps\right|}=\gamma\in[\gamma_-,\gamma_+].
$$

When $\eps\to0$, Theorem~\ref{thm:hom} shows that the sequence $\uepsnodef$ 
converges weakly in $H^1_0(\Omega)$ to the solution $\uhom$ of the coupled system~\eqref{eq:hom-pro} when $\fib\in C^1(\bar\Omega)$.

We proceed to derive an asymptotic formula for the difference response operator (the difference
of Dirichlet-to-Neumann operators on $\partial \Omega$)
\begin{eqnarray*}
H^{1/2}(\partial \Omega) & \to & H^{-1/2}(\partial \Omega)\\
\fib & \mapsto &
\aeps\left.\frac{\partial}{\partial
n}\left(\uepsdef-\uepsnodef\right)\right|_{\partial
\Omega}.
\end{eqnarray*} As this operator can be recovered by
polarisation, we limit ourselves to the study of the quadratic
form
\begin{eqnarray}
\DN:H^{1/2}(\partial \Omega) & \to & \mathbb{R} \notag \\
\fib & \mapsto & \int_{\partial
\Omega}\aeps \frac{\partial}{\partial
n}\left(\uepsdef-\uepsnodef\right)(s)\fib(s)ds.\label{eq:Def-Rbil}
\end{eqnarray}

We can now state the main result of this section.

\begin{theorem} \label{thm:defect}
Assume that $|\Geps|\to 0$, and $|\Geps|^{-1} \mathbf{1}_{\Geps}$ converges weakly-$*$ to a Radon measure $\mu$ on $\bar{\Omega}$. There exist a subsequence, still denoted by $\eps$,
and a matrix-valued function $M\in L^2(\Omega,\mu)^{3\times 3}$ such that 
the bilinear response form $\DN$ given by~\eqref{eq:Def-Rbil} has the following asymptotic form
\begin{equation}\label{Reps(Phib)}
\DN(\fib)=\left|\Geps\right|\int_{\Omega}M\nabla \uhom \cdot\nabla\uhom\,d\mu +o\left(\left|\Geps \right|\right) \text{ for any } \fib\in C^1(\bar\HOmeg),
\end{equation}
where $\uhom$ denotes the solution to the homogenized problem \eqref{eq:hom-pro} with boundary condition $\fib$, and
\[
\lim_{\eps \rightarrow 0} \sup_{\|\fib\|_{C^{1}(\bar{\HOmeg})} \leq 1} \frac{o(|G_\eps|)}{|G_\eps|} = 0.
\]
In addition, the matrix-valued function $M$ is symmetric, independent of $\fib$, and satisfies
$$
\left(\gamma_1-1\right)\min\left(1,\frac{1}{\gamma_1} \right) \leq M(y)\xi\cdot\xi \leq \left(\gamma_1-1\right)\max\left(1,\frac{1}{\gamma_1} \right) \text{ $\mu$-a.e.} 
$$
\end{theorem}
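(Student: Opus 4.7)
The first step is to convert the boundary form $\DN(\fib)$ into a volume integral over the defect. Since $\uepsdef-\uepsnodef\in H_0^1(\Omega)$, since $\div(\aepsdef\nabla\uepsdef)=\div(\aeps\nabla\uepsnodef)=0$, and since $\aepsdef-\aeps=(\gamma_1-1)\mathbf{1}_{\Geps}$ with $\aeps\equiv 1$ on $\Geps$ (because $\Geps\cap\Ceps=\emptyset$ by \eqref{Omeps}), Green's formula applied to \eqref{eq:Def-Rbil} yields
$$
\DN(\fib)\;=\;\int_{\Geps}(\gamma_1-1)\,\nabla\uepsdef\cdot\nabla\uepsnodef\,dx.
$$
This reduces the task to an asymptotic analysis of the right-hand side, which is already of size $O(|\Geps|)$ thanks to the energy estimate for $\uepsdef$.

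Second, I would exploit the uniform interior regularity of $\uepsnodef$ on $\Omega_\eps$. The choice $\tau=\eps^{1/16}$, $\kappa=1$, $\eta=7/9$ in Theorem~\ref{thm:Regularity-3d**} is admissible under \eqref{Omeps} and yields $\|\uepsnodef\|_{C^{1,\nu}(\Omega_\eps)}\leq C\|\fib\|_{C^1(\bar\Omega)}$ for $\nu\in(0,1/18)$. Combined with the weak $H^1$-convergence $\uepsnodef\rightharpoonup\uhom$ of Theorem~\ref{thm:hom}, Arzel\`a--Ascoli upgrades this to $\uepsnodef\to\uhom$ in $C^1$ on compact subsets of $\Omega_\eps$, linearly in $\fib\in C^1(\bar\Omega)$. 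In particular $\nabla\uepsnodef$ is essentially constant over any sub-region of $\Geps$ of diameter $\ll 1$, and close to $\nabla\uhom$.

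Third, I would invoke the polarization-tensor construction of \cite{CAPDEBOSCQ-VOGELIUS-03A}, which applies here because $\aeps\equiv 1$ in a neighborhood of $\Geps$: locally around $\Geps$, the equation for $\uepsdef-\uepsnodef$ is precisely a homogeneous-background, finite-contrast inclusion problem. For each $\xi\in\RR^3$, introduce a corrector $\phi_\xi^\eps$ solving $\div((1+(\gamma_1-1)\mathbf{1}_{\Geps})(\xi+\nabla\phi_\xi^\eps))=0$ on a ball enclosing $\Geps$, and define a tensor $M^\eps$ through $M^\eps\xi\cdot\eta=(\gamma_1-1)(\xi+\nabla\phi_\xi^\eps)\cdot\eta$ on $\Geps$. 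A bilinear duality argument, using the second step to freeze $\nabla\uepsnodef$ to a constant on each component of $\Geps$, then gives
$$
\int_{\Geps}(\gamma_1-1)\,\nabla\uepsdef\cdot\nabla\uepsnodef\,dx
\;=\;|\Geps|\int_{\Omega} M^\eps\,\nabla\uepsnodef\cdot\nabla\uepsnodef\,d\mu_\eps \;+\; o(|\Geps|),
$$
with $\mu_\eps=|\Geps|^{-1}\mathbf{1}_{\Geps}\,dx$. Weak-$*$ compactness of $\mu_\eps$ together with uniform $L^\infty$-bounds on $M^\eps$ allow the extraction of a subsequence along which $M^\eps\,d\mu_\eps\rightharpoonup^* M\,d\mu$; substituting $\nabla\uepsnodef\to\nabla\uhom$ in the limit then yields \eqref{Reps(Phib)}. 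Symmetry of $M$ and the two-sided bounds follow from the energy-minimization characterization of $\phi_\xi^\eps$ together with the $L^\infty$ bounds on $\gamma_1$.

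The principal obstacle is producing the error $o(|\Geps|)$ \emph{uniformly} over $\fib$ with $\|\fib\|_{C^1(\bar\Omega)}\leq 1$. The argument separates into three pieces: (i) a linear-in-$\fib$ $C^{1,\nu}$ control of $\nabla\uepsnodef$ on $\Omega_\eps$, furnished by Theorem~\ref{thm:Regularity-3d**}; (ii) $\fib$-uniform convergence $\nabla\uepsnodef\to\nabla\uhom$, deduced from (i) by compactness; and (iii) $\fib$-independent bounds on the local correctors $\phi_\xi^\eps$, obtained from the standard cell-problem energy estimate on a homogeneous background. A secondary subtlety is verifying that the weak-$*$ limit $M\,d\mu$ is absolutely continuous with respect to $\mu$ so that $M$ is a genuine $L^2(\mu)$-valued function; this follows from the uniform $L^\infty$-bound on $M^\eps$.
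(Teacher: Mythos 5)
Your first two steps coincide with the paper's: the reduction of $\DN(\fib)$ to $\int_{\Geps}(\gamma_1-1)\nabla\uepsdef\cdot\nabla\uepsnodef\,dx$ is exactly \eqref{eq:formuleR1}, and the compactness upgrade of the uniform $C^{1,\nu}(\Omega_\eps)$ bound to $\fib$-uniform convergence $\nabla\uepsnodef\to\nabla\uhom$ is Proposition~\ref{pro:def-regularite}. The gap is in your third step, the construction of the polarization tensor. You define the corrector $\phi^\eps_\xi$ by a finite-contrast inclusion problem on a \emph{homogeneous} background ("a ball enclosing $\Geps$"). But $\Geps$ is only assumed to be a measurable set of small volume contained in $\Omega_\eps$, which fills almost all of $\Omega_0$; any region enclosing $\Geps$ contains the highly conducting fibres, and the equation actually satisfied by $\uepsdef-\uepsnodef$ is $\divop(\aepsdef\nabla(\uepsdef-\uepsnodef))=\divop((\gamma_1-1)\mathbf{1}_{\Geps}\nabla\uepsnodef)$, whose solution operator is governed by the full coefficient $\aepsdef$ — fibres included — not by the Laplacian. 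Nothing in your argument shows that the fibres' influence on $\nabla(\uepsdef-\uepsnodef)|_{\Geps}$ is negligible, and there is no reason to expect it to be: the effective operator carries a nonlocal zeroth-order coupling, and the paper's own characterization of $M^*$ (Proposition~\ref{pro:energy-mstar}) is a minimization of $\int_\Omega\aepsdef|\nabla w+\tfrac{\gamma_1-1}{\gamma_1}\mathbf{1}_{\Geps}\xi|^2\psi$ in which $\aepsdef$ appears explicitly. The paper avoids your difficulty entirely by taking as correctors the differences $\viepsdef-\vieps$ of solutions of the \emph{full} fibred problems with affine boundary data $\fib=x_i$, exploiting that $v^i_*=x_i$ and $\nabla\vieps\to e_i$ in $C^{0,1/20}(\Omega_\eps)$; the tensor $M^*$ is then obtained as the density (via a Riesz-type absolute-continuity bound) of the weak-$*$ limit of $|\Geps|^{-1}\mathbf{1}_{\Geps}(1-\gamma_1)\nabla(\viepsdef-\vieps)$.

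Two further steps of your sketch would fail as written. First, "freezing $\nabla\uepsnodef$ to a constant on each component of $\Geps$" is not available: the components of $\Geps$ need not have small diameter (only $|\Geps|\to 0$ is assumed), so the bilinear duality argument must be carried out globally; the paper does this through a chain of integrations by parts (Proposition~\ref{pro:def-repfor}(b), the error terms $\err_1,\dots,\err_6$). Second, obtaining $o(|\Geps|)$ rather than $O(|\Geps|)$ errors in that chain requires more than the energy bound $\|\nabla(\uepsdef-\uepsnodef)\|_{L^2}\le C|\Geps|^{1/2}$: one needs the improved estimate $\|\uepsdef-\uepsnodef\|_{L^2(\Omega)}\le C|\Geps|^{5/6}$, proved by an Aubin--Nitsche duality argument that itself relies on the uniform $L^6$ gradient bound of Corollary~\ref{cor:Regularity-3d} (this is Lemma~\ref{lem:Aubin-Nitsche}). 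Your proposal never establishes or invokes this estimate, and without it several of the error terms are only $O(|\Geps|)$, which is not enough to isolate the leading-order term.
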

\begin{remark}\label{rem.suppmu}
Due to the definition (\ref{Omeps}) of $\Omega_\eps$, the measure $\mu$ has actually support in $\bar{\omega}_1\times[-l,l]\Subset\Omega$. This gives a sense to the integral term of (\ref{Reps(Phib)}) since $W_*\in C^1_{\rm loc}(\Omega)$.
\end{remark}
\begin{remark}
We have characterised the signature of a defect provided that the boundary condition $\fib$ is in $C^1(\bar\HOmeg)$. 
The natural space for $\fib$ is $H^{1/2}(\partial\Omega)$, but we do not know if our result holds in this space as well.
\end{remark}
%%%%%%%%%%%%%%%%%%%%%%%%%%%%%%%%%%%%%%%%%%
Our strategy is inspired by  \cite{CAPDEBOSCQ-VOGELIUS-03A,AMMARI-BONNETIER-CAPDEBOSCQ-09}. The first key element is  the following Lemma.

\begin{lemma}\label{lem:Aubin-Nitsche}
There exist
a positive constant $C$ independent of $\fib$ and $\eps$ such that 
\begin{align*}
&\|\nabla(\uepsnodef - \uepsdef)\|_{L^2(\Omega)} + |G_\eps |^{-1/3} \|\uepsnodef - \uepsdef\|_{L^2(\Omega)} 
\leq C\,|G_\eps |^{1/2}\,\|\fib\|_{C^{1}(\bar{\HOmeg})}.
\end{align*}
\end{lemma}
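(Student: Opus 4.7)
\noindent\textit{Proof plan.}
The function $W := \uepsnodef - \uepsdef$ lies in $H^1_0(\Omega)$, and subtracting \eqref{eq:u-with-defect} from \eqref{eq:u-no-defect} and using the identity $\aepsdef-\aeps=(\gamma_1-1)\mathbf{1}_{G_\eps}$, one verifies that it satisfies in the weak sense both
$$
-\divop(\aepsdef\,\nabla W)=\divop\!\big((\gamma_1-1)\,\mathbf{1}_{G_\eps}\,\nabla \uepsnodef\big)
\quad\text{and}\quad
-\divop(\aeps\,\nabla W)=\divop\!\big((\gamma_1-1)\,\mathbf{1}_{G_\eps}\,\nabla \uepsdef\big).
$$
Because $\gamma_1$ is continuous and strictly positive and $\ceps\to\infty$, both $\aepsdef$ and $\aeps$ are uniformly elliptic from below by a positive constant $c_0$ independent of $\eps$. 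Moreover, under \eqref{Omeps} the defect set $G_\eps$ lies inside $\Omegtau$ with the choice $\tau=\eps^{1/16}$, $\eta=7/9$, so Theorem~\ref{thm:Regularity-3d**} and Corollary~\ref{cor:Regularity-3d} are available on $\Omega_\eps$.

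For the $H^1$ bound I would test the first identity against $W$ itself. Uniform ellipticity and Cauchy--Schwarz give
$$
c_0\,\|\nabla W\|_{L^2(\Omega)}^{2}\leq C\int_{G_\eps}|\nabla \uepsnodef|\,|\nabla W|\,dx\leq C\,\|\nabla \uepsnodef\|_{L^\infty(G_\eps)}\,|G_\eps|^{1/2}\,\|\nabla W\|_{L^2(\Omega)}.
$$
Since $\uepsnodef=\Weps$, Theorem~\ref{thm:Regularity-3d**} applied on $\Omegtau\supset G_\eps$ yields $\|\nabla \uepsnodef\|_{L^\infty(G_\eps)}\leq C\,\|\fib\|_{C^1(\bar\HOmeg)}$, and therefore $\|\nabla W\|_{L^2(\Omega)}\leq C\,|G_\eps|^{1/2}\,\|\fib\|_{C^1(\bar\HOmeg)}$.

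For the $L^2$ bound I would use an Aubin--Nitsche duality argument. For arbitrary $\phi\in L^2(\Omega)$, let $\psi\in H^1_0(\Omega)$ solve $-\divop(\aeps\,\nabla\psi)=\phi$; the key regularity estimate $\|\nabla\psi\|_{L^6(\Omega_\eps)}\leq C\,\|\phi\|_{L^2(\Omega)}$ is furnished by Corollary~\ref{cor:Regularity-3d}. Using $\psi$ as a test function in the second form of the defect equation gives
$$
\int_\Omega W\,\phi\,dx=-\int_{G_\eps}(\gamma_1-1)\,\nabla \uepsdef\cdot\nabla\psi\,dx,
$$
and H\"older's inequality on $G_\eps$ with exponents $(2,3,6)$ yields
$$
\left|\int_\Omega W\,\phi\,dx\right|\leq C\,\|\nabla \uepsdef\|_{L^2(G_\eps)}\,|G_\eps|^{1/3}\,\|\nabla\psi\|_{L^6(\Omega_\eps)}.
$$
The triangle inequality together with the $H^1$ bound already proved implies $\|\nabla \uepsdef\|_{L^2(G_\eps)}\leq\|\nabla\uepsnodef\|_{L^2(G_\eps)}+\|\nabla W\|_{L^2(\Omega)}\leq C\,|G_\eps|^{1/2}\,\|\fib\|_{C^1(\bar\HOmeg)}$. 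Taking the supremum over $\phi$ of unit $L^2$-norm then delivers $\|W\|_{L^2(\Omega)}\leq C\,|G_\eps|^{5/6}\,\|\fib\|_{C^1(\bar\HOmeg)}$, which is equivalent to $|G_\eps|^{-1/3}\|W\|_{L^2(\Omega)}\leq C\,|G_\eps|^{1/2}\,\|\fib\|_{C^1(\bar\HOmeg)}$.

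The main obstacle is structural rather than computational: neither $\aeps$ nor $\aepsdef$ is uniformly bounded from above, so classical duality or Meyers-type estimates for divergence-form operators are not available globally. The argument sidesteps this by localising the source of the defect equation to $G_\eps$ and controlling $\nabla\uepsnodef$ (respectively $\nabla\psi$) there through the interior estimates of Theorem~\ref{thm:Regularity-3d**} and Corollary~\ref{cor:Regularity-3d}; this is precisely where the geometric hypothesis \eqref{Omeps} that $G_\eps$ stays at distance at least $\eps^{17/16}$ from the fibres is needed.
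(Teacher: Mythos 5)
Your proposal is correct and follows essentially the same route as the paper: the energy estimate tested against $\uepsnodef-\uepsdef$ combined with the $L^\infty$ gradient bound of Theorem~\ref{thm:Regularity-3d**} on $\Geps$ for the $H^1$ part, and an Aubin--Nitsche duality using the $L^6(\Omega_\eps)$ estimate of Corollary~\ref{cor:Regularity-3d} for the $L^2$ part. The only (immaterial) differences are that the paper takes the dual datum to be $\uepsnodef-\uepsdef$ itself rather than an arbitrary $\phi\in L^2$, and organises the H\"older step as an $L^{6/5}$--$L^6$ pairing instead of your three-exponent $(2,3,6)$ split; both give the same $|\Geps|^{5/6}$ power.
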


\begin{proof} Write $\chi_{\eps ,d} = \uepsnodef - \uepsdef \in H^1_0(\Omega)$, we have 
\[
\divop(\aepsdef\nabla \chi_{\eps ,d}) = \divop((\aepsdef - \aeps)\nabla\uepsnodef) = \divop((\cepsdef - 1)\,\mathbf{1}_{\Geps}\,\nabla\uepsnodef)
\]
Thus, by testing against $\chi_{\eps ,d}$, we get
\[
\|\nabla \chi_{\eps ,d}\|_{L^2(\Omega)} 
	\leq C\|\nabla \uepsnodef\|_{L^2(\Geps)}	
	\leq C\,|\Geps|^{1/2}\,\|\nabla \uepsnodef\|_{L^\infty(\Geps)}
	\;.
\]
Next note that, thanks to Theorem~\ref{thm:Regularity-3d**}, we have
\begin{equation}\label{est.GWeps}
\|\nabla \uepsnodef\|_{L^\infty(\Geps)}\leq \|\uepsnodef\|_{C^{1,\nu}(\HOmeg_\eps)} 
\leq C \|\fib\|_{C^{1}(\bar{\HOmeg})},
\end{equation}
with $\nu=1/20$. Thus
\begin{equation}
\|\nabla \chi_{\eps ,d}\|_{L^2(\Omega)} 	
	\leq C\,|\Geps|^{1/2}\,\|\fib\|_{C^{1}(\bar{\HOmeg})}
	\;.\label{eq:wepsdefL2}
\end{equation}
This proves the first half of the desired estimate,
namely 
$$
\|\nabla(\uepsnodef - \uepsdef)\|_{L^2(\Omega)} \leq C\,|\Geps|^{1/2}\,\|\fib\|_{C^{1}(\bar{\HOmeg})}.
$$
To prove the second part of the estimate, introduce the solution $F_\eps\in H^1_0(\Omega)$ 
of
$$
-\divop \left( \aeps \nabla F_\eps\right) = \chi_{\eps ,d}\;.
$$
Thanks to Corollary~\ref{cor:Regularity-3d}, we have
$$
\left\| \nabla F_\eps \right\|_{L^6(\Omega_\eps)}\leq C \|\chi_{\eps ,d}\|_{L^2(\HOmeg)}.
$$
On the other hand, note that
\[
\divop(\aeps\nabla \chi_{\eps ,d}) = \divop((\aepsdef - \aeps)\nabla\uepsdef) = \divop((\cepsdef - 1)\,\mathbf{1}_{\Geps}\,\nabla\uepsdef).
\]
After an integration by part, we see that
\begin{eqnarray*}
\|\chi_{\eps ,d}\|_{L^2(\Omega)}^2 &=& \int_\Omega a_\eps \nabla F_\eps \cdot \nabla \chi_{\eps ,d}\,dx \\ 
	&=& \int_{\Geps} (\cepsdef - 1)\,\mathbf{1}_{\Geps}\,\nabla F_\eps \cdot \nabla \uepsdef \,dx\\
	&=& -\int_{\Geps} \left(\gamma_1 -1 \right)\nabla F_\eps \cdot \nabla \chi_{\eps ,d}\,dx 
         + \int_{\Geps} \left(\gamma_1 -1 \right)\nabla F_\eps \cdot \nabla \uepsnodef \,dx\\
	&\leq&  C \left\| \nabla F_\eps \right\|_{L^6(\Omega_\eps)}  \left( \left\| \nabla \chi_{\eps ,d} \right\|_{L^{6/5}(\Geps)}
         + \|\nabla \uepsnodef\|_{L^\infty(\Geps)}  |\Geps|^{5/6}\right) \\
	&\leq&  C \|\chi_{\eps ,d}\|_{L^2(\HOmeg)}  \left( \|\nabla \chi_{\eps ,d} \|_{L^{2}(\HOmeg)} |\Geps|^{5/6 - 1/2}
         + \|\nabla \uepsnodef\|_{L^\infty(\Geps)}  |\Geps|^{5/6}\right) \\
	&\leq&  C \|\chi_{\eps ,d}\|_{L^2(\HOmeg)}  |\Geps|^{5/6}\|\fib\|_{C^{1}(\bar{\HOmeg})},
\end{eqnarray*}
where we have used \eqref{eq:wepsdefL2} in the penultimate inequality. The proof is complete.
\qed\end{proof}

The second ingredient is a pointwise uniform estimate on $\uepsnodef - \uhom$. We use the following extension of 
Theorem~\ref{thm:hom} with varying boundary data, the proof of which consists of a straightforward adaptation of the 
first step of the proof of Theorem~\ref{thm:hom}.
\begin{theorem}\label{thm:hom2}
Assume that the conditions (\ref{eq:boundkappa}), (\ref{eq:Cond1}) and (\ref{alpeps}) hold.
Consider a sequence $\varphi_\eps\in C^1(\bar\HOmeg)$ which converges strongly in $C^0(\bar\HOmeg)$ to some $\varphi \in C^{0,1}(\bar\HOmeg)$. 
Then, the solution $\Weps$ of  problem (\ref{eq:u-no-defect}) with the boundary condition $\fib=\varphi_\eps$ converges weakly in $H^1(\HOmeg)$ 
to the solution $\uhom$ of \eqref{eq:hom-pro} with the boundary condition $\fib=\varphi$. 
Furthermore, $\uhom\in W^{2,p}_{\rm loc}(\HOmeg)$ and $\vhom\in W^{2,p}_{\rm loc}(\HomO)$ for any $p>2$.
\end{theorem}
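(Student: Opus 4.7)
The plan is to follow the first step in the proof of Theorem~\ref{thm:hom}, replacing the fixed datum $\fib$ by the variable datum $\varphi_\eps$. The argument splits into three parts: a uniform $H^1$-bound on $W_\eps$, extraction of a weak limit, and identification of the limit as the solution of \eqref{eq:hom-pro} with boundary value $\varphi$.

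First, I would obtain the uniform $H^1$-bound. Because $\ceps\pi\reps^2 \le \kappa_+$ by \eqref{eq:boundkappa}, the total $a_\eps$-mass satisfies $\|a_\eps\|_{L^1(\Omega)} \le C$. Let $\tilde\varphi$ be a McShane-Lipschitz extension of $\varphi$ to $\bar\Omega$, so that $|\nabla\tilde\varphi|\le\mathrm{Lip}(\varphi)$ and $\int a_\eps|\nabla\tilde\varphi|^2\le C$. Construct an extension $\psi_\eps$ of $\varphi_\eps-\varphi$ to $\bar\Omega$ with $\int_\Omega a_\eps|\nabla\psi_\eps|^2\to 0$: the natural candidate is a cutoff of $\varphi_\eps-\varphi$ localized in a boundary strip of width $\delta_\eps$ to be tuned, which (using that $\|\varphi_\eps-\varphi\|_{C^0(\bar\Omega)}\to 0$ and $\|a_\eps\|_{L^1}\le C$) contributes an energy of order $\|\varphi_\eps-\varphi\|_{C^0}^2/\delta_\eps$, and is made $o(1)$ by a suitable choice of $\delta_\eps$. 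Set $\Phi_\eps := \tilde\varphi + \psi_\eps$, an extension of $\varphi_\eps$, and test \eqref{eq:u-no-defect} against $W_\eps-\Phi_\eps \in H^1_0(\Omega)$ to obtain
\[
\int_\Omega a_\eps|\nabla W_\eps|^2 \le \int_\Omega a_\eps|\nabla\Phi_\eps|^2 \le C.
\]
Thus $(W_\eps)$ is bounded in $H^1(\Omega)$ and has a weak limit $W_*\in H^1(\Omega)$ along a subsequence, with $W_*=\varphi$ on $\partial\Omega$ by continuity of the trace and $C^0$-convergence of $\varphi_\eps$.

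Second, I would identify the limit by importing, verbatim, the oscillating-test-function argument of Theorem~\ref{thm:hom}. The corrector $c_\eps^\sigma$ of \eqref{eq:defceps} and the associated test functions are built only from the microgeometry of the fibres and from the constants $\kappa,\gamma$; they have no dependence on the boundary data. The algebraic manipulations that lead to the coupled system \eqref{eq:hom-pro} are therefore unchanged, producing a limit pair $(W_*,V_*)$ that satisfies \eqref{eq:hom-pro} with boundary condition $\varphi$. The corrector result \eqref{cor.res} also carries over. Uniqueness for \eqref{eq:hom-pro} (a well-posed coupled elliptic system) lifts the subsequence convergence to full convergence.

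Third, the interior regularity follows from the homogenized system itself. In $\Omega_0$, the second equation reads $-\kappa\,\partial^2_{33}V_* = \gamma(W_*-V_*)$, and since $W_*,V_*\in C^{0,\alpha}$, a bootstrap in the variable $x_3$ gives $V_*\in W^{2,p}_{\mathrm{loc}}(\Omega_0)$. Substituting into the first equation gives $\Delta W_* = \gamma(W_*-V_*)\mathbf{1}_{\Omega_0}\in L^\infty_{\mathrm{loc}}\subset L^p_{\mathrm{loc}}$, and Calderón-Zygmund theory yields $W_*\in W^{2,p}_{\mathrm{loc}}(\Omega)$ for any $p>2$.

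The main obstacle is the construction of $\psi_\eps$ in the first step: with only $C^0$-convergence of $\varphi_\eps\to\varphi$ one has no direct control on $\nabla\varphi_\eps$, so the standard choice $\psi_\eps = \varphi_\eps-\varphi$ does not suffice, and the cutoff-based construction above, tailored to exploit the uniform smallness of $\varphi_\eps-\varphi$ against the $L^1$-boundedness of $a_\eps$, is the delicate point; everything else is a routine adaptation of the proof of Theorem~\ref{thm:hom}.
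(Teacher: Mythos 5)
Your overall route is the paper's: the paper gives no separate proof of Theorem~\ref{thm:hom2} beyond saying it is ``a straightforward adaptation of the first step of the proof of Theorem~\ref{thm:hom}'', and your second and third parts (the correctors $\csigeps$ and the oscillating-test-function identification are independent of the boundary datum; the regularity bootstrap between the two homogenized equations) are exactly that adaptation. The problem is the first part, which you yourself flag as the delicate point: your construction of the extension $\psi_\eps$ does not close. Writing $\psi_\eps=\eta_\eps(\varphi_\eps-\varphi)$ with $\eta_\eps$ a cutoff supported in a $\delta_\eps$-strip, you get $\nabla\psi_\eps=(\varphi_\eps-\varphi)\nabla\eta_\eps+\eta_\eps\nabla(\varphi_\eps-\varphi)$; only the first term is of the order you claim, while the second contains $\nabla\varphi_\eps$, on which the hypothesis of $C^0$-convergence gives no control whatsoever. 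No alternative extension can rescue this: any $H^1$ extension of the trace $(\varphi_\eps-\varphi)|_{\partial\Omega}$ has energy at least comparable to $\|\varphi_\eps-\varphi\|_{H^{1/2}(\partial\Omega)}^2$, and since $\aeps\geq 1$ and $\Weps$ minimizes the $\aeps$-energy, boundary data such as $\varphi_\eps=\eps\sin(x_1/\eps^3)$ (which converge to $0$ in $C^0$ but not in $H^{1/2}$) force $\int_\Omega\aeps|\nabla\Weps|^2\to\infty$. So under the literal hypotheses there is no uniform $H^1$ bound to extract a weak limit from.

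The intended ``straightforward adaptation'' tacitly carries over the energy estimate (\ref{ener.est}) verbatim, i.e.\ it uses $\int_\HOmeg\aeps|\nabla\Weps|^2\leq c\,\|\nabla\varphi_\eps\|_{L^\infty(\HOmeg)}^2$, which requires a uniform bound on $\|\varphi_\eps\|_{C^1(\bar\HOmeg)}$. That bound holds in the only place the theorem is invoked (Proposition~\ref{pro:def-regularite}, where $\|\varphi_n\|_{C^1(\bar\HOmeg)}\leq 1$). You should therefore either add the hypothesis $\sup_\eps\|\varphi_\eps\|_{C^1(\bar\HOmeg)}<\infty$ (or at least a hypothesis controlling $\|\varphi_\eps\|_{H^{1/2}(\partial\HOmeg)}$ together with the $\ceps$-weighted energy of an extension on the fibres) and then run the energy estimate exactly as in the proof of Theorem~\ref{thm:hom}, or abandon the claim that $C^0$-convergence alone suffices. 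With that repair, the remainder of your argument (weak limit, identification of the limit system with boundary value $\varphi$ by uniform convergence of traces, and the $W^{2,p}_{\rm loc}$ bootstrap) is sound and coincides with the paper's.
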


\begin{proposition}
\label{pro:def-regularite} The solutions $\uepsnodef$ and $\uhom$ to respectively \eqref{eq:u-with-defect} 
and \eqref{eq:hom-pro} satisfy the following convergence in the set $\Omega_\eps$ of \eqref{Omeps},
\begin{equation}\label{conv.C1Weps}
\lim_{\eps\to 0}\sup_{\begin{array}{c}
 \fib\in {C^{1}(\bar{\HOmeg})}\\
\left\|\fib\right\|_{C^{1}(\bar{\HOmeg})}\leq 1                                    
                                   \end{array}
}\|\nabla\uepsnodef-\nabla\uhom\|_{C^{0,1/20}\left(\Omega_\eps\right)}=0.
\end{equation}
\end{proposition}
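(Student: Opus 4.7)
The plan is to argue by contradiction, combining the uniform interior Hölder bound of Theorem~\ref{thm:Regularity-3d**} with the weak $H^1$-convergence from Theorem~\ref{thm:hom2}. If the claim fails, there exist $\delta>0$, a subsequence $\eps_n\to 0$, and boundary data $\varphi_n\in C^1(\bar\Omega)$ with $\|\varphi_n\|_{C^1(\bar\Omega)}\le 1$ such that, writing $W_n=W_{\eps_n}$ for the solution of \eqref{eq:u-no-defect} and $W_*(\psi)$ for the solution of the homogenized problem \eqref{eq:hom-pro} with boundary datum $\psi$,
\[
\|\nabla W_n-\nabla W_*(\varphi_n)\|_{C^{0,1/20}(\Omega_{\eps_n})}\ge\delta.
\]

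Since $\Omega_\eps$ coincides with $\Omegtau$ for $\tau=\eps^{1/16}$, I apply Theorem~\ref{thm:Regularity-3d**} with $\kappa=1/2$, $\eta=7/9$ (so that $\eta\in(3/4,1)$ and $2(\eta-3/4)=1/18>1/20$), which for some $\nu_0\in(1/20,1/18)$ gives
\[
\|W_n\|_{C^{1,\nu_0}(\Omega_{\eps_n})}\le C
\]
with $C$ independent of $n$. Simultaneously, Arzelà-Ascoli applied to $\{\varphi_n\}$, bounded in $C^1(\bar\Omega)$, yields along a further subsequence a limit $\varphi\in C^{0,1}(\bar\Omega)$ with $\varphi_n\to\varphi$ in $C^0(\bar\Omega)$. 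Theorem~\ref{thm:hom2} then gives $W_n\rightharpoonup W_*(\varphi)$ weakly in $H^1(\Omega)$; by linearity of \eqref{eq:hom-pro}, the maximum principle, and the $W^{2,p}_{\rm loc}$-regularity of Theorem~\ref{thm:hom}, one also gets $W_*(\varphi_n)\to W_*(\varphi)$ in $C^{1,1/20}(\overline{\omega_1\times[-l,l]})$.

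The core step is to pass to the limit for $W_n$ on a fixed domain. The set $(\omega_1\times(-l,l))\setminus\Omega_{\eps_n}$ is a disjoint union of solid circular cylinders, one around each rod, each of transverse radius $O(\eps_n^{17/16})$, and all affinely equivalent to a single unit model cylinder. A standard $C^{1,\nu_0}$-extension operator (for instance, a Whitney extension applied after subtracting the linear Taylor polynomial of $W_n$ at the rod's centre, then reinserted, so as to absorb the size of the hole into the leading polynomial part) produces $\tilde W_n\in C^{1,\nu_0}(\omega_1\times(-l,l))$ that agrees with $W_n$ on $\Omega_{\eps_n}$ and satisfies $\|\tilde W_n\|_{C^{1,\nu_0}}\le C$ uniformly in $n$. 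Arzelà-Ascoli on the compact set $\overline{\omega_1\times[-l,l]}$ then extracts, along a subsequence, $\tilde W_n\to \hat W$ in $C^{1,1/20}$. Since the excluded region has Lebesgue measure $O(\eps_n^{1/8})\to 0$ and both $\tilde W_n, W_n$ are uniformly bounded in $L^\infty$, one has $\tilde W_n-W_n\to 0$ in $L^2(\Omega)$, and comparison with the weak $H^1$-limit forces $\hat W=W_*(\varphi)$ everywhere. Restricting to $\Omega_{\eps_n}$ we conclude $\nabla W_n\to\nabla W_*(\varphi)$ in $C^{0,1/20}(\Omega_{\eps_n})$, which combined with $\nabla W_*(\varphi_n)\to\nabla W_*(\varphi)$ contradicts the assumed lower bound $\delta$.

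The main obstacle is the construction of the extension $\tilde W_n$ with a $C^{1,\nu_0}$-norm bound independent of $n$ across the many small $\eps_n$-dependent cylinders around the rods. The naive scaling of a fixed model extension degrades the $L^\infty$ control by a factor of $1/r$; to achieve the uniform estimate one must subtract off the local linear Taylor polynomial before extending and then add it back, so that only the small ``oscillatory'' part of $W_n$ is genuinely extended across each hole. The universality of the extension constant then follows from the fact that all excluded tubes are affinely equivalent straight circular cylinders.
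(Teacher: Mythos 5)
Your overall strategy is the paper's: argue by contradiction, invoke the uniform interior H\"older bound of Theorem~\ref{thm:Regularity-3d**}, use Arzel\`a--Ascoli on the boundary data, extend to the fixed domain $\Omega_1=\omega_1\times(-l,l)$ and use the compact embedding $C^{0,\alpha}\hookrightarrow C^{0,1/20}$, identify the limit through Theorem~\ref{thm:hom2} and the weak $L^2$ convergence of the gradients on the sets $\Omega_{\eps_n}$ of full asymptotic measure, and control $W_*(\varphi_n)-W_*(\varphi)$ by linearity and interior regularity of the homogenized system. The single point of divergence is the extension step, and it is also the only place where your argument is not complete as written. The paper never extends $W_n$ as a $C^{1,\nu_0}$ function: it extends the vector field $\nabla W_n$, bounded in $C^{0,1/19}(\Omega_{\eps_n})$, componentwise by the H\"older (McShane--Whitney) extension theorem, $\tilde g(x)=\inf_y\big[g(y)+K|x-y|^{1/19}\big]$, which works on an \emph{arbitrary} subset with a universal constant; no geometry of the perforated set enters. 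The extension is of course no longer a gradient, but that is harmless, since all one needs is compactness in $C^{0,1/20}(\Omega_1)$ plus the identification $\xi=\nabla W_*$ obtained by testing the weak $L^2$ limit against smooth functions and using $|\Omega_1\setminus\Omega_{\eps_n}|\to 0$.

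Your route --- a genuine $C^{1,\nu_0}$ extension of $W_n$ itself --- can be completed, but not by the mechanism you describe. The excluded tubes are not usefully ``affinely equivalent to a unit model cylinder'': the rescaling that normalises the transverse radius $\eps^{17/16}$ while leaving $x_3$ fixed is anisotropic and does not preserve H\"older seminorms, so no fixed model extension constant transfers that way. What the $C^{1,\nu_0}$ Whitney theorem actually needs is the Taylor compatibility condition $|W_n(x)-W_n(y)-\nabla W_n(y)\cdot(x-y)|\le C|x-y|^{1+\nu_0}$ on $\overline{\Omega_{\eps_n}}$, and this follows from your interior bound only after checking that $\Omega_{\eps_n}$ is quasiconvex with a constant independent of $n$ (a chord joining two points separated by a thin tube may leave the domain, but a detour around the tube costs only a bounded multiplicative factor because the tubes are circular, thin, and well separated). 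That uniform chord--arc property is the missing ingredient behind your Taylor-polynomial device; supplying it closes the gap, but extending only the gradient, as the paper does, makes the whole issue disappear.
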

\begin{proof}
By contradiction, suppose that there exist
 a constant $c_{0}>0$, a sequence $(\eps_n)_{n\in\mathbb{N}}$, and a sequence
 $\varphi_{n}\in {C^{1}(\bar{\HOmeg})}$ such that  
\[
\left\Vert \varphi_{n}\right\Vert_{C^{1}(\bar{\HOmeg})}\leq1,
\]
and 
\[
\left\Vert \nabla W_{n}-\nabla W_{*,n}\right\Vert _{C^{0,1/20}\left(\Omega_{\eps_n}\right)}\geq c_{0},
\]
where $W_{n}$ is the solution of
\[
\mbox{div}(a_{\eps_{n}}\nabla W_{n})=0\mbox{ in }\HOmeg,\mbox{ and }W_{n}=\varphi_{n}\mbox{ on }\partial\Omega,
\]
and $W_{*,n}$ is the solution of 
\[
\left\{\begin{array}{rl}
\displaystyle -\,\Delta W_{*,n}+ \gamma\left(W_{*,n}-V_{*,n}\right)\mathbf{1}_{\HOmeg_0}=0 & \mbox{in }\Omega
\\*[.6em]
-\,\kappa\,\partial^2_{33}V_{*,n}+ \gamma\left(V_{*,n}-W_{*,n}\right)=0 & \mbox{in }\HOmeg_0=\omega_0\times(-L,L)
\\*[.4em]
W_{*,n}=\varphi_{n} & \mbox{on }\partial\HOmeg
\\*[.4em]
V_{*,n}(\cdot,\pm L)=\varphi_{n}(\cdot,\pm L) & \mbox{in }\omega_0.
\end{array}\right.
\]
The regularity Theorem~\ref{thm:Regularity-3d**} shows that there exists a constant $K$,
independent of $n$, such that 
\[
\left\Vert \nabla W_{n}\right\Vert _{C^{0,1/19}(\Omega_{\eps})}\leq K\left\Vert \varphi_{n}\right\Vert _{C^{1}(\bar{\Omega})}\leq K.
\]
Then, by virtue of the extension theorem for H{\"{o}}lder functions (see, e.g., \cite{MINTY-70}) there exists an extension 
$\xi_n$ of $\nabla W_n$ to the set $\Omega_1=\omega_1\times(-l,l)$, such that 
\[
\|\xi_n\|_{C^{0,{1/19}}(\Omega_1)^3}\leq K,
\]
with the same constant $K$. Note that the embedding 
$
C^{0,1/19}(\Omega_{1})\hookrightarrow C^{0,1/20}(\Omega_{1})
$
is compact. Therefore we can extract a convergent subsequence $\xi_{p}$ (and the associated sequence $\varphi_{p}$)
such that 
\[
\left\Vert \xi_{p}-\xi\right\Vert _{C^{0,1/20}(\Omega_{1})^3} \to 0.
\]
Thanks to the Ascoli-Arzel\`a Theorem, we can extract yet another subsequence  $\xi_{q},\varphi_{q}$ such that 
\[
\left\Vert \varphi_{q}-\varphi\right\Vert_{C^{0}(\bar{\HOmeg})}\to0  \text{ for some } \varphi \in C^{0,1}(\bar\Omega).
\]
Now, the homogenization Theorem~\ref{thm:hom2} shows that 
\[
\nabla W_{q}\rightharpoonup\nabla \uhom\mbox{ in }L^{2}(\Omega)^{3},
\]
where $\uhom$ is uniquely defined by 
\[
\left\{\begin{array}{rl}
\displaystyle -\,\Delta \uhom+ \gamma\left(\uhom-\vhom\right)\mathbf{1}_{\HOmeg_0}=0 & \mbox{in }\Omega
\\*[.6em]
-\,\kappa\,\partial^2_{33}\vhom+ \gamma\left(\vhom-\uhom\right)=0 & \mbox{in }\HOmeg_0=\omega_0\times(-L,L)
\\*[.4em]
\uhom=\varphi & \mbox{on }\partial\HOmeg
\\*[.4em]
\vhom(\cdot,\pm L)=\varphi(\cdot,\pm L) & \mbox{in }\omega_0.
\end{array}\right.
\]
A uniqueness argument shows that  $\nabla \uhom= \xi$ on $\Omega_{1}$. In particular,
\[
\left\Vert \nabla W_{q}-\nabla \uhom\right\Vert _{C^{0,1/20}(\Omega_{\eps})}=o(1).
\]
On the other hand, thanks to Theorem~\ref{thm:hom}, the linearity of the homogenized system $W_{*,q}-\uhom\in W^{2,4}(\Omega_{1})$ 
and the Sobolev embedding Theorem we have,
\[
\|\nabla W_{*,q} - \nabla \uhom \|_{C^{0,1/20}(\Omega_\eps)}\leq C \left\Vert \varphi_q -\varphi \right\Vert _{C^{0}\left(\HOmeg\right)}=o(1)
\]
for a constant $C$ independent of $q$, $\varphi$ and $\eps$. We therefore have obtained that
$$
c_0 \leq \left\Vert \nabla W_{q}-\nabla W_{*,q}\right\Vert _{C^{0,1/20}\left(\Omega_\eps\right)}
    = o(1),
$$
which is a contradiction. 
\qed\end{proof}

The third ingredient is an asymptotic representation formula for $\nabla\left(\uepsdef-\uepsnodef\right)$ in the set $\Geps$,
\begin{proposition}
\label{pro:def-repfor}
(a) Let $\vieps$ and $\viepsdef$ be
solutions to~\eqref{eq:u-no-defect} and~\eqref{eq:u-with-defect},
respectively for $\Phi_b^i=x_i$.  Then, there exist
a subsequence, still denoted by $\eps$, and a matrix-valued function $M^{*}\in L^2(\Omega,\mu)^{3\times 3}$, such that
\begin{equation}\label{eq:conM}
\lim_{\eps \to 0}\int_\Omega \frac{\mathbf{1}_{\Geps}}{|\Geps|}\frac{\partial \left(\viepsdef -\vieps\right)}{\partial x_j}\,\psi\,dx = \int_\Omega M^{*}_{ij}\,\psi\,d\mu, 
\end{equation}
for all  $\psi\in C^0(\Omega)$. Furthermore, the matrix $M^*$ is symmetric, and satisfies
\begin{equation}\label{eq:bound-mstar}
0\leq M^{*}(y) \leq \frac{(\gamma_1 -1)^2}{\gamma_1} \qquad \mbox{$\mu$-a.e.}
\end{equation}

\medskip
\noindent(b) We have, for all $w\in W^{2,4}(\Omega)$,
\[ 
\int_{\Omega
}\left(\aeps-\aepsdef\right) \nabla\left(\uepsdef-\uepsnodef\right)\cdot\nabla w\,dx
=\left|\Geps \right|\int_{\Omega} M^{*}\nabla \uhom\cdot\nabla w\,d\mu +o(|\Geps|) \left\Vert w\right\Vert _{W^{2,4}(\Omega)},
\]
where
$o\left(\left|\Geps \right|\right)/\left|\Geps\right|$
converges to zero uniformly for $\left\Vert \fib \right\Vert
_{C^1(\bar\Omega)}\leq1$.

\end{proposition}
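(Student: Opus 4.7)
My plan is to obtain the polarisation tensor $M^{*}$ in part (a) by a weak-$*$ compactness and Radon--Nikodym argument, and then to reduce the representation formula of part (b) to part (a) via a linearisation based on Proposition~\ref{pro:def-regularite}.

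For part (a), set $\chi^{i}_{\eps} := \viepsdef - \vieps \in H^{1}_{0}(\Omega)$, which weakly solves $-\divop(\aepsdef\,\nabla\chi^{i}_{\eps}) = (\gamma_{1}-1)\divop(\mathbf{1}_{\Geps}\,\nabla\vieps)$. Lemma~\ref{lem:Aubin-Nitsche} applied to $\Phi^{i}_{b}(x) = x_{i}$ gives $\|\nabla\chi^{i}_{\eps}\|_{L^{2}(\Omega)} \leq C|\Geps|^{1/2}$. I introduce the signed Radon measures $\mu^{ij}_{\eps} := |\Geps|^{-1}\mathbf{1}_{\Geps}\,\partial_{j}\chi^{i}_{\eps}\,dx$ and positive auxiliary measures $\tau^{i}_{\eps} := |\Geps|^{-1}\mathbf{1}_{\Geps}|\nabla\chi^{i}_{\eps}|^{2}\,dx$. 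By Cauchy--Schwarz these are bounded in total variation; Banach--Alaoglu in $(C^{0}(\bar\Omega))^{*}$ together with a diagonal extraction yields a common subsequence along which $\mu^{ij}_{\eps} \rightharpoonup \nu^{ij}$ and $\tau^{i}_{\eps} \rightharpoonup \tau^{i}$ weakly-$*$. For $0 \leq \psi \in C^{0}(\bar\Omega)$, Cauchy--Schwarz and passage to the limit give
\[
\int \psi\,d|\nu^{ij}| \leq \Bigl(\int \psi\,d\mu\Bigr)^{1/2}\Bigl(\int \psi\,d\tau^{i}\Bigr)^{1/2},
\]
from which $\nu^{ij} \ll \mu$, with Radon--Nikodym density $M^{*}_{ij} \in L^{2}(\Omega,\mu)$ inherited from the same bound. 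Symmetry of $M^{*}$ is then extracted from the bilinear identity
\[
\int_{\Omega}\aepsdef\,\nabla\chi^{i}_{\eps}\cdot\nabla\chi^{j}_{\eps}\,dx = -(\gamma_{1}-1)\int_{\Geps}\nabla\vieps\cdot\nabla\chi^{j}_{\eps}\,dx,
\]
which is manifestly symmetric in $(i,j)$: combined with $\nabla\vieps \to \nabla v^{i}_{*}$ in $C^{0}(\Omega_{\eps})$ from Proposition~\ref{pro:def-regularite} and a localisation on a dense family of test functions, this yields $M^{*}_{ij} = M^{*}_{ji}$ $\mu$-a.e. The bound $0 \leq M^{*} \leq (\gamma_{1}-1)^{2}/\gamma_{1}$ follows from applying the same bilinear identity to $\sum_{i}\xi_{i}\chi^{i}_{\eps}$, using $\aepsdef \geq \min(1,\gamma_{1})$ on $\Geps$ and an AM--GM splitting.

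For part (b), with $\chi_{\eps} := \uepsdef - \uepsnodef$ one has
\[
\int_{\Omega}(\aeps - \aepsdef)\nabla\chi_{\eps}\cdot\nabla w\,dx = -(\gamma_{1}-1)\int_{\Geps}\nabla\chi_{\eps}\cdot\nabla w\,dx,
\]
and the idea is to rewrite the right-hand side as $|\Geps|\int M^{*}\nabla\uhom\cdot\nabla w\,d\mu$ plus an $o(|\Geps|)\|w\|_{W^{2,4}}$ error. The coupled energy identities
\[
\int_{\Omega}\aepsdef\,\nabla\chi_{\eps}\cdot\nabla\chi^{i}_{\eps}\,dx = (\gamma_{1}-1)\int_{\Geps}\nabla\uepsnodef\cdot\nabla\chi^{i}_{\eps}\,dx = (\gamma_{1}-1)\int_{\Geps}\nabla\vieps\cdot\nabla\chi_{\eps}\,dx
\]
link the restricted gradient $|\Geps|^{-1}\mathbf{1}_{\Geps}\nabla\chi_{\eps}\,dx$ to the measures $\mu^{ij}_{\eps}$ of part (a). Replacing $\nabla\uepsnodef$ by $\nabla\uhom$ and $\nabla\vieps$ by $\nabla v^{i}_{*}$ on $\Omega_{\eps}$ via Proposition~\ref{pro:def-regularite}, and expanding $\nabla w$ in a Taylor polynomial around reference points of a microscale partition of $\Omega_{\eps}$ (legitimate since $w \in W^{2,4}$ embeds into $C^{0,1/4}$ by Sobolev, and $\uhom \in W^{2,p}_{\rm loc}$, $p > 2$, by Theorem~\ref{thm:hom}), one reduces $\int_{\Geps}\nabla\chi_{\eps}\cdot\nabla w\,dx$ to $\sum_{i,j}\int_{\Geps}\partial_{j}\chi^{i}_{\eps}\,(\partial_{j}\uhom\,\partial_{i}w)\,dx$ up to the announced remainder. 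Part (a) applied with the continuous test function $\partial_{j}\uhom\,\partial_{i}w$ then delivers the required representation.

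The main obstacle is the closure of the linearisation in part (b). The naive reduction $\nabla\chi_{\eps} \approx \sum_{i}\partial_{i}\uhom\cdot\nabla\chi^{i}_{\eps}$ on $\Geps$ would yield the result immediately, but it presupposes the pointwise decomposition $\nabla\uhom = \sum_{i}\partial_{i}\uhom\,\nabla v^{i}_{*}$; this holds outside $\Omega_{0}$ where $v^{i}_{*} = x_{i}$, but fails in general inside $\Omega_{0}$ because the coupling of $\uhom$ with $\vhom$ gives $\nabla v^{i}_{*} \neq e_{i}$ there. The remedy is to work at the microscopic scale of individual connected components of $\Geps$: since $\Geps \subset \Omega_{\eps}$ is disjoint from the fibres, $\aeps \equiv 1$ in a neighbourhood of each component, and a local Eshelby-type comparison connects $\nabla\chi_{\eps}$ directly to the ambient value of $\nabla\uhom$, bypassing the family $\{\nabla v^{i}_{*}\}$. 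Patching these local arguments into a uniform global estimate consistent with the general Radon measure $\mu$, with error $o(|\Geps|)$ uniform over $\|\fib\|_{C^{1}(\bar{\Omega})} \leq 1$, is the technically most delicate step.
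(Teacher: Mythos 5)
Part (a) of your proposal is essentially the paper's argument: the same weak-$*$ compactness of $|\Geps|^{-1}\mathbf{1}_{\Geps}\partial_j(\viepsdef-\vieps)\,dx$, the same Cauchy--Schwarz route to absolute continuity with respect to $\mu$ and an $L^2(\mu)$ density, and the same symmetrisation and bounds via the energy identity for $\viepsdef-\vieps$. That part is fine.

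Part (b) contains a genuine gap, and it originates in a misreading of the homogenized problem. You claim that the decomposition $\nabla\uhom=\sum_i\partial_i\uhom\,\nabla v^i_*$ ``fails in general inside $\Omega_0$ because the coupling of $\uhom$ with $\vhom$ gives $\nabla v^i_*\neq e_i$ there.'' This is false: for $\fib=x_i$ the pair $(\uhom,\vhom)=(x_i,x_i)$ solves the coupled system \eqref{eq:hom-pro} exactly (the coupling terms vanish since $\uhom=\vhom$, $\Delta x_i=0$, $\partial^2_{33}x_i=0$, and the boundary conditions match), so $v^i_*=x_i$ and $\nabla v^i_*=e_i$ everywhere in $\Omega$, including inside $\HomO$. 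The obstacle you construct does not exist, and the ``remedy'' you propose in its place --- a local Eshelby-type comparison on the connected components of $\Geps$ --- is both unnecessary and unworkable: no geometric structure is assumed on $\Geps$ (the limit $\mu$ is an arbitrary Radon measure, possibly concentrated on lower-dimensional sets), so there are no cell problems to solve, and you explicitly leave the patching of these local arguments as ``the technically most delicate step,'' i.e.\ unproved. The correct closure, which is exactly one observation away from what you wrote, is the Murat--Tartar oscillating-test-function chain: starting from $\int_\Omega(\aeps-\aepsdef)\nabla(\uepsdef-\uepsnodef)\cdot\nabla w\,dx$, insert $\nabla v^i_*\cdot\,\partial_i w=\partial_i w$ (Einstein summation), replace $e_i$ by $\nabla\vieps$ at a cost $o(1)|\Geps|$ using Proposition~\ref{pro:def-regularite}, and then integrate by parts repeatedly to move all derivatives onto $\viepsdef-\vieps$, controlling each commutator by Lemma~\ref{lem:Aubin-Nitsche}, the $L^6$ bound of Corollary~\ref{cor:Regularity-3d}, and the interpolation $\|f\|_{L^4}\leq\|f\|_{L^2}^{1/4}\|f\|_{L^6}^{3/4}$; the limit is then read off from part (a) applied to the test function $\partial_j\uhom\,\partial_i w$, made uniform by a compactness argument in $C^{0,\nu}$. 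As written, your part (b) does not constitute a proof.
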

\begin{remark}
It is natural to ask if the extraction of a subsequence is necessary, even in the case of bounded conductivities. 
The answer is positive. In the case of domains $\Geps$  shrinking to a point, there is an entire set of possible 
polarisation tensors for a given constant $\gamma_1$, delimited by the so-called Hashin-Strikman bounds, see 
e.g. \cite{LIPTON-93,MILTON-02,CAPDEBOSCQ-VOGELIUS-04,AMMARI-KANG-04}.
\end{remark}
\begin{remark}\label{rem:drfUniform}
We can use a compactness argument to derive from \eqref{eq:conM} that
\[
\lim_{\eps \rightarrow 0} \sup_{\|\psi\|_{C^{0,\nu}(\Omega)} \leq 1} \left|\int_\Omega \frac{\mathbf{1}_{\Geps}}{|\Geps|}\frac{\partial \left(\viepsdef -\vieps\right)}{\partial x_j}\,\psi\,dx - \int_\Omega M^{*}_{ij}\,\psi\,d\mu\right| = 0
\]
for any $0 < \nu \leq 1$. We remind the reader of Remark~\ref{rem.suppmu}: $\mu$ has compact support in $\Omega$. Indeed, if this fails for some $\nu$, then there exist some $c_0 > 0$ and a sequence $\psi_n \in C^{0,\nu}(\Omega)$ such that (along some subsequence $\eps \rightarrow 0$)
\[
\left|\int_\Omega \frac{\mathbf{1}_{\Geps}}{|\Geps|}\frac{\partial \left(\viepsdef -\vieps\right)}{\partial x_j}\,\psi_n\,dx - \int_\Omega M^{*}_{ij}\,\psi_n\,d\mu\right| \geq c_0. 
\]
On the other hand, as the embedding $C^{0,\nu}(\Omega) \hookrightarrow C^0(\Omega)$ is compact, we can assume that $\psi_n \rightarrow \psi_*$ in $C^0(\Omega)$, which implies a violation to \eqref{eq:conM}.

\end{remark}

\begin{proof}%[Proof of Proposition~\ref{pro:def-repfor}]
The proof of this proposition is a variant of that of the main theorem of \cite{CAPDEBOSCQ-VOGELIUS-03A}.  
We provide the proof here for sake of completeness, adapted to the case at hand.

\medskip
\noindent(a) Introducing $\vieps$ and $\viepsdef$ solutions of solutions to~\eqref{eq:u-no-defect} and~\eqref{eq:u-with-defect},
respectively for a given $\Phi_b^i=x_i$, we know thanks to Theorem \ref{thm:hom}
and Proposition~\ref{pro:def-regularite} that
$$
\lim_{\eps\to0}\Big[\|\vieps - v^{i}_{*} \|_{L^2(\Omega)} + \|\nabla \vieps - \nabla v^{i}_{*} \|_{C^{0,1/20}(\Omega_\eps)}\Big]=0,
$$
where $v^{i}_{*}$ is the solution of $\uhom$ of \eqref{eq:hom-pro} corresponding to $\fib=x_i$.
It is immediate to verify that  $v^{i}_{*}=x_i$. In particular, we have
\begin{equation}
\left\Vert \vieps -x_i \right\Vert_{W^{1,\infty}(\Omega_\eps)}   \leq 
C \left(\left\Vert \nabla \vieps -e_{i}\right\Vert_{C^{0,1/20}(\Omega_\eps)} + \left\Vert  \vieps -x_{i}\right\Vert_{L^2(\Omega)}\right) = o(1),
\label{eq:bornesvdelta} 
\end{equation}
Here and in the sequel, $o(1)$ denotes any quantity going to zero with $\eps$, independently of $\fib$. On the other hand, 
using Lemma \ref{lem:Aubin-Nitsche} we have
\begin{equation}
\left\Vert \nabla \viepsdef -\nabla\vieps \right\Vert _{L^2(\Omega)}   \leq   C\,
\left|\Geps \right|^{1/2}\mbox{ and }
\left\Vert \viepsdef -\vieps \right\Vert_{L^2(\Omega)} \leq C\left|\Geps
\right|^{5/6}. \label{eq:bornesVdelta}
\end{equation}
Applying the Cauchy-Schwarz inequality yields
\begin{equation}
\frac{1}{|\Geps|}\,\int_{\Geps}\left| \nabla \viepsdef -\nabla\vieps \right|
\leq{\displaystyle
 \frac{1}{|\Geps|}\,}\left|\Geps \right|^{1/2}
\left\Vert \nabla \viepsdef -\nabla \vieps \right\Vert_{L^2(\Geps)}\leq C,
\label{eq:bornL1V}\end{equation}
thanks to \eqref{eq:bornesVdelta}. We may therefore extract a subsequence, still denoted by $\eps$,
such that
\begin{equation}
|\Geps |^{-1}\,\mathbf{1}_{\Geps}\left(1-\cepsdef\right)\,\frac{\partial}{\partial x_j} \left( \viepsdef - \vieps \right)\, dx 
 \longrightarrow 
d\mathcal{M}_{ij},\label{convdM}
\end{equation}
where $\mathcal{M}_{i,j},\;1\leq i,j\leq2$ is a Borel measure with support in $\bar\omega_0\times[-l,l]$. The above convergence
results hold in the  weak-$*$ topology of ${\mathcal M}(\bar\omega_0\times[-l,l])$ (the set of Radon measures on $\bar\omega_0\times[-l,l]$).
Note that  $1-\gamma_1 \in C^{0}(\Omega)$. We see that, for any $f\in C^{0}(\bar{\Omega})$,
\begin{eqnarray*}
\left|{\displaystyle \int_{\Omega}f\, d\mathcal{M}_{ij}}\right| &
= & \left|\lim_{\eps\rightarrow0}|\Geps|^{-1}\,
\int_{\Geps}\left(1-\gamma_1\right)\left(
 \frac{\partial}{\partial x_j} \viepsdef - \frac{\partial}{\partial x_j} \vieps \right)\, fdx\right|\\
 & \leq &\limsup_{\eps\to 0 }\frac{||1-\gamma_1||_{L^\infty(\Omega)}}{\sqrt{|\Geps|}} 
 \, ||\nabla \viepsdef - \nabla \vieps ||_{L^2(\Geps)}\,\left(\frac{1}{|\Geps|}
 {\displaystyle \int_{\Geps}f^{2}}dx\right)^{1/2}.\end{eqnarray*}
Since $\left|\Geps \right|^{-1}{\displaystyle
\int_{\Geps }f^2dx\rightarrow\int_{\Omega_0}f^2 d\mu}$, we
conclude, using~\eqref{eq:bornesVdelta} that
\begin{eqnarray}
\left|{\displaystyle \int_{\Omega} f \, d\mathcal{M}_{ij}}\right|
& \leq & C\,\left(\int_{\Omega_0}\left|f\right|^2d\mu\right)^{1/2}.\label{Riesz}
\end{eqnarray}
It follows that $d\mathcal{M}_{ij}$ is absolutely continuous with
respect to $\mu$ and thus, there exists a $3\times3$ matrix valued function
$M^* \in L^2(\Omega,\mu)^{3 \times 3}$ such that 
$$
\int_{\Omega_0} f d\mathcal{M}_{ij} =
\int_{\Omega_0} {M}^{*}_{ij}fd\mu.
$$
Let us now turn to the properties of the matrix valued function $M^*$, following  \cite{CAPDEBOSCQ-VOGELIUS-03A}. The matrix $M^*$  is characterised by 
$$
\int_\Omega M^{*}_{ij}\, \psi \,d\mu =\frac{1}{|\Geps|} \int_{\Geps} \left(1-\gamma_1\right)\nabla \left(\viepsdef-\vieps \right)\cdot \nabla x_j \,\psi\, dx +  o(1)
$$
for any $\psi\in C^{0}\left(\bar{\HOmeg}\right)$. Note that $\viepsdef-\vieps$ is the solution in $H^1_0(\HOmeg)$ of 
\begin{equation}\label{eq:correc-x}
\divop \left(\aepsdef \nabla \left(\viepsdef-\vieps \right)\right) = \divop \left((1-\gamma_1)\mathbf{1}_{\Geps}\nabla \vieps \right).
\end{equation}
Using this identity together with Proposition~\ref{pro:def-regularite} and Lemma~\ref{lem:Aubin-Nitsche},  we obtain
\begin{eqnarray*}
\int_\Omega M^{*}_{ij} \psi d\mu  &=& \frac{1}{|\Geps|} \int_{\Geps} \left(1-\gamma_1\right)\nabla\left( \left(\viepsdef-\vieps \right)\psi\right)\cdot \nabla \vjeps  dx +  o(1) \\
&=& \frac{1}{|\Geps|} \int_{\HOmeg} \aepsdef \nabla \left( \left(\viepsdef-\vieps \right)\psi\right)\cdot \nabla \left(\vjepsdef-\vjeps \right) dx +  o(1) \\
&=& \frac{1}{|\Geps|} \int_{\HOmeg} \aepsdef \nabla \left(\viepsdef-\vieps \right) 
\cdot \nabla \left(\vjepsdef-\vjeps \right) \psi\, dx +  o(1).
\end{eqnarray*}
Under this last form, it is apparent that $M^{*}_{ij}=M^{*}_{ji}$. 
Furthermore, given $\xi\in\mathbb{R}^3$, introducing $\Veps = \sum_{i=1}^3  (\viepsdef-\vieps) \xi_i$, this last identity yields
\begin{equation}\label{eq:Mstar-1}
\int_\Omega M^{*}\xi\cdot\xi \psi d\mu = \int_\Omega \sum_{1 \leq i,j \leq 3}M^{*}_{ij}\xi_i\,\xi_j \psi d\mu 
=\frac{1}{|\Geps|} \int_{\HOmeg} \aepsdef \left|\nabla \Veps\right|^2 \psi \,dx +o(1).
\end{equation}
This shows that $M^{*}\geq0$, $\mu$-almost everywhere. Alternatively, from \eqref{eq:correc-x} we derive, using 
Proposition~\ref{pro:def-regularite} and Lemma~\ref{lem:Aubin-Nitsche}, that for $\psi\geq0$,
\begin{eqnarray}
\int_\Omega \aepsdef \left| \nabla \Veps\right|^2  \psi dx &=& 
\int_\Omega \aepsdef \nabla \Veps\cdot \nabla \left(\Veps  \psi\right)dx + o(|\Geps|) \nonumber\\
&=& \int_\Omega \left((1-\gamma_1)\mathbf{1}_{\Geps} \right) \left(\sum_{i=1}^3  \nabla \vieps \xi_i\right)\cdot \nabla \left(\Veps  \psi\right)dx + o(|\Geps|) \nonumber \\
&=& \int_\Omega \left((1-\gamma_1)\mathbf{1}_{\Geps} \right) \xi \psi  \cdot \nabla \Veps  dx + o(|\Geps|) \label{eq:VW1}\\ 
&\leq& \left(\int_\Omega \frac{(1-\gamma_1)^2}{\gamma_1} \mathbf{1}_{\Geps} |\xi|^2 \psi\, dx \right)^{1/2}
\left(\int_\Omega \aepsdef \left|\nabla \Veps\right|^2  \psi \, dx\right)^{1/2} +   o(|\Geps|),\nonumber
\end{eqnarray}
where we have used we have used \eqref{eq:bornesvdelta} to derive \eqref{eq:VW1}. This shows that
$M^{*}\leq \frac{(1-\gamma_1)^2}{\gamma_1}$, $\mu$-almost everywhere.

\medskip
\noindent(b) Let us now show that
\begin{equation}\label{eq:conv-w}
\left|\Geps \right|^{-1}\int_{\Omega} \left(\aeps-\aepsdef \right)
\nabla \left( \uepsdef-\uepsnodef\right) \cdot\nabla w dx=\left|\Geps
\right|^{-1}\int_{\Omega} M^{*}\nabla \uhom \cdot \nabla w d\mu 
+\err,
\end{equation}
where $\err$ satisfies 
$$
|\err| = o(1)\left| \Geps \right| \,\|\fib\|_{C^{1}(\bar{\HOmeg})}\| w\|_{W^{2,4}(\Omega)}.
$$
Identity \eqref{eq:conv-w} can be interpreted as a `separation of scales' result.  On the left-hand side, the perturbation induced by the defect is present both in the `microscopic' term 
$\aeps-\aepsdef$ and in the `macroscopic' term $\nabla \uepsdef - \nabla \uepsnodef$. On the right-hand side, 
the `macroscopic' term $\nabla \uhom$ is independent of the defect. To prove this result, we follow the 
adaptation of the oscillating test function method in homogenization of Murat \& Tartar \cite{MURAT-TARTAR-97} 
proposed in \cite{CAPDEBOSCQ-VOGELIUS-03A}. In the following computation,  $\viepsdef -\vieps$ plays the role
of the corrector function in homogenization. By a chain of integration by parts, 
we will transfer the derivatives from $\nabla (\uepsdef -  \uepsnodef)$ to $\nabla(\viepsdef -\vieps)$, and then pass 
to the limit.
\par 
To express the left-hand side of \eqref{eq:conv-w} in terms of $M^*$, we introduce $\viepsdef -\vieps$ in the 
computation as follows. 
Using Einstein summation convention for the index $i$ and noting that $v^i_* = x_i$, we have
\begin{eqnarray}\label{eq:compen-1}
 \int_{\Omega} \left(\aeps-\aepsdef \right)
\nabla \left( \uepsdef-\uepsnodef\right) \cdot\nabla w dx  
&=& \int_{\Geps} \left(1-\gamma_1 \right) 
\nabla \left( \uepsdef-\uepsnodef\right) \cdot \nabla v^{i}_{*}\,\frac{\partial w}{\partial x_i}  dx \\
&=& \int_{\Geps} \left(1-\gamma_1 \right) 
\nabla \left( \uepsdef-\uepsnodef\right) \cdot \nabla \vieps \, \frac{\partial w }{\partial x_i}   dx + \err_1, \nonumber
\end{eqnarray}
with
\begin{eqnarray*}
|\err_1|& \leq & C \|\nabla \vieps -\nabla v^{i}_{*}\|_{C^0(\Omega_\eps)}  \|\nabla w \|_{L^\infty(\Geps)} 
\left| \Geps \right|^{1/2}  \left\|\nabla \left( \uepsdef-\uepsnodef\right)\right\|_{L^2(\Geps)} \\
      & = & o(1) \left| \Geps \right|\,\|\fib\|_{C^{1}(\bar{\HOmeg})}\| w\|_{W^{2,4}(\Omega)},
\end{eqnarray*}
thanks to \eqref{eq:bornesVdelta} and Lemma~\ref{lem:Aubin-Nitsche}.
Continuing the transformation, we write
\begin{eqnarray}\label{eq:compen-2}
 \int_{\Geps} \left(1-\gamma_1 \right) 
\nabla \left( \uepsdef-\uepsnodef\right) \cdot \nabla \vieps \, \frac{\partial w }{\partial x_i}   dx 
 &=&
\int_{\Geps} \left(1-\gamma_1 \right) 
\nabla\left(\frac{\partial w}{\partial x_i} \left( \uepsdef-\uepsnodef\right)\right) \cdot \nabla \vieps    dx + \err_2 \\
&=& \int_{\Omega} \aepsdef 
\nabla\left(\frac{\partial w}{\partial x_i} \left( \uepsdef-\uepsnodef\right)\right) \cdot \nabla \left(\viepsdef- \vieps\right)    dx + \err_2, \nonumber 
\end{eqnarray}
with 
\begin{eqnarray*}
|\err_2| \leq  C \| \nabla \vieps \|_{C^0(\Omega_\eps)}  \|\nabla^2 w \|_{L^2(\Geps)} 
\left\| \left( \uepsdef-\uepsnodef\right)\right\|_{L^2(\Geps)} 
      & \leq & C \left| \Geps \right|^{\frac{13}{12}} \,\|\fib\|_{C^{1}(\bar{\HOmeg})} \| w\|_{W^{2,4}(\Omega)}\\
      & = &  o(1) \left| \Geps \right| \,\|\fib\|_{C^{1}(\bar{\HOmeg})}\| w\|_{W^{2,4}(\Omega)},
\end{eqnarray*}
thanks to Lemma~\ref{lem:Aubin-Nitsche} and H\"{o}lder's inequality.
To remove the $\uepsdef$ term, we write
\begin{eqnarray}\label{eq:compen-3}
&& \int_{\Omega} \aepsdef 
\nabla\left(\frac{\partial w}{\partial x_i} \left( \uepsdef-\uepsnodef\right)\right) \cdot \nabla \left(\viepsdef- \vieps\right)    dx\\
 &=&
\int_{\Omega} \aepsdef  
\nabla\left( \uepsdef-\uepsnodef\right) \cdot \nabla \left( \frac{\partial w}{\partial x_i}  \left(\viepsdef- \vieps\right) \right) dx + \err_3 \nonumber\\
&=& \int_{\Omega} \left(\aeps - \aepsdef\right)  
\nabla \uepsnodef \cdot \nabla \left( \frac{\partial w}{\partial x_i}  \left(\viepsdef- \vieps\right) \right) dx + \err_3, \nonumber
\end{eqnarray}
with 
\begin{eqnarray*}
|\err_3|& \leq & C \int_{\Omega} \left|\nabla^2 w\right| \Big(\left|\uepsdef-\uepsnodef\right| \left|\nabla \left(\viepsdef- \vieps\right)\right| + \left|\nabla \left(\uepsdef-\uepsnodef\right)\right| \left| \viepsdef- \vieps\right| \Big)\,dx  \\
        & \leq & C \| w\|_{W^{2,4}(\Omega)} \left| \Geps \right|^{\frac{13}{12}}  
\,\|\fib\|_{C^{1}(\bar{\HOmeg})}\\
        & = &  o(1) \left| \Geps \right| \,\|\fib\|_{C^{1}(\bar{\HOmeg})}\| w\|_{W^{2,4}(\Omega)},
\end{eqnarray*}
using Lemma~\ref{lem:Aubin-Nitsche}, together with the interpolation inequality
$$
\|f\|_{L^4(\Omega)}\leq C\|f\|^{1/4}_{L^2(\Omega)} \|f\|^{3/4}_{L^6(\Omega)}.
$$ 
Expanding this last expression, we have
\begin{eqnarray}\label{eq:compen-4}
&&\int_{\Omega} \left(\aeps - \aepsdef\right)  
\nabla \uepsnodef \cdot \nabla \left( \frac{\partial w}{\partial x_i}  \left(\viepsdef- \vieps\right) \right) dx\\
 &= & \int_{\Geps} \left(1 - \cepsdef\right)  
\nabla \uepsnodef \cdot \nabla \left( \viepsdef- \vieps \right) \frac{\partial w}{\partial x_i} dx + \err_4, \nonumber \end{eqnarray}
with
\begin{eqnarray*}
|\err_4|& \leq & C \| \viepsdef - \vieps\|_{L^2(\Geps)}  \|\nabla \uepsnodef \|_{L^\infty(\Geps)} 
\left| \Geps \right|^{1/2} \| w\|_{W^{1,\infty}(\Omega)}\\
      & = & o(1) \left| \Geps \right|\,\|\fib\|_{C^{1}(\bar{\HOmeg})}\| w\|_{W^{2,4}(\Omega)}.
\end{eqnarray*}
Using the convergence of $\uepsnodef$ to $\uhom$, we get
\begin{eqnarray}\label{eq:compen-5}
&&\int_{\Geps} \left(1 - \cepsdef\right)  
\nabla \uepsnodef \cdot \nabla \left(\viepsdef- \vieps \right) \frac{\partial w}{\partial x_i} dx\\
 &=&
\int_{\HOmeg} 
\nabla \uhom \cdot \left(\mathbf{1}_{\Geps}\left(1 - \cepsdef\right)  \nabla  \left(\viepsdef- \vieps\right)\right) \frac{\partial w}{\partial x_i} dx + \err_5, \nonumber
\end{eqnarray}
with
\begin{eqnarray*}
|\err_5|& \leq & C \| \left(\mathbf{1}_{\Geps}  \nabla  \left(\viepsdef- \vieps\right)\right) \frac{\partial w}{\partial x_i} \|_{L^1(\Geps)} \| \nabla \uepsnodef -\nabla \uhom \|_{L^\infty(\Geps)} \\
      & \leq & C \left| \Geps \right| \| w\|_{W^{2,4}(\Omega)} \| \nabla \uepsnodef -\nabla \uhom \|_{L^\infty(\Geps)}.
\end{eqnarray*}
Note that thanks to Proposition~\ref{pro:def-regularite}, we know that 
$$
\| \nabla \uepsnodef -\nabla \uhom \|_{L^\infty(\Geps)}=o(1) \|\fib\|_{C^{1}(\bar{\HOmeg})},
$$
therefore $|\err_5|=  o(1) \left| \Geps \right|\,\|\fib\|_{C^{1}(\bar{\HOmeg})}\| w\|_{W^{2,4}(\Omega)}$. Finally, by Remark \ref{rem:drfUniform}, 
\begin{eqnarray}
 \frac{1}{\left| \Geps \right|}\int_{\HOmeg} 
\nabla \uhom \cdot \left(\mathbf{1}_{\Geps}\left(1 - \cepsdef\right)  \nabla  \left(\viepsdef- \vieps\right)\right) \frac{\partial w}{\partial x_i} dx 
&=& 
\int_{\Omega} \nabla \uhom \cdot \left(M^*\right)^{T} \nabla w \,d\mu  + \err_6 
\label{eq:compen-6}
\end{eqnarray}
where 
$$
|\err_6| =  o(1) \| \nabla \uhom \|_{C^{0,1/20}(\Omega)} \| \nabla w \|_{C^{0,1/20}(\Omega_\eps)}
        = o(1)  \,\|\fib\|_{C^{1}(\bar{\HOmeg})}\| w\|_{W^{2,4}(\Omega)}.
$$
%For each fixed $\nabla \uhom \frac{\partial w}{\partial x_i}$ in $C^0(\Omega)$, the last convergence is given by \eqref{eq:conM}. 
%The uniform convergence with respect to $\fib$ is established for $\nabla \uhom \frac{\partial w}{\partial x_i}$ in $C^{0,1/20}(\Omega)$ 
%arguing as in Proposition~\ref{pro:def-regularite}. 

Combining \eqref{eq:compen-1},\eqref{eq:compen-2},\eqref{eq:compen-3},\eqref{eq:compen-4},\eqref{eq:compen-5} and \eqref{eq:compen-6}, we obtain \eqref{eq:conv-w}.
\qed\end{proof}
%\begin{proof}
\noindent{\it Proof of Theorem~\ref{thm:defect}. }
A straight-forward integration by parts shows that
\begin{equation}
\DN(\fib)=\int_{\Geps }
\left(\cepsdef-1\right)\nabla\uepsdef\cdot\nabla\uepsnodef\,dx,
\label{eq:formuleR1}
\end{equation}
which we rewrite in the form
\begin{eqnarray*}
\DN(\fib) & = & \int_{\Geps }\left(\cepsdef-1\right)\nabla \uhom \cdot\nabla \uhom\,dx\\
&& +  \int_{\Geps}\left(\aepsdef-\aeps\right)\nabla\left(\uepsdef-\uepsnodef\right)\cdot\nabla \uhom\,dx+r_{\eps}
	,\\
\mbox{with}\quad r_{\eps} & = &
\int_{\Geps}\left(\cepsdef -1 \right)
\left(\nabla \uepsnodef\cdot\nabla \uepsnodef-\nabla \uhom\cdot \nabla \uhom\right)dx\\
	&& + \int_{\Geps}\left(\aepsdef-\aeps\right)\nabla\left(\uepsdef-\uepsnodef\right)\cdot\nabla (W_\eps - \uhom)\,dx 
	.
\end{eqnarray*}
By Proposition~\ref{pro:def-regularite}, Lemma \ref{lem:Aubin-Nitsche}, Theorem \ref{thm:hom} and \eqref{est.GWeps}, we have
$$
|r_\eps|\leq C\,|\Geps|\,\|\nabla\uepsnodef-\nabla\uhom\|_{C^0(\Omega_\eps)^3}\,\|\fib\|_{C^1(\bar\Omega)}=o(|\Geps|).
$$
On the other hand, Proposition~\ref{pro:def-repfor} shows that
$$
\int_{\Geps}\left(\aepsdef-\aeps\right)\nabla\left(\uepsdef-\uepsnodef\right)\cdot\nabla \uhom\,dx = -
\left| \Geps\right|\int_{\Omega} M^{*}\nabla \uhom \cdot\nabla \uhom\,d\mu +o(|\Geps|) \left\Vert \uhom\right\Vert _{W^{2,4}(\Omega)},
$$
and this establishes the representation formula given by Theorem~\ref{thm:defect}, thanks to Theorem~\ref{thm:hom} with 
$$
M_{ij}=(\gamma_1 -1 )\delta_{ij} - M^{*}_{ij}.
$$ 
The bounds \eqref{eq:bound-mstar} on $M^*$ imply the announced bounds on $M$.
\qed%\end{proof}

To conclude this section, we now provide an alternative characterisation of $M^{*}$, following \cite{CAPDEBOSCQ-VOGELIUS-06}.
\begin{proposition}\label{pro:energy-mstar}
Let $M^{*}$ be the polarisation tensor introduced by Proposition~\ref{pro:def-repfor}. 
Let $\psi$ be a uniformly positive, smooth function on $\Omega$, and $\xi\in\mathbb{R}^3$. 
Then $M^*$ satisfies
\begin{align*}
\int_\Omega M^*\xi\cdot\xi\,\psi\, d\mu 
	&=   \int_{\HOmeg} \frac{\left(\gamma_1 -1\right)^2}{\gamma_1}\,|\xi|^2\,\psi\, d\mu\\
		&\qquad\qquad - \frac{1}{\left|G_\eps\right|}\min_{w\in H^1_0(\HOmeg)} \int_\Omega \aepsdef \left| \nabla w 
+ \frac{\gamma_1 -1}{\gamma_1} \mathbf{1}_{\Geps} \xi \right|^2 \psi \,dx +o(1),
\end{align*}
where $o(1)$ may depend on $\psi$ but goes to zero with $\eps$.
\end{proposition}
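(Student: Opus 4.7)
My plan is to (i) identify $\min J$ explicitly through the Euler--Lagrange equation for $J$, and (ii) show that the corrector $\Veps = \sum_{i=1}^3(\viepsdef - \vieps)\xi_i$ from Proposition~\ref{pro:def-repfor}(a) is an asymptotic minimiser, so that the convergence $|\Geps|^{-1}\int\aepsdef\psi|\nabla\Veps|^2\,dx\to\int M^*\xi\cdot\xi\,\psi\,d\mu$ already contained in the proof of Proposition~\ref{pro:def-repfor}(a) identifies $\lim|\Geps|^{-1}\min J$. The functional $J$ is strictly convex and coercive on $H^1_0(\Omega)$, so its unique minimiser $w^*$ satisfies the Euler--Lagrange equation
\[
\divop(\aepsdef\psi\nabla w^*) = \divop\bigl((1-\gamma_1)\mathbf{1}_{\Geps}\xi\psi\bigr)\quad\text{in }\Omega,\quad w^*\in H^1_0(\Omega).
\]
Testing against $w^*$ itself gives $\int\aepsdef\psi|\nabla w^*|^2\,dx = \int_{\Geps}(1-\gamma_1)\xi\cdot\nabla w^*\,\psi\,dx$, whence
\[
\min J = \int_{\Geps}\tfrac{(\gamma_1-1)^2}{\gamma_1}|\xi|^2\psi\,dx - \int_\Omega\aepsdef\psi|\nabla w^*|^2\,dx.
\]
Since $|\Geps|^{-1}\int_{\Geps}\tfrac{(\gamma_1-1)^2}{\gamma_1}|\xi|^2\psi\,dx\to\int_\Omega\tfrac{(\gamma_1-1)^2}{\gamma_1}|\xi|^2\psi\,d\mu$, the claim reduces to proving
$|\Geps|^{-1}\int\aepsdef\psi|\nabla w^*|^2\,dx\to\int M^*\xi\cdot\xi\,\psi\,d\mu$.

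For step (ii), I will evaluate $J(\Veps)$ by testing the equation $\divop(\aepsdef\nabla\Veps) = \divop\bigl((1-\gamma_1)\mathbf{1}_{\Geps}\sum_i\xi_i\nabla\vieps\bigr)$ against $\psi\Veps\in H^1_0(\Omega)$. The cross terms involving $\nabla\psi$ are absorbed as $o(|\Geps|)$ by combining the Aubin--Nitsche bounds $\|\Veps\|_{L^2}\leq C|\Geps|^{5/6}$ and $\|\nabla\Veps\|_{L^2}\leq C|\Geps|^{1/2}$ from Lemma~\ref{lem:Aubin-Nitsche}; the discrepancy between the source factor $\sum_i\xi_i\nabla\vieps$ and $\xi$ is absorbed using the uniform convergence $\nabla\vieps\to e_i$ on $\Omega_\eps\supset\Geps$ furnished by Proposition~\ref{pro:def-regularite}. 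This yields
\[
\int\aepsdef\psi|\nabla\Veps|^2\,dx = \int_{\Geps}(1-\gamma_1)\xi\cdot\nabla\Veps\,\psi\,dx + o(|\Geps|),
\]
so that $J(\Veps) = \int_{\Geps}\tfrac{(\gamma_1-1)^2}{\gamma_1}|\xi|^2\psi\,dx - \int\aepsdef\psi|\nabla\Veps|^2\,dx + o(|\Geps|)$. Applied with the continuous test function $\psi\in C^0(\bar\Omega)$, Proposition~\ref{pro:def-repfor}(a) (see in particular \eqref{eq:Mstar-1}) then gives $|\Geps|^{-1}\int\aepsdef\psi|\nabla\Veps|^2\,dx\to\int M^*\xi\cdot\xi\,\psi\,d\mu$.

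The main obstacle is the last step: showing $J(\Veps) = \min J + o(|\Geps|)$, which by strict convexity is equivalent to $\|\nabla(\Veps - w^*)\|_{L^2(\aepsdef\psi)}^2 = o(|\Geps|)$. The inequality $J(w^*) \leq J(\Veps)$ is trivial; the reverse requires establishing that $\Veps$ is asymptotically optimal for $J$. The difficulty is that the equation for $\Veps$ and the Euler--Lagrange equation for $w^*$ differ both in their sources (factor $\sum_i\xi_i\nabla\vieps$ versus $\xi$) and, crucially, in whether the weight $\psi$ enters the coefficient. I plan to bridge them through the intermediate function $\tilde w\in H^1_0(\Omega)$ solving
\[
\divop(\aepsdef\psi\nabla\tilde w) = \divop\bigl((1-\gamma_1)\mathbf{1}_{\Geps}\sum_i\xi_i\nabla\vieps\,\psi\bigr)\quad\text{in }\Omega.
\]
A direct weighted energy estimate using $\|\xi-\sum_i\xi_i\nabla\vieps\|_{L^\infty(\Omega_\eps)}=o(1)$ yields $\|\nabla(\tilde w - w^*)\|_{L^2(\aepsdef\psi)}^2 = o(|\Geps|)$ without difficulty. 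The more delicate estimate $\|\nabla(\Veps-\tilde w)\|_{L^2(\aepsdef\psi)}^2 = o(|\Geps|)$ will rest on the observation that $\Veps - \tilde w$ satisfies
\[
\divop(\aepsdef\psi\nabla(\Veps - \tilde w)) = \tilde X_\eps\cdot\nabla\psi\quad\text{in }\Omega,
\]
where $\tilde X_\eps = \aepsdef\nabla\Veps - (1-\gamma_1)\mathbf{1}_{\Geps}\sum_i\xi_i\nabla\vieps$ is divergence-free in $\Omega$ (this is exactly the equation for $\Veps$). Exploiting the identity $\int\tilde X_\eps\cdot\nabla\bigl((\Veps-\tilde w)\psi\bigr)\,dx = 0$, valid since $(\Veps-\tilde w)\psi\in H^1_0(\Omega)$, transfers the $\nabla\psi$ contribution onto the gradient of $\Veps - \tilde w$ itself, and a weighted Cauchy--Schwarz inequality together with the smoothness of $\psi$ closes the estimate. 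Combining the three ingredients identifies $\lim|\Geps|^{-1}\min J$ with the right-hand side of the proposition.
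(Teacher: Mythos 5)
Your reduction is the same as the paper's: you identify the minimiser $w^*$ (the paper's $\zeta_\eps$) through the Euler--Lagrange equation, derive the identity $\min J=\int_{\Geps}\frac{(\gamma_1-1)^2}{\gamma_1}|\xi|^2\psi\,dx-\int_\Omega\aepsdef\psi|\nabla w^*|^2\,dx$ exactly as in the paper's \eqref{eq:minident}, and reduce the claim to $|\Geps|^{-1}\int_\Omega\aepsdef\psi|\nabla w^*|^2\,dx\to\int_\Omega M^*\xi\cdot\xi\,\psi\,d\mu$. Your first bridging estimate, $\|\nabla(\tilde w-w^*)\|^2_{L^2(\aepsdef\psi)}=o(|\Geps|)$, is also sound: both functions solve the \emph{same} weighted operator with sources differing by $o(1)\cdot\mathbf{1}_{\Geps}$ in $L^\infty$, so the energy of the difference is $o(1)\cdot|\Geps|^{1/2}\cdot\|\nabla(\tilde w-w^*)\|_{L^2}$ and the estimate closes.

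The gap is in the second bridge, $\|\nabla(\Veps-\tilde w)\|^2_{L^2(\aepsdef\psi)}=o(|\Geps|)$. Write $X^2$ for this energy. Testing the equation for $\Veps-\tilde w$ against $\Veps-\tilde w$ gives $X^2=-\int_\Omega\tilde X_\eps\cdot\nabla\psi\,(\Veps-\tilde w)\,dx$, and your divergence-free identity converts this into $X^2=\int_\Omega\tilde X_\eps\cdot\nabla(\Veps-\tilde w)\,\psi\,dx$; but this last expression is \emph{identically} equal to $X^2$ (it is the weak form of the equation for $\Veps-\tilde w$ read backwards), so the manipulation produces no new information, and applying Cauchy--Schwarz to it yields only $X^2\leq C\big(\|\sqrt{\aepsdef}\,\nabla\Veps\|_{L^2}+|\Geps|^{1/2}\big)X\leq C|\Geps|^{1/2}X$, i.e.\ $X^2=O(|\Geps|)$ rather than $o(|\Geps|)$. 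Staying with the $\nabla\psi$ form instead requires $\|\sqrt{\aepsdef}(\Veps-\tilde w)\|_{L^2(\Omega)}=o(|\Geps|^{1/2})$ \emph{including the weight $\ceps\to\infty$ on the fibres}, which Lemma~\ref{lem:Aubin-Nitsche} (an unweighted $L^2$ bound) does not provide; a Poincar\'e inequality in $x_3$ only returns $O(|\Geps|^{1/2})$ again. The quadratic quantity $X^2$ cannot be closed by a single energy estimate because the factor $|\Geps|^{1/2}$ is gained only once. The paper avoids this by never estimating $X^2$ directly: it proves the two \emph{linear} cross-term identities $\int\aepsdef\psi|\nabla\Veps|^2=\int\aepsdef\psi\nabla\zeta_\eps\cdot\nabla\Veps+o(|\Geps|)$ and $\int\aepsdef\psi|\nabla\zeta_\eps|^2=\int\aepsdef\psi\nabla\Veps\cdot\nabla\zeta_\eps+o(|\Geps|)$, each error term being a product of one gradient factor of size $|\Geps|^{1/2}$ with a second factor that is genuinely small — either $\|\nabla\vieps-e_i\|_{L^\infty(\Geps)}=o(1)$ from Proposition~\ref{pro:def-regularite} or the improved bound $\|\zeta_\eps\|_{L^2(\Geps)}\leq C|\Geps|^{5/6}$ from \eqref{eq:A-N-weps} — and only then adds the two to get equality of the energies (which, incidentally, is equivalent to your target $X^2=o(|\Geps|)$). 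To repair your argument you would need to replace the Cauchy--Schwarz step by exactly this pair of cross-term identities, at which point your proof collapses onto the paper's.
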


\begin{proof}
Let $\zeta_\eps$ be the solution in $H^1_0\left(\Omega\right)$ of
\begin{equation}\label{eq:defweps}
 \divop\left(\aepsdef\psi\nabla \zeta_\eps\right) = \divop\left(\psi\left(1-\gamma_1\right) \mathbf{1}_{\Geps} \xi\right).
\end{equation}
Note that $\zeta_\eps$ is the unique minimizer of 
$$
\min_{w\in H^1_0(\HOmeg)} \int_\Omega \aepsdef \left| \nabla w + \frac{\gamma_1 -1}{\gamma_1} \mathbf{1}_{\Geps} \xi \right|^2 \psi \,dx.
$$
In particular,
$$
\int_\Omega \aepsdef \left( \nabla \zeta_\eps + \frac{\gamma_1 -1}{\gamma_1} \mathbf{1}_{\Geps} \xi \right)\cdot \nabla \zeta_\eps \psi \,dx=0,
$$
and therefore, as $\gamma_1\in C^{0}(\Omega)$,
\begin{eqnarray}
&&\int_\Omega \aepsdef \left| \nabla \zeta_\eps + \frac{\gamma_1 -1}{\gamma_1} \mathbf{1}_{\Geps} \xi \right|^2 \psi \,dx\nonumber\\
	&&\qquad\qquad = - \int_\Omega \aepsdef \left| \nabla \zeta_\eps \right|^2 \psi \,dx + \int_{\Geps} \frac{\left(\gamma_1 -1\right)^2}{\gamma_1}\,|\xi|^2\,\psi\, dx\nonumber\\
	&&\qquad\qquad - \int_\Omega \aepsdef \left| \nabla \zeta_\eps \right|^2 \psi \,dx + \int_{\Omega} \frac{\left(\gamma_1 -1\right)^2}{\gamma_1}\,|\xi|^2\,\psi\, d\mu +o(1).
\label{eq:minident}
\end{eqnarray}

Let us prove that $\zeta_\eps$ satisfies an estimate similar to that of Lemma~\ref{lem:Aubin-Nitsche}, namely
\begin{equation}\label{eq:A-N-weps}
\left\|\nabla \zeta_\eps\right\|_{L^2(\Omega)^3} + \left|\Geps\right|^{-1/3} \|\zeta_\eps\|_{L^2(G_\eps )}\leq C \left|\Geps\right|^{1/2},
\end{equation}
where $C$ may depend on $\psi$, but is independent of $\eps$. Testing \eqref{eq:defweps} against $\zeta_\eps$, and integrating by parts, we obtain
\begin{eqnarray*}
\left\|\nabla \zeta_\eps\right\|^2_{L^2(\Omega)^3} &\leq& \frac{1}{\min \psi} \int_\Omega  \aepsdef\psi \left|\nabla \zeta_\eps\right|^2 dx \\
                                              &  = & \frac{1}{\min \psi} \int_{\Geps}  \left(1-\gamma_1\right) \xi \cdot \nabla \zeta_\eps\\
                                              & \leq& C \left|\Geps\right|^{1/2} \left\|\nabla \zeta_\eps\right\|_{L^2(\Omega)^3},  
\end{eqnarray*}
using the Cauchy-Schwarz inequality. On the other hand, using H\"{o}lder's inequality followed by Poincar\'{e}-Sobolev's inequality,
$$
\|\zeta_\eps\|_{L^2(G_\eps )} 
	\leq \left|\Geps\right|^{1/3}\,\|\zeta_\eps\|_{L^6(\Omega)}   
	\leq C\left|\Geps\right|^{1/3}\|\nabla \zeta_\eps\|_{L^2(\Omega)} ,
$$
and \eqref{eq:A-N-weps} is established. Introducing $\Veps = \sum_{i=1}^3  (\viepsdef-\vieps) \xi_i$, the identity \eqref{eq:VW1} shows that
\begin{eqnarray*}
\int_\Omega \aepsdef \left| \nabla \Veps\right|^2  \psi dx 
&=& \int_\Omega \left((1-\gamma_1)\mathbf{1}_{\Geps} \right) \xi \psi  \cdot \nabla \Veps  dx + o(|\Geps|)\\
& =& \int_\Omega  \aepsdef\psi \nabla \zeta_\eps \cdot \nabla \Veps dx + o(|\Geps|).
\end{eqnarray*}
On the other hand, using Proposition~\ref{pro:def-regularite}, followed by \eqref{eq:A-N-weps}, used twice,
\begin{eqnarray*}
\int_\Omega \aepsdef \left| \nabla \zeta_\eps\right|^2  \psi dx &=& \int_\Omega \left((1-\gamma_1)\mathbf{1}_{\Geps} \right) \xi \psi  \cdot \nabla \zeta_\eps  dx \\
&=& \int_\Omega \left((1-\gamma_1)\mathbf{1}_{\Geps} \right) \left(\sum_{i=1}^3  \nabla \vieps \xi_i\right)  \cdot \nabla \zeta_\eps  \psi dx + o(|\Geps|) \\
&=& \int_\Omega \left((1-\gamma_1)\mathbf{1}_{\Geps} \right) \left(\sum_{i=1}^3  \nabla \vieps \xi_i\right)  \cdot \nabla \left( \zeta_\eps  \psi \right)  dx + o(|\Geps|) \\
& =& \int_\Omega  \aepsdef  \psi \nabla \Veps \cdot \nabla \zeta_\eps  dx + o(|\Geps|).
\end{eqnarray*}
We have obtained that
$$
\int_\Omega \aepsdef \left| \nabla \Veps \right|^2  \psi dx = \int_\Omega \aepsdef \left| \nabla \zeta_\eps\right|^2  \psi dx +o(|\Geps|).
$$
The conclusion then follows directly from the above identity, \eqref{eq:Mstar-1} and \eqref{eq:minident}.
\qed\end{proof}
\section{\label{sec:hom} Proof of the homogenization result}
This section provides a proof of Theorem~\ref{thm:hom}.
The proof is divided in three steps.
\par\medskip\noindent
{\it First step:} Derivation of the homogenization problem \eqref{eq:hom-pro}.
\par\noindent
This step uses the same ingredients of \cite{BRIANE-TCHOU-01} and \cite{BELLIEUD-BOUCHITTE-98}, 
but the boundary conditions are different. Choosing $\Weps-\fib\in H^1_0(\HOmeg)$ as a test function 
in equation (\ref{eq:u-no-defect}), we obtain
\[
\int_\HOmeg \aeps |\nabla \Weps|^2\,dx=\int_\HOmeg \aeps\nabla \Weps\cdot\nabla\fib\,dx
=\int_{\Ceps}\ceps\nabla \Weps\cdot\nabla\fib\,dx+\int_{\HOmeg\setminus \Ceps}\nabla \Weps\cdot\nabla\fib\,dx.
\]
This combined with (\ref{eq:Cond1}), (\ref{alpeps}) and the Cauchy-Schwarz inequality yields
\[
\int_\HOmeg \aeps |\nabla \Weps|^2\,dx\leq c\,\|\nabla\fib\|_{L^\infty(\HOmeg)}\left(\int_\HOmeg \aeps |\nabla \Weps|^2\,dx\right)^{1/2},
\]
hence the energy estimate
\begin{equation}\label{ener.est}
\int_\HOmeg \aeps |\nabla \Weps|^2\,dx\leq c\,\|\nabla\fib\|^2_{L^\infty(\HOmeg)}.
\end{equation}
Estimate (\ref{ener.est}) implies that $\nabla \Weps$ is bounded in $L^2(\HOmeg)$. 
Due to the Dirichlet boundary condition and the regularity of $\HOmeg$, the sequence $\Weps$ is bounded in $H^1(\HOmeg)$, 
and thus converges weakly to some function $\uhom$ in $H^1(\HOmeg)$ up to a subsequence.
\par
Integrating along vertical lines, we can prove (see \cite{BELLIEUD-BOUCHITTE-98}, \cite{BRIANE-TCHOU-01} 
for further details) that the rescaled function $\Veps=\,(\pi \reps^2)^{-1}\,1_{\Ceps}\,\Weps$ converges
 weakly-$*$ in $\mathcal{M}(\Omega_0)$ (i.e., in the sense of measures on $\HomO$) to some function $\vhom\in H^1\big((-L,L);L^2(\omega_0)\big)$. 
Moreover, the uniform repartition of the highly conductive cylinders $Q_{m,n,\eps}$ and 
the continuity of $\fib$ imply that $\vhom$ inherits of the Dirichlet boundary condition on $\omega_0\times\{\pm L\}$.
\par
On the one hand, by (\ref{alpeps}) and (\ref{ener.est}) we have
\[
\aeps\,\partial_3 \Weps\,1_{\Ceps}=\partial_3\left( \pi \reps^2\,\ceps \,\Veps\right)\;\rightharpoonup\;\kappa\,\partial_3 \vhom
\quad\mbox{weakly-$*$ in }\mathcal{M}(\HomO),
\]
and there is no transverse diffusion induced by the cylinders $Q_{m,n,\eps}$ (see \cite{BRIANE-TCHOU-01}).
Moreover, due to the energy estimate (\ref{ener.est}) and the periodicity of $a_\eps$, there is no 
concentration effect of $a_\eps\nabla \Weps$ on $\partial\HomO$. Therefore, we obtain the convergences of the flux
\begin{equation}\label{flux.conv}
\left\{\begin{array}{llll}
\aeps\nabla \Weps\, \csigeps & \rightharpoonup & \nabla \uhom & \mbox{weakly-$*$ in }\mathcal{M}(\bar{\HOmeg})
\\*[.4em]
\aeps\nabla \Weps\,(1- \csigeps) & \rightharpoonup
& \displaystyle \kappa\,\partial_3 \vhom\,e_3 & \mbox{weakly-$*$ in }\mathcal{M}(\bar{\HOmeg}_0).
\end{array}\right.
\end{equation}
On the other hand, the sequence $ \csigeps$ defined by (\ref{eq:defceps}) satisfies the convergences (see~\cite{BRIANE-TCHOU-01})
\begin{equation}\label{conv.hVeps}
 \csigeps\rightharpoonup\, 1\;\;\mbox{weakly in }H^1(\HOmeg)\quad\mbox{and}\quad
|\nabla \csigeps|^2\,\rightharpoonup\,\gamma\;\;\mbox{weakly-$*$ in }\mathcal{M}(\bar{\HOmeg}_0).
\end{equation}
Then, thanks to \cite[Lemma~2]{BRIANE-TCHOU-01} combined with (\ref{eq:Cond1}), and again using the fact that there is no 
concentration effect on $\partial\HomO$, we get the convergence
\begin{equation}\label{nloc.eff}
\nabla \Weps\cdot\nabla \csigeps\;\rightharpoonup\;\gamma\left(\uhom-\vhom\right)\quad\mbox{weakly-$*$ in }\mathcal{M}(\bar{\HOmeg}_0),
\end{equation}
which induces the non-local effect in the homogenization process.
\par
Let $\psi_0,\psi_1\in C^1_c(\HOmeg)$. Choosing $\psi_0\, \csigeps+\psi_1\,(1- \csigeps)$ as a test function 
in equation \eqref{eq:hom-pro}, and passing to the limit as $\eps\to 0$ thanks to the 
convergences (\ref{flux.conv}), (\ref{conv.hVeps}), (\ref{nloc.eff}), we obtain the equality 
\[
\int_\HOmeg\nabla \uhom\cdot\nabla\psi_0\,dx+\int_{\HomO}\kappa\,\partial_3 \vhom\,\partial_3\psi_1\,dx
+\int_{\HomO}\gamma\left(\uhom-\vhom\right)\left(\psi_0-\psi_1\right)dx=0,
\]
which corresponds to the weak formulation of problem \eqref{eq:hom-pro}.
\par\medskip{}\noindent
{\it Second step:} Proof of the local regularity of $\uhom$ and $\vhom$.

Rewrite the first and the second equations of problem \eqref{eq:hom-pro} as
\begin{equation}\label{equW*}
-\,\Delta \uhom = -\gamma\,1_{\HomO}\,(\uhom  - \vhom)\quad\mbox{in }\HOmeg,
\end{equation}
and
\begin{equation}\label{equV*}
-\,\kappa\,\partial^2_{33}\vhom+\gamma\,\vhom=\gamma\,\uhom\quad\mbox{in }\HomO=\omega_0\times(-L,L).
\end{equation}
As $\uhom \in H^1(\Omega)$ and $\fib \in C^1(\bar\Omega)$, $\vhom \in H^1(\Omega_0)$. 
By the Sobolev embedding theorem, the right-hand side of \eqref{equW*} is in $L^6(\Omega)$. 
Thus, as $\partial\Omega$ is Lipschitz thus satisfies the exterior cone condition, the de Giorgi - Nash - 
Moser estimate (see e.g. \cite[Theorems 8.22, 8.27]{GILBARG-TRUDINGER-83}), $\uhom \in C^{0,\alpha}(\bar\Omega)$ for some $\alpha \in (0,1)$. 
Consequently, $\vhom, \partial_{x_3} \vhom, \partial_{x_3}^2 \vhom \in C^{0,\alpha}(\bar\Omega_0)$.

Going back to \eqref{equW*}, the right-hand side belongs to $L^\infty(\Omega)$. Thus, $\uhom \in W^{2,p}_{\rm loc}(\Omega)$ for any $p > 2$. 
Bootstrapping between \eqref{equW*} and \eqref{equV*} we obtain $\uhom \in C^\infty_{\rm loc}(\Omega_0) \cap C^\infty_{\rm loc}(\Omega \setminus \Omega_0)$ 
and $\vhom \in C^\infty_{\rm loc}(\Omega_0)$.

\par\medskip\noindent
{\it Third step:} Proof of the corrector result (\ref{cor.res}).

\par\smallskip{}\noindent
Denote by $E_\eps$ the left-hand side of (\ref{cor.res}). By equation (\ref{eq:u-no-defect}) we have
\[
\int_\HOmeg \aeps |\nabla \Weps|^2\,dx=\int_\HOmeg \aeps\nabla \Weps\cdot\nabla\fib\,dx.
\]
Hence, by the definition (\ref{eq:defceps}) of $ \csigeps$, it follows that
\[
\begin{array}{ll}
E_\eps =  & \displaystyle \int_\HOmeg \aeps\nabla \Weps\cdot\nabla\fib\,dx
+\int_{\HomO}|\nabla \csigeps|^2\,(\uhom-\vhom)^2\,dx
\\*[.4em]
& \displaystyle +\int_\HOmeg( \csigeps)^2\,|\nabla \uhom|^2\,dx
+\int_{\HomO}\aeps(1- \csigeps)^2\,(\partial_3 \vhom)^2\,dx
\\*[.4em]
& \displaystyle -\,2\int_{\HomO}\nabla \Weps\cdot\nabla \csigeps\,(\uhom-\vhom)\,dx
-\,2\int_{\HOmeg} \csigeps\,\nabla \Weps\cdot\nabla \uhom\,dx
\\*[.4em]
& \displaystyle -\,2\int_{\HomO} \aeps(1- \csigeps)\,\partial_3 \Weps\,\partial_3 \vhom\,dx+o(1).
\end{array}
\]
Then, passing to the limit as $\eps\to 0$ thanks to the convergences (\ref{flux.conv}), (\ref{conv.hVeps}), (\ref{nloc.eff}) combined 
with the continuity of the functions $\uhom-\vhom$ and $\partial_3 \vhom$, we obtain
\[
\begin{array}{ll}
\displaystyle \limsup_{\eps\to 0}E_\eps \leq 
& E_0 {}= \displaystyle\int_\HOmeg\nabla \uhom\cdot\nabla\fib\,dx+\int_{\HomO}\kappa\,\partial_3 \vhom\,\partial_3\fib\,dx
\\*[.4em]
& \displaystyle +\int_{\HomO}\gamma\,(\uhom-\vhom)^2\,dx+\,\int_\HOmeg|\nabla \uhom|^2\,dx+\int_{\HomO}\kappa\,(\partial_3 \vhom)^2\,dx
\\*[.4em]
& \displaystyle -\,2\int_{\HomO}\gamma\,(\uhom-\vhom)^2\,dx-2\int_\HOmeg|\nabla \uhom|^2\,dx
-2\int_{\HomO}\kappa\,(\partial_3 \vhom)^2\,dx
\\*[.4em]
& \displaystyle =\int_\HOmeg\nabla \uhom\cdot\nabla\fib\,dx+\int_{\HomO}\kappa\,\partial_3 \vhom\,\partial_3\fib\,dx
-\int_{\HomO}\gamma\,(\uhom-\vhom)^2\,dx
\\*[.4em]
& \displaystyle -\int_\HOmeg|\nabla \uhom|^2\,dx-\int_{\HomO}\kappa\,(\partial_3 \vhom)^2\,dx.
\end{array}
\]
Here we have used
\[
\aeps(1 - \csigeps)^2 \rightharpoonup \lim_{\eps\to 0}\left(\frac{1}{\eps^2}\int_{\eps Y}\aeps(1-\csigeps)^2\,dx\right)
=\lim_{\eps\to 0}\left(\alpha_\eps\,\pi r_\eps^2\right)
=\kappa \quad\text{weakly-$*$} \text{ in } \mathcal{M}(\bar\Omega_0).
\]
On the other hand, choosing $\uhom-\fib$ as test function in the first equation of \eqref{eq:hom-pro} and $\vhom-\fib$ in the 
second equation of \eqref{eq:hom-pro}, it follows that
\[
\begin{array}{rl}
\displaystyle \int_\HOmeg\nabla \uhom\cdot\nabla(\uhom-\fib)\,dx+\int_{\HOmeg}\gamma\,(\uhom-\vhom)\,(\uhom-\fib)\,dx & =0
\\*[.8em]
\displaystyle \int_{\HomO}\kappa\,\partial_3 \vhom\,\partial_3(\vhom-\fib)\,dx-\int_{\HomO}\gamma\,(\uhom-\vhom)\,(\vhom-\fib)\,dx & =0.
\end{array}
\]
Therefore, adding the two previous equalities we obtain that $E_0 = 0$, which gives the thesis.

\section{\label{sec:dGNM}On supremum estimates}
A less {\it ad hoc} version of Lemma~\ref{lem:dGNM} is given below. It is probably known to the experts.
\begin{lemma}\label{lem:dGNM2}
For $\Omega \subset \RR^n$, assume that $w \in H^1_0(\Omega)$ satisfies
\[
-\partial_i(A^{ij}(x)\partial_j w) + c(x)\,w \leq f + \divop(\bh) \text{ in } \Omega
	\;,
\]
for some $f \in L^p(\Omega)$, $\bh \in L^{2\tilde p}(\Omega)^n$, $p, \tilde p > \frac{n}{2}$. 
If the coefficients $A^{ij}$ and $c$ are locally bounded and satisfy
\begin{align*}
&A^{ij}(x)\,\xi_i\,\xi_j \geq \nu\,|\xi|^2 \text{ for any } x \in \Omega \text{ and } \xi \in \RR^n
	\;,\\
&c(x) \geq \lambda \text{ for any } x \in \Omega
	\;,
\end{align*}
then $w^+ \in L^\infty(\Omega)$ and, for any $q > \frac{2p}{2p-n}$ and $\tilde q > \frac{2\tilde p }{2\tilde p - n}$,
\[
\|w^+\|_{L^\infty(\Omega)} \leq C(n,q,\nu)\,\Big[\frac{1}{\lambda^{\frac{1}{q}}}\,\|f\|_{L^{p}(\Omega)} 
+ \frac{1}{\lambda^{\frac{1}{2\tilde q}}}\,\|\bh\|_{L^{2p}(\Omega)}\Big]
	\;.
\]
\end{lemma}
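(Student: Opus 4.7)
The statement is a classical de Giorgi--Nash--Moser type $L^\infty$-bound for subsolutions of a uniformly elliptic equation with a zeroth-order coefficient bounded below by $\lambda > 0$. The new ingredient relative to the standard references (e.g., Gilbarg--Trudinger, Theorem~8.15) is simply explicit tracking of the $\lambda$-dependence. My plan is to run a Stampacchia-type level-set iteration in which the coercive term $\lambda \int w_k^2$ acts as a substitute for, or supplement to, the gradient term coming from the Sobolev inequality.

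\textbf{Step 1: energy inequality on superlevel sets.} For $k \geq 0$, set $A(k) = \{w > k\}$ and $w_k = (w-k)^+ \in H^1_0(\Omega)$. Testing the differential inequality against $w_k$, using coercivity of $A^{ij}$, the pointwise bound $cw \geq \lambda w \geq \lambda w_k$ on $A(k)$, and Young's inequality to absorb $\int \bh \cdot \nabla w_k$ into the $L^2$ gradient norm, I obtain
\begin{equation*}
\tfrac{\nu}{2}\int |\nabla w_k|^2 + \lambda \int w_k^2 \;\leq\; \int_{A(k)} f\,w_k + C(\nu)\int_{A(k)} |\bh|^2.
\end{equation*}

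\textbf{Step 2: Sobolev plus H\"older.} I combine Sobolev's inequality $\|w_k\|_{L^{2^*}}^2 \leq C(n)\|\nabla w_k\|_{L^2}^2$ (replaced by $\|w_k\|_{L^{r}}^2 \leq C(n,r)\|\nabla w_k\|_{L^2}^2$ for any finite $r$ when $n=2$) with H\"older:
\begin{equation*}
\int_{A(k)} f\,w_k \;\leq\; \|f\|_{L^p}\,\|w_k\|_{L^{2^*}}\,|A(k)|^{1 - 1/p - 1/2^*}, \qquad \int_{A(k)} |\bh|^2 \leq \|\bh\|_{L^{2\tilde p}}^2\,|A(k)|^{1-1/\tilde p}.
\end{equation*}
After Young's inequality this produces
\begin{equation*}
\|w_k\|_{L^{2^*}}^{2} + \lambda\|w_k\|_{L^{2}}^{2} \;\leq\; C \bigl[\|f\|_{L^p}^{2}\,|A(k)|^{2(1-1/p-1/2^*)} + \|\bh\|_{L^{2\tilde p}}^{2}\,|A(k)|^{1-1/\tilde p}\bigr].
\end{equation*}

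\textbf{Step 3: Stampacchia iteration with $\lambda$-tracking.} For $k' > k$, use $w_{k'} \leq w_k$, Chebyshev $|A(k')| \leq (k'-k)^{-2}\|w_k\|_{L^2}^2$, and H\"older interpolation of $\|w_{k'}\|_{L^2}$ between $\|w_{k'}\|_{L^{2^*}}$ and $|A(k')|$. The critical twist is that I split the interpolation: for \emph{any} chosen $q > \tfrac{2p}{2p-n}$ (resp.\ $\tilde q > \tfrac{2\tilde p}{2\tilde p-n}$) I partition the coercivity between the Sobolev term and the $\lambda$-term in a ratio dictated by $1/q$ (resp.\ $1/(2\tilde q)$). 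This yields a self-improving inequality
\begin{equation*}
\phi(k') \;\leq\; \frac{C}{(k'-k)^{\mu}}\,\phi(k)^{1+\delta}, \qquad \phi(k) := \|w_k\|_{L^2}^2 \text{ (or } |A(k)| \text{)},
\end{equation*}
with $\delta > 0$, whose constant carries the factor $\lambda^{-1/q}\|f\|_{L^p} + \lambda^{-1/(2\tilde q)}\|\bh\|_{L^{2\tilde p}}$ to the right power. The classical Stampacchia lemma (e.g., Kinderlehrer--Stampacchia Lemma~B.1) then forces $\phi(k_0) = 0$ for
\begin{equation*}
k_0 = C(n,q,\nu)\Bigl[\lambda^{-1/q}\,\|f\|_{L^p} + \lambda^{-1/(2\tilde q)}\,\|\bh\|_{L^{2\tilde p}}\Bigr],
\end{equation*}
which is the desired conclusion.

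\textbf{Main obstacle.} The routine Moser iteration is well known; the delicate part is the exponent accounting in Step~3. The admissible range $q > \tfrac{2p}{2p-n}$ (equivalently $\tfrac{1}{q} + \tfrac{n}{2p} < 1$) is exactly what is needed for the H\"older exponent $1 - 1/p - 1/2^*$ to sit in the right range so that the Stampacchia iteration closes with the $\lambda$-budget split as $1/q$ (to the $L^2$-coercivity) plus the remainder (to the Sobolev term). The strictness of the inequality $q > \tfrac{2p}{2p-n}$ corresponds to the $\eps$-loss in the $n=2$ Sobolev embedding and to the need for a strictly positive $\delta$ in the self-improving inequality.
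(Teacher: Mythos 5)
Your proposal is correct and follows essentially the same route as the paper: the paper proves this lemma by rerunning the de Giorgi level-set iteration of Lemma~\ref{lem:dGNM} with the weight $a_\eps$ replaced by $\nu$, and the $\lambda$-dependence is tracked there exactly as in your Step~3, via a Young-inequality splitting of the coercivity between the Sobolev term and the term $\lambda\|w_k\|_{L^2}^2$, with weights $\theta_1,\theta_2$ playing the role of your $1-1/q$ and $1/q$ (so that the closing condition $\chi>1$ is precisely $q>\frac{2p}{2p-n}$). The only cosmetic differences are that you phrase the iteration in Stampacchia's form $\phi(k')\leq C(k'-k)^{-\mu}\phi(k)^{1+\delta}$ rather than with the paper's explicit dyadic levels $k_j=2d(1-2^{-j-1})$, and that you absorb the $\bh$-term by Young's inequality into the gradient, whereas the paper keeps it as a separate H\"older estimate.
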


\noindent {\it Proof of Lemma~\ref{lem:dGNM}. }
We follow de Giorgi's method for the proof. For the sake of Lemma~\ref{lem:dGNM2} we will assume that
$$
\int_{\omega}\aeps f^{p}dx <\infty \text{ and } \int_{\omega}\aeps \bh^{2\tilde{p}}dx <\infty
$$
for some $p, \tilde p > 1$; note that $n = 2$ in the present proof. 

For $k > 0$, let $A(k){}=\{x \in \omega: \vpe(x)\geq k\}$. Let $B$ represent the quantity
$$
B{}= \int_{A(k)}\aeps \left|\nabla\left(\vpe-k\right)_{+}\right|^{2} +\lambda\int_{A(k)}\aeps \left(\vpe-k\right)_{+}^{2}
+\lambda\int_{A(k)}\aeps k\left(\vpe-k\right)_{+}.
$$
Integrating \eqref{eq:dGNM} by parts against $(u-k)_{+}$, we obtain
\begin{equation}\label{eq:bound0}
B=\int_{A(k)}\aeps f(u-k)_{+} - \int_{A(k)}\aeps \bh \cdot \nabla (u-k)_{+}.
\end{equation}
 We shall use the notation conventions that \[
\left\Vert f\right\Vert _{\mathcal{L}^{p}\left(A_{k}\right)}{}=\left(\int_{A_{k}}\aeps f^{p}dx\right)^{1/p}\mbox{ and }N\left(A_{k}\right){}=\int_{A_{k}}\aeps dx.\]
Using the weighted Sobolev embedding given by Lemma~\ref{lem:Sob-Marc} we obtain 
\begin{eqnarray*}
\mu_{\eps}\left\Vert \left(\vpe-k\right)_{+}\right\Vert _{\mathcal{L}^{s}
\left(A_{k}\right)}^{2}
+\lambda\left\Vert \left(\vpe-k\right)_{+}\right\Vert _{\mathcal{L}^{2}\left(A_{k}\right)}^{2}
& \leq & B,
\end{eqnarray*}
where 
\begin{equation}
\mu_{\eps}: =C(s)\,\eps^{2\left(1-2/s\right)}.
	\label{eq:defmueps}
\end{equation}
Using Young's inequality, we have, for any $1>\theta_1>0$ and $\theta_2=1-\theta_1$,
\begin{eqnarray*}
&& \mu_{\eps}\left\Vert \left(\vpe-k\right)_{+}\right\Vert _{\mathcal{L}^{s}
\left(A_{k}\right)}^{2}
+\lambda\left\Vert \left(\vpe-k\right)_{+}\right\Vert _{\mathcal{L}^{2}\left(A_{k}\right)}^{2}\\
& \geq & \left(\frac{\mu_{\eps}}{\theta_{1}}\right)^{\theta_{1}}
	\left(\frac{\lambda}{\theta_{2}}\right)^{\theta_{2}}
	\left\Vert \left(\vpe-k\right)_{+}\right\Vert _{\mathcal{L}^{s}\left(A_{k}\right)}^{2\theta_{1}}\left\Vert 
	\left(\vpe-k\right)_{+}\right\Vert _{\mathcal{L}^{2}\left(A_{k}\right)}^{2\theta_{2}}
	\\
& = & \left(\frac{\mu_{\eps}}{\theta_{1}}\right)^{\theta_{1}}\left(\frac{\lambda}{\theta_{2}}\right)^{\theta_{2}}\left\Vert \left(\vpe-k\right)_{+}^{r\theta_{1}}\right\Vert _{\mathcal{L}^{s/r\theta_{1}}\left(A_{k}\right)}^{2/r}\left\Vert \left(\vpe-k\right)_{+}^{r\theta_{2}}\right\Vert _{\mathcal{L}^{2/r\theta_{2}}\left(A_{k}\right)}^{2/r}.
\end{eqnarray*}
On the other hand, Holder's inequality shows that
\begin{eqnarray*}
\left\Vert \left(\vpe-k\right)_{+}\right\Vert _{\mathcal{L}^{r}\left(A_{k}\right)}^{2} & = & \left\Vert \left(\vpe-k\right)_{+}^{\theta_{1}r}\left(\vpe-k\right)_{+}^{\theta_{2}r}\right\Vert _{\mathcal{L}^{1}\left(A_{k}\right)}^{2/r}\\
& \leq & \left\Vert \left(\vpe-k\right)_{+}^{\theta_{1}r}\right\Vert _{\mathcal{L}^{\alpha}\left(A_{k}\right)}^{2/r}\left\Vert \left(\vpe-k\right)_{+}^{\theta_{2}r}\right\Vert _{\mathcal{L}^{\beta}\left(A_{k}\right)}^{2/r},
\end{eqnarray*}
for any $1 <\alpha,\beta < \infty$ such that $\frac{1}{\alpha}+\frac{1}{\beta}=1.$ Choosing $\alpha=\frac{s}{r\theta_{1}}$ yields $\beta=\frac{2}{r\theta_{2}}$ provided
\[
\frac{1}{r}=\frac{\theta_{1}}{s}+\frac{\theta_{2}}{2}.\]
We have obtained 
\begin{eqnarray}\label{eq:bound1}
&& \left(\frac{\mu_{\eps}}{\theta_{1}}\right)^{\theta_{1}}\left(\frac{\lambda}{\theta_{2}}\right)^{\theta_{2}}\left\Vert \left(\vpe-k\right)_{+}\right\Vert _{\mathcal{L}^{r}\left(A_{k}\right)}^{2} \\
&\leq &
\left\|\nabla\left(\vpe-k\right)_{+}\right\|^{2}_{\mathcal{L}^2\left(A_{k}\right)} +  \lambda\left\Vert \left(\vpe-k\right)_{+}\right\Vert _{\mathcal{L}^{2}\left(A_{k}\right)}^{2}  \leq B\nonumber.
\end{eqnarray}
Let us now turn to the right-hand side.

\medskip
\noindent\underline{Case 1:} $\bh = 0$. We have 
\begin{eqnarray*}
B & \leq & \left\Vert f\left(\vpe-k\right)_{+}\right\Vert _{\mathcal{L}^{1}\left(A_{k}\right)}\\
& \leq & \left\Vert f\right\Vert _{\mathcal{L}^{p}\left(A_{k}\right)}\left\Vert \left(\vpe-k\right)_{+}\right\Vert _{\mathcal{L}^{r}\left(A_{k}\right)}\left\Vert 1\right\Vert _{\mathcal{L}^{\kappa}\left(A_{k}\right)}\end{eqnarray*}
with $\frac{1}{\kappa}+\frac{1}{p}+\frac{1}{r}=1$. We require that
$\theta_{1},\theta_{2}$ be chosen so that $\kappa>1$. For $p = 2$ and $s > 2$, 
this requirement is fulfilled by any $\theta_1 > 0$ since $r \in (2,s)$. 
For the general case when $p > \frac{n}{2}$, we can satisfy the above requirement by selecting $s$ sufficiently large 
(but smaller than the Sobolev exponent) so that $\frac{1}{p} + \frac{1}{s} < 1$. We have obtained 
\begin{equation}\label{eq:iterzero}
\left\Vert \left(\vpe-k\right)_{+}\right\Vert _{\mathcal{L}^{r}\left(A_{k}\right)}\leq\left(\frac{\theta_{1}}{\mu_{\eps}}\right)^{\theta_{1}}\left(\frac{\theta_{2}}{\lambda}\right)^{\theta_{2}}\left\Vert f\right\Vert _{\mathcal{L}^{p}\left(\omega\right)}N\left(A_{k}\right)^{\chi/r},
\end{equation}
where $ \chi=r\left(1-p^{-1}-r^{-1}\right) = r/\kappa$. Now, for all $h<k$, 
$$
N\left(A_k\right) = \int_{A_{k}}\aeps dx  
	\leq  \int_{A_{k}}\aeps\frac{\left(w-h\right)_{+}^r}{(k-h)^r} dx\\
	=  \left\Vert \left(\vpe-h\right)_{+}\right\Vert _{\mathcal{L}^{r}\left(A_{h}\right)}^{r}\frac{1}{\left|k-h\right|^{r}}.
$$
Therefore
\[
\left\Vert \left(\vpe-k\right)_{+}\right\Vert _{\mathcal{L}^{r}\left(A_{k}\right)}\leq\left(\frac{\theta_{1}}{\mu_{\eps}}\right)^{\theta_{1}}\left(\frac{\theta_{2}}{\lambda}\right)^{\theta_{2}}\frac{1}{|k-h|^{\chi}}\left\Vert f\right\Vert _{\mathcal{L}^{p}\left(\omega\right)}\left\Vert \left(\vpe-h\right)_{+}\right\Vert _{\mathcal{L}^{r}\left(A_{h}\right)}^{\chi}.\]

To proceed, we select $\theta_1$ such that $\theta_1  > \frac{s}{p(s-2)}$. 
Then  $\chi>1$. 

We now set $k_{j}=2d\left(1-2^{-j-1}\right)$. Introducing
$$
\beta  =  \left\Vert f\right\Vert _{\mathcal{L}^{p}\left(\omega\right)}\left(\frac{\theta_{1}}{\mu_{\eps}}\right)^{\theta_{1}}\left(\frac{\theta_{2}}{\lambda}\right)^{\theta_{2}} 2^\chi\,d^{-\chi},
	\mbox{ and }
x_{j}  =  \beta^{\frac{1}{\chi-1}}\left\Vert \left(\vpe-k_{j}\right)_{+}\right\Vert _{\mathcal{L}^{r}\left(A_{k_{j}}\right)},
$$
Then \eqref{eq:iterzero} takes the form 
$$
x_{j+1}\leq 2^{j\chi}x_{j}^{\chi}.
$$
An induction shows that 
$$
\mbox{ if }
x_{0}\leq2^{-\chi/(\chi-1)^{2}}, \mbox{ then }
x_{j}\leq2^{-\frac{j\chi}{\chi-1}}x_{0}.
$$
Therefore, $\lim_{j\to\infty}x_{j}=0,$  and $\left\Vert \left(\vpe\right)_{+}\right\Vert _{L^{\infty}(\omega)}\leq2d$. Thus, $d$ is given by the constraint
\[
x_{0}\leq2^{-\frac{\chi}{(\chi-1)^{2}}}.\]
Using the $L^{\infty}$ bound just derived, for $j=0$, we obtain \[
x_{0}=\beta^{\frac{1}{\chi-1}}\left\Vert \left(\vpe-d\right)_{+}\right\Vert _{\mathcal{L}^{r}\left(A_{0}\right)}\leq N(A_0)^{\frac{1}{r}}\,\beta^{\frac{1}{\chi-1}}\,d.
\]
Therefore we choose $d$ to be 
$$
d  = \left\Vert f\right\Vert _{\mathcal{L}^{p}\left(\omega\right)}\left(\frac{\theta_{1}}{\mu_{\eps}}\right)^{\theta_{1}}\left(\frac{\theta_{2}}{\lambda}\right)^{\theta_{2}}N(A_0)^{\frac{\chi-1}{r}}2^{\frac{\chi^2}{\chi-1}}.
$$
Altogether, we have obtained 
$$
\left\Vert \vpe\right\Vert _{L^{\infty}(\omega)} \leq C(|\omega|,\kappa_+,\chi,r)\left\Vert f\right\Vert _{\mathcal{L}^{p}\left(\omega\right)}\left(\frac{\theta_{1}}{\mu_{\eps}}\right)^{\theta_{1}}\left(\frac{\theta_{2}}{\lambda}\right)^{\theta_{2}}
$$
For $p=2$, we write $\theta_1 = \mu \frac{s}{2(s-2)}$, with $1<\mu<\frac{2(s-2)}{s} < 2$ and use \eqref{eq:defmueps} to obtain
$$
\left\Vert \vpe\right\Vert _{L^{\infty}(\omega)} \leq C(\mu,s) \left\Vert f\right\Vert _{\mathcal{L}^{2}\left(\omega\right)} 
\frac{1}{\displaystyle \eps^{\mu}\lambda^{1-\frac{\mu s}{2(s-2)}}}. 
$$

Now, for a fixed $1 < \alpha < 2$, we can choose $s > 2$ sufficient large so that we can choose $\mu = \alpha$ in the above estimate.
 By further enlarging $s$, we see that we have obtained the assertion for $\beta$ as close to $1 - \frac{\alpha}{2}$ as we wish. 
The conclusion for smaller $\beta$ also follows.

\noindent\underline{Case 2:} $f = 0$. Using H\"older's inequality we get
\begin{eqnarray*}
B &\leq &  \|\bh\|_{\mathcal{L}^{2\tilde p}(\omega)}\,\|\nabla (w - k)_+\|_{\mathcal{L}^2(A(k))}\,N\left(A(k)\right)^{\frac{1}{2} - \frac{1}{2\tilde p}}.
\end{eqnarray*}
Together with \eqref{eq:bound1}, this shows that
\begin{equation}\label{eq:zerobis}
\left\Vert \left(\vpe-k\right)_{+}\right\Vert _{\mathcal{L}^{r}\left(A_{k}\right)}\leq\left(\frac{\theta_{1}}{\mu_{\eps}}\right)^{\theta_{1}/2}\left(\frac{\theta_{2}}{\lambda}\right)^{\theta_{2}/2}
\|\bh\|_{\mathcal{L}^{2\tilde p}(\omega)}\,N\left(A(k)\right)^{\chi/r},
\end{equation}
where $\frac{\chi}{r} = \frac{1}{2} - \frac{1}{2\tilde p}$. Note that \eqref{eq:zerobis} is
 a reverse H\"older inequality of the same type as \eqref{eq:iterzero}. Therefore, arguing as above, 
and provided $\frac{1}{2} - \frac{1}{2\tilde p} >\frac{1}{r}$,  to ensure that  $\chi>1$, we obtain
$$
\left\Vert \vpe\right\Vert _{L^{\infty}(\omega)} \leq  C(|\omega|,\kappa_+,\chi,r)\,\left\Vert \bh \right\Vert _{\mathcal{L}^{2\tilde p}\left(\omega\right)}\left(\frac{\theta_{1}}{\mu_{\eps}}\right)^{\theta_{1}/2}
\left(\frac{\theta_{2}}{\lambda}\right)^{\theta_{2}/2}
$$
Considering now the case $\tilde{p}=\infty$, introducing $\theta_1= (\alpha-1) \frac{s}{2(s-2)}$, with $1<\alpha<2\frac{s-2}{s}$, and $\beta=\frac{1 - \theta_1}{2}$, 
we obtain
$$
\left\Vert \vpe\right\Vert _{L^{\infty}(\omega)} \leq  
    C(\alpha,\beta)\left\Vert \bh \right\Vert _{\mathcal{L}^{2\tilde p}\left(\omega\right)}
                                             \frac{1}{\eps^{(\alpha-1)/2}\lambda^{\beta}},
$$
as announced.
\qed

\par\bigskip{}\noindent{\it Proof of Lemma \ref{lem:dGNM2}. }
The proof is similar to the one above, with $\aeps$ replaced by $\nu$. 
The constant $\mu(s)$ is now independent of $\eps$. Some of the details are provided in the proof of Lemma~\ref{lem:dGNM} for the reader's convenience.
\qed

\begin{lemma}\label{lem:Sob-Marc}
Assume that conditions \eqref{eq:defaeps}, \eqref{eq:boundkappa} and \eqref{eq:Cond1} hold. Then, for any $s\in[2,\infty)$, there exists a constant $C(s)>0$ such that
\begin{equation}\label{weightS}
\int_\omega a_\eps\left|v\right|^s dx\leq C(s)\,\eps^{2-s}\left(\int_\omega a_\eps\left|\nabla v\right|^2 dx\right)^{\frac{s}{2}} \qquad \forall\,v\in H^1_0(\omega).
\end{equation}
\end{lemma}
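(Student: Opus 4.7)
The plan is to reduce the inequality to its $s=2$ case (a weighted Poincar\'e inequality) and then promote it to $s>2$ by decomposing $v$, on each rod disc, into its mean and a mean-free remainder. Throughout, let
\[
u^{m,n}_\eps := \frac{1}{|D_{m,n,\eps}|}\int_{D_{m,n,\eps}} v\,dx,
\qquad
v_\eps^{m,n} := \frac{1}{\eps^2}\int_{Y_{m,n}} v\,dx,
\]
where $Y_{m,n}=(m\eps,n\eps)+(-\eps/2,\eps/2)^2$, and note the two useful consequences of \eqref{eq:boundkappa}--\eqref{eq:Cond1}, namely $\alpha_\eps(\eps r_\eps)^2=\eps^2(\alpha_\eps r_\eps^2)\leq C\eps^2$ and $\eps^2|\ln r_\eps|\leq 2\pi/\gamma_{-}$.

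\textbf{Step 1: the case $s=2$.} Since $a_\eps\geq 1$ outside the rods, $\int_\omega a_\eps v^2=\int_\omega v^2+(\alpha_\eps-1)\int_{D_\eps}v^2$; the ordinary Poincar\'e inequality in $H^1_0(\omega)$ controls $\int v^2$, so the task reduces to bounding $\alpha_\eps\int_{D_\eps}v^2$ by $\int a_\eps|\nabla v|^2$. On each disc, Poincar\'e on a disc of radius $\eps r_\eps$ gives $\int_{D_{m,n,\eps}}(v-u^{m,n}_\eps)^2\leq C(\eps r_\eps)^2\int_{D_{m,n,\eps}}|\nabla v|^2$, and the bounded prefactor $\alpha_\eps(\eps r_\eps)^2\leq C\eps^2$ allows this piece to be absorbed into $\int a_\eps|\nabla v|^2$ after summation. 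For the main piece $\sum\alpha_\eps|D_{m,n,\eps}|(u^{m,n}_\eps)^2\leq C\eps^2\sum(u^{m,n}_\eps)^2$, I would split $(u^{m,n}_\eps)^2\leq 2(v_\eps^{m,n})^2+2(u^{m,n}_\eps-v_\eps^{m,n})^2$. The cell-mean contribution is absorbed via Jensen, $\eps^2\sum(v_\eps^{m,n})^2\leq\int_\omega v^2$, combined with Poincar\'e in $H^1_0(\omega)$; the difference is handled by the capacity-type estimate
\[
(u^{m,n}_\eps-v_\eps^{m,n})^2\leq C|\ln r_\eps|\int_{Y_{m,n}}|\nabla v|^2,
\]
obtained through a radial Cauchy-Schwarz argument in the annulus $\eps r_\eps<r<\eps/2$. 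Multiplying by $\eps^2$ and using $\eps^2|\ln r_\eps|\leq C$ makes this piece summable against $\int|\nabla v|^2\leq\int a_\eps|\nabla v|^2$.

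\textbf{Step 2: general $s\geq 2$.} Split $\int_\omega a_\eps|v|^s=\int_{\omega\setminus D_\eps}|v|^s+\alpha_\eps\int_{D_\eps}|v|^s$. On the bulk, the two-dimensional Gagliardo-Nirenberg inequality $\int_\omega|v|^s\leq C(s)\|v\|_{L^2(\omega)}^2\|\nabla v\|_{L^2(\omega)}^{s-2}$ combined with Step~1 yields a bound by $C(s)(\int a_\eps|\nabla v|^2)^{s/2}\leq C(s)\eps^{2-s}(\int a_\eps|\nabla v|^2)^{s/2}$ since $\eps\leq 1$. On each rod disc I would use $|v|^s\leq 2^{s-1}(|v-u^{m,n}_\eps|^s+|u^{m,n}_\eps|^s)$: the deviation term is handled by the scale-invariant Poincar\'e-Sobolev inequality $\int_{D_R}|v-\bar v|^s\leq C(s)R^2(\int_{D_R}|\nabla v|^2)^{s/2}$ (from $H^1\hookrightarrow L^s$ in dimension two), summed via the elementary inequality $\sum x_i^{s/2}\leq(\sum x_i)^{s/2}$ valid for $s\geq 2$ and $x_i\geq 0$. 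The mean term is estimated by Jensen followed by the rod weight,
\[
(u^{m,n}_\eps)^2\leq\frac{1}{|D_{m,n,\eps}|}\int_{D_{m,n,\eps}} v^2\,dx\leq\frac{1}{\alpha_\eps|D_{m,n,\eps}|}\int_{D_{m,n,\eps}}a_\eps v^2\,dx,
\]
so that $\alpha_\eps|D_{m,n,\eps}||u^{m,n}_\eps|^s\leq(\alpha_\eps|D_{m,n,\eps}|)^{1-s/2}\bigl(\int_{D_{m,n,\eps}}a_\eps v^2\bigr)^{s/2}$. The factor $(\alpha_\eps|D_{m,n,\eps}|)^{1-s/2}\leq(\kappa_{-}\eps^2)^{1-s/2}=C(s)\eps^{2-s}$ provides exactly the required scaling; summing once more with $\sum x_i^{s/2}\leq(\sum x_i)^{s/2}$ and invoking Step~1 closes the estimate.

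\textbf{Main obstacle.} The one non-routine ingredient is the capacity-type inequality used in Step~1. It is the mechanism that converts the logarithmic growth $|\ln r_\eps|\sim\eps^{-2}$ from~\eqref{eq:Cond1} into a bounded prefactor after multiplication by $\alpha_\eps|D_{m,n,\eps}|\sim\eps^2$; together with the Cauchy-Schwarz step $(u^{m,n}_\eps)^2\leq(\alpha_\eps|D_{m,n,\eps}|)^{-1}\int a_\eps v^2$ of Step~2, it produces the sharp power $\eps^{2-s}$ documented by Proposition~\ref{pro:contrex-marc}.
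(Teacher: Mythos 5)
Your proof is correct, and it reaches \eqref{weightS} by a genuinely different route from the paper. The paper works on the rescaled unit cell $Y$ and imports the $L^s$-capacity estimate (3.13) of \cite{BRIANE-MOKOBODZKI-MURAT-08}, namely $\int_Y |V-\overline{V}_{D(r_\eps)}|^s\,dy\leq C\,|\ln r_\eps|^{s/2}\bigl(\int_Y|\nabla V|^2\,dy\bigr)^{s/2}$ with $\overline{V}_{D(r_\eps)}$ the average over the small disc; combining this with the Sobolev embedding on $D(r_\eps)$ and on $Y$ gives the cell-level estimate \eqref{weightSY}, and after rescaling and summation the sharp power appears as $\eps^2|\ln r_\eps|^{s/2}=\eps^{2-s}(\eps^2|\ln r_\eps|)^{s/2}\leq C\eps^{2-s}$ by \eqref{eq:Cond1}. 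You, by contrast, only need the elementary $L^2$ capacity bound (to settle $s=2$, where the constant is $O(1)$), and then bootstrap: on each rod the disc average is dominated by $(\alpha_\eps|D_{m,n,\eps}|)^{-1}\int_{D_{m,n,\eps}}a_\eps v^2$, so the factor $\eps^{2-s}$ emerges from the weighted rod measure $(\alpha_\eps|D_{m,n,\eps}|)^{1-s/2}\leq(\kappa_-\eps^2)^{1-s/2}$ via \eqref{eq:boundkappa}, while the deviation and bulk terms are handled by scale-invariant Poincar\'e--Sobolev and Gagliardo--Nirenberg and contribute only $O(1)$ constants. Your argument is more self-contained (no $L^s$ capacity estimate to cite or reprove) and makes transparent that the loss $\eps^{2-s}$ is carried entirely by the rod averages interacting with the weight, consistent with the sharpness example of Proposition~\ref{pro:contrex-marc}; the paper's version is more compact and produces the reusable single-cell inequality \eqref{weightSY}, which records the $|\ln r_\eps|^{s/2}$ dependence explicitly. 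Both yield the same final constant structure $C(s)\eps^{2-s}$.
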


\begin{proof}
As a first step of the proof, we establish a rescaled version of \eqref{weightS} in the cell $Y=(-1/2,1/2)^2$. 
Define a rescaled conductivity $A_\eps(y)=a_\eps(\eps y)$, for $y\in Y$. Using a $r_\eps$-rescaling 
and the Sobolev embedding of $H^1(D_2)$ into $L^s(D_2)$ applied to the unit disk $D_2$, we get that
\[
\forall\,V\in H^1(Y),\quad\moy_{D(r_\eps)}\left|\,V-\moy_{D(r_\eps)}V\right|^s dy\leq C\,r_\eps^s\left(\moy_{D(r_\eps)}|\nabla V|^2\,dy\right)^{\frac{s}{2}}.
\]
Moreover, by estimate (3.13) in \cite{BRIANE-MOKOBODZKI-MURAT-08} we have
\[
\forall\,V\in H^1(Y),\quad\,\int_Y \left|V-\moy_{D(r_\eps)}V\,\right|^s dy\leq C\,|\ln r_\eps|^{\frac{s}{2}}\left(\int_Y |\nabla V|^2\,dy\right)^{\frac{s}{2}}.
\]
Then, combining the two previous estimates it follows that for any $V\in H^1(Y)$,
\[
\begin{array}{ll}
\displaystyle \moy_{D(r_\eps)}\left|\,V-\int_Y V\right|^s dy & \displaystyle \leq C\left|\,\int_Y Vdy-\moy_{D(r_\eps)}V\,dy\right|^s
+C\moy_{D(r_\eps)}\left|\,V-\moy_{D(r_\eps)}V\right|^s dy
\\
& \displaystyle \leq C\,|\ln r_\eps|^{\frac{s}{2}}\left(\int_Y |\nabla V|^2\,dy\right)^{\frac{s}{2}}
+C\,r_\eps^s\left(\moy_{D(r_\eps)}|\nabla V|^2\,dy\right)^{\frac{s}{2}}.
\end{array}
\]
Hence, by the definition of $A_\eps$ and \eqref{eq:defaeps},
\[
\int_{D(r_\eps)}A_\eps\left|\,V-\int_Y V\right|^s dy\leq C\,|\ln r_\eps|^{\frac{s}{2}}\left(\int_Y A_\eps\,|\nabla V|^2\,dy\right)^{\frac{s}{2}}.
\]
Since the Sobolev embedding inequality in $Y$ gives
\[
\int_Y\left|\,V-\int_Y V\right|^s dy
	\leq C	\left(\int_Y |\nabla V|^2\,dy\right)^{\frac{s}{2}}
	\leq C\left(\int_Y A_\eps\,|\nabla V|^2\,dy\right)^{\frac{s}{2}},
\]
we thus deduce the following estimate in $Y$,
\begin{equation}\label{weightSY}
\forall\,V\in H^1(Y),\quad\int_Y A_\eps\left|\,V-\int_Y V\right|^s dy
	\leq C\,|\ln r_\eps|^{\frac{s}{2}}\left(\int_Y A_\eps\,|\nabla V|^2\,dy\right)^{\frac{s}{2}}.
\end{equation}
\par

We now turn to the proof of \eqref{weightS}. Let $v\in H_0^1(\omega)$ and extend it by zero outside $\omega$. 
Rescaling estimate \eqref{weightSY} in each square $\eps\left(n+Y\right)$, for $n\in\ZZ^2$, we obtain that
\[
\begin{array}{ll}
\displaystyle \int_{\RR^2} a_\eps\left|v-\bar{v}_\eps\right|^s dx
& \displaystyle \leq C\,\eps^2\,|\ln r_\eps|^{\frac{s}{2}}\sum_{n\in\ZZ^2}\left(\int_{\eps\left(n+Y\right)}a_\eps\,|\nabla v|^2\,dx\right)^{\frac{s}{2}}
\\
& \displaystyle \leq C\,\eps^2\,|\ln r_\eps|^{\frac{s}{2}}\left(\int_{\RR^2}a_\eps\,|\nabla v|^2\,dx\right)^{\frac{s}{2}}\quad (\mbox{since }s\geq 2),
\end{array}
\]
where $\bar{v}_\eps$ is the piecewise function which takes the average value of $v$ in each square $\eps\left(n+Y\right)$.
Moreover, we have
\[
\begin{array}{ll}
\displaystyle \int_{\RR^2} a_\eps\left|\bar{v}_\eps\right|^s dx
& \displaystyle =\sum_{n\in\ZZ^2}\left(\int_{\eps\left(n+Y\right)}a_\eps\,dx\right)\left|\moy_{\eps\left(n+Y\right)}v\,dx\right|^s
\\
& \displaystyle \leq\sum_{n\in\ZZ^2}C\,\eps^2\moy_{\eps\left(n+Y\right)}|v|^s\,dx\quad\mbox{(by the Jensen inequality)}
\\
& \displaystyle \leq C\int_{\omega}|v|^s\,dx
\\
& \displaystyle \leq C\left(\int_{\omega}|\nabla v|^2\,dx\right)^{\frac{s}{2}}\quad\mbox{(by the Sobolev embedding of $H^1_0(\omega)$ into $L^s(\omega)$).}
\end{array}
\]
Finally, combining the two previous inequalities we find
\[
\int_{\RR^2} a_\eps\left|v\right|^s dx\leq C\left(\eps^2\,|\ln r_\eps|^{\frac{s}{2}}+1\right)\left(\int_{\omega}a_\eps\,|\nabla v|^2\,dx\right)^{\frac{s}{2}},
\]
which yields the desired estimate \eqref{weightS} taking into account \eqref{eq:defaeps}.
\qed\end{proof}
\begin{remark}
The constant in the weighted inequality of \cite{SAWYER-WHEEDEN-92} provides an estimate from above of the constant appearing in \eqref{weightS}. 
It is not clear that this constant is optimal. The dependence in $\eps$
of the constant in \eqref{weightS} is optimal: this can be verified
with the choice $v=g_\eps$, where $g_\eps$  is given by \eqref{eq:def-geps-test}. 
\end{remark}

\section{Proof of Proposition~\ref{prop:Linfybd2d}\label{sec:proof-2d}} 
All of this section is in the two-dimensional setting. We will therefore drop the subscript $2$ to denote two-dimensional gradients or divergences.
We consider the solution $\ueps$ of \eqref{eq:Eq2.5d}, for $\lambda \geq \lambda_0 > 0$,
\[
\left\{\begin{array}{ll}
-\divop(\aeps\,\nabla \ueps) + \lambda\,\aeps\,\ueps = f +\aeps\, g+\divop(\bh)&\text{ in } \homeg	\;,\\
\ueps = 0 &\text{ on } \partial\homeg	\;.
\end{array}\right.
\]
Let us start with a simple energy bound.
\begin{lemma}\label{lem:bounds} We have
\begin{equation}
\int_{\homeg} \aeps\,\Big[|\nabla  \ueps|^2 + \lambda\,|\ueps|^2\Big]\,dx
	\leq \frac{C}{\lambda}\int_{\homeg} \big[|f|^2 + \aeps |g|^2\big]\,dx + C\,\int_{\homeg} |\bh|^2\,dx
	\label{eq:ape-enerbd}.
\end{equation}
\end{lemma}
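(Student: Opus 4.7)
My plan is to test the equation (\ref{eq:Eq2.5d}) against $\ueps$ itself. Since $\ueps \in H^1_0(\omega)$, an integration by parts gives
\begin{equation*}
\int_\omega \aeps |\nabla \ueps|^2\,dx + \lambda \int_\omega \aeps \ueps^2\,dx
= \int_\omega f\,\ueps\,dx + \int_\omega \aeps\,g\,\ueps\,dx - \int_\omega \bh \cdot \nabla \ueps\,dx,
\end{equation*}
with no boundary contributions. The strategy is then to bound each right-hand side term by Cauchy--Schwarz followed by a weighted Young's inequality, arranging the weights so that a fraction of $\int \aeps|\nabla\ueps|^2$ and a fraction of $\lambda \int \aeps \ueps^2$ appear and can be absorbed into the left-hand side.

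More concretely, I would write, for an arbitrary $\delta>0$ to be chosen small,
\begin{align*}
\int_\omega f\,\ueps\,dx
&\leq \frac{1}{4\delta\lambda}\int_\omega f^2\,dx + \delta\lambda \int_\omega \ueps^2\,dx
 \leq \frac{1}{4\delta\lambda}\int_\omega f^2\,dx + \delta\lambda \int_\omega \aeps\,\ueps^2\,dx,\\
\int_\omega \aeps\,g\,\ueps\,dx
&\leq \frac{1}{4\delta\lambda}\int_\omega \aeps\,g^2\,dx + \delta\lambda \int_\omega \aeps\,\ueps^2\,dx,\\
-\int_\omega \bh\cdot\nabla\ueps\,dx
&\leq \frac{1}{4\delta}\int_\omega |\bh|^2\,dx + \delta \int_\omega |\nabla\ueps|^2\,dx
 \leq \frac{1}{4\delta}\int_\omega |\bh|^2\,dx + \delta \int_\omega \aeps |\nabla\ueps|^2\,dx,
\end{align*}
where I used the lower bound $\aeps\geq 1$ (from \eqref{eq:defaeps} and \eqref{eq:boundkappa}) to replace unweighted integrals by weighted ones in the terms one wishes to absorb. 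Choosing, say, $\delta = 1/4$ leaves on the right-hand side $\tfrac{1}{2}\int\aeps|\nabla\ueps|^2 + \tfrac{\lambda}{2}\int\aeps\ueps^2$, which is absorbed into the left, yielding \eqref{eq:ape-enerbd} with an explicit constant.

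I do not foresee any real obstacle: this is a standard Lax--Milgram-type energy estimate. The only small subtlety is making sure the $\bh$ term is controlled with a constant independent of $\lambda$ (hence the choice above that puts $\delta$, rather than $\delta\lambda$, in front of the gradient term), and exploiting $\aeps \geq 1$ so that the unweighted $L^2$ norms of $\ueps$ and $\nabla \ueps$ that arise from Cauchy--Schwarz can be controlled by the weighted ones on the left. The hypothesis $\lambda\geq\lambda_0>0$ is not needed here; it only becomes relevant in the sharper regularity statements further on.
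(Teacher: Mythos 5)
Your proof is correct and follows essentially the same route as the paper's: test \eqref{eq:Eq2.5d} against $\ueps$, integrate by parts, and absorb via Cauchy--Schwarz/Young, using $\aeps\geq 1$ to pass from unweighted to weighted norms. The paper's own proof is just a terser version of exactly this argument.
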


\begin{proof}
Integrating \eqref{eq:Eq2.5d} by parts against $u_\eps$ yields
$$
\int_{\homeg} \aeps\Big[|\nabla  \ueps|^2 + \lambda | \ueps|^2\Big]\,dx =  \int_{\homeg} \Big[f \ueps + \aeps g \ueps - \bh\cdot\nabla \ueps\Big]\,dx 
$$ 
Using the Cauchy-Schwarz inequality on the right-hand side we obtain \eqref{eq:ape-enerbd}. 
\qed\end{proof}

We now turn to the main part of our estimate. Our strategy, inspired by the limit case when the conductivity 
tends to infinity independently of the periodic structure \cite{CIORANESCU-PAULIN-79,BRIANE-97,CIORANESCU-DAMLAMIAN-DONATO-MASCARENHAS-99}, 
is to consider three contributions to $u_\eps$. 
The first one is the contribution of the right-hand side when no highly conducting fibres are present. We introduce $v \in H^1_0(\homeg)$, the solution to 
\begin{equation}\label{eq:defv}
-\Delta v + \lambda v = f+ g + \divop(\bh).
\end{equation}
The second is the contribution coming from the \emph{average} of $\ueps - v $ on the cross section of the fibres. Set
\[
u_{m,n} = \frac{1}{|\partial D_{m,n,\eps }|}\,\int_{\partial D_{m,n,\eps }} [\ueps(x) - v(x)]\,d\sigma(x),
\]
and define $\tueps \in H^1(\homeg \setminus \Deps)$ as the solution to
\begin{equation}\label{eq:defutilde}
\left\{\begin{array}{ll}
-\Delta \tueps + \lambda\,\tueps  = 0 & \text{ in } \homeg \setminus \Deps	\;,\\
\tueps = 0 & \text{ on } \partial\homeg	\;,\\
\tueps = u_{m,n} & \text{ on } \partial D_{m,n,\eps }	\;.
\end{array}\right.
\end{equation}
The third is simply the remainder, given by $\hueps = \ueps - v - \tueps$ on $\homeg\setminus\Deps$. We will consider each contribution 
separately. Intuitively, our estimates show that, outside the fibres, $\ueps$ consists ``mainly'' of two parts: $v$, the contribution of 
the background medium, and $\tueps$, the contribution from the high contrast.
\par\bigskip\noindent
{\it Proof of Proposition \ref{prop:Linfybd2d}. }
Fix $\tau> \kappa \eps^{\frac{1-\eta}{2(1+\eta)}}$.

Since $\tueps+\hueps=\ueps-v$, Lemma~\ref{lem:utilde} (with $\beta = \eta/2$) 
and Corollary~\ref{cor:estuhat} show that
\begin{eqnarray*}
 && \|\ueps-v \|_{L^\infty(\homegtau)} +\|\nabla \ueps- \nabla v \|_{L^\infty(\homegtau)} \\
  &\leq& \frac{C}{\lambda^{\frac{\eta}{2}}}\left( \frac{\eps^2}{\lambda}\left(\|f\|_{L^2(\homeg)} + \|\sqrt{\aeps}g\|_{L^2(\homeg)}
			+ \|\bh\|_{L^\infty(\homeg)}\right) + \|\ueps - v\|_{L^\infty(\homeg^\prime)}\right) \\
\end{eqnarray*}
where $\omega_0\Subset\omega^\prime\Subset\omega$. Using the  $L^\infty$ estimates on $v$ given by Lemma~\ref{lem:EstGradVInfty} (with $q = 2/\eta$), we obtain
\begin{eqnarray*}
&& \|\ueps-v \|_{L^\infty(\homegtau)} +  \|\nabla \ueps- \nabla v \|_{L^\infty(\homegtau)} \\
 &\leq& \frac{C}{\lambda^{\frac{\eta}{2}}}
\left(\frac{1}{\lambda^{\frac{\eta}{2}}} \left(\|f\|_{L^2(\homeg)} + \|\sqrt{\aeps}g\|_{L^2(\homeg)}
			+ \|\bh\|_{L^\infty(\homeg)}\right) + \|\ueps\|_{L^\infty(\homeg^\prime)}\right).
\end{eqnarray*}
Lemma~\ref{lem:HolderX} together with Corollary~\ref{cor:estuhat} show that provided $\nu<\eta$,
$$
[\nabla_2 \ueps - \nabla_2 v]_{C^\nu(\homegtau)} \leq \frac{C}{\lambda^{\frac{\eta}{2} - \frac{\nu}{2}}}  \|\ueps - v\|_{L^\infty(\homeg^\prime)} 
+ C \frac{\eps^2}{\lambda^{\frac{3}{2}}}\left(\|f\|_{L^2(\homeg)} + \|\sqrt{\aeps}g\|_{L^2(\homeg)}
			+ \|\bh\|_{L^\infty(\homeg)}\right).
$$
Using again the $L^\infty$ estimates on $v$ given by Lemma~\ref{lem:EstGradVInfty}, we obtain
$$
[\nabla_2 \ueps - \nabla_2 v]_{C^\nu(\homegtau)} \leq \frac{C}{\lambda^{\frac{\eta}{2} - \frac{\nu}{2}}}\left(\frac{1}{\lambda^{\frac{\eta}{2}}} \left(\|f\|_{L^2(\homeg)} 
+ \|\sqrt{\aeps}g\|_{L^2(\homeg)}
			+ \|\bh\|_{L^\infty(\homeg)}\right) + \|\ueps\|_{L^\infty(\homeg^\prime)}\right),
$$
which concludes the proof.
\qed
%+++++++++++++%

\subsection{Estimates for $v$}
It is quite straightforward to obtain estimates on $v$, as the following Lemma shows. 
\begin{lemma}\label{lem:EstGradVInfty}
There holds
\begin{align*}
&\sqrt{\lambda}\,\|\nabla v\|_{L^2(\homeg)} + \lambda\,\|v\|_{L^2(\homeg)} \leq C\,\left[\|f\|_{L^2(\homeg)}+ \|g\|_{L^2(\homeg)}
 + \sqrt{\lambda}\,\|\bh\|_{L^2(\homeg)}\right]
	,\\
&\|\nabla v\|_{L^q(\omega)}
	\leq C(q)\,\left[\|f\|_{L^2(\homeg)}+ \|g\|_{L^2(\homeg)} + \|\bh\|_{L^\infty(\homeg)}\right]
	, \qquad 2 \leq q < \infty.
\end{align*}
Furthermore, for any $q > 2$, and any $\homeg^\prime\Subset\omega$, 
\[
\|v\|_{L^\infty(\omega^\prime)}
	\leq \frac{C(q,\omega^\prime)}{\lambda^{\frac{1}{q}}}\,\left[\|f\|_{L^2(\homeg)}+ \|g\|_{L^2(\homeg)} + \|\bh\|_{L^\infty(\homeg)}\right].
\]
\end{lemma}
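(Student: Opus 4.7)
The three bounds follow from standard linear elliptic theory for $-\Delta+\lambda$ with Dirichlet data on the smooth planar domain $\omega$. For the first (energy) estimate, the plan is to test \eqref{eq:defv} against $v$ and integrate by parts to obtain
$$\int_\omega \bigl(|\nabla v|^2 + \lambda v^2\bigr)\,dx = \int_\omega (f+g)\,v\,dx - \int_\omega \bh\cdot\nabla v\,dx,$$
then apply Young's inequality with weight $\lambda$ on the first right-hand integral (producing $\tfrac{1}{2\lambda}\|f+g\|_{L^2(\omega)}^2$) and unit weight on the second (producing $\tfrac12\|\bh\|_{L^2(\omega)}^2$), absorbing $\tfrac{\lambda}{2}\|v\|_{L^2(\omega)}^2$ and $\tfrac12\|\nabla v\|_{L^2(\omega)}^2$ back to the left. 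Rearranging yields the first assertion.

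For the $L^q$-bound on $\nabla v$, I would split $v=v_1+v_2$, where $v_i\in H^1_0(\omega)$ solves $-\Delta v_i+\lambda v_i = F_i$ with $F_1=f+g$ and $F_2=\divop(\bh)$. Testing the $v_1$-equation against $-\Delta v_1$ and using $v_1|_{\partial\omega}=0$ yields $\|\Delta v_1\|_{L^2}\le \|f+g\|_{L^2}$; combined with the trivial bound $\|v_1\|_{L^2}\le\lambda_0^{-1}\|f+g\|_{L^2}$ and $H^2$ elliptic regularity on the smooth $\omega$, this gives $\|v_1\|_{H^2}\le C\|f+g\|_{L^2}$, hence $\|\nabla v_1\|_{L^q}\le C(q)\|f+g\|_{L^2}$ by the two-dimensional Sobolev embedding $H^1\hookrightarrow L^q$ for every $q<\infty$. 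For $v_2$, I would invoke the uniform-in-$\lambda$ fact that the resolvent $(-\Delta+\lambda)^{-1}\divop$ is bounded $L^q(\omega)^2\to W^{1,q}_0(\omega)$ with norm independent of $\lambda\ge 0$ (for $1<q<\infty$): on $\mathbb{R}^2$ this is immediate from the Fourier multiplier $-\xi\otimes\xi/(|\xi|^2+\lambda)$, and it transfers to the smooth bounded domain by standard localization and boundary straightening. This produces $\|\nabla v_2\|_{L^q}\le C(q)\|\bh\|_{L^q}\le C(q)\|\bh\|_{L^\infty}$, and the triangle inequality completes the argument.

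For the interior $L^\infty$-bound, I would apply Lemma~\ref{lem:dGNM2} to $\pm v$ with $A^{ij}=\delta^{ij}$, $c\equiv\lambda$, $p=2$, and $\tilde p$ large and finite. The admissible range for the first exponent is $q>2p/(2p-n)=2$. To force the two $\lambda$-exponents in the lemma's conclusion to coincide, the plan is to set $\tilde q=q/2$; this is consistent with the constraint $\tilde q>\tilde p/(\tilde p-1)$ provided $\tilde p>q/(q-2)$, which is always achievable once $q>2$. The lemma then produces
$$\|v\|_{L^\infty(\omega)} \le \frac{C(q)}{\lambda^{1/q}}\bigl[\|f+g\|_{L^2(\omega)} + \|\bh\|_{L^{2\tilde p}(\omega)}\bigr],$$
and boundedness of $\omega$ gives $\|\bh\|_{L^{2\tilde p}}\le C\|\bh\|_{L^\infty}$. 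This bound is in fact global on $\omega$, so a fortiori it holds on any $\omega'\Subset\omega$. No serious obstacle arises; the only point of real care is the $\lambda$-exponent matching in the last step, which pins the choices of $\tilde q$ and $\tilde p$ in terms of $q$.
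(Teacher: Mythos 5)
Your proposal is correct and follows the paper's proof essentially step for step: the energy estimate by testing against $v$, the splitting $v=v_1+v_2$ with $H^2$-regularity plus the two-dimensional Sobolev embedding for the $L^2$ part and Calder\'on--Zygmund for the $\divop(\bh)$ part, and Lemma~\ref{lem:dGNM2} with $p=2$ and $\tilde p$ large for the interior supremum bound (your exponent bookkeeping $\tilde q = q/2$, $\tilde p > q/(q-2)$ is exactly what is needed to equalize the two powers of $\lambda$). The one genuine variation is in the $L^q$ step: the paper moves $\lambda v$ to the right-hand side, so that $v_1$ solves the plain Poisson equation $-\Delta v_1=\lambda v+f+g$ (controlled in $L^2$ by the already-established energy bound $\lambda\|v\|_{L^2}\leq C[\cdots]$) and $v_2$ solves $-\Delta v_2=\divop(\bh)$ with no $\lambda$ at all; you instead keep $\lambda$ on the left and invoke the uniform-in-$\lambda$ boundedness of $(-\Delta+\lambda)^{-1}\divop$ on $L^q$. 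Your resolvent fact is true and your route works, but the paper's rearrangement makes the $\lambda$-uniformity of the constant automatic and avoids having to justify that multiplier/localization estimate.
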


\begin{proof}
The $L^2$ estimate for $\nabla v$ and $v$ follows directly by testing \eqref{eq:defv} against $v$. 
To obtain the $L^q$ gradient estimate, we write $v = v_1 + v_2$ where $v_1, v_2 \in H^1_0(\omega)$ are the solutions of
\[
-\Delta v_1 = \lambda\,v + f + g = \tilde f \text{ and } - \Delta v_2  = \divop(\bh).
\]
Next, by standard elliptic estimates, we have
\begin{align*}
\|v_1\|_{H^2(\omega)} 
	&\leq C\,\|\tilde f\|_{L^2(\homeg)} \leq C\Big[\lambda\,\|v\|_{L^2(\homeg)} + \|f+g\|_{L^2(\homeg)}\Big] \leq C\,\left(\|f\|_{L^2(\homeg)}+ \|g\|_{L^2(\homeg)}\right)
	,\\
\|v_2\|_{W^{1,q}(\omega)} 
	&\leq C\,\|\bh\|_{L^q(\homeg)}
	.
\end{align*}
Since $H^2(\omega) \hookrightarrow W^{1,q}(\omega)$, the second estimate in the lemma follows. The last assertion follows from Lemma~\ref{lem:dGNM2}.
\qed\end{proof}

A direct consequence of the above result is a local $L^2$ bound on $\nabla v$.
\begin{corollary}\label{lem:GaNiV}
For any $\omega' \Subset \homeg$, $\beta < \frac{1}{2}$ and $\varrho < \frac{1 - 2\beta}{4}$, there holds
$$
\| \nabla v \|_{L^2(\omega')}\leq C\,|\omega'|^{\varrho}\,\left[\frac{1}{\lambda^\beta}\left(\|f\|_{L^2(\homeg)} + \|\sqrt{\aeps}\,g\|_{L^2(\homeg)}\right) + \|\bh\|_{L^\infty(\homeg)}\right].
$$
\end{corollary}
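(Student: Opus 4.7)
The plan is to decompose the source by linearity and treat each piece separately. Write $v = v_1 + v_2$ with $v_1, v_2 \in H^1_0(\homeg)$ solving
$$
-\Delta v_1 + \lambda\,v_1 = f + g, \qquad -\Delta v_2 + \lambda\,v_2 = \divop(\bh),
$$
respectively, so that the $\lambda^{-\beta}$ decay must come entirely from $v_1$ (the $L^2$-source piece), while $v_2$ (the divergence-source piece) only needs the $\lambda$-independent $\|\bh\|_{L^\infty}$ bound already present on the right-hand side of the claim.

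For $v_1$, testing the equation against $v_1$ itself yields the energy estimate $\|\nabla v_1\|_{L^2(\homeg)} \leq C\,\lambda^{-1/2}(\|f\|_{L^2(\homeg)}+\|g\|_{L^2(\homeg)})$, while Lemma~\ref{lem:EstGradVInfty} applied with $\bh \equiv 0$ supplies the $\lambda$-independent bound $\|\nabla v_1\|_{L^q(\homeg)} \leq C(q)(\|f\|_{L^2(\homeg)}+\|g\|_{L^2(\homeg)})$ for every finite $q \geq 2$. I would interpolate the two by the log-convexity of $L^p$-norms to obtain, for any $r \in [2,q]$ with $\tfrac{1}{r} = \tfrac{\theta}{2} + \tfrac{1-\theta}{q}$,
$$
\|\nabla v_1\|_{L^r(\homeg)} \leq C(q)\,\lambda^{-\theta/2}\bigl(\|f\|_{L^2(\homeg)}+\|g\|_{L^2(\homeg)}\bigr),
$$
and then apply H\"older's inequality on $\omega'$ to produce a factor $|\omega'|^{1/2 - 1/r}$. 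For $v_2$, Lemma~\ref{lem:EstGradVInfty} with $f = g = 0$ gives $\|\nabla v_2\|_{L^q(\homeg)} \leq C(q)\,\|\bh\|_{L^\infty(\homeg)}$, and H\"older on $\omega'$ delivers $\|\nabla v_2\|_{L^2(\omega')} \leq C(q)\,|\omega'|^{1/2 - 1/q}\,\|\bh\|_{L^\infty(\homeg)}$, which is at most $C\,|\omega'|^\varrho\,\|\bh\|_{L^\infty(\homeg)}$ as soon as $q \geq 2/(1-2\varrho)$.

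The remaining work is exponent book-keeping. Setting $r = 2/(1 - 2\varrho)$ matches $\varrho = 1/2 - 1/r$, and the interpolation relation gives $\theta/2 = (q-r)/(r(q-2))$, which increases monotonically in $q$ and tends to $1/r = 1/2 - \varrho$ as $q \to \infty$. Hence any $\beta < 1/2 - \varrho$ can be realised by a suitable finite $q$, and the Corollary's hypothesis $\varrho < (1-2\beta)/4$ is strictly stronger than $\varrho + \beta < 1/2$ (since $(1-2\beta)/4 < 1/2 - \beta$ for every $\beta < 1/2$), so such a $q$ indeed exists. Finally, $\|g\|_{L^2(\homeg)} \leq \|\sqrt{\aeps}\,g\|_{L^2(\homeg)}$ because $\aeps \geq 1$ almost everywhere, which converts the estimate into the advertised form. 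The only mild subtlety is ensuring that $q$ stays finite so that the constant $C(q)$ from Lemma~\ref{lem:EstGradVInfty} remains bounded; this is guaranteed precisely by the strictness of the inequality $\varrho < (1-2\beta)/4$.
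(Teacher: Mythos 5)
Your proof is correct and follows essentially the same route as the paper: the same splitting of $v$ into the piece driven by $f+g$ and the piece driven by $\divop(\bh)$, the same two endpoint bounds from Lemma~\ref{lem:EstGradVInfty} (the global $\lambda^{-1/2}$ energy bound and the $\lambda$-independent $L^q$ gradient bound), combined by interpolation and H\"older's inequality on $\omega'$. The only cosmetic difference is that you interpolate in the Lebesgue exponent globally before restricting to $\omega'$, whereas the paper restricts first and then interpolates the two resulting $L^2(\omega')$ bounds; your exponent bookkeeping checks out.
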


\begin{proof}
We split $v = v_a + v_b$ where $v_a, v_b \in H^1_0(\omega)$ are the solutions to
\[
-\Delta v_a + \lambda v_a = f + g \text{ and } - \Delta v_b + \lambda v_b = \divop(\bh).
\]
By the $L^q$ gradient bound in Lemma \ref{lem:EstGradVInfty} and the Sobolev embedding theorem,
\[
\|\nabla v_a\|_{L^p(\homeg)} \leq C\,\left(\|f\|_{L^2(\homeg)}+ \|g\|_{L^2(\homeg)}\right) \text{ and } \|\nabla v_b\|_{L^p(\homeg)} \leq C\,\|\bh\|_{L^\infty(\homeg)} \text{ for any } p > 2.
\]
Thus, by H\"older's inequality, we have, for any $\omega' \subset \homeg$ and $\delta \in (0,1)$, 
\begin{align*}
\|\nabla v_a\|_{L^2(\omega')} \leq C\,\left(\|f\|_{L^2(\homeg)}+ \|g\|_{L^2(\homeg)}\right)\,|\omega'|^{1 - \delta}
	\text{ and }
\|\nabla v_b\|_{L^2(\omega')} \leq C\,\|\bh\|_{L^\infty(\homeg)}\,|\omega'|^{1 - \delta}.
\end{align*}
On the other hand, by Lemma \ref{lem:EstGradVInfty}, we also have
\[
\|\nabla v_a\|_{L^2(\homeg)} \leq \frac{C}{\sqrt{\lambda}}\,\left(\|f\|_{L^2(\homeg)}+ \|g\|_{L^2(\homeg)}\right) \text{ and } \|\nabla v_b\|_{L^2(\homeg)} \leq C\,\|\bh\|_{L^2(\homeg)}.
\]
The conclusion follows by an interpolation.
\qed\end{proof}

%+++++++++++++%
\subsection{Estimates for $\tueps$}

The first part of our estimate for $\tueps$ is given by the following Lemma. 
The second part, concerning the H\"{o}lder regularity of its gradient, is given by Lemma~\ref{lem:HolderX}.

\begin{lemma}\label{lem:utilde}
For any $0 < \beta < \frac{1}{2}$ and $\tau > \kappa \eps^{\frac{1-2\beta}{1+2\beta}}$,  the solution $\tueps$ of \eqref{eq:defutilde} satisfies
\[
\|\tueps\|_{L^\infty(\homegtau)} + \lambda^{\beta - 1/2}\|\nabla \tueps\|_{L^\infty(\homegtau)} \leq \frac{C}{\lambda^{1/2}}\sup_{m,n} |u_{m,n}|
	\;,
\]
where the constant $C$ depends on $\beta$, $\kappa$ and $\homeg$ only, and $\homegtau$ is defined in \eqref{eq:defhomegtau}.
\end{lemma}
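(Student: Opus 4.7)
The plan is to exploit the constant-coefficient Helmholtz-type structure of the equation $(-\Delta+\lambda)\tueps=0$ on $\omega\setminus\Deps$ with piecewise-constant Dirichlet data, and to establish both parts of the estimate by comparison with an explicit barrier built from the modified Bessel function $K_0(\sqrt{\lambda}\,\cdot)$. The maximum principle immediately yields the trivial bound $\|\tueps\|_{L^\infty(\omega\setminus\Deps)}\leq\sup|u_{m,n}|$, so the non-trivial content is the $\lambda^{-1/2}$ (resp.\ $\lambda^{-\beta}$) prefactor on the value (resp.\ gradient) estimate, which is only substantive when $\lambda$ is large. I may therefore restrict to $\lambda\geq\lambda^\ast$ for a threshold depending on $\beta$ and $\kappa$.

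For the $L^\infty$ estimate, I would construct a global supersolution $W\in H^1(\omega\setminus\Deps)$ of $(-\Delta+\lambda)W\geq 0$ with $W\geq 1$ on $\partial\Deps$ and $W\geq 0$ on $\partial\omega$, as a sum $W=\sum_{(m,n)\in I_\eps}W_{m,n}$ of cell-wise radial profiles. Each $W_{m,n}$ is supported in the half-cell $B(x_{m,n},\eps/2)$, equals $1$ on $\partial D_{m,n,\eps}$, is modelled on the Bessel ratio $K_0(\sqrt{\lambda}|x-x_{m,n}|)/K_0(\sqrt{\lambda}\,\eps\reps)$ (which is a genuine Helmholtz solution in the annulus $\{\eps\reps<r<\eps/2\}$), and is modified in a thin outer rim so that it vanishes with vanishing radial derivative at $r=\eps/2$; this outer modification is what ensures that the sum $W$ is a global distributional supersolution without spurious positive jump measures on cell boundaries. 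Comparison then gives $|\tueps|\leq\sup|u_{m,n}|\,W$, and the quantitative bound $\|W\|_{L^\infty(\homegtau)}\leq C/\sqrt{\lambda}$ follows from the asymptotics of $K_0$: the logarithmic asymptotic $K_0(z)\sim|\ln z|$ as $z\to 0$ combined with $|\ln\reps|\gtrsim\eps^{-2}$ from \eqref{eq:Cond1} covers the range $\sqrt{\lambda}\,\eps\tau\lesssim 1$, while the exponential asymptotic $K_0(z)\sim z^{-1/2}e^{-z}$ handles $\sqrt{\lambda}\,\eps\tau\gtrsim 1$. The hypothesis $\tau>\kappa\,\eps^{(1-2\beta)/(1+2\beta)}$ is precisely tuned to close both regimes uniformly in $\lambda\geq\lambda_0$. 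The gradient bound then follows either by differentiating the comparison or by combining the just-established $L^\infty$ bound with the interior Schauder-type estimate
\[
|\nabla\tueps(x_0)|\leq C\bigl(\sqrt{\lambda}+(\eps\tau)^{-1}\bigr)\|\tueps\|_{L^\infty(B(x_0,\eps\tau/2))}
\]
applied on balls $B(x_0,\eps\tau/2)\subset\homegtau$; in either case the Bessel-gradient asymptotic $|\nabla W|\lesssim\lambda^{-\beta}$ at $r=\eps\tau$, valid exactly when $\tau\geq\kappa\,\eps^{(1-2\beta)/(1+2\beta)}$, produces the announced $\lambda^{\beta-1/2}$ prefactor.

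The main obstacle is the construction of $W_{m,n}$: the pure $K_0$ profile does not vanish at $r=\eps/2$, so the outer rim has to be modified without destroying the supersolution property, which requires a careful interpolation between the Bessel regime and the cutoff at the cell boundary. A secondary subtlety is that the two asymptotic regimes $\sqrt{\lambda}\,\eps\tau\lesssim 1$ and $\sqrt{\lambda}\,\eps\tau\gtrsim 1$ must be merged into a single bound that is uniform in $\lambda\geq\lambda_0$: this is precisely where the exponent $(1-2\beta)/(1+2\beta)$ arises as the critical scaling, since it is the unique choice for which the logarithmic and exponential asymptotic of $K_0$ produce bounds of the same order $\lambda^{-1/2}$ (resp.\ $\lambda^{-\beta}$ for the gradient) at $r=\eps\tau$.
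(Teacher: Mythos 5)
Your barrier strategy is a genuinely different route from the paper's, which proves this lemma by a layer-potential representation: $\tueps$ and $\nabla\tueps$ are written as boundary integrals against the fundamental solution $\frac{1}{2\pi}K_0(\sqrt\lambda|\cdot|)$, split into the outer-boundary contribution, a double layer carrying the values $u_{m,n}$, and a single layer carrying the normal derivatives on $\partial D_{m,n,\eps}$ (estimated separately by a Hopf-type argument with the radial profile $\alpha I_0+\beta K_0$), and then summed over ``rings'' of fibres. Unfortunately your proposal has two gaps, one of which I believe is fatal to the method rather than merely technical.

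First, the construction of $W$ is not just delicate but obstructed as stated: a radial profile $\psi\ge 0$ on the annulus with $\psi(\eps/2)=\psi'(\eps/2)=0$ and $-\psi''-\psi'/r+\lambda\psi\ge0$ must satisfy $\psi''(\eps/2)\ge0$ (from $\psi\ge0$ and the double zero) and $\psi''(\eps/2)\le0$ (from the supersolution inequality at that point), forcing a triple zero, and the natural candidates for the ``outer rim'' (e.g.\ $K_0$ minus its tangent line at $\eps/2$) turn out to be \emph{sub}solutions there. The workable fix is the Neumann cell profile $\psi'(\eps/2)=0$, $\psi(\eps\reps)=1$, but that profile does not vanish at $r=\eps/2$, so the ``sum of bumps supported in disjoint half-cells'' ansatz must be abandoned in favour of a periodic extension by reflection, patched near $\partial\omega_0$ --- real work that the proposal does not do. Second, and more seriously, the gradient estimate does not follow from either of your two suggestions. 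The comparison $|\tueps|\le MW$ yields no interior gradient information (you may only ``differentiate'' it where the functions touch, i.e.\ on $\partial\Deps$, which is exactly the paper's Lemma~\ref{SCBdryGrad} and not the claim here). The interior estimate $|\nabla\tueps(x_0)|\le C\bigl(\sqrt\lambda+(\eps\tau)^{-1}\bigr)\|\tueps\|_{L^\infty(B(x_0,\eps\tau/2))}$ is correct, but for bounded $\lambda$ (say $\lambda=\lambda_0$) the sup of $\tueps$ at distance $\eps\tau/2$ from the fibres is genuinely of order $M$, and the factor $(\eps\tau)^{-1}\to\infty$ is not recovered; even inserting the optimistic barrier value $\psi(\eps\tau/2)\sim\eps^2|\ln(\sqrt\lambda\eps\tau)|$ leaves an unabsorbed logarithm at the critical exponent $\tau\sim\kappa\eps^{(1-2\beta)/(1+2\beta)}$. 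The paper's proof succeeds precisely where any pointwise barrier must fail: in the single-layer term the large normal derivative $\theta(\lambda,\eps)\sim\eps/\reps$ on each fibre is multiplied by the tiny perimeter $2\pi\eps\reps$, and it is the product $\sim\eps^2$, summed over rings of fibres using the decay of $K_0^{(1)}$, that produces the uniform bound $\|\nabla\tueps\|_{L^\infty(\homegtau)}\le CM\lambda^{-\beta}$. A barrier sees only boundary values, not total flux, and cannot reproduce this cancellation; you would need to reintroduce an integral representation (or an equivalent averaging argument) for the gradient part.
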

\par\noindent
Here and below $\sup_{m,n}$ refers to the supremum taken as $(m,n)$ varies in $I_\eps$.
\par
We shall use two local estimations. We first estimate the gradient $\nabla \tueps$ on the outer boundary~$\partial\homeg$.

\begin{lemma}\label{TU}
There holds
\[
\|\nabla \tueps\|_{L^\infty(\partial\homeg)} \leq C \sup_{m,n} |u_{m,n}|.
\]
\end{lemma}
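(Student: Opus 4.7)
The plan is to combine the weak maximum principle with a standard boundary Schauder estimate on a fixed, $\eps$-independent neighbourhood of $\partial\omega$ that avoids all the fibres.

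The first step exploits the inclusion $\Deps \subset \omega_0 \Subset \omega$. Since $\lambda > 0$, the weak maximum principle applied to $-\Delta\tueps + \lambda\tueps = 0$ in $\omega\setminus\Deps$, whose boundary consists of $\partial\omega$ (where $\tueps = 0$) and the circles $\partial D_{m,n,\eps}$ (where $\tueps = u_{m,n}$), immediately gives
\[
\|\tueps\|_{L^\infty(\omega\setminus\Deps)} \leq \sup_{m,n}|u_{m,n}|.
\]

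The second step is to fix once and for all an intermediate domain $\omega'$ with $\omega_0 \Subset \omega' \Subset \omega$. Because every fibre lies inside $\omega_0$, the function $\tueps$ satisfies the smooth uniformly elliptic equation $-\Delta\tueps + \lambda\tueps = 0$ on the $\eps$-independent region $U = \omega \setminus \overline{\omega'}$, together with $\tueps = 0$ on the smooth hypersurface $\partial\omega$. Standard boundary Schauder estimates (e.g.\ Theorem~6.6 of \cite{GILBARG-TRUDINGER-83}) then provide, on a smaller neighbourhood $V$ of $\partial\omega$ contained in $U$,
\[
\|\tueps\|_{C^{2,\alpha}(\overline V)} \leq C\,\|\tueps\|_{L^\infty(U)},
\]
for some $\alpha \in (0,1)$ and some $C = C(\lambda, \omega, \omega')$ \emph{independent of $\eps$}, the $\eps$-independence being the whole point of choosing $U$ away from the fibres. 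Combining the two bounds yields
\[
\|\nabla\tueps\|_{L^\infty(\partial\omega)} \leq \|\tueps\|_{C^{2,\alpha}(\overline V)} \leq C\sup_{m,n}|u_{m,n}|,
\]
as claimed.

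I do not anticipate a genuine obstacle here: the only thing to verify is the uniformity of the Schauder constant in $\eps$, and this is automatic once one has arranged a fixed collar neighbourhood of $\partial\omega$ on which both the equation and its coefficients are $\eps$-independent.
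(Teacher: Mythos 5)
Your first step (the maximum principle bound $\|\tueps\|_{L^\infty(\homeg\setminus\Deps)}\leq \sup_{m,n}|u_{m,n}|$) and your overall strategy (work on a fixed collar of $\partial\homeg$ that avoids the fibres, then apply elliptic regularity up to the boundary) match the paper's proof. The gap is in the second step: you invoke a boundary Schauder estimate for $-\Delta+\lambda$ and accept a constant $C=C(\lambda,\omega,\omega')$, treating only $\eps$-uniformity as the issue. But $\lambda$ is not a fixed parameter here: in the application $\lambda=\tfrac{n^2\pi^2}{4L^2}$ runs over all Fourier modes, and the constant in Theorem~6.6 of \cite{GILBARG-TRUDINGER-83} depends on the size of the zeroth-order coefficient $c=\lambda$, hence blows up as $\lambda\to\infty$. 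The lemma is used in the proof of Lemma~\ref{lem:utilde} (whose constant is asserted to depend on $\beta$, $\kappa$ and $\homeg$ only) and to derive \eqref{I1Est} with a $\lambda$-independent constant, so an uncontrolled $\lambda$-dependence in Lemma~\ref{TU} breaks the downstream argument. As written, your proof does not establish the uniformity in $\lambda$ that the statement must deliver.

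The paper avoids this as follows. First it obtains a $\lambda$-uniform $H^1$ bound on the collar by testing against $\eta^2\tueps$ and simply discarding the good-signed term $\lambda\int\eta^2\tueps^2$. It then introduces \emph{harmonic} comparison functions $u^\pm$ in $\homeg\setminus\omega'$ with boundary data $\tueps^\pm$ on $\partial\omega'$ and $0$ on $\partial\homeg$; since $-\Delta u^\pm=0$ involves no $\lambda$ at all, the $H^2$, $W^{2,p}$ and $W^{1,\infty}$ estimates for $u^\pm$ are automatically $\lambda$-free. Finally, because $u^\pm\geq 0$, one has $-\Delta u^+ +\lambda u^+\geq 0\geq -\Delta(-u^-)+\lambda(-u^-)$, so the maximum principle gives $-u^-\leq\tueps\leq u^+$ with all three functions vanishing on $\partial\homeg$; the normal derivative of $\tueps$ on $\partial\homeg$ is therefore squeezed between those of $\mp u^\mp$, which yields the bound with a constant independent of both $\eps$ and $\lambda$. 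To repair your argument you would either need to adopt this barrier construction, or rescale $x\mapsto\sqrt{\lambda}\,x$ to track the Schauder constant explicitly (which yields at best an extra factor of a power of $\lambda$ that would then have to be absorbed in the applications); neither is done in your proposal.
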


\begin{proof} By the maximum principle,
\begin{equation}
\sup_{\homeg \setminus \Deps} |\tueps| \leq \sup_{m,n} |u_{m,n}|
	\;.\label{TU-E1x}
\end{equation}

Fix $\omega_0 \Subset \omega'  \Subset \omega'' \Subset \homeg$. Let $\eta$ be a cut-off function which 
is one in $\homeg \setminus \omega'$ and vanishes in $\omega_0$. Using $\eta^2\,\tueps$ as a test function in \eqref{eq:defutilde}, we obtain
\begin{eqnarray*}
\int_{\homeg\setminus \omega_0 } \eta^2|\nabla \tueps|^2 + \int_{\homeg\setminus \omega_0} \lambda\eta^2\tueps^2  
	&= & - 2 \int_{\homeg\setminus \omega_0} \eta \tueps \nabla\eta \cdot\nabla \tueps, \\
	&\leq & \frac{1}{2}\int_{\homeg\setminus \omega_0 } \eta^2|\nabla \tueps|^2 + 2 \int_{\homeg\setminus \omega_0} \ueps^2\,|\nabla\eta|^2.
\end{eqnarray*}
Thanks to \eqref{TU-E1x}, we conclude that
\begin{equation}
\|\nabla\tueps\|_{L^2(\homeg \setminus \omega')} \leq C\, \sup_{m,n} |u_{m,n}|
	\;.\label{TU-E2}
\end{equation}

Let $u^\pm \in H^1(\homeg\setminus \omega')$ be the solution to
\[
\left\{\begin{array}{ll}
-\Delta u^\pm = 0  & \text{ in } \homeg \setminus \omega'	\;,\\
u^\pm = 0          & \text{ on } \partial\homeg	\;,\\
u^\pm = \tueps^\pm & \text{ on } \partial\omega^\prime	\;,
\end{array}\right.
\]
where $\tueps^\pm = \max(\pm \tueps, 0)$.
By $H^2$-estimates, valid up the boundary $\partial\homeg$ thanks to the vanishing boundary condition and the regularity of $\partial\omega$, 
$u^\pm \in H^2(\homeg \setminus \omega'')$, with
\[
\|u^\pm\|_{H^2(\homeg\setminus\omega'')} \leq C\,\|\tueps\|_{H^1(\homeg\setminus\omega^\prime)}.
\]
This, in turn, because $u^\pm$ satisfies a homogeneous equation in $\homeg\setminus\omega^\prime$, implies a better estimate, 
namely a $W^{2,p}$-estimates for an arbitrary $p$ \cite[Lemma 9.16]{GILBARG-TRUDINGER-83}. For $p$ large enough, 
thanks to the Sobolev embedding Theorem, $u^\pm \in W^{2,p}(\homeg \setminus \omega'') \hookrightarrow W^{1,\infty}(\homeg \setminus \omega'')$ and we deduce that
\[
\|u^\pm\|_{W^{1,\infty}(\homeg \setminus \omega'')} \leq C\,\|\tueps\|_{H^1(\homeg\setminus\omega')}
	\;.
\]
Inserting \eqref{TU-E1x} and \eqref{TU-E2} in this last estimate, we obtain
\begin{equation}
\|u^\pm\|_{W^{1,\infty}(\homeg \setminus \omega'')}  \leq C \sup_{m,n} |u_{m,n}|
	\;.\label{TU-E3}
\end{equation}
As an application of the maximum principle we note that $u^\pm \geq 0$ and so
\[
-\Delta u^+ + \lambda\,u^+ \geq 0 = - \Delta \tueps + \lambda \tueps \geq -\Delta (-u^-) + \lambda\,(-u^-) \text{ in } \homeg \setminus \omega'
	\;.
\]
Applying the maximum principle again, we thus get
\[
u^+ \geq \tueps \geq - u^- \text{ in } \homeg\setminus \omega'.
\]
Since the three functions agree on $\partial\homeg$, the required estimate follows from \eqref{TU-E3}.
\qed\end{proof}
Next, we estimate the trace of the gradient  $\nabla \tueps$ on the boundary of each rod $D_{m,n,\eps}$.
\begin{lemma}
\label{SCBdryGrad}
There holds
\[
\|\nabla \tueps\|_{L^\infty(\partial D_{m,n,\eps })} \leq C \theta(\lambda,\eps)\,\sup_{m,n}|u_{m,n}|
	\;,
\]
where
\begin{equation}\label{eq:mulambda}
\theta(\lambda,\eps ) = \left\{\begin{array}{ll}
\displaystyle \frac{\eps}{\reps}  &\text{ if } \displaystyle  \sqrt{\lambda} \leq \frac{1}{\eps},\\
\\
\displaystyle \frac{\eps}{\reps\left(1-\alpha +\eps^2 \right)} &\text{ if } \displaystyle   \sqrt{\lambda} = \frac{1}{\eps\reps^\alpha} \mbox{ with } 0\leq\alpha\leq 1, \\
\\
\sqrt{\lambda}                    &\text{ if } \displaystyle \frac{1}{\eps\reps} \leq \sqrt{\lambda}.
\end{array}\right.
\end{equation}

\end{lemma}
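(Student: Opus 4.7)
The plan is to compare $\tueps$ with explicit radial barriers built from modified Bessel functions in an annulus around each rod. First, observe that since $\tueps \equiv u_{m,n}$ is constant on $\partial D_{m,n,\eps}$, the tangential derivative of $\tueps$ vanishes there identically, so it suffices to estimate the outward normal derivative. The global weak maximum principle applied to \eqref{eq:defutilde} gives $|\tueps| \leq M := \sup_{(m,n)\in I_\eps}|u_{m,n}|$ throughout $\homeg \setminus \Deps$. Setting $x_{m,n} = (m\eps, n\eps)$, $\rho = \eps\reps$, and fixing $R \in (\rho, \eps/2)$ to be chosen below, the annulus $A = \{x : \rho < |x - x_{m,n}| < R\}$ is contained in $\homeg \setminus \Deps$ (neighbouring rods lying at distance $\eps - \rho > \eps/2$ from $x_{m,n}$), so $\tueps$ satisfies $-\Delta \tueps + \lambda \tueps = 0$ on $A$.

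Next I would introduce the radial fundamental pair $\phi_1, \phi_2$ of the modified Bessel equation $-w'' - w'/r + \lambda\,w = 0$ on $[\rho, R]$, normalised by $(\phi_1(\rho), \phi_1(R)) = (1, 0)$ and $(\phi_2(\rho), \phi_2(R)) = (0, 1)$, and set the radial barriers $W_\pm := u_{m,n}\phi_1 \pm M\phi_2$. They satisfy the same PDE as $\tueps$ with $W_\pm(\rho) = u_{m,n}$ and $W_\pm(R) = \pm M$. Since $|\tueps| \leq M$ on $\{r = R\}$, the weak maximum principle yields $W_- \leq \tueps \leq W_+$ on $A$ with equality on the inner circle. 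By elliptic regularity $\tueps$ is smooth in $\bar A$, so radial differentiation at $r = \rho$ gives $W_-'(\rho) \leq \partial_r \tueps \leq W_+'(\rho)$ on $\partial D_{m,n,\eps}$, whence
\[
|\nabla \tueps| \;=\; |\partial_r \tueps| \;\leq\; M\,\bigl(|\phi_1'(\rho)| + |\phi_2'(\rho)|\bigr) \qquad \text{on } \partial D_{m,n,\eps}.
\]
Writing $\phi_i$ explicitly in terms of $I_0, K_0$ and invoking the Bessel Wronskian $I_0(z) K_1(z) + I_1(z) K_0(z) = 1/z$, one finds
\[
\phi_2'(\rho) = \frac{1}{\rho\, D(z, Z)}, \qquad \phi_1'(\rho) = -\sqrt{\lambda}\,\frac{I_1(z) K_0(Z) + K_1(z) I_0(Z)}{D(z, Z)},
\]
where $D(z, Z) := K_0(z) I_0(Z) - I_0(z) K_0(Z)$, $z := \sqrt{\lambda}\,\rho$, $Z := \sqrt{\lambda}\,R$.

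The remaining task is to pick $R$ regime by regime and invoke the appropriate Bessel asymptotics to match $\theta(\lambda, \eps)$. In Regime~1 (and the small-$\alpha$ part of Regime~2) where $\sqrt{\lambda} \leq 2/\eps$, take $R = \eps/2$, so $z = O(\reps)$ and $Z = O(1)$; the small-argument expansion $K_0 \sim -\ln$, $I_0 \to 1$ combined with \eqref{eq:Cond1} yields $D \sim \ln(R/\rho) \sim |\ln \reps| \sim \eps^{-2}$, giving $|\phi_i'(\rho)| \leq C\eps/\reps$. In Regime~3 where $\sqrt{\lambda} \geq 1/(\eps\reps)$, take $R = \rho + 1/\sqrt{\lambda} < \eps/2$ so that $Z = z + 1$ with $z \geq 1$; the large-argument asymptotics $I_0(\zeta), K_0(\zeta) \sim e^{\pm \zeta}/\sqrt{\zeta}$ and the fact that $R/\rho \leq 2$ give $D \sim \sinh(1)/\sqrt{zZ}$ and $|\phi_i'(\rho)| \leq C\sqrt{\lambda}$. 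In the intermediate Regime~2 with $\sqrt{\lambda} = 1/(\eps\reps^\alpha)$, $0 < \alpha < 1$, again take $R = \rho + 1/\sqrt{\lambda}$, so $z = \reps^{1-\alpha}$ and $Z = 1 + z$. When $1-\alpha \gg \eps^2$, $z$ is small and $D \sim K_0(z)\,I_0(1) \sim (1-\alpha)\,|\ln\reps| \sim (1-\alpha)/\eps^2$, producing $|\phi_i'(\rho)| \leq C\eps/(\reps(1-\alpha))$; the residual case $1 - \alpha \lesssim \eps^2$ (where $z \sim 1$) falls under the large-argument bound $\lesssim 1/(\eps\reps)$. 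Combining the two sub-cases yields the advertised interpolation factor $(1-\alpha+\eps^2)^{-1}$.

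The principal technical hurdle is Regime~2, where $z$ and $Z$ straddle the crossover between small- and large-argument Bessel behaviour precisely when $1-\alpha \sim \eps^2$; reconciling the two asymptotic expansions uniformly in $\alpha$ is what produces the delicate interpolation factor $(1-\alpha+\eps^2)^{-1}$. A related subtlety is that $\phi_1'(\rho)$ formally carries an extra $\sqrt{\lambda}$ prefactor relative to $\phi_2'(\rho)$, but the Wronskian forces the numerator $I_1(z) K_0(Z) + K_1(z) I_0(Z)$ to scale like $1/z$ in all the regimes of interest, so $|\phi_1'(\rho)|$ and $|\phi_2'(\rho)|$ share the same order of magnitude $\theta(\lambda, \eps)$.
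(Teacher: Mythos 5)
Your proposal is correct and follows essentially the same route as the paper: a comparison of $\tueps$ with explicit radial solutions $\alpha I_0(\sqrt{\lambda}\,|x|)+\beta K_0(\sqrt{\lambda}\,|x|)$ on an annulus around the rod, reduction to the normal derivative of the barrier at $|x|=\eps\reps$ (the tangential derivative vanishing since $\tueps$ is constant there), and a regime-by-regime Bessel asymptotic analysis whose crossover at $1-\alpha\sim\eps^2$ produces the factor $(1-\alpha+\eps^2)^{-1}$. The only differences are organizational — you let the outer radius shrink to $\eps\reps+\lambda^{-1/2}$ for large $\lambda$ and package the coefficients via the Wronskian, whereas the paper fixes the outer radius at $\eps/2$ and expands $K_0$ directly — and these do not change the substance of the argument.
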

This result is proved in Appendix~\ref{sec:lem46}.

We are now in position to prove Lemma~\ref{lem:utilde}. 
Note that the fundamental solution $\Phi$ of $-\Delta + \lambda$ on~$\mathbb{R}^2$ is given by
\begin{equation}
\Phi(x) = \Phi(|x|) = \frac{1}{2\pi}\,K_0(\sqrt{\lambda}\,|x|)
	\;,\label{FundSol}
\end{equation}
where $K_0$ is the second modified Bessel function of 
order zero. In what follows we will make extensive use of well known facts concerning $K_0$ and its derivatives, easily recovered from the recurrence
relations and asymptotic properties, see e.g. \cite{NIST-10}.
Let $K_0^{(n)}$ denote de $n$-th derivative of $K_0$, $n\geq0$. Then  $|K_0^{(n)}|$ is decreasing, and, for all $x>0$ and $\alpha\geq0$,
\begin{eqnarray}
|K_0(x)| \leq C(\alpha)\frac{|\ln(x)|}{1+x^{\alpha}} \text{ and } |K_0^{(n)}(x)| \leq \frac{C(n,\alpha)}{x^n(1+x^{\alpha})}, n \geq 1. \label{eq:bdkn}
\end{eqnarray}

\noindent {\it Proof of Lemma~\ref{lem:utilde}. }
Our argument uses layer potentials. We set $M {}= \sup_{m,n} |u_{m,n}|$. Let $\Phi$ be given by \eqref{FundSol}.  We have
\begin{align}
\tueps(x) 
	&= \int_{\partial(\homeg \setminus \Deps)} \Big[\frac{\partial\tueps}{\partial n(y)} (y)\,\Phi(x - y) - \tueps(y)\,\frac{\partial\Phi}{\partial n(y)}(x - y)\Big]\,d\sigma(y)
	\;.\label{RepForm17Oct10}
\end{align}
Here $n(y)$ denotes the outward unit normal to $\partial(\omega \setminus \Deps)$ at $y$. 
Fix any multi-index $J \in \ZZ^2$. Differentiating \eqref{RepForm17Oct10} we obtain
\begin{align}
\nabla^J \tueps(x) 
	&= \int_{\partial(\homeg \setminus \Deps)} \Big[\frac{\partial\tueps}{\partial n(y)} (y)\,\nabla^{J}_x\Phi(x - y) - \tueps(y)\,\frac{\partial \nabla^J_x\Phi}{\partial n(y)}(x - y)\Big]\,d\sigma(y)\nonumber\\
	&= I_1^J + I_2^J + I_3^J
	\;,\label{RepForm17Oct10-Der}
\end{align}
where
\begin{align*}
I_1^J
	& {}= \int_{\partial \homeg} \frac{\partial\tueps}{\partial n(y)} (y)\,\nabla^J_x\Phi(x - y)\,d\sigma(y),\\
I_2^J
	& {}= -\sum_{(m,n) \in I_\eps}\int_{\partial D_{m,n,\eps }} \tueps(y)\,\frac{\partial \nabla^J_x\Phi}{\partial n(y)}(x - y)\,d\sigma(y),\\
I_3^J
	& {}= \sum_{(m,n) \in I_\eps}\int_{\partial D_{m,n,\eps }} \frac{\partial\tueps}{\partial n(y)} (y)\,\nabla^J_x\Phi(x - y) \,d\sigma(y)
	\;.
\end{align*}

Fix $x \in \homegtau$. We now estimate the right-hand side of \eqref{RepForm17Oct10-Der}. We begin with the easy estimates for $I_1^J$ and $I_2^J$. 
Thanks to \eqref{eq:bdkn} and  Lemma \ref{TU}, 
\begin{equation}
|I_1^J| \leq \frac{C(|J|,\tilde\beta)} {\lambda^{\tilde\beta}} M \mbox{ for all } \tilde\beta \geq 0.
	\label{I1Est}
\end{equation}
To bound $I_2^J$, we estimate the contributions of inclusions located ``radially'' between $\eps\tau+ j\eps$ 
and $\eps\tau+ (j+1)\eps$ for $j=0,1,\ldots$ We obtain using \eqref{eq:bdkn},
\begin{align}
|I_2^J|
	&\leq C\sum_{j=0}^{C\,\eps ^{-1}} M\,(j+1)\,\sqrt{\lambda}^{|J| + 1}\,
       \left|K_0^{(|J| + 1)}(\sqrt{\lambda}\,(j  + \tau)\eps )\right|\,\eps \,\reps\nonumber\\
	&\leq CM\,\frac{\reps}{\lambda^{\tbeta}\,\tau^{|J| + 1 + 2\tbeta}\,\eps^{|J| + 2\tbeta}}\,
          \sum_{j=0}^{C\,\eps ^{-1}} \big(\sqrt{\lambda}\,(j + \tau)\,\eps)^{|J| + 1 + 2\tbeta}\,
          \left|K_0^{(|J| + 1)}(\sqrt{\lambda}\,(j  + \tau)\eps )\right|\nonumber\\
	&\leq CM\,\frac{\reps}{\lambda^{\tbeta}\,\tau^{|J| + 1 + 2\tbeta}\,\eps^{|J| + 1+ 2\tbeta}} 
          \sup_{0 < t< \infty} t^{|J| + 1 + 2\tbeta} K_0^{(|J| + 1)}(t)\nonumber\\
       &= CM\,\frac{\reps}{\lambda^{\tbeta}\,\tau^{|J| + 1 + 2\tbeta}\,\eps^{|J| + 1+ 2\tbeta}}, \qquad \tbeta \geq 0.
	\label{I2Est}
\end{align}
Let us now turn to the third integral, $I_3^J$. Using Lemma \ref{SCBdryGrad} and counting contributions from ``rings'' of inclusions as above, we have
\[
|I_3^{J}|
	\leq C\sum_{j=0}^{C\,\eps ^{-1}} \theta(\lambda,\eps ) \,M\,(j+1)\,\sqrt{\lambda}^{|J|}
          \left|K_0^{(|J|)}(\sqrt{\lambda}\,(j + \tau)\eps )\right|\,\eps \,\reps.
\]
Since $\theta(\lambda,\eps)$ can be ``large'', we proceed to decompose $I_3^J$ into two parts: one counts the contribution 
from the ring of inclusions closest to $x$, where the dependence on $\tau$ is dominant, and the other counts the contribution of the 
further away inclusions, where $\tau$ does not play a role. 
Proceeding in this way, we compute, using the decay properties of $K_0^{(|J|)}$ for the second term,
\begin{align*}
|I_3^{J}|
	&\leq C\sum_{j=0}^{C\,\eps ^{-1}} \theta(\lambda,\eps ) \,M\,(j+1)\,\sqrt{\lambda}^{|J|}
          \left|K_0^{(|J|)}(\sqrt{\lambda}\,(j + \tau)\eps )\right|\,\eps \,\reps\\
	&\leq \underbrace{C\,M\,\theta(\lambda,\eps )\,\eps\,\reps\,\sqrt{\lambda}^{|J|}
               \left|K_0^{(|J|)}(\sqrt{\lambda}\,\tau\,\eps )\right|}_{X^J = \text{near neighbour terms, i.e. $j = 0$}}\\
        &\qquad + \underbrace{C\,M\,\theta(\lambda,\eps )\,\eps\,\reps\,\sum_{j=1}^{C\,\eps ^{-1}} \,j\,\sqrt{\lambda}^{|J|}
               \left|K_0^{(|J|)}(\sqrt{\lambda}\,j\,\eps )\right|}_{Y^J = \text{distant neighbour terms}}.
\end{align*}
The near term $X^J$ can be bounded as follows,
\begin{align}
X^J
	&\leq C\,M\,\frac{\theta(\lambda,\eps )\,\reps}{\lambda^\tbeta\,\tau^{|J| + 2\tbeta}\,\eps^{|J| - 1+ 2\tbeta}}\,
 (\sqrt{\lambda}\,\tau\,\eps)^{|J| + 2\tbeta}\left|K_0^{(|J|)}(\sqrt{\lambda}\,\tau\,\eps )\right|\nonumber\\
	&\leq C\,M\,\frac{\theta(\lambda,\eps )\,\reps}{\lambda^\tbeta\,\tau^{|J| + 2\tbeta}\,\eps^{|J| - 1 + 2\tbeta}}
 \,\sup_{t > 0} t^{|J| + 2\tbeta}\left|K_0^{(|J|)}(t)\right|\nonumber\\
	&\leq C\,M\,\frac{\theta(\lambda,\eps )\,\reps}{\lambda^\tbeta\,\tau^{|J| + 2\tbeta}\,\eps^{|J| - 1 + 2\tbeta}}, \qquad \tbeta > 0.
	\label{I3X}
\end{align}
Similarly,  we have
\begin{align}
Y^J
	&\leq C\,M\,\frac{\theta(\lambda,\eps )\,\reps}{\lambda^\tbeta\,\eps^{|J| -1 + 2\tbeta}}\,
\sum_{j=1}^{C\,\eps ^{-1}} (\sqrt{\lambda}\,j\,\eps)^{|J| + 2\tbeta}\left|K_0^{(|J|)}(\sqrt{\lambda}\,j\,\eps )\right|\nonumber\\
	&\leq C\,M\,\frac{\theta(\lambda,\eps )\,\reps}{\lambda^\tbeta\,\eps^{|J| + 2\tbeta}}
\,\sup_{t > 0} t^{|J| + 2\tbeta}\left|K_0^{(|J|)}(t)\right|\nonumber\\
	&\leq C\,M\,\frac{\theta(\lambda,\eps )\,\reps}{\lambda^\tbeta\,\eps^{|J| + 2\tbeta}}, \qquad \tbeta > 0.
	\label{I3YCrude}
\end{align}
We then insert estimate \eqref{eq:mulambda} in \eqref{I3X} and \eqref{I3YCrude}. For $\lambda \geq \frac{1}{\reps}$, we can choose $\tbeta = 1$ and obtain
\begin{align}
|I_3^{J}|
	&\leq C\,M\,\max\Big(\frac{1}{\eps\,\reps},\sqrt{\lambda}\Big)\,\frac{\reps}{\lambda\,\eps^{|J| + 2}}\Big(\frac{\eps}{\tau^{|J| + 2}} + 1\Big)\nonumber\\
	&\leq C\,M\,\max\Big(\frac{1}{\eps\,\reps^{1/2}},1\Big)\,\frac{\reps}{\sqrt{\lambda}\,\eps^{|J| + 2}}\Big(\frac{\eps}{\tau^{|J| + 2}} + 1\Big)\nonumber\\
	&\leq C\,M\,\frac{\reps^{1/2}}{\sqrt{\lambda}\,\eps^{|J| + 3}}\Big(\frac{\eps}{\tau^{|J| + 2}} + 1\Big).
	\label{I3-CrudeLarge}
\end{align}
For $\lambda \leq \frac{1}{\reps}$ we can take $\tbeta = \frac{1}{2}$ to get
\begin{equation}
|I_3^{J}|
	\leq C\,M\,\frac{1}{\sqrt{\lambda}\,\eps^{|J|}}\,\Big(\frac{\eps}{\tau^{|J| + 1}} + 1\Big).
	\label{I3-CrudeSmall}
\end{equation}

These estimates are enough to conclude for the case $J = (0,0)$. Adding \eqref{I1Est}, \eqref{I2Est} (with $\tbeta = \frac{1}{2}$), \eqref{I3-CrudeLarge} 
and \eqref{I3-CrudeSmall}, we obtain, for all $\lambda \geq \lambda_0$,
\[
|\tueps(x)| \leq \frac{CM}{\sqrt{\lambda}}\Big(1 + \frac{\reps}{\tau^2\,\eps^2} + \frac{\eps}{\tau} + \frac{\reps^{1/2}}{\tau^2\,\eps^3}\Big(\frac{\eps}{\tau^2} + 1\Big)\Big),
\]
which proves the first desired estimate as $\tau > \kappa\,\eps^{\frac{1-2\beta}{1+2\beta}} \gg \eps$.

For $|J| = 1$ and $\lambda \leq \frac{1}{\reps}$, estimate \eqref{I3-CrudeSmall} deteriorates. We keep the 
near estimate \eqref{I3X} for $X^J$ and improve the far estimate \eqref{I3YCrude} for $Y^J$. 
First, we write
\begin{align*}
Y^{(0,1)} + Y^{(1,0)}
	&= C\,M\,\theta(\lambda,\eps )\,\eps\,\reps\,\sum_{j=1}^{C\,\eps ^{-1}} \,j\,\sqrt{\lambda}\left|K_0^{(1)}(\sqrt{\lambda}\,j\,\eps )\right|\\
	&\leq C\,M\,\frac{\theta(\lambda,\eps )\,\reps}{\eps}\,\sum_{j=1}^{C\,\eps ^{-1}} (\sqrt{\lambda}\,j\,\eps)\left|K_0^{(1)}(\sqrt{\lambda}\,j\,\eps )\right|\,\eps.
\end{align*}
Note that since $t\to t \,|K_0^{(1)}(t)|$ is decreasing and summable on $(0,\infty)$, its lower Riemann sum is bounded from above by the continuous integral. Thus
\begin{eqnarray*}
Y^{(0,1)} + Y^{(1,0)}
&\leq& C\,M\,\frac{\theta(\lambda,\eps)\,\reps}{\eps}\,\int_0^\infty \sqrt{\lambda}t\,K_0^{(1)}(\sqrt{\lambda}t)\,dt\\
&=& C\,M\,\frac{\theta(\lambda,\eps)\,\reps}{\sqrt{\lambda}\,\eps}.
\end{eqnarray*}
This inequality together with \eqref{eq:mulambda} and \eqref{I3X} yields that for $\lambda \leq \frac{1}{\reps}$,
\begin{equation}
|I_3^{(1,0)}| + |I_3^{(0,1)}|  \leq \frac{CM}{\lambda^{\tbeta}}\Big(\frac{\eps^{1- 2\tbeta}}{\tau^{1 + 2\tbeta}} + 1\Big), \qquad 0 < \tbeta < \frac{1}{2}.
	\label{I3J1Sharp}
\end{equation}
Choosing $\tbeta = \beta$ in the above estimate and in \eqref{I1Est}, \eqref{I2Est} and 
recalling \eqref{I3-CrudeLarge}, we conclude for all $\lambda \geq \lambda_0$ that 
\[
|\nabla \tueps(x)|  \leq \frac{CM}{\lambda^\beta}\Big(1 + \frac{\reps}{\tau^{2 + 2\beta}\,\eps^{2 + 2\beta}} + \frac{\eps^{1 - 2\beta}}{\tau^{1 + 2\beta}} + \frac{\reps^{1/2}}{\eps^4}\Big(\frac{\eps}{\tau^3} + 1\Big)\Big),
\]
which provides the remaining estimate as $\tau > \kappa\,\eps^{\frac{1-2\beta}{1+2\beta}}$.
\qed

We now turn to the derivation of H\"older estimates for the gradient of $\tueps$.
%-----------HOLDER ESTIMATES------------------%
\begin{lemma}\label{lem:HolderX}
For any $0 < \beta < \frac{1}{2}$, $0<\nu<2\beta$, $\kappa > 0$ and $\tau > \kappa\,\eps^{\frac{1 - 2\beta}{2(1 + 2\beta - \nu)}}$ there holds
\[
[\nabla\tueps]_{C^\nu(\homegtau)} \leq \frac{C}{\lambda^{\beta - \frac{\nu}{2}}}\,\sup_{m,n} |u_{m,n}|
\]
for some constant $C$ independent of $\eps$, $\tau$ and $\lambda$.
\end{lemma}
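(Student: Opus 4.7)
The plan is to mimic the layer-potential argument of the proof of Lemma~\ref{lem:utilde}, now applied to the difference $\nabla\tueps(x)-\nabla\tueps(x')$ for $x,x'\in\homegtau$. The first step will be to differentiate the representation formula \eqref{RepForm17Oct10-Der} with $|J|=1$, writing
\[
\nabla\tueps(x)-\nabla\tueps(x')=\sum_{k=1}^3\bigl(I_k^{(1)}(x)-I_k^{(1)}(x')\bigr),
\]
and then to control each kernel difference $\nabla_x\Phi(x-y)-\nabla_x\Phi(x'-y)$ by interpolating between the $L^\infty$ bound and the mean-value bound along a path from $x$ to $x'$ that stays in $\homeg_{\tau/2}^\eps$ (hence avoids the singular sets $\partial D_{m,n,\eps}$). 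The interpolation has the schematic form
\[
|\nabla_x\Phi(x-y)-\nabla_x\Phi(x'-y)|\leq C\,|x-x'|^\nu\,\sup|\nabla\Phi|^{1-\nu}\,\sup|\nabla^2\Phi|^\nu,
\]
and by the pointwise decay of Bessel derivatives \eqref{eq:bdkn}, the product of suprema will behave essentially as $\lambda^{(1+\nu)/2}|K_0^{(1+\nu)}(\sqrt\lambda|x-y|)|$, i.e.\ as the Lemma~\ref{lem:utilde} kernel with $|J|=1$ bumped up to $|J|=1+\nu$ at the cost of a factor $|x-x'|^\nu$.

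I will then substitute this estimate into the three terms $I_k^{(1)}(x)-I_k^{(1)}(x')$ and follow the chain of estimates \eqref{I1Est}, \eqref{I2Est}, \eqref{I3X}, \eqref{I3YCrude} from the proof of Lemma~\ref{lem:utilde}, everywhere replacing $|J|=1$ by $|J|=1+\nu$ and carrying the extra factor $|x-x'|^\nu$. Analyzing the three $\lambda$-regimes of $\theta(\lambda,\eps)$ in \eqref{eq:mulambda} separately, as in Lemma~\ref{lem:utilde}, and choosing the auxiliary exponent $\tilde\beta=\beta-\nu/2$ (so that $|J|+2\tilde\beta=1+2\beta$ is preserved) should give the target bound $CM\,|x-x'|^\nu/\lambda^{\beta-\nu/2}$ in the regime $|x-x'|\leq\eps\tau/2$. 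The near-neighbor term $X^{1+\nu}$ is the dominant contribution and dictates the threshold $\tau>\kappa\,\eps^{(1-2\beta)/(2(1+2\beta-\nu))}$ appearing in the statement, via the balance between $\theta(\lambda,\eps)\reps$ and the $\tau^{1+2\beta}\eps^{\cdot}$ denominator.

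The principal obstacle will be the regime $|x-x'|>\eps\tau/2$, where no straight segment from $x$ to $x'$ stays in $\homeg_{\tau/2}^\eps$ and the mean-value bound breaks down along inclusions intersected by the segment. I plan to resolve this either by decomposing the path into a chain of intermediate points in $\homeg_{\tau/2}^\eps$ of total length comparable to $|x-x'|$ (using that the holes form an $\eps\reps$-thick obstacle in an $\eps$-periodic lattice) and applying the small-separation estimate on each link, summing the $\nu$-th powers by H\"older's inequality; or, as a fallback, by using the $L^\infty$ bound from Lemma~\ref{lem:utilde} at level $\beta-\nu/2$, which is available here since a direct comparison of exponents shows the hypothesis $\tau>\kappa\,\eps^{(1-2\beta)/(2(1+2\beta-\nu))}$ implies the Lemma~\ref{lem:utilde} threshold both at level $\beta$ and at level $\beta-\nu/2$. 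The stated threshold is tailored precisely so that these two strategies combine to give a uniform $\lambda^{-(\beta-\nu/2)}$ estimate.
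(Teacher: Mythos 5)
Your treatment of the small-separation regime is essentially the right idea, and it matches what the paper does with the kernel: the difference $\nabla_x\Phi(x-y)-\nabla_x\Phi(z-y)$ is controlled through the $\nu$-H\"older modulus $K_0^{(1+\nu)}$ of $K_0^{(1)}$, which obeys $K_0^{(1+\nu)}(t)\leq Ct^{-(1+\nu)}$, and one sums ring by ring exactly as in Lemma~\ref{lem:utilde}. The genuine gap is in the regime $|x-x'|\gtrsim\eps\tau$, where neither of your two devices closes the estimate. Chaining through $N\sim|x-x'|/(\eps\tau)$ intermediate points gives $\sum_k|x_k-x_{k+1}|^\nu\sim N^{1-\nu}|x-x'|^\nu$; for $\nu<1$ the inequality $\sum_k a_k^\nu\geq(\sum_k a_k)^\nu$ goes the wrong way, so you lose an unbounded factor $N^{1-\nu}$. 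The $L^\infty$ fallback $|\nabla\tueps(x)-\nabla\tueps(x')|\leq 2\|\nabla\tueps\|_{L^\infty}\leq CM\lambda^{-\tbeta}$ yields the claimed H\"older quotient only when $|x-x'|\gtrsim\lambda^{-\nu/(2\nu)}=\lambda^{-1/2}$, which leaves the intermediate range $\eps\tau\lesssim|x-x'|\lesssim\lambda^{-1/2}$ uncovered — and this range is nonempty for every fixed $\lambda\geq\lambda_0$ as $\eps\to0$. The paper's resolution is different and is the idea you are missing: it first proves a $\lambda$-\emph{independent} bound $[\nabla\tueps]_{C^{1-\delta}(\homegtau)}\leq CM$ (estimate \eqref{H-FirstRound}) valid for \emph{all} pairs $x,z$, with no path argument at all — the inclusions are split into those closer to $x$ and those closer to $z$, and $K_0^{(1+\nu)}$ evaluated at the smaller of the two distances controls each kernel difference directly — and then interpolates this against the decaying $L^\infty$ bound of Lemma~\ref{lem:utilde} by a geometric mean in which the powers of $|x-z|$ cancel exactly. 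That cancellation is what handles every separation simultaneously, and the choices $\tbeta=(\beta-\nu/4)/(1-\nu/2)$, $\delta=(1-2\tbeta)/(1+2\tbeta)$ produce both the decay $\lambda^{-(\beta-\nu/2)}$ and the stated threshold $\tau>\kappa\,\eps^{(1-2\beta)/(2(1+2\beta-\nu))}$.

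A second, smaller gap: even in the small-separation regime your plan cites only the crude far-field estimate \eqref{I3YCrude}. Already for $|J|=1$ and $\lambda\leq\reps^{-1}$ that estimate deteriorates (this is why the paper derives the sharpened bound \eqref{I3J1Sharp} via the Riemann-sum comparison with $\int_0^\infty t\,|K_0^{(1)}(t)|\,dt$). With $|J|$ bumped to $1+\nu$ and $\tbeta=\beta-\nu/2$, the crude far sum contributes $CM\,|x-x'|^\nu\,\theta(\lambda,\eps)\reps\,\lambda^{-(\beta-\nu/2)}\eps^{-(1+2\beta)}\leq CM\,|x-x'|^\nu\lambda^{-(\beta-\nu/2)}\eps^{-2\beta}$, which blows up; you would need the analogue of the improved estimate for the $(1+\nu)$-kernel.
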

This Lemma is proved in Appendix~\ref{sec:lem47}.

%+++++++++++++%
\subsection{Estimates for $\hueps$}
The residual term $\hueps$ satisfies
\begin{equation}\label{eq:defuhat}
\left\{\begin{array}{ll}
-\Delta\hueps + \lambda\,\hueps = 0 &\text{ in } \homeg\setminus \Deps	\;,\\
\hueps = \ueps - v - u_{m,n} &\text{ on } \partial D_{m,n,\eps }\;, (m,n) \in I_\eps 	\;.
\end{array}\right.
\end{equation}
We derive the following estimate, which shows that $\hueps$ is negligible compared to other contributions to $\ueps$.

\begin{lemma}\label{lem:ufEst}
For any $J \in \ZZ^2$, $\eta > 0$, and $\tau >\kappa\,\eps^{-\frac{2 + |J| +2\eta}{|J|+2\eta}}\reps^{\frac{1}{16(|J|+2\eta)}}$, 
the solution $\hueps$ of \eqref{eq:defuhat} satisfies
\[
\|\nabla^J \hueps\|_{L^\infty(\homegtau)} \leq \frac{C\,\reps^{\frac{1}{16}}}{\lambda^{\eta}}
	\left[\|f\|_{L^2(\homeg)} + \|\sqrt{\aeps}g\|_{L^2(\homeg)} + \|\bh\|_{L^\infty(\homeg)}\right]
	,
\]
where the constant $C$ depends on $J$, $\eta$, $\kappa$ and $\homeg$ only, and $\homegtau$ is defined in \eqref{eq:defhomegtau}.%
\end{lemma}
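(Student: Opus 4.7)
The proof proceeds in two steps: first an $L^\infty$ bound on $\hueps$ valid on all of $\omega\setminus\Deps$, then an interior regularity upgrade converting this into pointwise bounds on $\nabla^J\hueps$ in the smaller set $\homegtau$.

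For the $L^\infty$ bound, I would exploit the fact that $\hueps$ satisfies the constant-coefficient equation $-\Delta\hueps + \lambda\hueps = 0$ in $\omega\setminus\Deps$ with $\hueps = 0$ on $\partial\omega$ and $\hueps(y) = (\ueps - v)(y) - u_{m,n}$ on each circle $\partial D_{m,n,\eps}$. By the maximum principle,
$$
\|\hueps\|_{L^\infty(\omega\setminus\Deps)} \;\leq\; \max_{(m,n)\in I_\eps}\;\mathrm{osc}_{\partial D_{m,n,\eps}}(\ueps - v).
$$
The oscillation on the circle of (exponentially small) radius $\eps r_\eps$ is then controlled by combining (i) the pointwise bound on $z := \ueps - v$ coming from Proposition~\ref{pro:supboundueps} applied to $\ueps$ and Lemma~\ref{lem:EstGradVInfty} applied to $v$, which gives $\|z\|_{L^\infty(\omega')} \leq C\lambda^{-\eta_0}[\,\cdots\,]$ for $\omega' \Subset \omega$, and (ii) the fact that $z$ satisfies the smooth equation $-\Delta z + \lambda z = 0$ throughout $\omega\setminus\Ceps$. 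A rescaling $y = (x-(m\eps,n\eps))/(\eps r_\eps)$ converts the annulus $B_{\eps/4}(m\eps,n\eps)\setminus D_{m,n,\eps}$ into a large annulus $B_{1/(4r_\eps)}\setminus B_1$ on which $\tilde z$ solves a small perturbation of the Laplace equation; elliptic regularity there yields an oscillation bound $\mathrm{osc}_{\partial B_1}\tilde z \leq C r_\eps^{\alpha_0}\|\tilde z\|_{L^\infty}$ with some $\alpha_0 > 0$, from which a bound $\|\hueps\|_{L^\infty(\omega\setminus\Deps)} \leq C r_\eps^{\alpha_0}\lambda^{-\eta_0}[\,\cdots\,]$ follows.

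For the derivative upgrade, since $-\Delta\hueps + \lambda\hueps = 0$ has constant coefficients, standard interior estimates applied on a ball of radius $\sim\eps\tau$ centered at any $x\in\homegtau$ give
$$
|\nabla^J\hueps(x)| \;\leq\; C\bigl[\min(\sqrt\lambda,(\eps\tau)^{-1})\bigr]^{|J|}\,\|\hueps\|_{L^\infty(\omega_{\tau/2}^\eps)}.
$$
Combining with Step~1 produces a bound of the form $C\,r_\eps^{\alpha_0}/[\lambda^{\eta_0}(\eps\tau)^{|J|}]\,[\,\cdots\,]$, and the precise algebraic form of the hypothesis $\tau > \kappa\,\eps^{-(2+|J|+2\eta)/(|J|+2\eta)} r_\eps^{1/(16(|J|+2\eta))}$ is exactly what is needed to absorb the loss $(\eps\tau)^{-|J|}$ into a clean $r_\eps^{1/16}/\lambda^{\eta}$ factor.

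The main obstacle is the oscillation estimate in Step~1. In contrast to Lemma~\ref{lem:utilde}, whose boundary values $u_{m,n}$ are constants on each circle and whose proof relies on a layer-potential representation, here the nontrivial feature is that the boundary data of $\hueps$ on $\partial D_{m,n,\eps}$ has mean zero by construction, and this cancellation must be quantified. Translating the mean-zero structure into concrete $r_\eps^{\alpha_0}$ decay, uniformly in $(m,n)\in I_\eps$, while correctly tracking the $\lambda$-dependence, is the technical crux; the appearance of the peculiar exponents $1/16$ and $(2+|J|+2\eta)/(|J|+2\eta)$ reflects the interplay between this local oscillation estimate at scale $\eps r_\eps$ and the interior derivative recovery at scale $\eps\tau$.
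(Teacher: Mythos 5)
Your overall architecture ($L^\infty$ bound first, then interior derivative recovery at scale $\eps\tau$) matches the paper's, and the maximum-principle reduction to the oscillation of $\ueps-v$ on each $\partial D_{m,n,\eps}$ is legitimate. But the mechanism you propose for the oscillation estimate does not work. After rescaling, $\tilde z$ solves $-\Delta\tilde z+s\tilde z=0$ only in the annulus $B_{1/(4\reps)}\setminus B_1$, not in the full ball: inside the disc $\ueps$ satisfies the equation with the huge coefficient $\ceps$, so $z=\ueps-v$ does not extend as a solution across $B_1$. For a function that solves the equation only in an annulus there is no oscillation gain on the inner circle — e.g.\ $x_1/|x|^2$ is harmonic in $B_{1/(4\reps)}\setminus B_1$, has sup norm $1$ there, and oscillation $2$ on $\partial B_1$. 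The cancellation actually comes from the weighted energy: since $\aeps=\ceps\sim(\pi\reps^2)^{-1}$ on $D_{m,n,\eps}$, the bound $\int\aeps|\nabla\ueps|^2\leq C/\lambda\,[\cdots]^2$ forces $\|\nabla\ueps\|_{L^2(D_{m,n,\eps})}\leq C\reps\lambda^{-1/2}[\cdots]$, and Corollary~\ref{lem:GaNiV} gives $\|\nabla v\|_{L^2(D_{m,n,\eps})}\leq C(\eps\reps)^{2\varrho}[\cdots]$ because $|D_{m,n,\eps}|$ is tiny; a Poincar\'e/trace inequality on the circle then controls $\|\ueps-v-u_{m,n}\|_{L^2(\partial D_{m,n,\eps})}$ with the factor $\sqrt{\eps\reps}\,(\reps+(\eps\reps)^{2\varrho})$, which is where the exponent $1/16$ originates ($\varrho=1/16$). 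This is the step your plan replaces with a false regularity claim.

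There is a second, independent gap: your Step 1 produces at best a fixed power $\lambda^{-\eta_0}$ (with $\eta_0<1/2$, coming from Proposition~\ref{pro:supboundueps} and Lemma~\ref{lem:EstGradVInfty}), and for $J=(0,0)$ the interior estimate contributes no further decay in $\lambda$, so you cannot reach $\lambda^{-\eta}$ for arbitrary $\eta>0$ as the lemma requires. The paper obtains arbitrary powers of $\lambda$ from a different source: it dominates $\hueps$ by a sum of exterior solutions $\alpha_{m,n}^{\pm}$ on $\RR^2\setminus D_{m,n,\eps}$ and invokes Lemma~\ref{lem:extdecay}, which gives the exponential decay $\exp\bigl(-\sqrt{\lambda}\,\eps\tau/16\bigr)$ at distance $\eps\tau$ from the discs; the elementary inequality $e^{-t}\leq C(\eta)t^{-2\eta}$ then converts this into $\lambda^{-\eta}(\eps\tau)^{-2\eta}$, and the hypothesis on $\tau$ absorbs all the remaining negative powers of $\eps$ and $\tau$. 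Without some quantitative decay of $\hueps$ away from the fibres, your scheme cannot close.
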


For the proof of Proposition~\ref{prop:Linfybd2d}, we shall use the following simple consequence of Lemma~\ref{lem:ufEst}.
\begin{corollary}\label{cor:estuhat}
For any $\tau > \kappa \eps^2$, the solution $\hueps$ of \eqref{eq:defuhat} satisfies 
$$
\|\hueps\|_{L^\infty(\homegtau)} + \|\nabla \hueps\|_{L^\infty(\homegtau)} + [\nabla \hueps]_{C^{0,1}(\overline{\homegtau})}
	\leq \frac{C\,\eps^2}{\lambda^{\frac{3}{2}}}\,
		\left[\|f\|_{L^2(\homeg)} + \|\sqrt{\aeps}g\|_{L^2(\homeg)} + \|\bh\|_{L^\infty(\homeg)}\right],
$$ 
where $C>0$ depends on $\kappa$ and $\homeg$ only, and $\homegtau$ is defined in \eqref{eq:defhomegtau}.%
\end{corollary}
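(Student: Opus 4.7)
The plan is to reduce everything to three applications of Lemma~\ref{lem:ufEst}, taken with the common choice $\eta = 3/2$ and multi-indices $|J| = 0, 1, 2$. Each choice produces the exponent $\lambda^{-3/2}$ that the corollary demands.

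The main observation driving the proof is that hypothesis \eqref{eq:Cond1} forces $\reps$ to be super-exponentially small in $\eps$. Indeed, $-\eps^2 \ln \reps \leq 2\pi/\gamma_-$ rearranges to $\reps \leq \exp(-\gamma_-/(2\pi\,\eps^2))$, so for any fixed $c>0$ one has $\reps^c = o(\eps^k)$ for every polynomial order $k$. Applied with $c = 1/16$, this converts the prefactor $\reps^{1/16}$ on the right-hand side of Lemma~\ref{lem:ufEst} into a quantity bounded by $C(\kappa)\,\eps^2$ for $\eps$ small. Applied to the admissibility condition $\tau > \kappa\,\eps^{-(2+|J|+2\eta)/(|J|+2\eta)}\,\reps^{1/(16(|J|+2\eta))}$, it shows that for the finitely many choices $|J| \in \{0,1,2\}$ and $\eta = 3/2$, the exponential decay of $\reps$ kills the growing factor $\eps^{-(2+|J|+2\eta)/(|J|+2\eta)}$, so the hypothesis $\tau > \kappa\,\eps^2$ suffices to invoke Lemma~\ref{lem:ufEst} in all three cases.

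With this in hand, Lemma~\ref{lem:ufEst} applied at orders $|J| = 0$ and $|J| = 1$ directly yields
$$
\|\hueps\|_{L^\infty(\homegtau)} + \|\nabla \hueps\|_{L^\infty(\homegtau)} \leq \frac{C\,\eps^2}{\lambda^{3/2}}\,\Big[\|f\|_{L^2(\homeg)} + \|\sqrt{\aeps}\,g\|_{L^2(\homeg)} + \|\bh\|_{L^\infty(\homeg)}\Big],
$$
and applying it once more at order $|J| = 2$ gives the identical bound for $\|\nabla^2 \hueps\|_{L^\infty(\homegtau)}$.

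The only step with any subtlety is converting the sup bound on $\nabla^2\hueps$ into the Lipschitz semi-norm $[\nabla \hueps]_{C^{0,1}(\overline{\homegtau})}$, since $\homegtau$ is not convex. The remedy is to apply Lemma~\ref{lem:ufEst} instead on the slightly enlarged set $\omega_{\tau/2}^\eps$ and to observe that $\tau > \kappa\eps^2 \gg \eps\reps$ guarantees quasi-convexity of $\omega_{\tau/2}^\eps$: any two $x,y \in \homegtau$ can be joined by a path lying in $\omega_{\tau/2}^\eps$ whose length is bounded by $C|x-y|$ (a detour around at most one obstructing disc costs an extra length of order $\eps\tau$, which is comparable to $|x-y|$ whenever the straight segment is actually blocked). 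Integrating $\nabla^2\hueps$ along such a path produces the required Lipschitz estimate. This quasi-convexity argument is the sole technical point, and it is standard for domains obtained by removing a periodic array of small holes.
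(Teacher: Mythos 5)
Your proposal is correct and coincides with the paper's (unwritten) argument: Corollary~\ref{cor:estuhat} is presented as a direct consequence of Lemma~\ref{lem:ufEst}, obtained exactly as you do by taking $\eta=3/2$ and $|J|\le 2$, absorbing both the prefactor $\reps^{1/16}$ and the admissibility threshold on $\tau$ into powers of $\eps$ via the super-exponential smallness of $\reps$, and passing from the bound on $\nabla^2\hueps$ to the Lipschitz semi-norm by the quasi-convexity of the perforated set. One small slip: that smallness comes from the \emph{left} inequality in \eqref{eq:Cond1}, $-\eps^2\ln\reps\ge 2\pi/\gamma_+$, which gives $\reps\le\exp\left(-2\pi/(\gamma_+\eps^2)\right)$ (you used the other side and inverted the constant), but this does not affect the argument.
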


\begin{remark}
A similar statement is possible with $\frac{\eps^2}{\lambda^{\frac{3}{2}}}$ 
replaced by $\frac{\eps^\alpha}{\lambda^\beta}$ with arbitrary $\alpha > 0$ and $\beta > 0$. 
However, we chose those powers to fix ideas, since, for example, $\|v\|_{L^\infty(\homeg)}$ and $\|\tueps\|_{L^\infty(\homegtau)}$ have 
slower decay in $\lambda$, and the restriction on $\tau$ (with respect to $\eps$) is more stringent.
\end{remark}

The proof will use the following result, whose proof will be given later. 
\begin{lemma}\label{lem:extdecay}
Given $\alpha\in H^{1/2}\left(\Sphere^1\right)$, and $s>0$, there exists a unique solution in $H^1(\RR^2\setminus B(0,1))$ on the exterior of the unit disk of $\RR^2$ of
$$
-\Delta u + s u=0 \mbox{ in } \RR^2\setminus B(0,1) \quad u=\alpha \mbox{ on } \Sphere^1,
$$ 
still denoted by $\alpha$, which decays at infinity. It satisfies
\begin{equation}
|\alpha(x)| \leq C\,e^{-\sqrt{s}\,|x|/4}\,\|\alpha\|_{L^2(\Sphere^1)}
	\text{ for } |x| \geq 3.
	\label{eq:extdecay}
\end{equation}
\end{lemma}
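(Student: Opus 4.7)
The strategy is to build the solution via Fourier series in polar coordinates, extract a crude pointwise bound on a circle slightly larger than $\Sphere^1$, and then propagate exponential decay outward by comparison with an explicit radial supersolution.

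Existence and uniqueness are standard: the coercivity of the bilinear form $\int_{\RR^2\setminus B(0,1)}(\nabla u\cdot\nabla v + suv)\,dx$ on $H^1(\RR^2\setminus B(0,1))$, combined with a lifting of the $H^{1/2}$-trace, gives a unique solution by Lax-Milgram. Separation of variables then yields the explicit formula
$$ u(r,\theta) = \sum_{n\in\ZZ} \alpha_n \,\frac{K_n(\sqrt{s}\,r)}{K_n(\sqrt{s})}\, e^{in\theta}, $$
where $K_n$ is the modified Bessel function of the second kind of order $n$ and $\alpha(\theta)=\sum_n\alpha_n e^{in\theta}$.

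The key intermediate step is the pointwise bound
$$ M := \|u\|_{L^\infty(\partial B(0,3/2))} \leq C(s)\,\|\alpha\|_{L^2(\Sphere^1)}. $$
Since each $K_n$ is positive and decreasing on $(0,\infty)$, every Fourier ratio satisfies $K_n(\sqrt{s}\,r)/K_n(\sqrt{s})\leq 1$ for $r\geq 1$, and Parseval's identity yields $\|u(r,\cdot)\|_{L^2(\partial B(0,r))}^2 \leq r\,\|\alpha\|_{L^2(\Sphere^1)}^2$ for every $r\geq 1$. Integrating in $r\in(5/4,7/4)$ gives an $L^2$ bound of $u$ on this annulus; since $u$ solves the smooth homogeneous equation $(-\Delta+s)u=0$ there, the interior elliptic estimate $\|u\|_{L^\infty(B_{r/2})}\leq C(s)\|u\|_{L^2(B_r)}$ (following e.g. from $\Delta u = su$, interior $H^2$-regularity, and the 2D Sobolev embedding) applied on small balls centred on $\{|x|=3/2\}$ delivers the claimed estimate.

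To conclude, I set up a radial barrier. Define $\phi(x) := Me^{-\sqrt{s}(|x|-3/2)/2}$. Using $\Delta e^{-c|x|} = (c^2 - c/|x|)e^{-c|x|}$ in two dimensions, one computes
$$ (-\Delta+s)\phi = \Bigl(\tfrac{3s}{4} + \tfrac{\sqrt{s}}{2|x|}\Bigr)\phi > 0 \qquad\text{for }|x|>0, $$
so $\phi$ is a positive supersolution on $\{|x|>3/2\}$, dominates $|u|$ on $\{|x|=3/2\}$, and, like $u$, decays at infinity. The maximum principle applied to $\phi\pm u$ on the exterior domain gives $|u|\leq\phi$; for $|x|\geq 3$, the inequality $|x|-3/2\geq |x|/2$ yields $\phi(x)\leq M e^{-\sqrt{s}|x|/4}$, which together with the bound on $M$ establishes \eqref{eq:extdecay}. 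The only genuinely subtle point is the intermediate pointwise bound, where an $L^2$ datum is upgraded to an $L^\infty$ bound on a nearby circle; it relies crucially on the uniform contraction $K_n(\sqrt{s}\,r)/K_n(\sqrt{s})\leq 1$ valid across \emph{all} Fourier modes as soon as $r\geq 1$, after which the barrier step is essentially routine.
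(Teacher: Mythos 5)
Your argument is correct in outline but takes a genuinely different route from the paper's. The paper never invokes a maximum principle: it works directly on the Fourier representation, proving two complementary bounds on the ratios $K_k(\sqrt{s}\,r)/K_k(\sqrt{s})$ --- the uniform-in-$k$ exponential bound $e^{-\sqrt{s}(r-1)}$ obtained from the integral representation $K_k(t)=\int_0^\infty e^{-t\cosh\tau}\cosh k\tau\,d\tau$, and the algebraic bound $2^{k+1}/r^k$ obtained from two-sided estimates $\tfrac{(k-1)!}{2t^k}\leq K_k(t)\leq \tfrac{2^k(k-1)!}{t^k}$ --- and then interpolating (taking the geometric mean of the two) so that the series can be summed by Cauchy--Schwarz for $r>3$, with an absolute constant. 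Your route instead converts the $L^2$ trace datum into an $L^\infty$ bound on a nearby circle and propagates decay with a radial supersolution; the barrier computation and the comparison argument are fine, and your approach has the merit of needing only the monotonicity of $K_n$ rather than its asymptotics.

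The one point that needs repair is the constant in your intermediate step $M\leq C(s)\|\alpha\|_{L^2(\Sphere^1)}$. As you set it up (via $\Delta u=su$, interior $H^2$ regularity and Sobolev embedding), the constant grows like $1+s$, whereas the constant in \eqref{eq:extdecay} must be independent of $s$: in the proof of Lemma~\ref{lem:ufEst} the lemma is applied after rescaling the disc $D_{m,n,\eps}$ to unit size, so $s=\lambda\,\eps^2\reps^2$ varies with both $\eps$ and $\lambda$, and an $s$-dependent constant would reintroduce exactly the dependence the lemma is meant to eliminate. The fix is cheap: since $\Delta|u|\geq s|u|\geq 0$ in the annulus (Kato's inequality, or argue on $u^\pm$ separately), $|u|$ is subharmonic there, and the mean value inequality gives $\|u\|_{L^\infty(\partial B(0,3/2))}\leq C\|u\|_{L^2(\{5/4<|x|<7/4\})}\leq C\|\alpha\|_{L^2(\Sphere^1)}$ with an absolute constant. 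Alternatively one can absorb the loss, since $(1+s)e^{-\sqrt{s}|x|/4}\leq C e^{-\sqrt{s}|x|/8}$ for $|x|\geq 3$, at the price of weakening the exponent --- harmless for the application, but not the statement as written. With either correction your proof is complete.
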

\begin{remark}
We do not claim that this estimate is in any way optimal, but is it sufficient for our purpose.
\end{remark}
\par\noindent
{\it Proof of Lemma~\ref{lem:ufEst}.}
We first derive a bound for $\hueps$ away from $\Deps$. For a given $(m,n) \in I_\eps$, let $\alpha_{m,n}^+$ be the solution to
\[
\left\{\begin{array}{ll}
-\Delta \alpha_{m,n}^+ + \lambda\,\alpha_{m,n}^+ = 0 &\text{ in } \RR^2 \setminus D_{m,n,\eps }	\;,\\
\alpha_{m,n}^+ = \hueps^+ &\text{ on } \partial D_{m,n,\eps }\;,\\
\alpha_{m,n}^+(x') \rightarrow 0 & \text{ as } |x'| \rightarrow \infty	\;.
\end{array}\right.
\]
By the maximum principle, $\alpha_{m,n}^+$ is non-negative in $\homeg \setminus \Deps$. Thus, again by the maximum principle, 
\begin{equation}
\hueps \leq \sum_{(m,n) \in I_\eps } \alpha_{m,n}^+
	\text{ in } \homeg \setminus \Deps
	\;.\label{ufE::Eqn01}
\end{equation}

Now, thanks to Lemma~\ref{lem:extdecay} we have
\[
|\alpha_{m,n}^+| \leq C\,\exp\left(-\frac{\sqrt{\lambda}\,\eps\,\tau}{16}\right)\,\frac{1}{\sqrt{\eps \,\reps}}\|\ueps - v - u_{m,n}\|_{L^2(\partial D_{m,n,\eps })}
	\text{ in } \homeg_{\tau/4}^\eps
	\;.
\]
On the other hand, by Poincar\'e's inequality, \eqref{eq:ape-enerbd} and Corollary~\ref{lem:GaNiV},
\begin{eqnarray*}
\|\ueps - v - u_{m,n}\|_{L^2(\partial D_{m,n,\eps })} 
&\leq& C\,\sqrt{\eps \,\reps}\,\Big[\|\nabla\ueps\|_{L^2(D_{m,n,\eps })} + \|\nabla v\|_{L^2(D_{m,n,\eps })}\Big] \\
&\leq& C\,\sqrt{\eps \,\reps}\,\left(\reps + (\eps\,\reps)^{2\varrho}\right) \times\\
		&&\qquad\qquad \times \underbrace{\left[\frac{1}{\lambda^\beta}\,\left(\|f\|_{L^2(\homeg)} 
+ \|\sqrt{\aeps}\,g\|_{L^2(\homeg)}\right)+ \|\bh\|_{L^\infty(\homeg)}\right]}_{=: \zeta}
\end{eqnarray*}
for $\beta = \frac{1}{4} \in (0,\frac{1}{2})$ and $\varrho = \frac{1}{16} \in (0,\frac{1 - 2\beta}{4})$. 
Inserting this estimate in the upper bound of $\alpha_{m,n}^+$ gives
\[
\alpha_{m,n}^+ \leq C\,\exp\left(-\frac{\sqrt{\lambda}\,\eps\,\tau}{16}\right) \left(\reps + (\eps\,\reps)^{2\varrho}\right)\,\zeta
	\text{ in } \homeg_{\tau/4}^\eps
	\;.
\]
Substituting the above estimate into \eqref{ufE::Eqn01} results in
\[
\hueps \leq C\,\exp\left(-\frac{\sqrt{\lambda}\,\eps\,\tau}{16}\right) \frac{\left(\reps + (\eps\,\reps)^{2\varrho}\right)}{\eps^2}\,\zeta
	\text{ in } \homeg_{\tau/4}^\eps
	\;.
\]
A similar lower bound is derived by repeating the argument. We have obtained
\begin{equation}\label{ufE::Eqn02}
\|\hueps\|_{L^\infty(\homeg_{\tau/4}^\eps)}
\leq C\,\exp\left(-\frac{\sqrt{\lambda}\,\eps\,\tau}{16}\right) \frac{\left(\reps + (\eps\,\reps)^{2\varrho}\right)}{\eps^2}\,\zeta.
\end{equation}

We now proceed to prove the required derivative estimate for $\hueps$. 

We will only bound $\xi=\partial_{x_1} \hueps$. The other (higher) partial derivatives can be bounded similarly. 
Fix $x \in \homeg_{\tau}^\eps$. Note that the disk $B_{3\eps\tau/4}(x) \subset \homeg_{\tau/4}^\eps$. 
Using a suitable cut-off function which is one in $B_{\eps\tau/2}(x)$ and zero outside $B_{3\eps\tau/4}(x)$ 
and a standard energy estimate, it is easy to show that
\[
\|\nabla \hueps\|_{L^2(B_{\eps\tau/2}(x))} \leq \frac{C}{\eps\tau}\|\hueps\|_{L^2(B_{3\eps\tau/4}(x_0) \setminus B_{\eps\tau/2}(x_0))}
\]
which together with \eqref{ufE::Eqn02} implies
\begin{equation}
\|\nabla \hueps\|_{L^2(B_{\eps\tau/2}(x))} 
	\leq C\,\exp\left(-\frac{\sqrt{\lambda}\,\eps\,\tau}{16}\right) \frac{\left(\reps + (\eps\,\reps)^{2\varrho}\right)}{\eps^2}\,\zeta
	.\label{ufE::Eqn03}
\end{equation}
On the other hand, by differentiating \eqref{eq:defuhat}, 
\[
-\Delta \xi + \lambda\,\xi = 0 \text{ in } \homeg\setminus\Deps
	\;.
\]
Usual interior De Giorgi estimates then show that
$$
\|\xi\|_{L^\infty(B_{\eps\tau/4}(x))} 
	\leq \frac{C}{\eps\tau} \,\|\xi\|_{L^2(B_{\eps\tau/2}(x))}.
$$
Combining this bound with \eqref{ufE::Eqn03} and noting that $x$ is arbitrary in $\omega_\tau^\eps$, we get
$$
\|\xi\|_{L^\infty(\homeg_{\tau}^\eps)} 
	\leq C\,\exp\left(-\frac{\sqrt{\lambda}\,\eps\,\tau}{16}\right) \frac{\left(\reps + (\eps\,\reps)^{2\varrho}\right)}{\tau\eps^3}\,\zeta.
$$
The required estimate for $\xi$ then follows from the simple inequality $e^{-|x|} \leq \frac{C(\eta)}{|x|^\eta}$. 
Further higher derivative estimates are obtained by repeating the above process.
\qed
\par\bigskip
To conclude, we now provide the proof of Lemma \ref{lem:extdecay}.
\par\bigskip\noindent
{\it Proof of Lemma \ref{lem:extdecay}.}
It is straightforward to check that if $\alpha\in L^2\left(\Sphere^1\right)$ has the Fourier expansion
\[
\alpha(\theta) = \sum_{k = 0}^\infty \big[a_k\,\cos\,k\theta + b_k\,\sin\,k\theta\big]
	\;,
\]
then it extends to a solution of $(-\Delta + s)\alpha = 0$ on $\RR^2 \setminus D_1$ which vanishes at infinity by
\[
\alpha(r,\theta) = \sum_{k=0}^\infty \frac{1}{K_k(\sqrt{s})}\big[a_k\,\cos\,k\theta + b_k\,\sin\,k\theta\big] K_k(\sqrt{s}\,r)
	\;,
\]
where $K_k$ is the second modified Bessel Function of order $k$. The function $K_k$ has a representation as follows (see e.g. \cite{NIST-10})
\[
K_k(t) = \int_0^\infty e^{-t\,\cosh \tau}\,\cosh k\tau\,d\tau
	\;.
\]
This implies the obvious bound
\[
\frac{K_k(\sqrt{s}\,r)}{K_k(\sqrt{s})} \leq e^{-\sqrt{s}(r - 1)}
	, \qquad k \geq 0.
\]
On the other hand, for $k \geq 1$, we also have
\[
K_k(t) \leq \int_0^\infty e^{-\frac{1}{2}\,t\,e^\tau}\,e^{k\tau}\,d\tau
	= \frac{2^k}{t^k}\int_0^\infty e^{-\tau}\,\tau^{k-1}\,d\tau = \frac{2^k\,(k-1)!}{t^k}
	\;,
\]
and
\[
K_k(t) \geq \frac{1}{2}\int_0^\infty e^{-t\,e^\tau}\,e^{k\tau}\,d\tau
	= \frac{1}{2\,t^k}\int_0^\infty e^{-\tau}\,\tau^{k-1}\,d\tau = \frac{(k-1)!}{2\,t^k}
	\;.
\]
It follows that
\[
\frac{K_k(\sqrt{s}\,r)}{K_k(\sqrt{s})} \leq \frac{2^{k+1}}{r^k}
	\;.
\]
This is also valid for $k = 0$.

In particular, the above two bounds show that the series for $\alpha(r,\theta)$ converges absolutely for $r > 3$ and,
\begin{align}
|\alpha(r,\theta)| 
	&\leq C\,e^{-\sqrt{s}(r - 1)/2}\,\sum_{k=0}^\infty \frac{2^{k/2}}{r^{k/2}}\,[|a_k| + |b_k|] \nonumber\\
	&\leq C\,e^{-\sqrt{s}(r - 1)/2}\,\frac{r^2}{r^2 - 4}\,\Big\{\sum_{k=0}^\infty [a_k^2 + b_k^2]\Big\}^{1/2}\nonumber\\
	&\leq C\,e^{-\sqrt{s}\,r/4}\,\|\alpha\|_{L^2(\Sphere^1)}
	\text{ for } r > 3\;
\end{align}
as announced.
\qed
%==============================================================%
\section{\label{sec:ABCD} Interior estimates for the boundary value problem}

In this section, we detail regularity estimates for the solutions $\tilde{W}_{\eps,1}$ 
and $\tilde{W}_{\eps,2}$ of the boundary value problems \eqref{eq:BDY-3DC}
which appear in the proof of Theorem~\ref{thm:Regularity-3d**}.

\subsection{Regularity of $\tilde{W}_{\eps,1}$} 
Our result is the following
\begin{proposition}\label{pro:tilde-W1}
Let $\Omegtau = \homegtau \times(-L,L)$. For $\kappa > 0$,  $\tau > \kappa\,\eps^{\frac{1-\eta}{2(1+\eta)}}$
with $\eta
\in\left(\frac{1}{2},1\right)$, and $0<\nu < 2\left(\eta - \frac{1}{2}\right)$, the solution $W_{\eps,1}$ of \eqref{eq:defweps1} satisfies
\begin{align*}
&\left\| \tilde{W}_{\eps,1} - \tilde{V}_{1} \right\|_{L^\infty\left(\Omegtau\right)}
	+ \left\|\nabla \tilde{W}_{\eps,1}  -\nabla \tilde{V}_{1} \right\|_{L^\infty\left(\Omegtau\right)}
	\leq C(\kappa,\eta) \|\fib\|_{C^1(\bar{\Omega})},\\
&\left[\nabla \tilde{W}_{\eps,1} - \nabla \tilde{V}_{1}\right]_{C^\nu\left(\Omegtau\right)}
	\leq C(\kappa,\eta,\nu) \|\fib\|_{C^1(\bar\Omega)}.
\end{align*}
\end{proposition}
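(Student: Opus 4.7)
The approach mirrors that of Theorem~\ref{thm:Regularity-3d}: Fourier-decompose in $x_3$, apply the two-dimensional estimate of Proposition~\ref{prop:Linfybd2d} mode by mode, and sum. Since $\tilde W_{\eps,1}$ and $\tilde V_1$ both lie in $H^1_0(\Omega)$ (in particular vanishing on $\omega\times\{\pm L\}$), I expand
\[
\tilde W_{\eps,1}(x',x_3)\sim \sum_{n\geq 1}\twepsn(x')\sin\Big(\tfrac{n\pi}{2}\big(\tfrac{x_3}{L}+1\big)\Big),\qquad
\tilde V_1(x',x_3)\sim \sum_{n\geq 1}\tvn(x')\sin\Big(\tfrac{n\pi}{2}\big(\tfrac{x_3}{L}+1\big)\Big).
\]
By Proposition~\ref{pro:phi1}, $\phi_1(x',x_3)=P(x')x_3+Q(x')$ with $P,Q\in C^{0,1}(\bar\omega)$ of Lipschitz norm controlled by $\|\fib\|_{C^1}$ and each constant on every fibre cross-section $D_{m,n,\eps}$. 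Expanding $x_3$ and $1$ in the sine basis gives coefficients $a_n,b_n=O(1/n)$, so
\[
\Delta\phi_1=\Delta_2\phi_1=\divop_2\Big(\sum_{n\geq 1}\bh_n(x')\sin\Big(\tfrac{n\pi}{2}\big(\tfrac{x_3}{L}+1\big)\Big)\Big),\qquad
\bh_n:=a_n\nabla_2 P+b_n\nabla_2 Q,
\]
with $\|\bh_n\|_{L^\infty(\omega)}\leq C\|\fib\|_{C^1}/n$ and, crucially, $\bh_n\equiv 0$ on $\Deps$. The modes $\twepsn,\tvn$ then solve the companion two-dimensional problems \eqref{eq:Eq2.5d} with $\lambda=\lambda_n=n^2\pi^2/(4L^2)$, $f=g=0$, $\bh=\bh_n$.

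Proposition~\ref{prop:Linfybd2d} then delivers
\[
\|\twepsn-\tvn\|_{L^\infty(\homegtau)}+\|\nabla_2(\twepsn-\tvn)\|_{L^\infty(\homegtau)}
\leq \frac{C}{n^\eta}\Big[\tfrac{\|\bh_n\|_{L^\infty}}{n^\eta}+\|\twepsn\|_{L^\infty(\omega)}\Big],
\]
with the analogous $C^\nu$ bound scaled by $n^{-(\eta-\nu)}$ in place of $n^{-\eta}$. The key auxiliary estimate is the decay bound $\|\twepsn\|_{L^\infty(\omega)}\leq C\|\fib\|_{C^1}/n^{2-2\delta}$ for any small $\delta>0$, which I will obtain by applying Lemma~\ref{lem:dGNM2} to $-\divop_2(\aeps\nabla_2\twepsn)+\lambda_n\aeps\twepsn=\divop_2(\bh_n)$: this operator has ellipticity constant bounded below by $1$ and reaction coefficient bounded below by $\lambda_n$, and the lemma's constants depend only on these lower bounds, yielding a $\lambda_n^{-1/2+\delta}\|\bh_n\|_{L^\infty}$ bound and hence the claimed decay. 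Combining with $\|\bh_n\|_{L^\infty}\leq C\|\fib\|_{C^1}/n$ gives $\|\twepsn-\tvn\|_{L^\infty(\homegtau)}+\|\nabla_2(\twepsn-\tvn)\|_{L^\infty(\homegtau)}\leq C\|\fib\|_{C^1}/n^{1+2\eta}$ and $[\nabla_2(\twepsn-\tvn)]_{C^\nu(\homegtau)}\leq C\|\fib\|_{C^1}/n^{1+2\eta-\nu}$.

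The assembly of the three-dimensional bounds from the modal estimates proceeds exactly as in the proof of Theorem~\ref{thm:Regularity-3d}. The $L^\infty$ and horizontal-gradient bounds follow from $\sum n^{-(1+2\eta)}<\infty$; the horizontal H\"older seminorm needs $\sum n^{-(1+2\eta-\nu)}<\infty$, i.e. $\nu<2\eta$. Differentiation in $x_3$ introduces an extra factor of $n$ per mode: the $L^\infty$ bound on $\partial_3(\tilde W_{\eps,1}-\tilde V_1)$ needs $\sum n^{-2\eta}<\infty$, which forces $\eta>\tfrac{1}{2}$, while its $x_3$-H\"older seminorm needs $\sum n^{-(2\eta-\nu)}<\infty$, producing the binding constraint $\nu<2(\eta-\tfrac{1}{2})$. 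The hard part of the argument is the decay estimate $\|\twepsn\|_{L^\infty(\omega)}=O(n^{-2+2\delta})$: the bare maximum principle applied to $W_{\eps,1}$ only gives $\|\twepsn\|_{L^\infty(\omega)}=O(1/n)$ through the Fourier coefficients of its boundary datum, which is insufficient to sum the $\partial_3$ series for $\eta<1$; recovering the additional $\lambda_n^{-1/2}$ factor hinges both on the reaction term and on the vanishing of $\bh_n$ on the fibres, which ensures that the de Giorgi iteration of Lemma~\ref{lem:dGNM2} runs with constants independent of the upper bound of $\aeps$.
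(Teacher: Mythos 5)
Your proposal is correct and follows essentially the same route as the paper's proof: Fourier decomposition in $x_3$, the $O(1/n)$ bound on $\bh_n$ coming from $\partial_{33}\phi_1=0$, the auxiliary decay $\|\twepsn\|_{L^\infty(\omega)}\leq C\lambda_n^{-1/2+\delta}\|\bh_n\|_{L^\infty}$ obtained from Lemma~\ref{lem:dGNM2} with $\tilde p=\infty$, then Proposition~\ref{prop:Linfybd2d} mode by mode with exactly the same summation thresholds ($\eta>\frac12$ for $\partial_3$ in $L^\infty$, $\nu<2(\eta-\frac12)$ for its H\"older seminorm). One cosmetic correction: the $\eps$-uniformity of the constant in Lemma~\ref{lem:dGNM2} comes solely from the fact that its de Giorgi iteration uses only the ellipticity lower bound (the upper bound of $\aeps$ never enters), not from the vanishing of $\bh_n$ on $\Deps$, which plays no role in that step.
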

\begin{proof}
We apply a Fourier series decomposition in $x_3$. We write
\begin{eqnarray*}
\tilde{W}_{\eps,1}(x^\prime, x_3) 
	&\sim& \sum_{n=1}^\infty \twepsn(x')\,\sin\Big(\frac{n\pi}{2}\big(\frac{x_3}{L} + 1\big)\Big),\\
\tilde{V}_{1}(x^\prime, x_3) 
	&\sim& \sum_{n=1}^\infty \tvn(x')\,\sin\Big(\frac{n\pi}{2}\big(\frac{x_3}{L} + 1\big)\Big).
\end{eqnarray*}
As $\tilde{W}_{\eps,1} - \tilde{V}_{1}$ is harmonic therefore smooth in $\Omega \setminus \Ceps$, 
the Fourier expansions of $\tilde{W}_{\eps,1} - \tilde{V}_{1}$ 
and $\nabla(\tilde{W}_{\eps,1} - \tilde{V}_{1})$ converge pointwise to themselves in $\Omegtau$. 

Now $\twepsn\in H^1_0(\omega)$ satisfies
$$
\divop_2(\aeps\nabla_2 \twepsn) + \frac{n^2\pi^2}{4L^2}\,\aeps\,\twepsn  = \divop_2(\bh_n)
$$
where $\bh_n=(h_n^{1}, h_n^2)$ is given by 
$$
h_n^{i} = \int_{-L}^{L} \partial_{i} \phi_1 \sin\Big(\frac{n\pi}{2}\big(\frac{x_3}{L} + 1\big)\Big). 
$$
Since $\partial_{33} \phi_{1} =0$, we see from an integration by parts that
\begin{equation}\label{eq:bdhni}
\|h_n^{i}\|_{L^\infty(\homeg)} \leq \frac{C}{n} \|\nabla \phi_1\|_{L^\infty(\HOmeg)} \leq \frac{C}{n} \| \fib\|_{C^{1}(\bar\HOmeg)}.
\end{equation}
Lemma~\ref{lem:dGNM2}, applied with $\tilde p =\infty$, shows that for any $\eta<1$, 
\begin{equation}\label{eq:maxprinchni}
\|\twepsn\|_{L^\infty(\bar\Omega)} \leq \frac{C}{n^\eta} \|h_n^{i}\|_{L^\infty(\homeg)},
\end{equation}
Proposition~\ref{prop:Linfybd2d} together with \eqref{eq:bdhni} and \eqref{eq:maxprinchni} now shows that, for every $0<\nu<\eta<1$,
$$
 \|\twepsn - \tvn \|_{L^\infty(\homegtau)} + \|\nabla_2 \twepsn - \nabla_2 \tvn \|_{L^\infty(\homegtau)} 
\leq  \frac{C}{n^{2\eta+1}} \| \fib\|_{C^{1}(\bar\HOmeg)},
$$
and 
$$
[\nabla_2 \twepsn - \nabla_2 v]_{C^\nu(\homegtau)} \leq \frac{C}{n^{2\eta + 1 - \nu}} \| \fib\|_{C^{1}(\bar\HOmeg)},
$$
Following a variant of the proof of Theorem~\ref{thm:Regularity-3d}, we obtain consecutively the following estimates using the indicated choice of $\eta$,
\begin{align*}
&\left\|\tilde{W}_{\eps,1} - \tilde{V}_{1}\right\|_{L^\infty\left(\Omegtau\right)} 
+ \left\|\nabla_2 \tilde{W}_{\eps,1}- \nabla_2 \tilde{V}_{1} \right\|_{L^\infty\left(\Omegtau\right)}\leq  C \| \fib\|_{C^{1}(\bar\HOmeg)} 
	& \text{ with any } \eta > 0
	,\\
&[\nabla_2 \tilde{W}_{\eps,1} - \nabla_2 \tilde{V}_{1}]_{C^\nu(\Omegtau)} \leq C \| \fib\|_{C^{1}(\bar\HOmeg)}
	& \text{ with any } \eta > \frac{\nu}{2}
	,\\ 
&\left\|\partial_{x_3} \tilde{W}_{\eps,1}- \partial_{x_3} \tilde{V}_{1} \right\|_{L^\infty\left(\Omegtau\right)}\leq  C \| \fib\|_{C^{1}(\bar\HOmeg)} 
	& \text{ with any } \eta > \frac{1}{2}
	,\\
&[\partial_{x_3} \tilde{W}_{\eps,1} - \partial_{x_3} \tilde{V}_{1}]_{C^\nu(\Omegtau)} \leq C \| \fib\|_{C^{1}(\bar\HOmeg)}
	& \text{ with any } \eta > \frac{1}{2} + \frac{\nu}{2}
	.
\end{align*}
This completes the proof.
\qed\end{proof}

\subsection{Regularity of $\tilde{W}_{\eps,2}$} 
Our result is the following
\begin{proposition}\label{pro:tilde-W2}
Let $\Omegtau = \homegtau \times(-L,L)$. For $\kappa > 0$,  $\tau > \kappa\,\eps^{\frac{1-\eta}{2(1+\eta)}}$ with $\eta
\in\left(\frac{2}{3},1\right)$, and $0<\nu <3\left(\eta - \frac{2}{3}\right)$, 
the solutions $\tilde{W}_{\eps,2}$ and $\tilde V_{\eps,2}$ of \eqref{eq:BDY-4DC} and \eqref{eq:BDY-TV01}, respectively, satisfy
\begin{align*}
&\left\| \tilde{W}_{\eps,2} - \tilde{V}_{\eps,2} \right\|_{L^\infty\left(\Omegtau\right)}
	+ \left\|\nabla \tilde{W}_{\eps,2}  -\nabla \tilde{V}_{\eps,2} \right\|_{L^\infty\left(\Omegtau\right)}
	\leq C(\kappa,\eta) \eps^\sigma \|\fib\|_{C^1(\bar{\Omega})},\\
&\left[\nabla \tilde{W}_{\eps,2} - \nabla \tilde{V}_{\eps,2}\right]_{C^\nu\left(\Omegtau\right)}
	\leq C(\kappa,\eta,\nu) \eps^\sigma \|\fib\|_{C^1(\bar{\Omega})},
\end{align*}
\end{proposition}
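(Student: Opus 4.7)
The strategy parallels the Fourier-series approach already used for $\tilde W_{\eps,1}$ in Proposition~\ref{pro:tilde-W1}. Since $p\in C_c^\infty(-l',l')$ vanishes at $x_3=\pm L$, both $\tilde W_{\eps,2}$ and $\tilde V_{\eps,2}$ belong to $H_0^1(\HOmeg)$ and expand into the sine basis,
\[
\tilde W_{\eps,2}= \sum_{n\geq 1} \ttwepsn(x')\,\sin\!\big(\tfrac{n\pi}{2}(\tfrac{x_3}{L}+1)\big),\qquad
\tilde V_{\eps,2}= \sum_{n\geq 1} \ttvn(x')\,\sin\!\big(\tfrac{n\pi}{2}(\tfrac{x_3}{L}+1)\big).
\]
Each coefficient $\ttwepsn\in H_0^1(\omega)$ satisfies the two-dimensional weighted equation \eqref{eq:Eq2.5d} with $\lambda=\lambda_n:=n^2\pi^2/(4L^2)$, right-hand side $\aeps g_n+f_n$, and $\bh_n=0$: here $f_n$ is the $n$-th sine coefficient of $g_\eps$, while $\aeps g_n$ is that of $\aeps\zeta_0(2p'\partial_3 W_{\eps,2}+p''W_{\eps,2})$. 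The coefficient $\ttvn$ solves the companion equation (with $\aeps$ replaced by $1$) and thus plays exactly the role of $v$ in Proposition~\ref{prop:Linfybd2d}.

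The plan then reduces to three $n$-uniform estimates. First, \eqref{eq:maxprinc-phi2} together with \eqref{eq:reg-g} gives $\|g_\eps\|_{C^1(\bar\HOmeg)}\le C\eps^{\sigma+2}\|\fib\|_{C^1}$, so repeated $x_3$-integration by parts yields $\|f_n\|_{L^2(\omega)}\le C_k\eps^{\sigma+2}n^{-k}$ for every $k\geq 1$. Second, since $\aeps$ is independent of $x_3$, every $x_3$-derivative of $W_{\eps,2}$ solves the same homogeneous $\aeps$-harmonic equation, so an iterated Caccioppoli argument in the spirit of \eqref{eq:reg-uepsd3} bounds $\partial_3^k W_{\eps,2}$ in $L^2(\aeps\,dx)$ over the support of $p$ in terms of $\|W_{\eps,2}\|_\infty\le C\eps^{\sigma+2}\|\fib\|_{C^1}$; integrating by parts $k$ times in the defining integral for $\ttwepsn$ then gives $\|\ttwepsn\|_{L^\infty(\omega)}\le C_k\eps^{\sigma+2}n^{-k}$. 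Third, and most delicate, is $\|\sqrt{\aeps}\,g_n\|_{L^2(\omega)}$: a single $x_3$-integration by parts trades $\partial_3 W_{\eps,2}$ for a factor $\alpha_n:=n\pi/(2L)$, yielding
\[
g_n(x')=-\frac{\zeta_0(x')}{L}\left[\,2\alpha_n\!\int_{-L}^L\! p'\,W_{\eps,2}\cos\!\big(\tfrac{n\pi}{2}(\tfrac{x_3}{L}+1)\big)dx_3+\!\int_{-L}^L\! p''\,W_{\eps,2}\sin\!\big(\tfrac{n\pi}{2}(\tfrac{x_3}{L}+1)\big)dx_3\,\right],
\]
so that $\sqrt{\aeps}\,g_n=\alpha_n G_n^{(1)}+G_n^{(2)}$. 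Bessel's inequality in $x_3$ and the uniform $L^\infty$ bound on $W_{\eps,2}$ then give the Parseval-type total estimate $\sum_n[\|G_n^{(1)}\|_{L^2(\omega)}^2+\|G_n^{(2)}\|_{L^2(\omega)}^2]\le C\eps^{2(\sigma+2)}\|\fib\|_{C^1}^2$.

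Equipped with these, Proposition~\ref{prop:Linfybd2d} applied at level $n$ controls $\|\ttwepsn-\ttvn\|_{L^\infty(\omega_\tau^\eps)}$, $\|\nabla_2(\ttwepsn-\ttvn)\|_{L^\infty(\omega_\tau^\eps)}$ and the matching Hölder seminorm by $n^{-2\eta}(\alpha_n\|G_n^{(1)}\|_2+\|G_n^{(2)}\|_2+\|f_n\|_2)+n^{-\eta}\|\ttwepsn\|_\infty$. I then reassemble the Fourier series for $\tilde W_{\eps,2}-\tilde V_{\eps,2}$, for $\nabla_2(\tilde W_{\eps,2}-\tilde V_{\eps,2})$, for $\partial_{x_3}(\tilde W_{\eps,2}-\tilde V_{\eps,2})$ and for their $C^\nu$ seminorms, following the term-by-term summation used in the proof of Theorem~\ref{thm:Regularity-3d} and applying Cauchy--Schwarz against the Parseval bounds on $G_n^{(1)},G_n^{(2)}$. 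Requiring each of these series to converge with the announced $\eps^\sigma$ prefactor produces the thresholds $\eta>2/3$ and $\nu<3(\eta-2/3)$; the binding constraint comes from the Hölder seminorm of $\partial_{x_3}(\tilde W_{\eps,2}-\tilde V_{\eps,2})$, dominated by $\sum_n \alpha_n^{1+\nu}\|\ttwepsn-\ttvn\|_\infty$, in which the $\alpha_n G_n^{(1)}$ term is the critical one.

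The main obstacle is precisely this last contribution: $\aeps\,\partial_3 W_{\eps,2}$ enjoys no uniform $L^p$ bound for $p>2$, only the $\aeps$-weighted $L^2$ energy estimate \eqref{eq:reg-uepsd3}. The single $x_3$-integration by parts converts each such derivative into a factor $\alpha_n$ multiplying the $L^\infty$-controlled $W_{\eps,2}$, and it is the delicate balance between this loss of $\alpha_n$ and the $\lambda_n^{-\eta}\sim\alpha_n^{-2\eta}$ decay provided by Proposition~\ref{prop:Linfybd2d} that pins down the admissible range of $\eta$ and $\nu$.
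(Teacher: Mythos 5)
Your skeleton---sine expansion in $x_3$, reduction to the two--dimensional problem \eqref{eq:Eq2.5d} at $\lambda=\lambda_n=n^2\pi^2/(4L^2)$, mode-by-mode application of Proposition~\ref{prop:Linfybd2d}, and resummation as in the proof of Theorem~\ref{thm:Regularity-3d}---is the paper's. But the two quantitative inputs you feed into Proposition~\ref{prop:Linfybd2d} are obtained the wrong way, and the first is fatal. You integrate by parts in $x_3$ on the term $2\partial_3\zeta\,\partial_3 W_{\eps,2}$ so as to move the derivative \emph{off} $W_{\eps,2}$ and onto $p'\sin(\cdot)$, trading $\partial_3 W_{\eps,2}$ for $W_{\eps,2}$ at the price of a factor $\alpha_n\sim n$; your bound is thus $\|\sqrt{\aeps}\,g_n\|_{L^2(\omega)}\lesssim n\,\eps^{\sigma+2}$. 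Fed into Proposition~\ref{prop:Linfybd2d} this gives $\|\ttwepsn-\ttvn\|_{L^\infty}\lesssim \lambda_n^{-\eta}\,n\,\eps^{\sigma+2}\sim n^{1-2\eta}\eps^{\sigma+2}$, so the series $\sum_n n\,\|\ttwepsn-\ttvn\|_{L^\infty}$ controlling $\partial_{x_3}(\tilde W_{\eps,2}-\tilde V_{\eps,2})$ requires $\eta>3/2$; even Cauchy--Schwarz against your Parseval bound $\sum_n\|G_n^{(1)}\|^2\lesssim\eps^{2(\sigma+2)}$ still needs $\eta>5/4$. The paper integrates by parts in the \emph{opposite} direction: writing $\sin(\sqrt{\lambda}(t+L))=-\lambda^{-1/2}\partial_t\cos(\sqrt{\lambda}(t+L))$, it puts the derivative onto $\partial_3\zeta\,\partial_3 W_{\eps,2}$, producing $\partial_{33}W_{\eps,2}$---still controlled in weighted $L^2$ because every $x_3$-derivative of $W_{\eps,2}$ is $\aeps$-harmonic (iterate \eqref{eq:reg-uepsd3})---and \emph{gaining} $\lambda^{-1/2}\sim n^{-1}$. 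That two-powers-of-$n$ swing, i.e.\ $\|\sqrt{\aeps}A_{\eps,n}\|_{L^2}\lesssim\lambda^{-1/2}\eps^{\sigma+2}$, is what produces the $n^{-3\eta}$ decay per mode and hence the thresholds $\eta>2/3$, $\nu<3(\eta-2/3)$ that you quote but do not actually derive.

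The second gap is the claim that $k$ integrations by parts yield $\|\ttwepsn\|_{L^\infty(\omega)}\le C_k\eps^{\sigma+2}n^{-k}$. Already after one integration by parts you face $\zeta\,\partial_3 W_{\eps,2}$ pointwise in $x'$, and this is controlled only in $L^2(\aeps\,dx)$, never in $L^\infty$ uniformly in $\eps$; your route therefore gives a weighted $L^2(\omega)$ bound on the Fourier coefficient, not a sup bound. Passing from a weighted $L^2$ right-hand side to an $L^\infty$ bound on the solution of the 2D equation is exactly the content of Lemma~\ref{lem:dGNM}, which the paper invokes here (Proposition~\ref{pro:reg-ABC}): it costs a factor $\eps^{-\alpha}$---unavoidable by Proposition~\ref{pro:contrex-marc}---which is absorbed only because $\|\phi_2\|_{L^\infty}\lesssim\eps^{\sigma+2}$, leaving $\|\ttwepsn\|_{L^\infty}\le C\eps^{\sigma}\lambda^{-\eta}$ for any $\eta<1$. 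Bypassing that lemma leaves the $\|\ueps\|_{L^\infty(\homeg)}$ term in Proposition~\ref{prop:Linfybd2d} without any decay in $n$, and the resummation again fails.
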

\begin{proof}
As before, we decompose $\tilde{W}_{\eps,2}$ in a Fourier series along the third direction, i.e. 
\begin{align*}
\tilde{W}_{\eps,2} &\sim \sum_{n=1}^\infty \ttwepsn \sin \left(\frac{n\pi}{2L}\left(x_3 +L\right) \right),\\
\tilde{V}_{\eps,2} &\sim \sum_{n=1}^\infty \ttvn \sin \left(\frac{n\pi}{2L}\left(x_3 +L\right) \right).
\end{align*}
As usual, $\tilde W_{\eps,2} - \tilde V_{\eps,2}$ is harmonic and regular in $\Omega \setminus \Ceps$. 
Thus, we can sum the estimates on the Fourier coefficients to obtain estimates 
on $\tilde W_{\eps,2} - \tilde V_{\eps,2}$ and $\nabla(\tilde W_{\eps,2} - \tilde V_{\eps,2})$ in $\Omegtau$.

The problem satisfied by $\ttwepsn$ is now, in $H^1_0(\omega)$,
$$
-\mbox{div}_2\left(\aeps \nabla_2 \ttwepsn \right) + \lambda \aeps \ttwepsn = \aeps\, A_{\eps,n} + \aeps B_{\eps,n} + C_{\eps,n},
$$
where $\lambda=\frac{n^2\pi^2}{4L^2}$, and
\begin{eqnarray*}
A_{\eps,n} &=& \int_{-L}^L \sin \left(\sqrt{\lambda}\left(t +L\right)\right) \partial_{3} \zeta(\cdot,t)\partial_{3} W_{\eps,2}(\cdot,t) dt,\\
B_{\eps,n} &=& \int_{-L}^L \sin \left(\sqrt{\lambda}\left(t +L\right)\right) \partial_{33}\zeta(\cdot,t) W_{\eps,2}(\cdot,t)dt,\\
C_{\eps,n} &=& \int_{-L}^L \sin \left(\sqrt{\lambda}\left(t +L\right)\right)g_\eps(\cdot,t)dt.
\end{eqnarray*}
Proposition~\ref{prop:Linfybd2d} together with Proposition~\ref{pro:reg-ABC}, now shows that for all $0<\nu<\eta<1$,
\[
 \|\ttwepsn - \ttvn \|_{L^\infty(\homegtau)} + \|\nabla_2 \ttwepsn - \nabla_2 \ttvn \|_{L^\infty(\homegtau)} 
\leq  \eps^{\sigma}\frac{C}{n^{3\eta}} \| \fib\|_{C^{1}(\bar\HOmeg)},
\]
and
\[
[\nabla_2 \ttwepsn - \nabla_2 \ttvn ]_{C^\nu(\homegtau)} \leq  \eps^{\sigma}\frac{C}{n^{3\eta-\nu}} \| \fib\|_{C^{1}(\bar\HOmeg)}.
\]
Following a variant of the proof of Theorem~\ref{thm:Regularity-3d}, we obtain consecutively the following estimates using the indicated choice of $\eta$,
\begin{align*}
&\left\|\tilde{W}_{\eps,2} - \tilde{V}_{2}\right\|_{L^\infty\left(\Omegtau\right)} 
+ \left\|\nabla_2 \tilde{W}_{\eps,2}- \nabla_2 \tilde{V}_{2} \right\|_{L^\infty\left(\Omegtau\right)}\leq  C\,\eps^{\sigma} \| \fib\|_{C^{1}(\bar\HOmeg)} 
	& \text{ with any } \eta > \frac{1}{3}
	,\\
&[\nabla_2 \tilde{W}_{\eps,2} - \nabla_2 \tilde{V}_{2}]_{C^\nu(\Omegtau)} \leq C\,\eps^\sigma\, \| \fib\|_{C^{1}(\bar\HOmeg)}
	& \text{ with any } \eta > \frac{1}{3} + \frac{\nu}{3}
	,\\ 
&\left\|\partial_{x_3} \tilde{W}_{\eps,2}- \partial_{x_3} \tilde{V}_{2} \right\|_{L^\infty\left(\Omegtau\right)}\leq  C \eps^{\sigma}\| \fib\|_{C^{1}(\bar\HOmeg)} 
	& \text{ with any } \eta > \frac{2}{3}
	,\\
&[\partial_{x_3} \tilde{W}_{\eps,2} - \partial_{x_3} \tilde{V}_{2}]_{C^\nu(\Omegtau)} \leq C \eps^{\sigma}\| \fib\|_{C^{1}(\bar\HOmeg)}
	& \text{ with any } \eta > \frac{2}{3} + \frac{\nu}{3},
\end{align*}
which concludes our argument.
\qed\end{proof}

\begin{proposition}\label{pro:reg-ABC}
We have the following estimate
$$
\|\sqrt{\aeps}A_{\eps,n}\|_{L^2(\homeg)} + \|\sqrt{\aeps}B_{\eps,n}\|_{L^2(\homeg)}+\|\sqrt{\aeps}C_{\eps,n}\|_{L^2(\homeg)} 
\leq \frac{C}{\sqrt{\lambda}} \eps^{\sigma+2} \|\Phi\|_{C^1(\bar\Omega)}.
$$
As a consequence,
$$
\| \ttwepsn \|_{L^\infty(\homeg)} \leq  C(\eta) \frac{\eps^\sigma}{\lambda^{\eta}} \|\Phi\|_{C^1(\bar\Omega)}.
$$
for any $\eta<1$.
\end{proposition}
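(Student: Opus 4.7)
The plan is to first establish the $\sqrt{\aeps}$-weighted $L^2(\homeg)$ bounds on $A_{\eps,n}$, $B_{\eps,n}$ and $C_{\eps,n}$ with the crucial factor $1/\sqrt{\lambda}$, and then feed these into Lemma~\ref{lem:dGNM} applied to the 2D equation for $\ttwepsn$.

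The uniform ingredient is one integration by parts in $x_3$ using $\sin(\sqrt{\lambda}(t+L))=-(\cos(\sqrt{\lambda}(t+L))/\sqrt{\lambda})'$. For $C_{\eps,n}$, the source $g_\eps$ belongs to $C^1(\bar\Omega)$, is supported in $(\omega_3\setminus\omega_2)\times(-l',l')\Subset\Omega$ where $\aeps\equiv 1$, and $p$ vanishes at $x_3=\pm L$, so no boundary terms arise and \eqref{eq:reg-g} yields the desired bound. For $B_{\eps,n}$ we use that $p''(\pm L)=0$; the boundary terms vanish and a Cauchy--Schwarz in $t$ gives
\[
|B_{\eps,n}(x')|\leq\frac{C\,\zeta_0(x')}{\sqrt{\lambda}}\bigl[\|W_{\eps,2}(x',\cdot)\|_{L^2(-L,L)}+\|\partial_3 W_{\eps,2}(x',\cdot)\|_{L^2(-L,L)}\bigr].
\]

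The main obstacle is $A_{\eps,n}$. The same integration by parts yields
\[
A_{\eps,n}=\frac{\zeta_0}{\sqrt{\lambda}}\int_{-L}^{L}\cos(\sqrt{\lambda}(t+L))\bigl[p''(t)\,\partial_3 W_{\eps,2}+p'(t)\,\partial_{33}W_{\eps,2}\bigr]\,dt,
\]
which involves $\partial_{33}W_{\eps,2}$. The key observation is that this is well-defined as an $L^2_{\rm loc}$ function of $t$ for a.e.\ $x'\in\omega$: since $\aeps$ is constant along every vertical line avoiding the measure-zero set $\bigcup_{(m,n)\in I_\eps}\partial D_{m,n,\eps}$, the restriction $W_{\eps,2}(x',\cdot)$ satisfies the smooth equation $\Delta W_{\eps,2}=0$ along any such line. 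Moreover, because $\aeps$ does not depend on $x_3$, each of $W_{\eps,2}$, $\partial_3 W_{\eps,2}$ and $\partial_{33}W_{\eps,2}$ is $\aeps$-harmonic in $\Omega$, so iterating the interior Caccioppoli inequality in the spirit of \eqref{eq:reg-uepsd3} yields
\[
\int_{\omega_3\times(-l',l')}\aeps\bigl(|W_{\eps,2}|^2+|\partial_3 W_{\eps,2}|^2+|\partial_{33}W_{\eps,2}|^2\bigr)\,dx\leq C\,\|W_{\eps,2}\|_{L^\infty(\Omega)}^2.
\]
Combining Cauchy--Schwarz in $t$ with this bound and the estimate $\|W_{\eps,2}\|_{L^\infty(\Omega)}\leq\|\phi_2\|_{L^\infty(\Omega)}\leq C\,\eps^{\sigma+2}\|\Phi_b\|_{C^1(\bar\Omega)}$ (from the maximum principle and Proposition~\ref{pro:phi1}) closes the three weighted $L^2$ bounds.

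For the concluding $L^\infty$ estimate, the 2D equation
\[
-\divop_2(\aeps\nabla_2\ttwepsn)+\lambda\,\aeps\,\ttwepsn=\aeps\bigl(2A_{\eps,n}+B_{\eps,n}+C_{\eps,n}/\aeps\bigr)
\]
(well-defined because $C_{\eps,n}$ is supported where $\aeps=1$) fits the framework of Lemma~\ref{lem:dGNM} with $\bh\equiv 0$ and $\|\sqrt{\aeps}f\|_{L^2}\leq C\,\eps^{\sigma+2}\lambda^{-1/2}\|\Phi_b\|_{C^1}$. Choosing $\alpha\in(1,2)$ close to $1$ and $\beta\in(0,1-\alpha/2)$ close to the endpoint gives
\[
\|\ttwepsn\|_{L^\infty(\omega)}\leq C\,\eps^{\sigma+2-\alpha}\lambda^{-(\beta+1/2)}\|\Phi_b\|_{C^1(\bar\Omega)},
\]
which, for any prescribed $\eta<1$, can be made to majorize $C(\eta)\,\eps^\sigma\lambda^{-\eta}\|\Phi_b\|_{C^1(\bar\Omega)}$ by picking $\alpha,\beta$ appropriately (using $\eps,\lambda^{-1}\leq 1$). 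The technical delicacy is entirely concentrated in the $A_{\eps,n}$ step, where the $x_3$-independence of $\aeps$ must be used to legitimize integration by parts across the $\partial Q_\eps$ interfaces via the iterated Caccioppoli control on $\partial_{33}W_{\eps,2}$.
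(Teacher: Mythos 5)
Your proposal is correct and follows essentially the same route as the paper: one integration by parts in $x_3$ (with the boundary terms killed by the compact support of $p$), Cauchy--Schwarz in $t$, the weighted Caccioppoli control \eqref{eq:reg-uepsd3} on $\partial_3 W_{\eps,2}$ and $\partial_{33}W_{\eps,2}$ together with the maximum principle \eqref{eq:maxprinc-phi2} and Proposition~\ref{pro:phi1}, and finally Lemma~\ref{lem:dGNM} with $\alpha$ close to $1$ and $\beta$ close to $1-\alpha/2$. The only difference is that you make explicit the justification of $\partial_{33}W_{\eps,2}\in L^2_{\rm loc}(\aeps\,dx)$ via the $x_3$-independence of $\aeps$, which the paper leaves implicit.
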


\begin{proof}
Note that \eqref{eq:reg-g} shows that $C_{\eps,n}$ satisfies
$$
\|C_{\eps,n}\|_{L^\infty(\omega)} \leq \frac{C}{\sqrt{\lambda}} \left\Vert  \phi_{2} \right\Vert _{L^{\infty}(\Omega)}.
$$
Turning to $B_{\eps,n}$ an integration by parts shows that
$$
\left|B_{\eps,n}\right|
	\leq \frac{C}{\sqrt{\lambda}}\left(\|W_{\eps,2}\|_{L^\infty(\Omega)} 
+ \int_{-L}^{L}\mathbf{1}_{\omega_3 \times (-l',l')} |\partial_{3}W_{\eps,2}(\cdot,t)|dt \right),
$$
which in turn shows that
$$
\int_\homeg \aeps \left|B_{\eps,n}\right|^2 dx^\prime 
\leq 
\frac{C}{\lambda}\left(\|W_{\eps,2}\|_{L^\infty(\Omega)} +  \int_{\omega_3 \times (-l',l')} \aeps |\partial_{3}W_{\eps,2}|^2 dx \right).
$$
Thanks to \eqref{eq:maxprinc-phi2} and the interior estimate \eqref{eq:reg-uepsd3}, we have obtained that
$$
\| \sqrt{\aeps} B_{\eps,n} \|_{L^2(\omega)} \leq \frac{C}{\sqrt{\lambda}} \left\Vert  \phi_{2} \right\Vert _{L^{\infty}(\HOmeg)}.
$$
We proceed with $A_{\eps,n}$ in a similar way. After an integration by parts, we derive that
$$
\left|A_{\eps,n}\right|^2\leq \frac{C}{\lambda} 
\left(  \int_{-L}^{L}\mathbf{1}_{\omega_3 \times (-l',l')} \left(|\partial_{3}W_{\eps,2}(\cdot,t)|^2 + |\partial_{33}W_{\eps,2}(\cdot,t)|^2\right) dt\right)
$$
Multiplying this quantity by $\aeps$, and integrating we obtain thanks to \eqref{eq:maxprinc-phi2} and \eqref{eq:reg-uepsd3}
$$
\| \sqrt{\aeps} A_{\eps,n} \|_{L^2(\omega)} \leq \frac{C}{\sqrt{\lambda}} \left\Vert  \phi_{2} \right\Vert _{L^{\infty}(\HOmeg)}.
$$ 
We have obtained that
$$
\|\sqrt{\aeps}A_{\eps,n}\|_{L^2(\homeg)} + \|\sqrt{\aeps}B_{\eps,n}\|_{L^2(\homeg)}+\|C_{\eps,n}\|_{L^2(\homeg)} 
\leq 
\frac{C}{\sqrt{\lambda}} \left\Vert  \phi_{2} \right\Vert _{L^{\infty}(\HOmeg)}.
$$
The first assertion now follows from Proposition~\ref{pro:phi1}.

Finally, using Lemma~\ref{lem:dGNM}, we know that for any $1 < \alpha < 2$, and any $\beta<1-\alpha/2$, 
\begin{eqnarray*}
\left\Vert \ttwepsn \right\Vert _{L^{\infty}(\homeg)} 
&\leq& \frac{C(\alpha,\beta)}{\displaystyle \eps^{\alpha}\lambda^{\beta}}\left(\|\sqrt{\aeps}A_{\eps,n}\|_{L^2(\homeg)} + 
\|\sqrt{\aeps}B_{\eps,n}\|_{L^2(\homeg)}+\|C_{\eps,n}\|_{L^2(\homeg)} \right) \\
                                          &\leq& C(\alpha,\beta) \frac{\eps^{\sigma+2-\alpha}}{\lambda^{1/2+\beta}} 
\left\Vert  \Phi \right\Vert _{C^{1}(\Omega)} \\
                                          &\leq& C(\beta) \eps^{\sigma} \lambda^{-1/2 -\beta}\left\Vert  \Phi \right\Vert _{C^{1}(\Omega)}. 
\end{eqnarray*}
\qed\end{proof}

%==============================================================%

\begin{acknowledgements}
Yves Capdeboscq is supported by the EPSRC Science and Innovation award to the Oxford Centre for Nonlinear PDE (EP/E035027/1). 
This paper was written in part during Yves Capdeboscq's stay in the Institute for Advanced Study, and he would like to gratefully 
acknowledge the fantastic time he had there.
This work was started when Luc Nguyen was in the Oxford Centre for Nonlinear PDE, where he spent a wonderful year. 
He would like to thank the centre for its financial support and fostering an encouraging environment. 
This material is based upon work supported by the National Science Foundation under agreement No. DMS-0635607. 
Any opinions, findings and conclusions or recommendations expressed in this material are those of the authors 
and do not necessarily reflect the views of the National Science Foundation.
\end{acknowledgements}
%%%%%%%%%%

\appendix
{\normalsize
\section{\label{sec:AnnexA}Proofs of technical lemmas and propositions}

\subsection{\label{sec:pfax3}Proof of Proposition~\ref{pro:analx3}}
Note that we have
\[
\left\{\begin{array}{rl}
\displaystyle -\,\Delta \uhom+\gamma\left(\uhom-\vhom\right)=0 & \mbox{in }\Omega_0
	,\\
-\,\kappa\,\partial^2_{33}\vhom+\gamma\left(\vhom-\uhom\right)=0 & \mbox{in }\Omega_0.
\end{array}\right.
\]
And a simple induction shows that $\uhom$ and $\vhom$ are infinitely differentiable with respect to $x_3$ in $\Omega_0$. 
To gain analyticity, we resort to estimates. 
Fix a ball $B(p^0,R)$ in $\Omega_0$. We first derive an integral representation for $\uhom(p^0)$. 
For any $r > 0$, the function
\[
G_{p^0,r}(x) := \frac{\cosh(\sqrt{\gamma}|x - p^0|)}{4\pi|x - p^0|} - \coth(\sqrt{\gamma}r)\,\frac{\sinh(\sqrt{\gamma}|x - p^0|)}{4\pi|x - p^0|}
\]
satisfies
\[
\left\{\begin{array}{ll}
(-\Delta + \gamma) G_{p^0,r} = \delta_{p^0} &\text{ in } B(p^0,r),\\
 G_{p^0,r} = 0 &\text{ on } \partial B(p^0,r).
\end{array}\right.
\]
A direct application of Green's formula then yields that for $0 < r < R$, there holds
\begin{align*}
\uhom(p^0) 
	&= \gamma\int_{B(p^0,r)} G_{p^0,r}(x)\,\vhom(x)\,dx
		+ \frac{\sqrt{\gamma}}{4\pi\,r\,\sinh(\sqrt{\gamma}r)}\,\int_{\partial B(p^0,r)} \uhom(x)\,d\sigma(x).
\end{align*}
Here we have used
\[
\frac{\partial}{\partial n}G_{p^0,r}\Big|_{\partial B(p^0,r)} = - \frac{\sqrt{\gamma}}{4\pi\,r\,\sinh(\sqrt{\gamma}r)}.
\]
It follows that
\begin{align*}
&\uhom(p^0)\int_0^R r\,\sinh(\sqrt{\gamma}r)\,dr\\
	&\qquad =  \gamma \int_0^R r\,\sinh(\sqrt{\gamma}r)\,dr \int_{B(p^0,r)} G_{p^0,r}(x)\,\vhom(x)\,dx
		 + \frac{\sqrt{\gamma}}{4\pi}\,\int_{B(p^0,R)} \uhom(x)\,dx.
\end{align*}
Applying this identity to $\partial_{x_3}^{(n+1)} \uhom$ we obtain
\begin{align*}
\partial_{x_3}^{(n+1)}\uhom(p^0)\int_0^R r\,\sinh(\sqrt{\gamma}r)\,dr
	&\qquad =  \gamma \int_0^R r\,\sinh(\sqrt{\gamma}r)\,dr \int_{B(p^0,r)} G_{p^0,r}(x)\,\partial_{x_3}^{(n+1)}\vhom(x)\,dx\\
		&\qquad\qquad+ \frac{\sqrt{\gamma}}{4\pi}\,\int_{B(p^0,R)} \partial_{x_3}^{(n+1)}\uhom(x)\,dx\\
	&\qquad =  \gamma \int_0^R r\,\sinh(\sqrt{\gamma}r)\,dr \int_{B(p^0,r)}G_{p^0,r}(x)\,\partial_{x_3}^{(n+1)}\vhom(x)\,dx\\
		&\qquad\qquad+ \frac{\sqrt{\gamma}}{4\pi}\,\int_{\partial B(p^0,R)} \partial_{x_3}^{(n)}\uhom(x)\,\frac{x_3 - p^0_3}{|x - p_0|}\,d\sigma(x).
\end{align*}
Where we used the notation $p^0=(p^0_1,p^0_2,p^0_3)$. 
Noting that $0 \leq G_{p^0,r}(x) \leq \frac{\cosh(\sqrt{\gamma}|x - p^0|)}{|x - p^0|}$, we deduce that
\begin{align*}
&\left|\partial_{x_3}^{(n+1)}\uhom(p^0)\right|\,\int_0^R r\,\sinh(\sqrt{\gamma}r)\,dr\\
	&\qquad \leq C\Big[\int_0^R  r\,\sinh(\sqrt{\gamma}r)\,dr \int_{0}^r s\,\cosh(\sqrt{\gamma}s)\,ds \|\partial_{x_3}^{(n+1)}\vhom\|_{L^\infty(B(p^0,R))}\\
		&\qquad\qquad + R^2\,\|\partial_{x_3}^{(n)}\uhom\|_{L^\infty(B(p_0,R))}\Big],
\end{align*}
and we have bounded $\partial_{x_3}^{(n+1)}\uhom(p^0)$ in terms of lower derivatives by
\begin{equation}
\left|\partial_{x_3}^{(n+1)}\uhom(p^0)\right|
	\leq C\Big[R^2\|\partial_{x_3}^{(n+1)}\vhom\|_{L^\infty(B(p^0,R))}
		+ \frac{1}{R}\|\partial_{x_3}^{(n)}\uhom\|_{L^\infty(B(p_0,R))}\Big].
	\label{eq:VtoW}
\end{equation}

Let us now turn to $\vhom$. For $x_3 \in (p^0_3 - R, p^0_3 + R)$ we have
\begin{align*}
\partial_{x_3}^{(n)} \vhom(p^0_1,p^0_2,x_3)
	&= \alpha_+\,\exp\left(\sqrt{\frac{\gamma}{\kappa}}x_3\right)
		+ \alpha_-\,\exp\left(-\sqrt{\frac{\gamma}{\kappa}}x_3\right)\\
		&+ \sqrt{\frac{\gamma}{\kappa}}
\int_{p^0_3 - R}^{x_3}\sinh\left(\sqrt{\frac{\gamma}{\kappa}}(s-x_3)\right)\,\partial_{x_3}^{(n)} \uhom(p^0_1,p^0_2,s)\,ds
\end{align*}
where the constants $\alpha_\pm$ satisfy
\[
|\alpha_\pm| \leq C\Big[\frac{1}{R}\|\partial_{x_3}^{(n)}\vhom\|_{L^\infty(B(p^0,R))} + R\,\|\partial_{x_3}^{(n)}\uhom\|_{L^\infty(B(p^0,R))}\Big].
\]
In particular,
\begin{align*}
\partial_{x_3}^{(n+1)} \vhom(p^0)
	&= \alpha_+\,\sqrt{\frac{\gamma}{\kappa}}\,\exp\left(\sqrt{\frac{\gamma}{\kappa}}p^0_3\right)
		- \alpha_-\,\sqrt{\frac{\gamma}{\kappa}}\,\exp\left(-\sqrt{\frac{\gamma}{\kappa}}p^0_3\right)\\
		&\qquad - \frac{\gamma}{\kappa}
\int_{p^0_3 - R}^{p^0_3}\cosh\left(\sqrt{\frac{\gamma}{\kappa}}(s- p^0_3)\right)\,\partial_{x_3}^{(n)} \uhom(p^0_1,p^0_2,s)\,ds
\end{align*}
which provides a bound of  $\partial_{x_3}^{(n+1)}\vhom(p^0)$ in terms of lower derivatives given by
\begin{equation}
\left|\partial_{x_3}^{(n+1)}\vhom(p^0)\right|
	\leq C\Big[R\|\partial_{x_3}^{(n)}\uhom\|_{L^\infty(B(p^0,R))}
		+ \frac{1}{R}\|\partial_{x_3}^{(n)}\vhom\|_{L^\infty(B(p_0,R))}\Big].
	\label{eq:WtoV}
\end{equation}

Using \eqref{eq:WtoV} for the first term on the right hand side of \eqref{eq:VtoW}, we see for all $R$ sufficiently small that
\[
\left|\partial_{x_3}^{(n+1)}\uhom(p^0)\right|
	\leq C\Big[R\|\partial_{x_3}^{(n)}\vhom\|_{L^\infty(B(p^0,R))}
		+ \frac{1}{R}\|\partial_{x_3}^{(n)}\uhom\|_{L^\infty(B(p_0,R))}\Big].
\]
This inequality together with \eqref{eq:WtoV} shows that
\begin{equation}
\left|\partial_{x_3}^{(n+1)}\uhom(p^0)\right| + \left|\partial_{x_3}^{(n+1)}\vhom(p^0)\right|
	\leq \frac{C_0}{R}\Big[\|\partial_{x_3}^{(n)}\uhom\|_{L^\infty(B(p^0,R))}
		+ \|\partial_{x_3}^{(n)}\vhom\|_{L^\infty(B(p_0,R))}\Big].
	\label{eq:Key}
\end{equation}
To conclude, we will show by induction that \eqref{eq:Key} implies
\begin{equation}
\left|\partial_{x_3}^{(n)}\uhom(p^0)\right| + \left|\partial_{x_3}^{(n)}\vhom(p^0)\right|
	\leq \frac{C_0^n\,n^n}{R^n}\Big[\|\uhom\|_{L^\infty(B(p^0,R))}
		+ \|\vhom\|_{L^\infty(B(p_0,R))}\Big].
	\label{eq:Main}
\end{equation}
Since this holds for any value of $n$, analyticity of $\uhom$ and $\vhom$ in the $x_3$ direction then follows from Sterling's formula.

By \eqref{eq:Key}, the above estimate holds for $n = 1$. Assume that it holds for some $n$. Using estimate \eqref{eq:Key} we find,
\begin{align*}
\left|\partial_{x_3}^{(n+1)}\uhom(p^0)\right| + \left|\partial_{x_3}^{(n+1)}\vhom(p^0)\right|
	\leq \frac{C_0(n+1)}{R}\Big[\|\partial_{x_3}^{(n)}\uhom\|_{L^\infty(B(p^0,\frac{R}{n+1}))}
		+ \|\partial_{x_3}^{(n)}\vhom\|_{L^\infty(B(p_0,\frac{R}{n+1}))}\Big].
\end{align*}
Note that every point in $B(p_0,\frac{R}{n+1})$ is contained in a ball of radius $\frac{nR}{n+1}$ which is contained in $B(p^0,R)$. 
Thus, applying \eqref{eq:Main} to the right hand side we obtain
\begin{align*}
&\left|\partial_{x_3}^{(n+1)}\uhom(p^0)\right| + \left|\partial_{x_3}^{(n+1)}\vhom(p^0)\right|\\
	&\qquad \leq \frac{C_0(n+1)}{R}\,\frac{C_0^n\,n^n}{(\frac{nR}{n+1})^n}\Big[\|\uhom\|_{L^\infty(B(p^0,R)}
		+ \|\vhom\|_{L^\infty(B(p_0,R)}\Big]\\
	&\qquad = \frac{C_0^{n+1}(n+1)^{n+1}}{R^{n+1}}\Big[\|\uhom\|_{L^\infty(B(p^0,R)}
		+ \|\vhom\|_{L^\infty(B(p_0,R)}\Big].
\end{align*}
which is our induction thesis.

\subsection{\label{sec:phi1phi2}Proof of Proposition~\ref{pro:phi1}}
Most properties are easily verified by inspection. Regarding $\phi_1$ let us check its regularity in the support of  
$\nabla {c_\eps^{\sigma}}\times[-L,L]$. 
The definition of $\phi_1$ being local, it suffices to look at one cell, centered in $(0,0)$. We have 
$$
\partial_{3}\phi_{1}  =  \partial_{3}\phi_L  \, {c_\eps^{\sigma}} +\left(1-{c_\eps^{\sigma}} \right)
\partial_{3}\phi_L\left(0,0,x_{3}\right),
$$
and for $i=1,2$,
\begin{eqnarray*}
\left|\partial_{i}\phi_{1} \right|
 & = & \left|\partial_{i}
\phi_L \, {c_\eps^{\sigma}}
+\frac{1}{\eps^2 \ln \left( \eps^{\sigma} /(2\eps\, r_\eps)\right)} \frac{x_{i}}{x_{1}^{2}+x_{2}^{2}}
\left(x_{1}\partial_{1}\phi_L\left(\zeta,x_{3}\right)+x_{2}\partial_{2}\phi_L\left(\zeta,x_{3}\right)\right)\right|\\
 & \leq & C \left(\left\|\partial_{1}\phi_L\right\|_{\infty} + \left\|\partial_{2}\phi_L\right\|_{\infty}\right),
\end{eqnarray*}
since, thanks to \eqref{eq:Cond1},
$$
\eps^2 \ln \left(\eps^{\sigma-1} / r_\eps\right) \leq C.
$$
The conclusion for $\phi_1$ follows.

Regarding $\phi_2$, note that
$$
\phi_2(x)=\sum_{(m,n) \in I_\eps}\left(1-{c_\eps^{\sigma}}\right)\mathbf{1}_{m,n,\eps}\left(x\right)\left(\phi_L\left(m\eps,n\eps,x_{3}\right)-\phi_{L}(x)\right).
$$
Again, the definition of $\phi_2$ is local. In the cell $[-\eps/2, \eps/2]^3$, we have
\begin{eqnarray*}
\left|\phi_2\right| &\leq& C r \left\|\nabla \phi_L\right\|_{L^\infty(\Omega)}  \mbox{ for }  r\leq\eps r_\eps, \\
\left|\phi_2\right| &\leq& C r \frac{\ln(\eps^\sigma/2) -\ln r}{\ln(\eps^\sigma/2)-\ln(\eps r_\eps)} 
\left\|\nabla \phi_L\right\|_{L^\infty(\Omega)}\mbox{ for } \eps r_\eps\leq r\leq \eps^\sigma/2, \\
\phi_2 &=& 0 \mbox{ for } \eps^\sigma \leq 2r, 
\end{eqnarray*}
with $r=\sqrt{x_1^2 +x_2^2}$. Thanks to \eqref{eq:Cond1}, this yields the global bound
$$
\left|\phi_2\right| \leq C \eps^{\sigma+2}  \left\|\nabla \phi_L\right\|_{L^\infty(\Omega)}. \\
$$

\subsection{\label{sec:lem46} Proof of Lemma~\ref{SCBdryGrad}}

Without loss of generality, we can assume that the centre of $D_{m,n,\eps }$ is the origin. 

Note that from \eqref{TU-E1x} we know that
\[
\|\tueps\|_{L^\infty(\homeg \setminus \Deps)} \leq \sup_{m,n} |u_{m,n}| =: M
	\;.
\]
Remember that two linearly independent radial solutions of 
$$
-\Delta u +\lambda u = 0
$$
for $|x|>0$ are $I_0\left(\sqrt{\lambda} \cdot \right)$ and $K_0\left(\sqrt{\lambda} \cdot \right)$, the modified Bessel 
functions of the first and second kind.
They are defined by 
\begin{align}
K_0(r) 
	&= \int_0^\infty e^{-r\,\cosh t}\,dt	\;,\label{eq:K0def}\\
I_0(r)
	&= \sum_{n=0}^{\infty}\frac{\left(r^{2}/4\right)^{n}}{\left(n!\right)^{2}}\;,\label{eq:I0def}
\end{align}
see, e.g. \cite{NIST-10}. Introduce
\[
\psi(x) = \psi(|x|) = \alpha\,I_0(\sqrt{\lambda}|x|) + \beta\,K_0(\sqrt{\lambda}|x|),
\]
where $\alpha$ and $\beta$ are chosen so that
\begin{align*}
\psi(\eps \,\reps) = u_{m,n}, \qquad \psi(\eps /2) = M.
\end{align*}
Thanks to the maximum principle, $\tueps(x) \leq \psi(x)$. 
Since the two functions agree on $\partial D_{m,n,\eps }$, the normal derivative of $\psi$ at $|x|=\eps  \reps$ gives 
an upper bound for  the normal derivative of $\tueps$ at $\partial D_{m,n,\eps }$:
\begin{equation}
\sup_{\partial D_{m,n,\eps}} \frac{\partial \tueps}{\partial n} \leq |\psi'(\eps\,\reps)|.
	\label{eq:Normaltueps-Psi}
\end{equation}

We thus proceed to estimate $|\psi'(\eps\,\reps)|$. The constants $\alpha$ and $\beta$ are given by
\begin{eqnarray*}
\alpha & = & \frac{u_{m,n} K_{0}\left(\sqrt{\lambda}\eps /2\right)
                                  - M K_{0}\left(\sqrt{\lambda}\reps\eps \right)}{
I_{0}\left(\sqrt{\lambda}\reps\eps \right)K_{0}\left(\sqrt{\lambda}\eps /2\right)
-K_{0}\left(\sqrt{\lambda}\reps\eps \right)I_{0}\left(\sqrt{\lambda}\eps /2\right)
},\\
\beta & = & \frac{-u_{m,n} I_{0}\left(\sqrt{\lambda}\eps /2\right)
                               +M    I_{0}\left(\sqrt{\lambda}\reps\eps \right)}{
I_{0}\left(\sqrt{\lambda}\reps\eps \right)K_{0}\left(\sqrt{\lambda}\eps /2\right)
-K_{0}\left(\sqrt{\lambda}\reps\eps \right)I_{0}\left(\sqrt{\lambda}\eps /2\right)
}.
 \end{eqnarray*}
The proof relies on precise estimates on $\alpha$ and $\beta$ for various regime of $\lambda$.

It is convenient to introduce the notation  $z=\sqrt{\lambda}\eps $. Let us first consider the case when
$$
z\leq \reps^{-\frac{1}{3}},
$$
We shall use an {\it ad-hoc} bound, easily verifiable using the Frobenius decomposition of $K_0$, see e.g. \cite{NIST-10}. For all $x>0$, we have  
\begin{equation}\label{eq:K0low}
K_{0}(x)=I_{0}(x)\left(- \ln\left(\frac{x}{2}\right)-\gamma +\ln(x+e) \frac{t(x)}{1+t(x)}R_0(x)\right),
\end{equation}
where $t(x)=\frac{x^{2}}{4+x^{5/2}\sqrt{2\pi}}e^{x}$ and $R_0$ satisfies
$ \frac{2}{5}\leq R_0(x)\leq \frac{5}{4} $ for all $x>0$.
We now compute
\begin{eqnarray*}
&& \displaystyle \frac{I_{0}\left(\reps z\right)K_{0}\left(\frac{1}{2}z\right)-K_{0}\left(\reps z\right)I_{0}\left(\frac{1}{2}z\right)} {I_{0}\left(\frac{1}{2}z\right)I_{0}\left(\reps z\right)} \\ 
&=&  \ln \left(2 \reps \right)
 +   \ln\left(\frac{1}{2}z+e\right) \frac{t\left(\frac{1}{2}z\right)}{1+t\left(\frac{1}{2}z\right)}R_0\left(\frac{1}{2}z\right) 
 -   \ln\left(\reps z+ e \right)\frac{t\left(\reps z\right)}{1+t\left(\reps z\right)}R_0\left(\reps z\right).
\end{eqnarray*}
Note that 
\begin{eqnarray*}
\left|\ln(x/2+e) \frac{t(x/2)}{1+t(x/2)}R_0(x/2)\right|&\leq& \left|\max(1,\ln(x))\right| \\
&\leq& \frac{1}{3}|\ln(\reps)| \textrm{ when } x<\frac{1}{\reps^{1/3}}.
\end{eqnarray*}
Therefore we have 
\begin{equation}\label{eq:bddenom1}
I_{0}\left(\reps z\right)K_{0}\left(\frac{1}{2}z\right)-K_{0}\left(\reps z\right)I_{0}\left(\frac{1}{2}z\right)  = I_{0}\left(\frac{1}{2}z\right)I_{0}\left(\reps z\right)\ln(\reps)\left(1 + E_\eps \right),
\end{equation}
with
$$
|E_\eps | \leq \frac{2}{3} \mbox{ for all } z\leq \reps^{-1/3}.
$$
We can now estimate $\alpha$ and $\beta$ as follows
\begin{eqnarray*}
|\alpha| &\leq& \left| \frac{u_{m,n} K_{0}\left(z/2\right)}{I_{0}\left(\reps z\right)K_{0}\left(\frac{1}{2}z\right)-K_{0}\left(\reps z\right)I_{0}\left(\frac{1}{2}z\right)}\right| \\
&+& \left|M \frac{K_{0}\left(\reps z\right)}{I_{0}\left(\reps z\right)K_{0}\left(\frac{1}{2}z\right)-K_{0}\left(\reps z\right)I_{0}\left(\frac{1}{2}z\right)}\right|\\
&\leq& M \frac{\left|- \ln\left(\frac{z}{2}\right)-\gamma +\ln(\frac{z}{2}+e) \frac{t(z/2)}{1+t(z/2)}R_0(z/2)\right|}{I_{0}\left(\reps z\right)|\ln(\reps)|\left(1 + E_\eps \right)} \\
&+& M \frac{\left|- \ln\left(\reps z \right)-\gamma +\ln(\reps z+e) \frac{t(z\reps)}{1+t(z\reps)}R_0(\reps z)\right|}{I_{0}\left(\reps z\right)|\ln(\reps)|\left(1 + E_\eps \right)} \\
&\leq&3\frac{M}{I_{0}\left(\reps z\right)} + 9 \frac{M}{I_{0}\left(\frac{1}{2}z\right)} \mbox{ when } z\leq \reps^{-1/3}.
\end{eqnarray*}
Therefore
$$
|\alpha| \leq C \frac{M}{I_{0}\left(\reps z\right)}\leq C M\,.
$$
Similarly, we obtain
$$
|\beta|\leq \frac{M \left| I_{0}\left( z/2 \right)\right| + M \left| I_{0}\left( \reps z \right)\right| }{\left|I_{0}\left(\reps z\right) K_{0}\left(\frac{1}{2}z\right)
- K_{0}\left(\reps z\right) I_{0}\left(\frac{1}{2}z\right)
\right|} \leq C  \frac{M}{I_{0}\left(\reps z\right)|\ln(\reps)|}\leq C  \frac{M}{|\ln(\reps)|} .
$$
We are now in position to bound $\psi^\prime$, namely
$$
|\psi^\prime(t)| \leq \sqrt{\lambda}|\alpha| I_1\left(\sqrt{\lambda}t\right) +  \sqrt{\lambda}|\beta| K_1\left(\sqrt{\lambda}t\right).
$$
At $t=\eps  \reps$, since $\sqrt{\lambda}\eps  \reps \leq \reps^{2/3}<1$, we deduce that $I_1\left(\sqrt{\lambda}t\right)\leq \reps^{2/3}$. Using the fact that for all $x>0$, $x K_1(x)\leq1$, we obtain
$$
  \leq  \sqrt{\lambda}  C M \left( \reps^{2/3} + \frac{1}{\sqrt{\lambda}\eps  \reps |\ln(\reps)|}\right) \leq C M \frac{1}{\eps  \reps |\ln(\reps)|}.
$$
We have thus shown that
\begin{equation}
|\psi^\prime(\eps  \reps)| \leq  C \theta(\lambda,\eps)\,M \text{ for } z < \reps^{-\frac{1}{3}}.
	\label{eq:UpperNormal1}
\end{equation}

Let us now turn to the case when $z\geq \reps^{-1/3}$. In this case, we easily show that
$$
\alpha \approx M \frac{1}{I_0\left(\frac{1}{2} z \right)} \mbox{ and } \beta  \approx  u_{m,n} \frac{1}{K_0\left( \reps z \right)},
$$
leading to the bounds 
\begin{equation}
|\psi^\prime(\eps  \reps)|  \leq  \sqrt{\lambda}  C M \left(  \frac{I_1\left(\reps z\right)}{I_0\left(\frac{1}{2} z \right) } 
+ \frac{K_1\left(\reps z\right)}{K_0\left(\reps z \right)} \right) .
	\label{eq:Psiprime2}
\end{equation}
For $\reps^{-1/3} < z < \reps^{-1}$, we have
\[
\sqrt{\lambda}\frac{I_1\left(\reps z\right)}{I_0\left(\frac{1}{2} z \right) } \leq  \sqrt{\lambda}\frac{I_1(1)}{I_0\left(\frac{1}{2} \sqrt{\lambda}\eps \right) } \rightarrow 0 \text{ as } \eps \to 0.
\]
On the other hand, it is easy to check that for $0<x<1$,
$$
\frac{K_1\left(x \right)}{K_0\left(x \right)} \leq 2 \frac{1}{x\ln\left(\frac{e}{x}\right)}.
$$
We thus obtain for $z=\reps^{-\alpha}$, $\frac{1}{3} \leq \alpha \leq 1$, that 
\begin{equation}
|\psi^\prime(\eps  \reps)|\leq  C M \left(1 + \frac{1}{\reps \eps} \frac{1}{\ln\left(\frac{e}{\reps^{1-\alpha}}\right)}\right) \leq C M \frac{\eps}{\reps(1-\alpha+\eps^2)} = C\theta(\lambda,\eps)M.
	\label{eq:UpperNormal2}
\end{equation}

For the remaining case $ z \geq \reps^{-1}$, note that 
\begin{align*}
&I_1(\reps\,x) < I_0(\reps x) \leq I_0\left(\frac{1}{2}x\right) \text{ for }x > 0,\\
&K_1(x)\leq 2 K_0(x) \text{ for }x\geq1.
\end{align*}
Inserting these inequalities in \eqref{eq:Psiprime2}, we obtain
\begin{equation}
|\psi^\prime(\eps  \reps)|\leq  C M \sqrt{\lambda} \text{ for } z > \reps^{-1}.
	\label{eq:UpperNormal3}
\end{equation}

From \eqref{eq:Normaltueps-Psi}, \eqref{eq:UpperNormal1}, \eqref{eq:UpperNormal2} and \eqref{eq:UpperNormal3}, we conclude that
\[
\sup_{\partial D_{m,n,\eps}} \frac{\partial \tueps}{\partial n} \leq |\psi'(\eps\,\reps)| \leq  C\theta(\lambda,\eps)\,M.
\]
The lower bound for $\frac{\partial \tueps}{\partial n}$ is obtained by similar arguments. Since $\tueps$ is constant on $\partial D_{m,n,\eps}$, this concludes the proof.

\subsection{\label{sec:lem47} Proof of Lemma~\ref{lem:HolderX}}
We continue to use $M = \sup_{m,n} |u_{m,n}|$. Fix a multi-index $J \in  \{(0,1),(1,0)\}$ and
 split $\nabla^J \tueps(x) = I_1^J(x) + I_2^J(x) + I_3^J(x)$ as in \eqref{RepForm17Oct10-Der}.

We first show that
\begin{equation}
[\nabla\tueps]_{C^\nu(\homegtau)} \leq C\,M \text{ for } \tau > \kappa\,\eps^{(1-\nu)/2}. 
	\label{H-FirstRound}
\end{equation}

Recall that we have shown using \eqref{I1Est} and \eqref{I2Est} with $\tbeta = 0$ that
\[
|\nabla I_1^J(x)| \leq CM \text{ and }
	|\nabla I_2^J(x)| \leq \frac{CM\,\reps}{\tau^{3}\eps^{3}}.
\]

Fix $x$ and $z$ in $\homegtau$ and $\nu \in (0,1)$. Then the above estimates implies that
\[
\frac{|I_1^J(x) - I_1^J(z)| + |I_2^J(x) - I_2^J(z)|}{|x - z|^\nu} \leq C\,M\Big(\frac{\reps}{\tau^3\,\eps^3} + 1\Big).
\]
To establish \eqref{H-FirstRound}, it remains to bound
\begin{eqnarray*}
A(x,z)
	&:=& \frac{|I_3^J(x) - I_3^J(z)|}{|x - z|^\nu}\\
	&=& \sum_{(m,n) \in I_\eps}\int_{\partial D_{m,n,\eps }} \frac{\partial\tueps}{\partial n}(y)\,\frac{[\nabla^J_x\Phi(x - y) - \nabla^J_x\Phi(z - y)]}{|x - z|^\nu} \,d\sigma(y).
\end{eqnarray*}
For $\lambda \geq \frac{1}{\reps}$, thanks to \eqref{I3-CrudeLarge}, we have 
\[
A(x,z) \leq C |\nabla I_3^J| = C|I_3^{J+1}|
	\leq C\,M\,\frac{\reps^{1/2}}{\sqrt{\lambda}\,\eps^{5}}\Big(\frac{\eps}{\tau^{4}} + 1\Big).
\]
which is sufficient provided $\tau >\kappa \eps^2$ for example. We henceforth assume that $\lambda \leq \frac{1}{\reps}$. 
We first bound the derivatives of $I_3^J$. For $\xi \in \homegtau$, we use Lemma \ref{SCBdryGrad} and 
count contributions from ``rings'' of inclusions, distinguishing near and far contributions as before, to get
\begin{align*}
|\nabla I_3^{J}(\xi)|
	&\leq C\sum_{j=0}^{C\,\eps ^{-1}} \theta(\lambda,\eps ) \,M\,(j+1)\,\lambda\left|K_0^{(2)}(\sqrt{\lambda}(j + \tau)\eps )\right|\,\eps \,\reps\\
	&\leq C\,M\,\theta(\lambda,\eps )\,\eps \,\reps\,\lambda\left|K_0^{(2)}(\sqrt{\lambda}\,\tau\,\eps )\right|\\
		&\qquad\qquad + C\,M\,\theta(\lambda,\eps ) \,\eps \,\reps\,\sum_{j=1}^{C\,\eps ^{-1}} M\,j\,\sqrt{\lambda}^{2}\left|K_0^{(2)}(\sqrt{\lambda}\,j\,\eps )\right|\\
	&\leq C\,M\,\theta(\lambda,\eps )\,\eps \,\reps\,\left(\frac{1}{\tau^2\,\eps^2}
			+ \sum_{j=1}^{C\,\eps ^{-1}} \frac{1}{j\,\eps^2}\right)\\
	&\leq C\,M\,\theta(\lambda,\eps )\,\eps\,\reps\left(\frac{1}{\tau^2\,\eps^2} + \frac{|\log\eps|}{\eps^2}\right).
\end{align*}
Noting that $\theta(\lambda,\eps) \leq \frac{\eps}{\reps}$ by \eqref{eq:mulambda} and our restriction on $\lambda$, we thus have
\begin{equation}
|\nabla I_3^{J}(\xi)|
	\leq C\,M\,\Big(\frac{1}{\tau^2} + |\log \eps|\Big).
	\label{I3J2Log}
\end{equation}

From \eqref{I3J2Log} we deduce in particular that
\begin{equation}
A(x,z) \leq C\,M\,\eps^{1 - \nu}\,\Big(\frac{1}{\tau^2} + |\log \eps|\Big) \text{ for } |x - z| < 10\eps.
	\label{AClose}
\end{equation}

We turn to bounding $A$ when $|x - z| > 10\eps$. 
Pick $\tilde x$ and $\tilde z$ in $\homegtau$ such that $|x - \tilde x| < 2\eps$, 
$|z - \tilde z| < 2\eps$, $\dist(\tilde x, \Deps) > \eps/10$, $\dist(\tilde z, \Deps) > \eps/10$ and $|\tilde x - \tilde z|<|x-z|$. Then by \eqref{AClose}, 
\[
A(x,z) \leq A(x,\tilde x) + A(\tilde x, \tilde z) + A(\tilde z,z) \leq A(\tilde x, \tilde z) + C\,M\,\eps^{1 - \nu}\,\Big(\frac{1}{\tau^2} + |\log \eps|\Big).
\]
Thus, provided $\tau>\kappa \,\eps^{\frac{1-\nu}{2}}$, can focus on case when $\dist(x,\Deps) > \eps/10$ and $\dist(z,\Deps) > \eps/10$.

Split $A(x,z) = A_1(x,z) + A_2(x,z)$ where
\begin{align*}
A_1(x,z)
	&= \sum_{(m,n) \in I_{\eps}(x,z)}\int_{\partial D_{m,n,\eps }} \frac{\partial\tueps}{\partial n}(y) \,\frac{[\nabla^J_x\Phi(x - y) - \nabla^J_x\Phi(z - y)]}{|x - z|^\nu} \,d\sigma(y),\\
A_2(x,z)
	&= \sum_{(m,n) \in I_{\eps}(z,x)}\int_{\partial D_{m,n,\eps }} \frac{\partial\tueps}{\partial n}(y)\,\frac{[\nabla^J_x\Phi(x - y) - \nabla^J_x\Phi(z - y)]}{|x - z|^\nu} \,d\sigma(y).
\end{align*}
and
\begin{align*}
I_{\eps}(x,z)
	&= \left\{ (m,n) \in I_\eps: \dist(x,D_{m,n,\eps}) \leq \dist(z,D_{m,n,\eps})
\right\}.
\end{align*}
In the sequel we bound $A_1(x,z)$ independently of $x$ and $z$. Switching the role of $x$ and $z$, we can therefore use the same bound for $A_2(x,z)$.

Using Lemma \ref{SCBdryGrad} and the expression for $\theta(\lambda,\eps)$ for $\lambda \leq \frac{1}{\reps}$, we have
\begin{equation}
|A_1(x,z)| \leq \sum_{(m,n) \in I_{\eps}(x,z)} 
C\,M\,\eps^2\,\sup_{y \in \partial D_{m,n,\eps}}\left|\frac{[\nabla^J_x\Phi(x - y) - \nabla^J_x\Phi(z - y)]}{|x - z|^\nu}\right|.
	\label{eq:A1FirstBound}
\end{equation}

It is convenient to introduce
\[
K_{0}^{(1 + \nu)}(t) := \sup_{s,s' > t} \frac{|K_0^{(1)}(s) - K_0^{(1)}(s')|}{|s - s'|^\nu}.
\]
We will use the following inequality, which is easily proved using the monotonicity properties of $K_0^{(1)}$ and $K_0^{(2)}$, the derivative of $K_0^{(1)}$,
\begin{equation}
K_{0}^{(1 + \nu)}(t) \leq \frac{C}{t^{1 + \nu}} \text{ for } t > 0.
	\label{K1nu}
\end{equation}

To bound $A_1(x,z)$, we proceed as before by counting contribution from inclusions 
located in the rings centred at $x$ with radii $(j + 1/10)\eps$ and $(j + 1 + 1/10)\eps$. 
For example, consider an inclusion $D_{m,n,\eps}$ which is closer to $x$ than to $z$ and lying in the above $j$-th ring. For $y \in \partial D_{m,n,\eps}$, we estimate
\begin{align*}
&\frac{|\nabla^J_x\Phi(x - y) - \nabla^J_x\Phi(z - y)|}{|x - z|^\nu}\\
	&\qquad\qquad = \frac{1}{|x - z|^\nu}\,\left|\sqrt{\lambda}\,K_0^{(1)}(\sqrt{\lambda}\,|x - y|)\,\frac{(x - y) \cdot J}{|x - y|} - \sqrt{\lambda}\,K_0^{(1)}(\sqrt{\lambda}\,|z - y|)\,\frac{(z - y) \cdot J}{|z - y|}\right|\\
	&\qquad\qquad \leq \frac{1}{|x - z|^\nu}\,\sqrt{\lambda}\,|K_0^{(1)}(\sqrt{\lambda}\,|x - y|)|\,\left|\,\frac{(x - y) \cdot J}{|x - y|} - \frac{(z - y) \cdot J}{|z - y|}\right|\\
		&\qquad\qquad\qquad\qquad + \frac{1}{|x - z|^\nu}\left|\sqrt{\lambda}\,K_0^{(1)}(\sqrt{\lambda}\,|x - y|) - \sqrt{\lambda}\,K_0^{(1)}(\sqrt{\lambda}\,|z - y|)\right|\,\left|\frac{(z - y) \cdot J}{|z - y|}\right|\\
	&\qquad\qquad \leq \sqrt{\lambda}\,|K_0^{(1)}(\sqrt{\lambda}\,(j + 1/10)\,\eps)|\,\frac{C}{((j + 1/10)\eps)^\nu}
		+ \sqrt{\lambda}^{1+\nu}\,K_{0}^{(1 + \nu)}(\sqrt{\lambda}\,(j + 1/10)\,\eps)\\
	&\qquad\qquad \leq \frac{C}{((j + 1/10)\,\eps)^{1+\nu}}.
\end{align*}
Inserting this estimate in \eqref{eq:A1FirstBound} and summing over all inclusions we end up with
\[
A_1(x,z) \leq \sum_{j = 0}^{C\,\eps^{-1}} j\,\frac{C\,M\,\eps^{1 - \nu}}{(j + 1/10)^{1 + \nu}}
	\leq C\,M\,\eps^{1 - \nu}\,\sum_{j=1}^{10C\eps^{-1}} \frac{1}{j^{\nu}} \leq C\,M.
\]
Likewise
\[
A_2(x,z) \leq C\,M.
\]
We have thus established \eqref{H-FirstRound}.

To establish the result, we note that, by Lemma \ref{lem:utilde}, we have, for $0 < \tbeta < \frac{1}{2}$ and $\tau > \kappa\,\eps^{\frac{1-2\tbeta}{2(1+2\tbeta)}}$,
\[
\|\nabla\tueps\|_{L^\infty(\homegtau)} \leq \frac{C}{\lambda^\tbeta}\,M.
\]
This implies that
\[
\frac{|\nabla\tueps(x) - \nabla\tueps(z)|}{|x - z|^\nu} \leq \frac{C}{\lambda^\tbeta}\,\frac{M}{|x - z|^\nu}.
\]
On the other hand, for some small $\delta > 0$ to be determined, \eqref{H-FirstRound} implies that for $\tau > \kappa\,\eps^{\frac{\delta}{2}}$ that
\[
\frac{|\nabla\tueps(x) - \nabla\tueps(z)|}{|x - z|^\nu} 
	\leq [\nabla\tueps]_{C^{1-\delta}(\homegtau)}\,|x - z|^{1 - \delta - \nu}
	\leq C\,M\,|x - z|^{1 - \delta - \nu}.
\]
It follows that, provided $0<1-\delta -\nu$, 
\begin{align*}
\frac{|\nabla\tueps(x) - \nabla\tueps(z)|}{|x - z|^\nu} 
	&\leq \left\{\frac{C}{\lambda^\tbeta}\,\frac{M}{|x - z|^\nu}\right\}^{\frac{1-\delta - \nu}{1 - \delta}}\,\left\{C\,M\,|x - z|^{1 - \delta - \nu}\right\}^{\frac{\nu}{1 - \delta}}\\
	&= \frac{C}{\lambda^{\frac{\tbeta(1-\delta - \nu)}{1 - \delta}}}\,M \text{ provided } \tau > \kappa\,\eps^{\min\left(\frac{\delta}{2}, \frac{1 - 2\tbeta}{1 + 2\tbeta}\right)}.
\end{align*}
Picking $\tbeta = \frac{\beta - \frac{\nu}{4}}{1 - \frac{\nu}{2}}$ and $\delta = \frac{1 - 2\tbeta}{1 + 2\tbeta} = \frac{1 - 2\beta}{1 + 2\beta - \nu}$, 
we get the assertion, provided $\nu <2\beta$.

}

\bibliographystyle{spmpsci}
\bibliography{Mybib}
%%%%%%%%%%

\end{document}